\renewcommand{\vec}[1]{\boldsymbol{#1}}
\newcommand{\dif}{\mathrm{d}}
\DeclareMathOperator*{\Res}{Res}
\newtheorem{theorem}{Theorem}[section]
\newtheorem{remark}[theorem]{Remark}
\newtheorem{lemma}[theorem]{Lemma}
\newtheorem{corollary}[theorem]{Corollary}
\newcommand{\im}{\mathrm{i}}
\begin{document}
\begin{center}
{\Large \bf 
The Spectral Properties of the Magnetic Polarizability Tensor for Metallic Object Characterisation}
\\
P.D. Ledger$^*$ and W.R.B. Lionheart$^\dagger$ \\
$^*$Zienkiewicz Centre for Computational Engineering, College of Engineering, \\Swansea University Bay Campus, Swansea. SA1 8EN UK\\
$^\dagger$School of Mathematics, Alan Turing Building, \\The University of Manchester, Oxford Road, Manchester, M13 9PL UK\\
31st May 2019
\end{center}

\section*{Abstract}
The measurement of  time-harmonic perturbed field data, at a range of frequencies, is beneficial for practical metal detection where the goal is to locate and identify hidden targets.
 In particular, these benefits are realised  when frequency dependent magnetic polarizability tensors (MPTs)  are used to provide an economical characterisation of  conducting permeable objects and a dictionary based classifier is employed. However, despite the advantages shown in dictionary based classifiers, the behaviour of the MPT coefficients with frequency is not properly understood.  In this paper, we rigorously analyse, for the first time, 
 the spectral properties of the coefficients of the MPT. This analysis has the potential to improve existing algorithms and design new approaches for object location and identification in metal detection. Our analysis also enables the response transient response from a conducting permeable object to be predicted for more general forms of excitation.

{\bf Keywords:} Eddy current, Metal Detection, Inverse Problems, Asymptotic Analysis, Spectral Problems, Magnetic Polarizability Tensor. \\

MSC Classification: 35R30; 35B30

\section{Introduction}

In metal detection, there is considerable interest in being able to locate and identify conducting permeable objects from the measurements of mutual inductance between a transmitting and a measurement coil. Applications include  security screening, archaeology excavations, ensuring food safety as well as the search for landmines and unexploded ordnance. There are also closely related topics such as magnetic induction tomography for medical imaging and eddy current testing for monitoring the corrosion of steel reinforcement in concrete structures.

Within the metal detection community, magnetic polarizability tensors (MPTs) have attracted considerable interest to assist with the identification of objects when the transmitting coil is excited by a sinusoidal signal e.g~\cite{abdel,das1990,dekdouk,Gregorczy,marsh2014b,marsh2013,zhao2016}. Engineers believe that a rank 2 MPT provides an economical characterisation of a conducting permeable object  that is invariant of position. An asymptotic formula providing the leading order term for the perturbed magnetic field due to the presence of a small conducting permeable object has been obtained by Ammari, Chen, Chen, Garnier and Volkov~\cite{ammarivolkov2013}, which characterises the object in terms of a rank 4 tensor. We have shown that this simplifies for orthonormal coordinates and allows an object to be characterised by a complex symmetric rank 2 MPT, with an explicit formula for its coefficients, thus, justifying the earlier engineering conjecture~\cite{ledgerlionheart2012}. We have extended Ammari {\it et al.}'s work to provide a complete asymptotic expansion for the perturbed magnetic field, which allows an object to be characterised by generalised MPTs, of which the rank 2 MPT is the simplest case~\cite{ledgerlionheart2017}. In~\cite{ledgerlionheart2018mathmeth}, we have developed asymptotic expansions for the perturbed magnetic field that describe, 1), the leading order term for a small inhomogeneous conducting permeable object  and, 2), the response in the case of multiple small conducting permeable objects, thus, extending the rank 2 MPT description of each of the objects to these cases. Some properties of the MPT are described in~\cite{ledgerlionheart2016} and the availability of the explicit formula for the isolated single object case has been communicated to the engineering community~\cite{ledgerlionheart2018}. 

The measurement of  time-harmonic perturbed field data, at a range of frequencies, has already been shown to be 
beneficial for object location and object identification with several dictionary based algorithms being proposed for metal detection~\cite{ammarivolkov2013,ammarivolkov2013b,ledgerlionheart2018mathmeth} and electro-sensing~\cite{ammarielectrosensing2013}. These benefits are realised due to the frequency dependence of the tensor coefficients.
 In the case of an object with homogeneous conductivity and permeability, computational results (e.g.~\cite{ledgerlionheart2016}) have shown, for a range of object shapes and topologies, that the eigenvalues of the real part of the MPT is
 monotonic and bounded with logarithmic frequency while the eigenvalues of the imaginary part of the MPT
 has a single local maximum
 with  logarithmic frequency. The behaviour was found to be similar for objects with inhomogeneous permeability, however, for objects with inhomogeneous conductivity, computational results show that the eigenvalues of the real part have multiple non-stationary infection points and the imaginary part has multiple local maxima~\cite{ledgerlionheart2018mathmeth}. This was also found to be the case in the measurements of the MPTs of US coins, which are made up of different conducting materials~\cite{davidson2018}.  

An insight in to the spectral behaviour of MPT coefficients is provided by the analytical solution for a conducting permeable sphere obtained by Wait and Spies~\cite{wait1969} who also provide a description for the transient perturbed magnetic field when the excitation is through a step or impulse function. 
Baum~\cite{baum499} has suggested the form that the spectral response for an MPT for a homogeneous conducting object should take a similar form, but does give a formal proof or an explicit expansion. Instead, he uses heuristic arguments to justify its existence. He uses this as a basis for the so called singularity expansion method~\cite{baum521,baumbook} where he proposes that the transient response to a conducting permeable object can be characterised by a series of resonant frequencies, rather than its MPT coefficients. This approach, however, has been received with scepticism when applied to objects other than spheres due to its lack of rigour~\cite{ramm1982}. In this work, we present the first rigorous spectral analysis of the MPT coefficients and its eigenvalues, which, we anticipate, will lead to improvements to existing algorithms and the design of new approaches for object location and object identification. Furthermore, the improved understanding the spectral behaviour of the MPT coefficients also allows the transient response of conducting permeable objects to be understood when the excitation is not harmonic (e.g. in the case of delta function) extending the work of Wait and Spies and making rigorous the work of Baum. This is of practical value for metal detectors, which use impulse or other non-harmonic forms of excitation. 
See also related work in the field of electro-sensing~\cite{ammarielectrosensing2014}.
These points are addressed in our work through the following novelties:

\begin{enumerate}
\item A new alternative invariant form of the MPT is introduced where the coefficients of the MPT follow from symmetric bilinear forms.
\item Explicit expressions for the MPT coefficients for the limiting cases of an inhomogeneous permeable object at low frequency and an inhomogeneous object with  infinite conductivity (perfectly conducting) for multiply connected topologies.
\item A new alternative view point is introduced where the MPT coefficients are derived from an energy functional expressed as  a sum of three inner products, describing  the magnetostatic and time varying magnetic and electric energies. This leads to explicit expressions for the real and imaginary parts of the MPT in the form
\begin{align}
{\mathcal M} = {\mathcal N}^0 + {\mathcal R}^{\sigma_*}+ \im {\mathcal I}^{\sigma_*} ,
\end{align}
for a general inhomogeneous object. In the above, ${\mathcal N}^0$, ${\mathcal R}^{\sigma_*}$ and ${\mathcal I }^{\sigma_*}$ are all real symmetric rank 2 tensors, the former describes the magnetostatic response and the latter two are frequency dependent. The frequency behaviour of the coefficients of  ${\mathcal R}^{\sigma_*}$ and $ {\mathcal I}^{\sigma_*}$ is also explicitly derived.

\item The introduction of an eigenvalue problem that allows the spectral behaviour of ${\mathcal M}$, and hence the frequency response of ${\mathcal R}^{\sigma_*}$ and ${\mathcal I}^{\sigma_*}$,  to be understood as a convergent infinite series using the Mittag-Leffler theorem.

\item Explicit forms of the transient response from a homogeneous conducting permeable object when the excitation is a step function or an impulse function are derived.  For the step function, this rigorously shows that the long time response  is that of a permeable object and, for an impulse function, the short time response is that of a perfect conductor.
\end{enumerate}

The paper is organised as follows: In Section~\ref{sect:background} some background on the characterisation of a conducting permeable object by an MPT is briefly reviewed. Then, in Section~\ref{sect:invariantform}, a new invariant form of the MPT is presented. Section~\ref{sect:limitcase} describes the explicit expressions for the MPT coefficients for the limiting cases of an inhomogeneous  non-conducting permeable multiply connected object at low frequency and an inhomogeneous multiply connected object with  infinite conductivity (perfectly conducting).  In Section~\ref{sect:enfun}, an energy functional is defined, from which the  MPT coefficients follow, leading to explicit expressions for ${\mathcal R}^{\sigma_*}$ and ${\mathcal I }^{\sigma_*}$.  Section~\ref{sect:bounds} provides bounds on ${\mathcal R}^{\sigma_*}$ and ${\mathcal I }^{\sigma_*}$, generalising the results already known for ${\mathcal N}^0$. Then, in Section~\ref{sect:eig}, explicit expressions for the eigenvalues of the tensors ${\mathcal N}^0$, ${\mathcal R}^{\sigma_*}$ and ${\mathcal I}^{\sigma_*}$ are derived. Section~\ref{sect:spectrum} presents a spectral analysis of the MPT coefficients allowing their behaviour with frequency to be understood. Using this analysis, the transient response for several different forms of excitation is obtained in Section~\ref{sect:tranresp}. 

\section{Characterisation of conducting permeable objects} \label{sect:background}
We begin by considering the characterisation of a single homogenous conducting permeable object.
Following~\cite{ammarivolkov2013,ledgerlionheart2014}, we describe a single inclusion by $B_\alpha := \alpha B + {\vec z}$, which means that it can be thought of a unit-sized object $B$ located at the origin, scaled by $\alpha$ and translated by ${\vec z}$. We assume the background is non-conducting and non-permeable and introduce the position dependent conductivity and permeability as
\begin{align}
\sigma_\alpha= \left \{ \begin{array}{ll} \sigma_* & \text{in $B_\alpha$} \\
                                                             0 & \text{in $B_\alpha^c:= {\mathbb R}^3 \setminus\overline{ B_\alpha}$}
                                                             \end{array},  \right . \qquad
\mu_\alpha= \left \{ \begin{array}{ll} \mu_* & \text{in $B_\alpha$} \\
                                                             \mu_0 & \text{in $B_\alpha^c$}
                                                             \end{array} \right .  , \label{eqn:eddymodel}
\end{align}
where $\mu_0:= 4 \pi \times 10^{-7} \text{H/m}$ is the permeability of free space, $0 < \mu_* < \infty$ and $0\le \sigma_* < \infty$. For metal detection, the relevant mathematical model is the eddy current approximation of Maxwell's equations since $\sigma_*$ is large and the angular frequency $\omega=2 \pi f$ is small (see Ammari, Buffa and N\'ed\'elec~\cite{ammaribuffa2000} for a detailed justification). The electric and magnetic interaction fields, ${\vec E}_\alpha$ and ${\vec H}_\alpha$, respectively, satisfy the curl equations
\begin{equation}
\nabla \times {\vec H}_\alpha = \sigma_\alpha {\vec E}_\alpha +{\vec J}_0, \qquad 
\nabla \times {\vec E}_\alpha = \im \omega \mu_\alpha {\vec H}_\alpha, \label{eqn:eddyqns}
\end{equation}
in ${\mathbb R}^3$ and decay as $O(|{\vec x}|^{-1})$ for $|{\vec x}| \to \infty$. In the above, ${\vec J}_0$ is an external current source with support in $B_\alpha^c$. In absence of an object, the background fields ${\vec E}_0$ and ${\vec H}_0$ satisfy (\ref{eqn:eddyqns}) with $\alpha=0$.

The task is to find an economical description for the perturbed magnetic  field $({\vec H}_\alpha - {\vec H}_0)({\vec x})$ due to the presence of $B_\alpha$, which characterises the object's shape and material parameters by a small number of parameters separately to its location ${\vec z}$.  For ${\vec x}$ away from $B_\alpha$, the leading order term in an asymptotic expansion for  $({\vec H}_\alpha - {\vec H}_0)({\vec x})$ as $\alpha \to 0$ has been derived by Ammari {\it et al.}~\cite{ammarivolkov2013}. We have shown that this reduces to the simpler form~\cite{ledgerlionheart2014,ledgerlionheart2018}~\footnote{In order to simplify notation, we drop the double check on ${\mathcal M}$ and the single check on ${\mathcal C}$, which was used in~\cite{ledgerlionheart2014} to denote two and one reduction(s) in rank, respectively. We recall that ${\mathcal M}= ({\mathcal M})_{ij} {\vec e}_i \otimes {\vec e}_j$, by the Einstein summation convention, where we use the notation ${\vec e}_i$ to denote the $i$th unit  orthonormal basis vector and repeated indices imply summation unless otherwise stated. We will denote the $i$th component of a vector ${\vec u}$ by ${\vec u} \cdot {\vec e}_i = ({\vec u})_i$ and the $i,j$th coefficient of a rank 2 tensor ${\mathcal M}$ by $({\mathcal M})_{ij}$.} 
\begin{align}
({\vec H}_\alpha - {\vec H}_0)({\vec x})_i =&( {\vec D}_{x}^2 G({\vec x},{\vec z}))_{ij}  ( {\mathcal M}[\alpha B])_{jk} ({\vec H}_0({\vec z}) )_k +( {\vec R}({\vec x}))_i   \nonumber \\
= &  \frac{1}{4 \pi r^3} \left ( 3\hat{\vec r} \otimes \hat{\vec r} - {\mathbb I} \right)_{ij}  ( {\mathcal M} [\alpha B])_{jk} ({\vec H}_0({\vec z}) )_k + ({\vec R}({\vec x}))_i    \label{eqn:asymp}.
\end{align}
In the above, $G({\vec x},{\vec z}) := 1/ ( 4 \pi |{\vec x}-{\vec z}|)$  is the free space Laplace Green's function, ${\vec r}: ={\vec x}- {\vec z}$, $r=|{\vec r}|$ and $\hat{\vec r}= {\vec r}/ r$ and ${\mathbb I}$ is the rank 2 identity tensor.
The term ${\vec R}({\vec x})$ quantifies the remainder and it is known that  $|{\vec R}| \le C \alpha^4 \| {\vec H}_0 \|_{W^{2,\infty}(B_\alpha) }$.  The result holds when $\nu \in {\mathbb R}^+  := \sigma_* \mu_0 \omega \alpha^2 = O(1)$ as $\alpha \to 0$ (this includes the case of fixed $\sigma_*$, $\mu_*$, $\omega$ as $\alpha \to 0$). Note that (\ref{eqn:asymp}) involves the evaluation of the background field within the object, usually at it's centre i.e. ${\vec H}_0({\vec z}) $,  and requires it to be analytic at this location. In addition, the notation ${\mathcal M}[\alpha B]$ is used to denote that ${\mathcal M}$ is evaluated for the configuration $\alpha B$. 
 In the following, we write ${\mathcal M}$ for ${\mathcal M}[\alpha B]$ where no confusion arises.

The  rank 2 tensor ${\mathcal M}:= - {\mathcal C} + {\mathcal N}= (- ({\mathcal C})_{ij} + ({\mathcal N})_{ij} ){\vec e}_i \otimes {\vec e}_j $ depends on $\omega$, $\sigma_*$, $\mu_*/\mu_0$, $\alpha$ and the shape of $B$, but is independent of ${\vec z}$. This is the MPT and its coefficients can be computed from vectorial solutions ${\vec \theta}_j( {\vec \xi}) $, $j=1,2,3$, to a transmission problem, which we will state shortly, using
\begin{subequations}  \label{eqn:mcheck} 
\begin{align}
( {\mathcal C} )_{ij} :=& -\frac{\im  \alpha^3 }{4}{\vec e}_i \cdot \int_B\nu  {\vec \xi} \times ({\vec \theta}_j + {\vec e}_j \times {\vec \xi} ) \dif {\vec \xi},  \\
({\mathcal N} )_{ij} := &  \alpha^3  \int_B \left ( 1- \frac{\mu_0}{\mu_*} \right ) \left (
{\vec e}_i \cdot {\vec e}_j + \frac{1}{2}  {\vec e}_i \cdot  \nabla \times {\vec \theta}_j \right ) \dif {\vec \xi}.
\end{align}
\end{subequations}
If the object is inhomogeneous, with possibly different piecewise-constant values of $\mu_*$ and $\nu$ in different regions of the object, then (\ref{eqn:asymp}) and (\ref{eqn:mcheck}) still hold if
 we replace $B$ with ${\vec B} = \bigcup_{n=1}^N B^{(n)}$ and $B_\alpha = \alpha B + {\vec z}$ by ${\vec B}_\alpha =\alpha {\vec B} + {\vec z}$  to describe the fact that ${\vec B}$ is made up of $N$ regions~\cite{ledgerlionheart2018mathmeth}. We require that  $B$ (and  $B^{(n)}$) have Lipschitz boundaries and note that
\begin{align}
\mu = \left \{ \begin{array}{ll}  \mu_*( {\vec \xi}) & \text{in ${\vec B}$}  \\
\mu_0 & \text{in ${\vec B}^c := {\mathbb R}^3 \setminus \overline{\vec B}$}  \end {array} \right . ,
\qquad
\sigma  = \left \{ \begin{array}{ll}  \sigma_*( {\vec \xi}) & \text{in ${\vec B}$ } \\
0 & \text{in ${\vec B}^c$}  \end {array} \right . ,  \nonumber
\end{align} 
and
\begin{align}
\mu_* =  \begin{array}{cc}
\mu_*^{(n)} & \text{in $B^{(n)}$ }\end{array} ,  \qquad
\sigma_* =  \begin{array}{cc} \sigma_*^{(n)} & \text{in $B^{(n)}$} \end{array} ,  \qquad
\nu =  \begin{array}{cc} \omega \alpha^2\mu_0  \sigma_*^{(n)} & \text{in $B^{(n)}$} \end{array}   \nonumber ,
\end{align}
where $0 < \mu_*^{(n)} < \infty$, $0 \le \sigma_*^{(n)} < \infty$.
In addition,  $\alpha$  denotes the size of the (combined) configuration and ${\vec z}$ its location. Throughout the following, we concentrate on results  for the case of ${\vec B}$, but these readily simplify to the case of ${ B}$.
The aforementioned  transmission problem is
\begin{subequations}
\begin{align}
\nabla_\xi \times \mu_*^{-1} \nabla_\xi \times {\vec \theta}_j - \im \omega \sigma_* \alpha^2 {\vec \theta }_j & = \im \omega \sigma_* \alpha^2 {\vec e}_j \times {\vec \xi}  && \text{in ${\vec B}$ } ,\\
\nabla_\xi \cdot {\vec \theta}_j  = 0 , \qquad \nabla_\xi \times \mu_0^{-1} \nabla_\xi \times {\vec \theta}_j  & = {\vec 0} && \text{in ${\vec B}^c$ }, \\
[{\vec n} \times {\vec \theta}_j ]_\Gamma = {\vec 0},  \qquad [{\vec n} \times \mu^{-1} \nabla_\xi \times {\vec \theta}_j ]_\Gamma & = -2 [\mu^{-1 } ]_\Gamma {\vec n} \times {\vec e}_j  && \text{on $\Gamma$},\\
{\vec \theta}_j & = O( | {\vec \xi} |^{-1}) && \text{as $|{\vec \xi} | \to \infty$ },
\end{align}\label{eqn:transproblemthetar}%
\end{subequations}
which is solved for ${\vec \theta}_j({\vec \xi})$, $j=1,2,3$. In the above,  $[\cdot ]_\Gamma $ denotes the  jump of the function over $\Gamma$, with $\Gamma=\partial B$  for the homogeneous case or $\Gamma= \partial {\vec B} \cup \{ \partial B^{(n)} \cap \partial B^{(m)}, n,m=1,\cdots,N, n \ne m \}$ otherwise, and ${\vec \xi}$ is measured from an origin chosen to be in $B$ or ${\vec B}$, respectively. 

\section{Invariant form of ${\mathcal M}$} \label{sect:invariantform}
We define ${\vec \Theta}({\vec u})$, for a constant real vector ${\vec u}$, to be the complex vector field solution of the transmission problem
\begin{subequations}
\begin{align}
\nabla_\xi \times \mu_r^{-1} \nabla_\xi \times {\vec \Theta} - \im \nu   {\vec \Theta } & = \im\nu {\vec u} \times {\vec \xi}  && \text{in ${\vec B}$ } ,\\
\nabla_\xi \cdot {\vec \Theta}  = 0 , \qquad \nabla_\xi \times  \nabla_\xi \times {\vec \Theta}  & = {\vec 0} && \text{in ${\vec B}^c$ }, \\
[{\vec n} \times {\vec \Theta} ]_\Gamma = {\vec 0},  \qquad [{\vec n} \times \tilde{\mu}_r^{-1} \nabla_\xi \times {\vec \Theta} ]_\Gamma & = -2 [\tilde{\mu}_r^{-1 } ]_\Gamma {\vec n} \times {\vec u}  && \text{on $\Gamma$},\\
{\vec \Theta} & = O( | {\vec \xi} |^{-1}) && \text{as $|{\vec \xi} | \to \infty$ },
\end{align}\label{eqn:transproblemthetainv}%
\end{subequations}
where, here, and in the following, the dependence of ${\vec \Theta}({\vec u})$ on position ${\vec \xi}$ is not stated explicitly for compactness of notation. In addition,
\begin{align}
\mu_r({\vec \xi}) : =\frac{ \mu_* ({\vec \xi})}{\mu_0}, \qquad  \nu({\vec \xi}) := \omega  \mu_0 \alpha^2 \sigma_*({\vec \xi}) , \qquad   {\vec \xi} \in {\vec B},  \qquad 
\tilde{\mu}_r: = \left \{ \begin{array}{ll} \mu_r & \text{in ${\vec B}$} \\
1 & \text{in ${\vec B}^c$} 
\end{array} \right . . \nonumber
\end{align}
Thus, it clear that ${\vec \theta}_j = {\vec \Theta}({\vec e}_j)$. In addition, setting
\begin{subequations}  \label{eqn:mcheckinv} 
\begin{align}
{ C} ({\vec u},{\vec v}):=& -\frac{\im  \alpha^3 }{4}{\vec u} \cdot \int_B \nu  {\vec \xi} \times ({\vec \Theta} ({\vec v}) + {\vec v} \times {\vec \xi} ) \dif {\vec \xi},  \\
N ({\vec u},{\vec v}) := &  \alpha^3  \int_B \left ( 1- \mu_r^{-1}  \right ) \left (
{\vec u} \cdot {\vec v} + \frac{1}{2}  {\vec u} \cdot  \nabla \times {\vec \Theta} ({\vec v}) \right ) \dif {\vec \xi} , \\
M ({\vec u},{\vec v}):= & N({\vec u},{\vec v}) - { C} ({\vec u},{\vec v}), 
\end{align}
\end{subequations}
where ${\vec v}$ is also a constant real vector then, obviously, $( {\mathcal C} )_{i j} = C ({\vec e}_i, {\vec e}_j)$, $( {\mathcal N} )_{i j} = N ({\vec e}_i, {\vec e}_j)$, and $({\mathcal M})_{i j} = M({\vec e}_i , {\vec e}_j)$  are the aforementioned tensor coefficients.

To provide an alternative splitting of $ {\mathcal M} $, we generalise Lemma 1 of~\cite{ledgerlionheart2016}, which was for a homogenous object, to the inhomogeneous case, in terms of
\begin{align}
{\vec \Theta} ( {\vec u}) =&  {\vec \Theta}^{(0)} ({\vec u}) + {\vec \Theta}^{(1)} ({\vec u}) - {\vec u} \times {\vec \xi},  \nonumber \\
=&  \tilde{\vec \Theta}^{(0)} ({\vec u}) + {\vec \Theta}^{(1)} ({\vec u}),  \nonumber
\end{align}
with $ \tilde{\vec \Theta}^{(0)} ({\vec u}) :=  {\vec \Theta}^{(0)} ({\vec u}) - {\vec u} \times {\vec \xi}$,
as follows:

\begin{lemma} \label{lemma:msplitting1}
The coefficients of  $ {\mathcal M}$ in a orthonormal basis ${\vec e}_i$, $i=1,\ldots,3$, can be expressed as $ ({\mathcal M})_{ij} := M({\vec e}_i , {\vec e}_j) = 
 { N}^{\sigma_*} ({\vec e}_i,{\vec e}_j) + {N}^0  ({\vec e}_i,{\vec e}_j) - {{ C}}^{\sigma_*} ({\vec e}_i,{\vec e}_j)$ where
\begin{subequations}
\begin{align}
{{C}}^{\sigma_*}  ({\vec u},{\vec v}) & : = - \frac{\im  \alpha^3 }{4} {\vec u} \cdot \int_{\vec B} \nu {\vec \xi} \times ({\vec \Theta}^{(0)}({\vec v}) + {\vec \Theta}^{(1)} ({\vec v}) ) \dif {\vec \xi} , \label{eqn:definecconvar}\\
{ N}^{\sigma_*} ({\vec u},{\vec v} ) & := \frac{\alpha^3}{2}  \int_{\vec B} \left ( 1- \mu_r ^{-1} \right ) \left (
    {\vec u} \cdot  \nabla \times {\vec \Theta}^{(1)} ({\vec v})  \right ) \dif {\vec \xi} \label{eqn:definenconvar}, \\
{ N}^0 ({\vec u},{\vec v} ) & := \frac{\alpha^3}{2}  \int_{\vec B}  \left ( 1- \mu_r^{-1} \right )  \left (
  {\vec u} \cdot  \nabla \times {\vec \Theta}^{(0)}({\vec v}) \right ) \dif {\vec \xi}  , \label{eqn:definen0var}
\end{align}
\end{subequations}
and ${\vec u}$, ${\vec v}$ are constant real vectors.
Note that $ {\mathcal N}^{\sigma_*} -{{\mathcal C}}^{\sigma_*}= ({{N}}^{\sigma_*}  ({\vec e}_i,{\vec e}_j)  -   C^{\sigma_*} ({\vec e}_i,{\vec e}_j ) ) {\vec e}_i \otimes {\vec e}_j$ is a complex rank 2 tensor and ${\mathcal N}^0= { N}^0 ({\vec e}_i,{\vec e}_j ){\vec e}_i \otimes {\vec e}_j$ is a real rank 2 tensor. The forms ${{C}}^{\sigma_*}$, ${ N}^{\sigma_*} $ and ${ N}^0 $  depend on the solutions ${\vec \Theta}^{(0)} ({\vec u})$, ${\vec \Theta}^{(1)}({\vec u})$ to the transmission problems
\begin{subequations}
\begin{align} 
\nabla_\xi \times \tilde{\mu}_r^{-1} \nabla_\xi \times {\vec \Theta}^{(0)} & ={\vec 0}  && \hbox{in ${\vec B} \cup {\vec B}^c$}  , \\
\nabla_\xi \cdot {\vec \Theta}^{(0)} &= 0  && \hbox{in ${\vec B} \cup {\vec B} ^c$} , \\
\left [ {\vec \Theta} ^{(0)} \times {\vec n} \right ]_\Gamma   &= {\vec 0}   && \hbox{on $\Gamma$} ,\\ 
\,  \left [   \tilde{\mu}_r^{-1}   \nabla_\xi  \times {\vec \Theta}^{(0)}  \times {\vec n} \right ]_\Gamma  & = 
{\vec 0}  &&
\hbox{on $\Gamma$} ,\\ 
{\vec \Theta}^{(0)} - {\vec u} \times {\vec \xi} & = O(|{\vec \xi} |^{-1})  && \hbox{as $|{\vec \xi}| \to \infty$} ,
\end{align} \label{eqn:transproblem0var}
\end{subequations}
and
\begin{subequations}
\begin{align} 
\nabla_\xi \times \mu_r^{-1} \nabla_\xi \times {\vec \Theta} ^{(1)} - \im \nu  ({\vec \Theta}^{(1)} +{\vec \Theta}^{(0)} )  & = {\vec 0}   && \hbox{in ${\vec B}  $}  , \\
\nabla_\xi \times  \nabla_\xi \times {\vec \Theta} ^{(1)} & = {\vec 0}   && \hbox{in ${\vec B}^c$}  , \\
\nabla_\xi \cdot {\vec \Theta}^{(1)} &= 0 && \hbox{in ${\vec B}^c$} , \\
\left [ {\vec \Theta} ^{(1)} \times {\vec n} \right ]_\Gamma  & = {\vec 0}  && \hbox{on $\Gamma$} ,\\ 
  \left [   \tilde{\mu}_r^{-1}   \nabla_\xi  \times {\vec \Theta}^{(1)} \times {\vec n} \right ]_\Gamma  & = 
 {\vec 0} &&
\hbox{on $\Gamma$} ,\\ 
{\vec \Theta}^{(1)} & = O(|{\vec \xi} |^{-1}) && \hbox{as $|{\vec \xi}| \to \infty$} ,
\end{align} \label{eqn:transproblem1var}
\end{subequations}
respectively, where ${\vec \Theta}^{(0)} ({\vec u})$ is a real vector field and ${\vec \Theta}^{(1)}({\vec u})$ is a complex vector field.
\end{lemma}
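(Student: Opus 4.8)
The plan is to adapt the proof of Lemma~1 of~\cite{ledgerlionheart2016}, which establishes the analogous splitting for a homogeneous object, to the piecewise-homogeneous configuration $\vec{B}=\bigcup_{n=1}^{N}B^{(n)}$. The argument separates into two essentially independent parts: (i) showing that the transmission problems (\ref{eqn:transproblem0var}) and (\ref{eqn:transproblem1var}) are well posed and define precisely the fields appearing in the decomposition $\vec{\Theta}(\vec{u})=\vec{\Theta}^{(0)}(\vec{u})+\vec{\Theta}^{(1)}(\vec{u})-\vec{u}\times\vec{\xi}$; and (ii) substituting that decomposition into the bilinear forms (\ref{eqn:mcheckinv}) to obtain the claimed splitting of $M$. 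Throughout I would use the elementary identities $\nabla\times(\vec{u}\times\vec{\xi})=2\vec{u}$ and $\nabla\times\nabla\times(\vec{u}\times\vec{\xi})=\vec{0}$ for a constant vector $\vec{u}$.

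For part~(i), I would first treat (\ref{eqn:transproblem0var}). Since $\vec{\Theta}^{(0)}$ grows linearly, set $\tilde{\vec{\Theta}}^{(0)}(\vec{u})=\vec{\Theta}^{(0)}(\vec{u})-\vec{u}\times\vec{\xi}$; then (\ref{eqn:transproblem0var}) becomes the problem of finding a field that decays as $O(|\vec{\xi}|^{-1})$, is divergence free, satisfies $\nabla\times\tilde{\mu}_r^{-1}\nabla\times\tilde{\vec{\Theta}}^{(0)}=\vec{0}$ in each homogeneous subregion, and carries the interface data $[\tilde{\vec{\Theta}}^{(0)}\times\vec{n}]_\Gamma=\vec{0}$ and $[\vec{n}\times\tilde{\mu}_r^{-1}\nabla\times\tilde{\vec{\Theta}}^{(0)}]_\Gamma=-2[\tilde{\mu}_r^{-1}]_\Gamma\,\vec{n}\times\vec{u}$ on $\Gamma$ --- a magnetostatic transmission problem of exactly the type arising in the well-posedness analysis of (\ref{eqn:transproblemthetainv}), whose unique solvability follows from the Lax--Milgram lemma in the weighted (Beppo--Levi) space in which $\nabla(\cdot)$ is square integrable over $\mathbb{R}^3$ with the prescribed decay, coercivity being immediate because $\tilde{\mu}_r$ is bounded above and below; since $\tilde{\mu}_r$ and all data are real, $\vec{\Theta}^{(0)}(\vec{u}):=\tilde{\vec{\Theta}}^{(0)}(\vec{u})+\vec{u}\times\vec{\xi}$ is a real vector field. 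With $\vec{\Theta}^{(0)}$ fixed I would then \emph{define} $\vec{\Theta}^{(1)}(\vec{u}):=\vec{\Theta}(\vec{u})-\vec{\Theta}^{(0)}(\vec{u})+\vec{u}\times\vec{\xi}$ and check (\ref{eqn:transproblem1var}) term by term: in each subregion of $\vec{B}$, subtracting $\nabla\times\tilde{\mu}_r^{-1}\nabla\times\vec{\Theta}^{(0)}=\vec{0}$ from the curl-curl equation for $\vec{\Theta}$ gives $\nabla\times\mu_r^{-1}\nabla\times\vec{\Theta}^{(1)}=\im\nu(\vec{\Theta}+\vec{u}\times\vec{\xi})=\im\nu(\vec{\Theta}^{(1)}+\vec{\Theta}^{(0)})$; in $\vec{B}^c$ the curl-curl equation, the divergence condition and the decay are inherited at once from $\vec{\Theta}$ and $\tilde{\vec{\Theta}}^{(0)}$; and $[\vec{\Theta}^{(1)}\times\vec{n}]_\Gamma=\vec{0}$ since $\vec{u}\times\vec{\xi}$ is continuous, while the co-normal jump $-2[\tilde{\mu}_r^{-1}]_\Gamma\,\vec{n}\times\vec{u}$ carried by $\vec{\Theta}$ is exactly cancelled by $[\vec{n}\times\tilde{\mu}_r^{-1}\nabla\times(\vec{u}\times\vec{\xi})]_\Gamma=2[\tilde{\mu}_r^{-1}]_\Gamma\,\vec{n}\times\vec{u}$, the $\vec{\Theta}^{(0)}$ contributions vanishing. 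Complex-valuedness of $\vec{\Theta}^{(1)}$ is inherited from $\vec{\Theta}$.

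For part~(ii), I would substitute $\vec{\Theta}(\vec{v})=\vec{\Theta}^{(0)}(\vec{v})+\vec{\Theta}^{(1)}(\vec{v})-\vec{v}\times\vec{\xi}$ into (\ref{eqn:mcheckinv}). In $C(\vec{u},\vec{v})$ only the combination $\vec{\Theta}(\vec{v})+\vec{v}\times\vec{\xi}=\vec{\Theta}^{(0)}(\vec{v})+\vec{\Theta}^{(1)}(\vec{v})$ occurs, so $C(\vec{u},\vec{v})=C^{\sigma_*}(\vec{u},\vec{v})$ at once. In $N(\vec{u},\vec{v})$, using $\nabla\times\vec{\Theta}(\vec{v})=\nabla\times\vec{\Theta}^{(0)}(\vec{v})+\nabla\times\vec{\Theta}^{(1)}(\vec{v})-2\vec{v}$, the integrand factor reduces to $\vec{u}\cdot\vec{v}+\frac{1}{2}\vec{u}\cdot\nabla\times\vec{\Theta}(\vec{v})=\frac{1}{2}\vec{u}\cdot\nabla\times\vec{\Theta}^{(0)}(\vec{v})+\frac{1}{2}\vec{u}\cdot\nabla\times\vec{\Theta}^{(1)}(\vec{v})$, the constant $\vec{u}\cdot\vec{v}$ cancelling $\frac{1}{2}\vec{u}\cdot(-2\vec{v})$; splitting the integral gives $N(\vec{u},\vec{v})=N^0(\vec{u},\vec{v})+N^{\sigma_*}(\vec{u},\vec{v})$. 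Hence $M(\vec{e}_i,\vec{e}_j)=N(\vec{e}_i,\vec{e}_j)-C(\vec{e}_i,\vec{e}_j)=N^0(\vec{e}_i,\vec{e}_j)+N^{\sigma_*}(\vec{e}_i,\vec{e}_j)-C^{\sigma_*}(\vec{e}_i,\vec{e}_j)$, which is the assertion, and the concluding remarks on which tensors are real or complex follow from $\vec{\Theta}^{(0)}$ being real, $\vec{\Theta}^{(1)}$ complex, and $\mu_r,\nu$ real.

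The only genuinely analytical step is the well-posedness of (\ref{eqn:transproblem0var}) in the multiply connected, piecewise-homogeneous setting --- identifying the correct weighted function space and verifying coercivity of the associated bilinear form when $\mu_r$ jumps across several interfaces --- and this transfers with only cosmetic changes from the homogeneous argument of~\cite{ledgerlionheart2016} and from the existing well-posedness theory for (\ref{eqn:transproblemthetainv}). Everything else is linear bookkeeping, so I would devote most of the write-up to part~(i) and dispatch part~(ii) as a short computation.
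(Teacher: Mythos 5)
Your proposal is correct and follows essentially the same route as the paper, whose proof consists only of the remark that the argument is analogous to Lemma 1 of the authors' earlier work: you decompose ${\vec \Theta}({\vec u})={\vec \Theta}^{(0)}({\vec u})+{\vec \Theta}^{(1)}({\vec u})-{\vec u}\times{\vec \xi}$, verify the two transmission problems by subtraction using $\nabla\times({\vec u}\times{\vec \xi})=2{\vec u}$, and substitute into $C$ and $N$ so that the constant term cancels, exactly as intended. Your fleshing-out of the interface-jump bookkeeping and the well-posedness of the magnetostatic problem in the piecewise-homogeneous setting is a faithful and complete expansion of the argument the paper leaves implicit.
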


\begin{proof}
The proof is analogous to Lemma 1 of~\cite{ledgerlionheart2016}.
\end{proof}
We now consider the symmetry of the forms ${{N}}^{0}  ({\vec u},{\vec v}) $, $M({\vec u},{\vec v})$ and $M({\vec u},{\vec v})-N^0({\vec u},{\vec v}) = {N}^{\sigma_*} ({\vec u},{\vec v}) - C^{\sigma_*} ({\vec u},{\vec v})$ and, hence, the tensors ${\mathcal N}^0$, ${\mathcal M}$ and ${\mathcal M}-{\mathcal N}^0={\mathcal N}^{\sigma_*} - {\mathcal C}^{\sigma_*}$ for the inhomogeneous case. In the homogeneous case, the tensor ${\mathcal N}^0$ can be shown to be equivalent to the P\'olya-Szeg\"o tensor parameterised by the contrast in permeability, ${\mathcal  T}(\mu_r)$, (see Lemma 3 of~\cite{ledgerlionheart2016}). In addition, we have that ${\mathcal M}={\mathcal N}^0 + O(\omega)={\mathcal T}(\mu_r)  + O(\omega) $ as $\omega \to 0$ (by Theorem 9 of~\cite{ledgerlionheart2016}) and ${\mathcal N}^0$ is known to be real symmetric. Consequently, the tensor ${\mathcal N}^0$ provides an object characterisation for magnetostatic problems. In Lemma 4.4 of~\cite{ledgerlionheart2014}, we have previously shown that ${\mathcal M}$ is  complex symmetric and provides a characterisation of homogeneous conducting permeable objects.
 In order to extend these results to the inhomogeneous case, for square integrable complex vector fields ${\vec a}$, ${\vec b}$, we will use the notation   
\begin{align}
\left <{\vec a},{\vec b}  \right >_{L^2({\vec B} )}= \overline{\left <{\vec b},{\vec a} \right >_{L^2({\vec B} )}} := \int_{\vec B} {\vec a} \cdot \overline{\vec b} \dif {\vec \xi} \label{eqn:l2innerprod} ,
\end{align}
to denote the $L^2$ inner product over ${\vec B}$, where the overbar denotes the complex conjugate. This reduces to $\left <{\vec a},{\vec b}  \right >_{L^2({\vec B})} =\left <{\vec b},{\vec a} \right  >_{L^2({\vec B})}$ if ${\vec a}$, ${\vec b}$ are  square integrable real vector fields.  Hence, $\| {\vec u} \|_{L^2({\vec B})} := \left < {\vec u} , {\vec u} \right >_{L^2({\vec B})}^{1/2}$ is the $L^2$ norm of ${\vec u}$ over ${\vec B}$. We also define  $\|{\vec u}\|_{W(c,{\vec B})}: =\left <c {\vec u} , {\vec u}  \right >_{L^2({\vec B})}^{1/2}$, for a piecewise constant $c> 0$ in ${\vec B}$, as a weighted $L^2$ norm of ${\vec u}$ over ${\vec B}$.
The following theorem reveals insights into ${N}^0({\vec u},{\vec v})$ for inhomogeneous objects.

\begin{theorem}\label{thm:formsn0}
 ${ N}^0 ({\vec u}, {\vec v}): {\mathbb R}^3 \times {\mathbb R}^3 \to {\mathbb R} $ is a symmetric bilinear form on real vectors that can be expressed as
\begin{align}
{N}^0  ({\vec u}, {\vec v}) 
  =&\alpha^3  \left <   \left ( 1- \mu_r^{-1} \right )   {\vec u}, {\vec v} \right >_{L^2({\vec B} )}+  \frac{\alpha^3}{4} \left < \tilde{\mu}_r^{-1}  \nabla  \times \tilde{\vec \Theta}^{(0)}({\vec u})  , \nabla  \times \tilde{\vec \Theta}^{(0)} ({\vec v}) \right >_{L^2({\vec B}\cup{\vec B}^c )} 
\label{eqn:n0form2},  
\end{align}
and also defines an inner product provided that $\mu_r ({\vec \xi}) \ge 1$ for ${\vec \xi} \in {\vec B}$. In the above,
$\tilde{\vec \Theta}^{(0)} ({\vec u}):= {\vec \Theta}^{(0)} ({\vec u})- {\vec u} \times {\vec \xi}$ is a real vector field, which satisfies the transmission problem
\begin{subequations}
\begin{align} 
\nabla_\xi \times \tilde{\mu}_r^{-1} \nabla_\xi \times \tilde{\vec \Theta}^{(0)} & ={\vec 0}  && \hbox{in ${\vec B}  \cup {\vec B}^c$}  , \\
\nabla_\xi \cdot \tilde{\vec \Theta}^{(0)} &= 0  && \hbox{in ${\vec B} \cup {\vec B}^c$} , \\
\left [ \tilde{\vec \Theta}^{(0)} \times {\vec n} \right ]_\Gamma   &= {\vec 0}   && \hbox{on $\Gamma$} ,\\ 
\,  \left [   \tilde{\mu}_r^{-1}   \nabla_\xi  \times \tilde{\vec \Theta}^{(0)}  \times {\vec n} \right ]_\Gamma  & = -2 [\tilde{\mu}_r^{-1}]_\Gamma {\vec u} \times {\vec n} &&
\hbox{on $\Gamma$} , \\ 
\tilde{\vec \Theta}^{(0)}  & = O(|{\vec \xi} |^{-1})  && \hbox{as $|{\vec \xi}| \to \infty$} .
\end{align} \label{eqn:transproblem0til}
\end{subequations}
\end{theorem}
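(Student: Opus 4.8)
\emph{Proof proposal.} The plan is to start from the definition (\ref{eqn:definen0var}) of $N^0$, peel off a trivial term using the identity $\nabla_\xi\times(\vec v\times\vec\xi)=2\vec v$ (valid since $\vec v$ is constant), and then symmetrise the remaining integral by pairing a divergence-theorem computation with the weak form of the transmission problem. Writing $\nabla_\xi\times\vec\Theta^{(0)}(\vec v)=\nabla_\xi\times\tilde{\vec\Theta}^{(0)}(\vec v)+2\vec v$ in (\ref{eqn:definen0var}) immediately gives $N^0(\vec u,\vec v)=\alpha^3\langle(1-\mu_r^{-1})\vec u,\vec v\rangle_{L^2(\vec B)}+\tfrac{\alpha^3}{2}\int_{\vec B}(1-\mu_r^{-1})\,\vec u\cdot\nabla_\xi\times\tilde{\vec\Theta}^{(0)}(\vec v)\,\dif\vec\xi$, so the whole content of the theorem is to identify the last integral with $\tfrac14\langle\tilde\mu_r^{-1}\nabla\times\tilde{\vec\Theta}^{(0)}(\vec u),\nabla\times\tilde{\vec\Theta}^{(0)}(\vec v)\rangle_{L^2(\vec B\cup\vec B^c)}$. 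I would first record that $\tilde{\vec\Theta}^{(0)}(\vec u):=\vec\Theta^{(0)}(\vec u)-\vec u\times\vec\xi$ solves (\ref{eqn:transproblem0til}): subtract $\vec u\times\vec\xi$ from (\ref{eqn:transproblem0var}) and use $\nabla_\xi\cdot(\vec u\times\vec\xi)=0$, $\nabla_\xi\times(\tilde\mu_r^{-1}\nabla_\xi\times(\vec u\times\vec\xi))=\nabla_\xi\times(2\tilde\mu_r^{-1}\vec u)=\vec 0$ away from $\Gamma$, continuity of $\vec u\times\vec\xi$ across $\Gamma$, and $\nabla_\xi\times(\vec u\times\vec\xi)=2\vec u$, the last of which turns the homogeneous transmission condition for $\tilde\mu_r^{-1}\nabla_\xi\times\vec\Theta^{(0)}(\vec u)$ into the inhomogeneous one with datum $-2[\tilde\mu_r^{-1}]_\Gamma\,\vec u\times\vec n$. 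I take as given that (\ref{eqn:transproblem0til}) is well posed with a solution decaying like $|\vec\xi|^{-1}$.

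For the remaining integral I would compute it in two ways and equate the results. On the one hand, $\vec u\cdot\nabla_\xi\times\tilde{\vec\Theta}^{(0)}(\vec v)=\nabla_\xi\cdot(\tilde{\vec\Theta}^{(0)}(\vec v)\times\vec u)$ since $\vec u$ is constant, so applying the divergence theorem on each subregion $B^{(n)}$ on which $1-\mu_r^{-1}$ is constant, and noting that the integrand $(\vec u\times\vec n)\cdot\tilde{\vec\Theta}^{(0)}(\vec v)$ depends only on the tangential component of $\tilde{\vec\Theta}^{(0)}(\vec v)$, which is continuous across $\Gamma$ by (\ref{eqn:transproblem0til}), the interface contributions recombine to give $\int_{\vec B}(1-\mu_r^{-1})\,\vec u\cdot\nabla_\xi\times\tilde{\vec\Theta}^{(0)}(\vec v)\,\dif\vec\xi=-\int_\Gamma[\tilde\mu_r^{-1}]_\Gamma(\vec u\times\vec n)\cdot\tilde{\vec\Theta}^{(0)}(\vec v)\,\dif\vec s$. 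On the other hand, testing $\nabla_\xi\times(\tilde\mu_r^{-1}\nabla_\xi\times\tilde{\vec\Theta}^{(0)}(\vec u))=\vec 0$ against $\tilde{\vec\Theta}^{(0)}(\vec v)$ region by region, integrating by parts (the sphere-at-infinity term vanishes because $\tilde{\vec\Theta}^{(0)}(\vec v)=O(|\vec\xi|^{-1})$), and inserting the transmission condition of (\ref{eqn:transproblem0til}) yields $\langle\tilde\mu_r^{-1}\nabla\times\tilde{\vec\Theta}^{(0)}(\vec u),\nabla\times\tilde{\vec\Theta}^{(0)}(\vec v)\rangle_{L^2(\vec B\cup\vec B^c)}=-2\int_\Gamma[\tilde\mu_r^{-1}]_\Gamma(\vec u\times\vec n)\cdot\tilde{\vec\Theta}^{(0)}(\vec v)\,\dif\vec s$. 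Comparing the two displays (the first being exactly half the second) gives (\ref{eqn:n0form2}). This is the inhomogeneous counterpart of Lemma~3 of~\cite{ledgerlionheart2016} and Lemma~4.4 of~\cite{ledgerlionheart2014}, the one new ingredient being the bookkeeping over the interior interfaces $\partial B^{(n)}\cap\partial B^{(m)}$.

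Symmetry, bilinearity and the inner-product property are then read off from (\ref{eqn:n0form2}). The map $\vec u\mapsto\tilde{\vec\Theta}^{(0)}(\vec u)$ is real-linear since (\ref{eqn:transproblem0til}) is linear in the datum $\vec u$, so $N^0$ is bilinear, and both terms on the right of (\ref{eqn:n0form2}) are manifestly symmetric in $\vec u$ and $\vec v$ — the first is a real weighted $L^2(\vec B)$ pairing, the second the $L^2(\vec B\cup\vec B^c)$ inner product of $\nabla\times\tilde{\vec\Theta}^{(0)}(\vec u)$ and $\nabla\times\tilde{\vec\Theta}^{(0)}(\vec v)$. For $\mu_r\ge 1$ one has $1-\mu_r^{-1}\ge 0$, so both terms of $N^0(\vec u,\vec u)=\alpha^3\langle(1-\mu_r^{-1})\vec u,\vec u\rangle_{L^2(\vec B)}+\tfrac{\alpha^3}{4}\langle\tilde\mu_r^{-1}\nabla\times\tilde{\vec\Theta}^{(0)}(\vec u),\nabla\times\tilde{\vec\Theta}^{(0)}(\vec u)\rangle_{L^2(\vec B\cup\vec B^c)}$ are non-negative; and since the first equals $\alpha^3|\vec u|^2\int_{\vec B}(1-\mu_r^{-1})\,\dif\vec\xi$, it is strictly positive for $\vec u\ne\vec 0$ as soon as there is genuine contrast ($\mu_r>1$ on a set of positive measure, which is the only case in which $N^0$ does not vanish identically), so $N^0$ is positive definite and defines an inner product on ${\mathbb R}^3$.

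I expect the main obstacle to be the interface bookkeeping of the second paragraph: choosing the orientations and jump conventions so that, across the multi-component interface $\Gamma$ with piecewise-constant $\mu_r$, the surface integral produced by the divergence theorem coincides exactly — including the factor $2$ — with the one produced by the weak form of (\ref{eqn:transproblem0til}). The remaining points (enough regularity and decay of $\tilde{\vec\Theta}^{(0)}$ to justify the integrations by parts and the convergence of the $L^2(\vec B\cup\vec B^c)$ integral) are standard for this elliptic transmission problem.
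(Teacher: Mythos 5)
Your proposal is correct and follows essentially the same route as the paper's proof: peel off the $2\vec{v}$ term via $\nabla\times\vec{\Theta}^{(0)}(\vec{v})=\nabla\times\tilde{\vec{\Theta}}^{(0)}(\vec{v})+2\vec{v}$, reduce the remaining volume integral to interface integrals weighted by the jumps $[\tilde{\mu}_r^{-1}]_\Gamma$, and then use the transmission conditions of (\ref{eqn:transproblem0til}) together with integration by parts to identify it with $\tfrac14\langle\tilde{\mu}_r^{-1}\nabla\times\tilde{\vec{\Theta}}^{(0)}(\vec{u}),\nabla\times\tilde{\vec{\Theta}}^{(0)}(\vec{v})\rangle_{L^2(\vec{B}\cup\vec{B}^c)}$, with symmetry, bilinearity and positivity read off from the resulting expression. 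Your phrasing as ``two evaluations of the same surface integral'' is only a cosmetic reordering of the paper's chain of identities, and your remark that positive-definiteness requires genuine contrast (not merely $\mu_r\ge 1$) is a valid, slightly more careful observation than the paper makes.
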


\begin{proof}
We first rewrite $N^0({\vec u},{\vec v}) $ as 
\begin{align}
N^0({\vec u},{\vec v}) 
=& \alpha^3  \int_{{\vec B}}   \left ( 1- \mu_r^{-1}  \right )  \left ( {\vec u}\cdot  {\vec v}+ \frac{1}{2} {\vec u} \cdot  \nabla  \times \tilde{\vec \Theta}^{(0)}({\vec v}) \right ) \dif {\vec \xi}  , \nonumber
\end{align}
where we have used $ {\vec \Theta}^{(0)}({\vec u})= \tilde{\vec \Theta}^{(0)} ({\vec u})+ {\vec u} \times {\vec \xi}$ in (\ref{eqn:definen0var}). The transmission problem for $\tilde{\vec \Theta}^{(0)} ({\vec u})$ is also easily derived. 

To obtain (\ref{eqn:n0form2}), we notice that
\begin{align}
\int_{{\vec B}}   (1- \mu_r^{-1})   {\vec u} \cdot  \nabla  \times \tilde{\vec \Theta}^{(0)} ({\vec v}) \dif {\vec \xi}  =  & \sum_{n=1}^N  (1- (\mu_r(B^{(n)}))^{-1}) \int_{B^{(n)}} 
{\vec u} \cdot  \nabla  \times \tilde{\vec \Theta}^{(0)} ({\vec v})\dif {\vec \xi} 
\nonumber \\
=& \sum_{n=1}^N  (1- (\mu_r(B^{(n)}))^{-1}) \int_{\partial B^{(n)}} 
{\vec u} \cdot  {\vec n}  \times \tilde{\vec \Theta}^{(0)} ({\vec v})\dif {\vec \xi},  \nonumber 
\end{align}
where $\mu_r(B^{(n)})= \mu_r ({\vec \xi}) $ with ${\vec \xi} \in B^{(n)}$.
Then, using $[{\vec n} \times \tilde{\vec \Theta}^{(0)} ({\vec v})]_{\partial B^{(n)}\cup\partial B^{(m)}}  = {\vec 0}$ for $n,m=1,\ldots,N$, $n\ne m$, it follows that
\begin{align}
\int_{{\vec B}}   (1- \mu_r^{-1})   {\vec u} \cdot  \nabla  \times \tilde{\vec \Theta}^{(0)} ({\vec v}) \dif {\vec \xi}  
=& \sum_{n,m =1, n \ne m}^N
 \int_{\partial B^{(n)} \cap \partial B^{(m)}}   [ {\mu}_r^{-1} ]_{\partial B^{(n)} \cap \partial B^{(m)}}  {\vec u} \cdot  {\vec n}  \times \tilde{\vec \Theta}^{(0)} ({\vec v})  
\dif {\vec \xi} \nonumber \\
&+\int_{\partial {\vec B} }   [\tilde{\mu}_r^{-1} ]_{\partial {\vec B}}   {\vec u} \cdot  {\vec n}^-  \times \tilde{\vec \Theta}^{(0)} ({\vec v}) 
\dif {\vec \xi}  \nonumber \\
= &  - \sum_{n,m =1, n \ne m}^N
 \int_{\partial B^{(n)} \cap \partial B^{(m)}}   [{\mu}_r^{-1} ]_{\partial B^{(n)} \cap \partial B^{(m)}}    \tilde{\vec \Theta}^{(0)} ({\vec v})\cdot {\vec n} \times {\vec u}
\dif {\vec \xi} \nonumber \\
&-\int_{\partial {\vec B} }   [\tilde{\mu}_r^{-1} ]_{\partial {\vec B}}    \tilde{\vec \Theta}^{(0)} ({\vec v}) \cdot {\vec n}^- \times {\vec u}
\dif {\vec \xi}  \nonumber .
\end{align}
By application of the transmission conditions in (\ref{eqn:transproblem0til}) and integration by parts, this becomes
\begin{align}
 -\frac{1}{2}   & \left ( - \sum_{n,m =1, n \ne m}^N
 \int_{\partial B^{(n)} \cap \partial B^{(m)}}    ( [  {\vec n}  \times \mu_r^{-1}   \nabla \times  \tilde{\vec \Theta}^0({\vec u}) ]_{\partial B^{(n)} \cap \partial B^{(m)}}  \cdot \tilde{\vec \Theta}^{(0)} ({\vec v}) 
\dif {\vec \xi} \right .  \nonumber \\
&  \left . + \int_{\partial {\vec B}}  {\vec n}^+   \times   \nabla \times  \tilde{\vec \Theta}^0({\vec u})  \cdot \tilde{\vec \Theta}^0({\vec v}) |_+ \dif {\vec \xi} + \int_{\partial {\vec B}}  {\vec n}^- \times \mu_r^{-1} \nabla \times \tilde{\vec \Theta}^0({\vec u}) \cdot \tilde{\vec \Theta}^0 ({\vec v})  |_- \dif {\vec \xi} \right ) \nonumber \\
& = \frac{1}{2} \left (  \int_{{\vec B}}  \mu_r^{-1} \nabla \times \tilde{\vec \Theta}^0 ({\vec u}) \cdot \nabla \times \tilde{\vec \Theta}^0 ({\vec v})  \dif {\vec \xi} +  \int_{{\vec B}^c}  \nabla \times \tilde{\vec \Theta}^0({\vec u})  \cdot \nabla \times \tilde{\vec \Theta}^0 ({\vec v})  \dif {\vec \xi}  \right ) \nonumber ,
\end{align}
from which  (\ref{eqn:n0form2})   immediately follows. We observe, from (\ref{eqn:n0form2}), and the linearity of the transmission problem  (\ref{eqn:transproblem0til}), that $N^0({\vec u},{\vec v})= N^0({\vec v},{\vec u})$ $\forall {\vec u},{\vec v}\in {\mathbb R}^3$, $N^0({\vec u}+{\vec w},{\vec v})= N^0({\vec u},{\vec v}) +N^0({\vec w},{\vec v})$ $\forall {\vec u},{\vec v}, {\vec w}\in {\mathbb R}^3$ and $N^0(c{\vec u},d{\vec v})=cd N^0({\vec u},{\vec v})$ $\forall {\vec u},{\vec v}\in {\mathbb R}^3$ and $c,d\in {\mathbb R}$ . Thus,  $N^0({\vec u},{\vec v}): {\mathbb R}^3 \times {\mathbb R}^3 \to {\mathbb R}$  is a symmetric bilinear form. Provided that $\mu_r ({\vec \xi}) \ge 1$ for ${\vec \xi} \in {\vec B}$ then $N^0({\vec u},{\vec u})\ge 0$ and $N^0({\vec u},{\vec u}) =0$ only if ${\vec u}={\vec 0}$, hence, $N^0({\vec u},{\vec v})$ defines an inner product. 
\end{proof}

\begin{corollary} \label{coll:diagonalcoeffn0}
It immediately follows from Theorem~\ref{thm:formsn0} that  ${\mathcal N}^{0}=N^0({\vec e}_i,{\vec e}_j) {\vec e}_i \otimes {\vec e}_j $ is a symmetric tensor extending the known result for a homogenous object proved in Lemma 1 of~\cite{ledgerlionheart2016}. In particular, the diagonal coefficients of the associated tensor ${\mathcal N}^{0}$  are
\begin{align}
({\mathcal N}^0)_{ii}  =  N^0({\vec e}_i,{\vec e}_i)=&
 \alpha^3  \int_{{\vec B}}  \left ( 1- \mu_r^{-1}  \right )    \dif {\vec \xi} \nonumber \\
&+ \frac{\alpha^3}{4} \left   (    \| \nabla \times \tilde{\vec \Theta}^{(0)} ({\vec e}_i) \|_{L^2({\vec B}^c)}^2 +   \| \nabla \times \tilde{\vec \Theta}^{(0)}({\vec e}_i)  \|_{W(\mu_r^{-1},{\vec B})}^2  \right )  , \nonumber
\end{align}
where the repeated index $i$ does not imply summation.
 In addition, we see that ${\mathcal N}^0_{ii} > 0$ provided that $\mu_r({\vec \xi})> 1$ for ${\vec \xi} \in {\vec B}$.
\end{corollary}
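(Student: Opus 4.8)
The plan is to obtain all three assertions directly from the representation formula (\ref{eqn:n0form2}) of Theorem~\ref{thm:formsn0}. That theorem already establishes that $N^0(\cdot,\cdot)$ is a symmetric bilinear form on ${\mathbb R}^3\times{\mathbb R}^3$, so the matrix of coefficients $({\mathcal N}^0)_{ij}=N^0({\vec e}_i,{\vec e}_j)$ automatically satisfies $({\mathcal N}^0)_{ij}=({\mathcal N}^0)_{ji}$; this is exactly the symmetry of the rank~2 tensor ${\mathcal N}^0$, and it contains the homogeneous statement of Lemma~1 of~\cite{ledgerlionheart2016} as the special case ${\vec B}=B$.

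For the diagonal entries I would substitute ${\vec u}={\vec v}={\vec e}_i$ (with the repeated index $i$ not summed) into (\ref{eqn:n0form2}). Since ${\vec e}_i$ is a unit vector, $\langle(1-\mu_r^{-1}){\vec e}_i,{\vec e}_i\rangle_{L^2({\vec B})}=\int_{\vec B}(1-\mu_r^{-1})\dif {\vec \xi}$, so the first term becomes $\alpha^3\int_{\vec B}(1-\mu_r^{-1})\dif {\vec \xi}$. For the second term I would split the domain ${\vec B}\cup{\vec B}^c$ into its two pieces and invoke the piecewise definition of $\tilde{\mu}_r^{-1}$, namely $\tilde{\mu}_r^{-1}=\mu_r^{-1}$ in ${\vec B}$ and $\tilde{\mu}_r^{-1}=1$ in ${\vec B}^c$: the exterior contribution is $\|\nabla\times\tilde{\vec \Theta}^{(0)}({\vec e}_i)\|_{L^2({\vec B}^c)}^2$ and the interior contribution is $\|\nabla\times\tilde{\vec \Theta}^{(0)}({\vec e}_i)\|_{W(\mu_r^{-1},{\vec B})}^2$, by the weighted $L^2$ norm defined just before Theorem~\ref{thm:formsn0}. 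Collecting these gives the claimed expression for $({\mathcal N}^0)_{ii}$.

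For the positivity statement I would observe that $\mu_r({\vec \xi})>1$ on ${\vec B}$ forces $1-\mu_r^{-1}>0$ there, and since ${\vec B}$ has positive Lebesgue measure the first term $\alpha^3\int_{\vec B}(1-\mu_r^{-1})\dif {\vec \xi}$ is strictly positive; the two squared norms are non-negative, so $({\mathcal N}^0)_{ii}>0$. This corollary presents essentially no obstacle, being a direct reading of (\ref{eqn:n0form2}); the only points requiring any care are the bookkeeping of the piecewise definition of $\tilde{\mu}_r^{-1}$ when splitting the integral over ${\vec B}\cup{\vec B}^c$, and noticing that strict positivity of the diagonal coefficients needs the strict inequality $\mu_r>1$, whereas the weaker hypothesis $\mu_r\ge1$ of Theorem~\ref{thm:formsn0} yields only the non-strict inner-product property $N^0({\vec u},{\vec u})\ge0$.
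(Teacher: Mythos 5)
Your proposal is correct and matches the paper's intent exactly: the corollary is stated as an immediate consequence of Theorem~\ref{thm:formsn0}, and your substitution of ${\vec u}={\vec v}={\vec e}_i$ into (\ref{eqn:n0form2}), the splitting of the ${\vec B}\cup{\vec B}^c$ integral via the piecewise definition of $\tilde{\mu}_r^{-1}$, and the positivity argument from $\mu_r>1$ are precisely the intended reading. Your remark distinguishing the strict inequality needed for $({\mathcal N}^0)_{ii}>0$ from the non-strict hypothesis of the theorem is a correct and worthwhile observation.
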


The following result provides further insights in to $N^0 ({\vec u}, {\vec v}) $ when the object is homogeneous:
\begin{lemma}\label{lemma:formsn0h}
For the homogeneous case, where ${\vec B}$ becomes $B$,  ${N}^0({\vec u}, {\vec v})$  can also be expressed in the following alternative forms
\begin{subequations}
\begin{align}
{ N}^0({\vec u}, {\vec v})  = &  \frac{\alpha^3}{4}  \int_\Gamma (1- \mu_r^{-1}) \nabla \times ({\vec u} \times {\vec \xi}) \cdot {\vec n}^- \times {\vec \Theta}^{(0)} ({\vec v}) \dif {\vec \xi} \label{eqn:n0form1h},  \\
 = &  -\frac{\alpha^3}{4}  \int_\Gamma ( \mu_r -1 )   ({\vec u} \times {\vec \xi}) \cdot {\vec n}^- \times \nabla \times  {\vec \Theta}^{(0)} ({\vec v}) |_+ \dif {\vec \xi} \label{eqn:n0form2h} , \\
 = & \frac{\alpha^3}{4} \frac{\mu_r+1}{\mu_r -1}  \left ( {\vec u} \cdot {\vec v} |B| + \left <  {\mu}_r^{-1} \nabla \times {\vec \Theta}^{(0)} ({\vec u}), \nabla \times {\vec \Theta }^{(0)} ({\vec v}) \right >_{L^2({\vec B}) } \right . 
 \nonumber\\
&\left .  +  \left <   \nabla \times \tilde{\vec \Theta}^{(0)} ({\vec u}), \nabla \times \tilde{\vec \Theta }^{(0)} ({\vec v}) \right >_{L^2({\vec B}^c)} \right ) 
\label{eqn:n0form3h} ,
\end{align}
\end{subequations}
where $\mu_r= \mu_* /\mu_0$ is now a constant.
\end{lemma}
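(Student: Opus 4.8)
The plan is to derive the three representations in turn, each from an already-established expression for $N^0$ combined with a vector Stokes or Green's identity and the transmission/decay conditions of (\ref{eqn:transproblem0var}); the key simplification relative to Theorem~\ref{thm:formsn0} is that $\mu_r$ is now a single constant, so $\nabla_\xi\times\nabla_\xi\times\vec{\Theta}^{(0)}(\vec{v})=\vec{0}$ holds throughout $B$ and not just in $B^c$. For (\ref{eqn:n0form1h}) I would start from the intermediate expression used in the proof of Theorem~\ref{thm:formsn0}, which for a homogeneous object is $N^0(\vec{u},\vec{v})=\alpha^3(1-\mu_r^{-1})\int_{B}\big(\vec{u}\cdot\vec{v}+\tfrac{1}{2}\vec{u}\cdot\nabla_\xi\times\tilde{\vec{\Theta}}^{(0)}(\vec{v})\big)\dif\vec{\xi}$, apply $\int_B\nabla_\xi\times\vec{F}\dif\vec{\xi}=\int_\Gamma\vec{n}^-\times\vec{F}\dif\vec{\xi}$ to the curl term, substitute $\tilde{\vec{\Theta}}^{(0)}(\vec{v})=\vec{\Theta}^{(0)}(\vec{v})-\vec{v}\times\vec{\xi}$, and use $\nabla_\xi\times(\vec{v}\times\vec{\xi})=2\vec{v}$: the $\vec{v}\times\vec{\xi}$ contribution returns $2|B|\,\vec{u}\cdot\vec{v}$, which cancels the $|B|\,\vec{u}\cdot\vec{v}$ term exactly and leaves $N^0(\vec{u},\vec{v})=\tfrac{\alpha^3}{2}(1-\mu_r^{-1})\,\vec{u}\cdot\int_\Gamma\vec{n}^-\times\vec{\Theta}^{(0)}(\vec{v})\dif\vec{\xi}$; writing $2\vec{u}=\nabla_\xi\times(\vec{u}\times\vec{\xi})$ then gives (\ref{eqn:n0form1h}).

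For (\ref{eqn:n0form2h}) I would begin from (\ref{eqn:n0form1h}) and transfer the curl off $\vec{u}\times\vec{\xi}$ and onto $\vec{\Theta}^{(0)}(\vec{v})$ via the second vector Green's identity on $B$, $\int_B\big[(\nabla_\xi\times\nabla_\xi\times\vec{a})\cdot\vec{b}-\nabla_\xi\times\vec{a}\cdot\nabla_\xi\times\vec{b}\big]\dif\vec{\xi}=\int_\Gamma(\vec{n}^-\times\nabla_\xi\times\vec{a})\cdot\vec{b}\dif\vec{\xi}$, taking $\vec{a}=\vec{\Theta}^{(0)}(\vec{v})$ (whose double curl vanishes in $B$ by homogeneity) and $\vec{b}=\vec{u}\times\vec{\xi}$ (double curl identically zero, single curl $2\vec{u}$). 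The result is a boundary integral of the interior trace $\nabla_\xi\times\vec{\Theta}^{(0)}(\vec{v})|_-$; the transmission condition $[\tilde{\mu}_r^{-1}\nabla_\xi\times\vec{\Theta}^{(0)}\times\vec{n}]_\Gamma=\vec{0}$ replaces $\vec{n}^-\times\nabla_\xi\times\vec{\Theta}^{(0)}(\vec{v})|_-$ by $\mu_r\,\vec{n}^-\times\nabla_\xi\times\vec{\Theta}^{(0)}(\vec{v})|_+$, and with $(1-\mu_r^{-1})\mu_r=\mu_r-1$ this is (\ref{eqn:n0form2h}).

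For (\ref{eqn:n0form3h}) I would start instead from the energy representation (\ref{eqn:n0form2}) of Theorem~\ref{thm:formsn0}, which for a homogeneous object reads $N^0(\vec{u},\vec{v})=\alpha^3(1-\mu_r^{-1})|B|\,\vec{u}\cdot\vec{v}+\tfrac{\alpha^3}{4}\langle\tilde{\mu}_r^{-1}\nabla_\xi\times\tilde{\vec{\Theta}}^{(0)}(\vec{u}),\nabla_\xi\times\tilde{\vec{\Theta}}^{(0)}(\vec{v})\rangle_{L^2(B\cup B^c)}$, and insert $\nabla_\xi\times\tilde{\vec{\Theta}}^{(0)}(\vec{w})=\nabla_\xi\times\vec{\Theta}^{(0)}(\vec{w})-2\vec{w}$ in the $B$-part only — the $B^c$-part must remain expressed through $\tilde{\vec{\Theta}}^{(0)}$, since $\nabla_\xi\times\vec{\Theta}^{(0)}$ is not square-integrable over $B^c$. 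Expanding the interior inner product produces $\langle\mu_r^{-1}\nabla_\xi\times\vec{\Theta}^{(0)}(\vec{u}),\nabla_\xi\times\vec{\Theta}^{(0)}(\vec{v})\rangle_{L^2(B)}$, a term proportional to $|B|\,\vec{u}\cdot\vec{v}$, and the cross terms $\vec{u}\cdot\int_B\nabla_\xi\times\vec{\Theta}^{(0)}(\vec{v})\dif\vec{\xi}$ and $\vec{v}\cdot\int_B\nabla_\xi\times\vec{\Theta}^{(0)}(\vec{u})\dif\vec{\xi}$; by Stokes' theorem and (\ref{eqn:n0form1h}), together with the symmetry $N^0(\vec{u},\vec{v})=N^0(\vec{v},\vec{u})$ recorded in Corollary~\ref{coll:diagonalcoeffn0}, each cross term is a fixed multiple of $N^0(\vec{u},\vec{v})$ itself, so moving these to the left and dividing through by the resulting rational function of $\mu_r$ yields the rational prefactor appearing in (\ref{eqn:n0form3h}).

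The step I expect to be the main obstacle is the sign and trace bookkeeping in the last two parts — the convention for $[\cdot]_\Gamma$, the orientation of $\vec{n}^-$ against $\vec{n}^+$, and keeping the one-sided traces $|_-$ and $|_+$ of $\nabla_\xi\times\vec{\Theta}^{(0)}$ distinct through each Green's identity — since a single slip turns, for instance, $\tfrac{\mu_r+1}{\mu_r-1}$ into its reciprocal and misplaces factors of $\mu_r$ in the coefficient of $|B|\,\vec{u}\cdot\vec{v}$. A secondary point is justifying, in (\ref{eqn:n0form3h}), that only the interior energy may be re-expressed through $\vec{\Theta}^{(0)}$ while the exterior contribution has to be retained in $\tilde{\vec{\Theta}}^{(0)}$ for convergence. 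Once the cross terms have been correctly identified with $N^0(\vec{u},\vec{v})$, what remains is routine algebra, and the overall argument parallels Lemma~3 of~\cite{ledgerlionheart2016}.
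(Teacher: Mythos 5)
Your proposal is correct. For the first two identities it is essentially the paper's own argument: \eqref{eqn:n0form1h} comes from converting the volume integral in \eqref{eqn:definen0var} to a surface integral and writing $2{\vec u}=\nabla\times({\vec u}\times{\vec \xi})$ (your detour through $\tilde{\vec \Theta}^{(0)}$ and the cancellation of the $|B|\,{\vec u}\cdot{\vec v}$ term merely recovers \eqref{eqn:definen0var} before passing to the boundary), and \eqref{eqn:n0form2h} rests on exactly the two facts the paper uses --- $\nabla_\xi\times\nabla_\xi\times{\vec \Theta}^{(0)}({\vec v})={\vec 0}$ in $B$ for constant $\mu_r$, and the transmission condition converting the interior trace of $\nabla_\xi\times{\vec \Theta}^{(0)}({\vec v})$ into $\mu_r$ times the exterior one --- whether packaged as ``add zero and integrate by parts'' (the paper) or as a vector Green's identity (you). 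For \eqref{eqn:n0form3h} your route is genuinely different: the paper forms $(1+\mu_r^{-1})N^0$ from \eqref{eqn:n0form1h} and $\mu_r^{-1}\times$\eqref{eqn:n0form2h}, manipulates the resulting boundary integrals via ${\vec \Theta}^{(0)}=\tilde{\vec \Theta}^{(0)}+{\vec v}\times{\vec \xi}$, and only then integrates by parts back to volume integrals; you instead expand the already-established energy identity \eqref{eqn:n0form2} inside $B$ and absorb the cross terms ${\vec u}\cdot\int_B\nabla\times{\vec \Theta}^{(0)}({\vec v})\,\dif{\vec \xi}$ as multiples of $N^0$ itself using \eqref{eqn:definen0var} and the symmetry from Theorem~\ref{thm:formsn0}. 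Your version is purely algebraic once Theorem~\ref{thm:formsn0} is in hand and avoids all further trace bookkeeping, at the price of a mild (but non-circular) self-reference; both are valid.

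One point worth recording about the step you flagged as the likely obstacle: carried out carefully, both your route and the paper's own proof arrive at $N^0({\vec u},{\vec v})=\frac{\alpha^3}{4}\frac{\mu_r-1}{\mu_r+1}\bigl(4|B|\,{\vec u}\cdot{\vec v}+\langle\mu_r^{-1}\nabla\times{\vec \Theta}^{(0)}({\vec u}),\nabla\times{\vec \Theta}^{(0)}({\vec v})\rangle_{L^2(B)}+\langle\nabla\times\tilde{\vec \Theta}^{(0)}({\vec u}),\nabla\times\tilde{\vec \Theta}^{(0)}({\vec v})\rangle_{L^2(B^c)}\bigr)$; this is precisely what the displayed relation $4\bigl(\frac{\mu_r+1}{\mu_r-1}\frac{N^0}{\alpha^3}-|B|\,{\vec u}\cdot{\vec v}\bigr)=\cdots$ in the paper's proof rearranges to, and it is the version consistent with $N^0\to 0$ as $\mu_r\to 1$. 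The reciprocal prefactor and the coefficient of $|B|\,{\vec u}\cdot{\vec v}$ as printed in \eqref{eqn:n0form3h} appear to be transcription slips in the statement, so when your algebra produces $\frac{\mu_r-1}{\mu_r+1}$ rather than $\frac{\mu_r+1}{\mu_r-1}$, the discrepancy is not an error in your derivation.
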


\begin{proof}
To obtain (\ref{eqn:n0form1h}), we replace ${\vec B}$ by $B$ and transform the volume integral over ${ B}$  in (\ref{eqn:definen0var}) to a surface integral over $\Gamma=\partial B$ and use $2 {\vec u} = \nabla \times ({\vec u} \times {\vec \xi})$.

As $\mu_r$ is constant in ${B}$ for this case, then, to obtain  (\ref{eqn:n0form2h}), we subtract the following from (\ref{eqn:definen0var}) 
\begin{align}
0 =&  \frac{\alpha^3}{4}  \int_{{B}} ( 1- \mu_r^{-1} )  \nabla \times \nabla \times {\vec \Theta}^{(0)} ({\vec v})\cdot {\vec u} \times {\vec \xi} \dif {\vec \xi} \nonumber \\
= & \frac{\alpha^3}{4}  \left (  \int_{{ B}} ( 1- \mu_r^{-1} ) \nabla \cdot  (\nabla \times {\vec \Theta}^{(0)} ({\vec v})  \times ({\vec u} \times {\vec \xi} ))  \dif {\vec \xi} +
2 \int_{{ B}} ( 1- \mu_r^{-1} ) {\vec u} \cdot \nabla \times {\vec \Theta}^{(0)}({\vec v})  \dif {\vec \xi} \right ) \nonumber.
\end{align}
The result then follows by transforming the remaining volume integral to a surface integral over $\Gamma$ and using the transmission condition in (\ref{eqn:transproblem0var}).

For the third form, we use  (\ref{eqn:n0form1h}) and  (\ref{eqn:n0form2h}) to give
\begin{align}
(1+\mu_r^{-1} ) { N}^0({\vec u},{\vec v}) =& \frac{\alpha^3}{4} ( 1- \mu_r^{-1}) \left ( \int_\Gamma \nabla \times ({\vec u} \times {\vec \xi} ) \cdot {\vec n}^- \times {\vec \Theta} ^{(0)} ({\vec v})  \dif {\vec \xi} \right . \nonumber \\
& \left . - \int_\Gamma {\vec u} \times {\vec \xi} \cdot {\vec n}^- \times \nabla \times {\vec \Theta}^{(0)} ({\vec v}) |_+ \dif {\vec \xi}
\right ) \nonumber .
\end{align}
Then, writing ${\vec \Theta}^{(0)}({\vec v}) = \tilde{\vec \Theta}^{(0)}({\vec v})+ {\vec v} \times {\vec \xi}$, so that $\nabla \times {\vec \Theta}^{(0)} ({\vec v}) = \nabla \times \tilde{\vec \Theta}^{(0)}({\vec v})+ 2 {\vec v}$, we have
\begin{align}
 \int_\Gamma  ({\vec u} \times {\vec \xi} ) \cdot {\vec n}^- \times  \nabla \times  {\vec \Theta}^{(0)} ({\vec v}) |_+ \dif {\vec \xi} = &  \int_\Gamma {\vec u} \times {\vec \xi} \cdot {\vec n}^- \times \nabla \times \tilde{\vec \Theta}^{(0)} ({\vec v}) |_+ \dif {\vec \xi} + 2 \int_\Gamma {\vec n}^- \cdot {\vec v} \times ( {\vec u} \times {\vec \xi}) \dif {\vec \xi} \nonumber \\
 =&   \int_\Gamma {\vec u} \times {\vec \xi} \cdot {\vec n}^- \times \nabla \times \tilde{\vec \Theta}^{(0)} ({\vec v}) |_+ \dif {\vec \xi} - 4 |B| {\vec u}\cdot {\vec v} \nonumber .
\end{align}
This means that
\begin{align}
4 \left ( \frac{\mu_r +1 }{\mu_r-1}  \frac{{N}^0({\vec u},{\vec v})}{\alpha^3} -|B|  {\vec u}\cdot {\vec v}  \right ) = & -\int_\Gamma {\vec \Theta}^{(0)}({\vec v}) \cdot {\vec n}^- \times( \nabla \times (  {\vec u} \times {\vec \xi}))|_+ \dif {\vec \xi} \nonumber \\
&
-\int_\Gamma {\vec u} \times {\vec \xi} \cdot {\vec n}^- \times \nabla \times \tilde{\vec \Theta}^{(0)} ({\vec v}) |_+ \dif {\vec \xi}\nonumber \\
= & - \mu_r^{-1} \int_\Gamma {\vec \Theta}^{(0)} ({\vec v}) \cdot {\vec n}^- \times \nabla \times {\vec \Theta}^{(0)} ({\vec u})
 |_- \dif {\vec \xi}\nonumber \\
 & + \int_\Gamma \tilde{\vec \Theta}^{(0)} ({\vec u}) \cdot {\vec n}^-  \times \nabla \times \tilde{\vec \Theta}^{(0)}({\vec v}) |_+ \dif {\vec \xi} \nonumber ,
  \end{align}
which follows from  first using ${\vec n} \times \nabla \times  ( {\vec u} \times {\vec \xi})  = -{\vec n} \times \nabla \times  \tilde{\vec \Theta}^{(0)} ({\vec u}) |_+ + \mu_r^{-1} {\vec n} \times \nabla \times  {\vec \Theta}^{(0)} ({\vec u}) |_-$, then using  ${\vec n} \times  {\vec \Theta}^{(0)} ({\vec u}) |_+ =  {\vec n} \times {\vec \Theta}^{(0)} ({\vec u}) |_- ={\vec n} \times( {\vec u} \times {\vec \xi}) +  {\vec n} \times  \tilde{\vec \Theta}^{(0)} ({\vec u})  |_+$  and simplifying.  The final result follows from integration by parts and using the far field decay conditions of $\tilde{\vec \Theta}^{(0)} ({\vec u}) $.
\end{proof}

\begin{corollary} \label{coll:diagonalcoeffn02}
It immediately follows from Lemma~\ref{lemma:formsn0h} that the diagonal coefficients of ${\mathcal N}^{0}$ for homogeneous  case  are
\begin{align}
({\mathcal N}^0)_{ii} & = \frac{\alpha^3}{4} \frac{\mu_r+1}{\mu_r -1}  \left ( |B| +  \| \nabla \times  \tilde{\vec \Theta}^{(0)} ({\vec e}_i) \|_{L^2(B^c)}^2 +  {\mu}_r^{-1} \| \nabla \times {\vec \Theta}^{(0)} ({\vec e}_i) \|_{L^2(B)}^2  \right )  , \nonumber
\end{align}
where the repeated index $i$ does not imply summation, and, hence, ${\mathcal N}^0_{ii}>0$ if $\mu_r >1$ and ${\mathcal N}^0_{ii}< 0 $ if $\mu_r <1 $.
\end{corollary}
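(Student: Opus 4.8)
The plan is simply to specialise the third representation of $N^0$ established in Lemma~\ref{lemma:formsn0h}, equation (\ref{eqn:n0form3h}), to the case ${\vec u}={\vec v}={\vec e}_i$. First I would substitute ${\vec u}={\vec v}={\vec e}_i$ into (\ref{eqn:n0form3h}) (with no summation over $i$): then ${\vec e}_i\cdot{\vec e}_i=1$, so the first term contributes $\alpha^3\tfrac{\mu_r+1}{4(\mu_r-1)}|B|$. Since $\mu_r$ is constant in $B$ in the homogeneous case, the $L^2({\vec B})$-inner product term becomes $\mu_r^{-1}\|\nabla\times{\vec\Theta}^{(0)}({\vec e}_i)\|_{L^2(B)}^2$ (equivalently $\|\nabla\times{\vec\Theta}^{(0)}({\vec e}_i)\|_{W(\mu_r^{-1},B)}^2$ in the notation of Corollary~\ref{coll:diagonalcoeffn0}), and the $L^2({\vec B}^c)$-inner product term becomes $\|\nabla\times\tilde{\vec\Theta}^{(0)}({\vec e}_i)\|_{L^2(B^c)}^2$. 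Collecting these three contributions under the common prefactor $\tfrac{\alpha^3}{4}\tfrac{\mu_r+1}{\mu_r-1}$ yields precisely the claimed expression for $({\mathcal N}^0)_{ii}=N^0({\vec e}_i,{\vec e}_i)$.

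For the sign statement I would argue as follows. Since $0<\mu_r<\infty$ we have $\mu_r+1>0$, so $\mathrm{sign}\!\left(\tfrac{\mu_r+1}{\mu_r-1}\right)=\mathrm{sign}(\mu_r-1)$. The bracketed quantity $|B|+\|\nabla\times\tilde{\vec\Theta}^{(0)}({\vec e}_i)\|_{L^2(B^c)}^2+\mu_r^{-1}\|\nabla\times{\vec\Theta}^{(0)}({\vec e}_i)\|_{L^2(B)}^2$ is a sum of $|B|>0$ and two manifestly nonnegative terms, hence strictly positive (positivity is guaranteed by $|B|>0$, which holds because $B$ has nonempty interior, independently of any property of ${\vec\Theta}^{(0)}$). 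Therefore $({\mathcal N}^0)_{ii}$ has the same sign as $\mu_r-1$, i.e.\ it is positive when $\mu_r>1$ and negative when $0<\mu_r<1$; in the degenerate case $\mu_r=1$ one has $N^0\equiv0$ trivially since the integrand $1-\mu_r^{-1}$ in (\ref{eqn:definen0var}) vanishes, so that case is excluded from the statement.

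I do not anticipate any genuine obstacle: this is a direct corollary of Lemma~\ref{lemma:formsn0h}. The only points requiring a little care are keeping track of the sign of the prefactor through the change of sign of $\mu_r-1$, and confirming that the bracket cannot vanish, which, as noted, follows from $|B|>0$ rather than from any nontrivial estimate.
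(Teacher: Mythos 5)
Your proposal is correct and follows exactly the route the paper intends: the corollary is a direct specialisation of (\ref{eqn:n0form3h}) to ${\vec u}={\vec v}={\vec e}_i$, with the sign determined by $\mathrm{sign}(\mu_r-1)$ and strict positivity of the bracket coming from $|B|>0$. The paper gives no separate proof beyond "it immediately follows," and your filled-in details (including the remark that $\mu_r=1$ gives $N^0\equiv 0$ and is excluded) are consistent with it.
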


We now consider the symmetry of the bilinear forms $M({\vec u},{\vec v})$ and $M({\vec u},{\vec v})-N^0({\vec u},{\vec v}) = {\vec N}^{\sigma_*} ({\vec u},{\vec v}) - C^{\sigma_*} ({\vec u},{\vec v})$ and, hence, the symmetry of the tensors ${\mathcal M}$ and ${\mathcal M}-{\mathcal N}^0={\mathcal N}^{\sigma_*} - {\mathcal C}^{\sigma_*}$.

\begin{theorem} \label{thm:symmfull} 
$M({\vec u},{\vec v}): {\mathbb R}^3 \times {\mathbb R}^3 \to {\mathbb C}$ is a symmetric bilinear form on real vectors, which can be expressed as
\begin{align}
M({\vec u},{\vec v}) =& \frac{\alpha^3}{4} \left (\int_{\vec B} \frac{1}{\im \nu}  \nabla \times \mu_r^{-1} \nabla \times {\vec \Theta}({\vec u}) \cdot \nabla \times \mu_r^{-1} \nabla \times {\vec \Theta}({\vec v}) \dif {\vec \xi}\right . \nonumber \\
 &+\int_{\vec B} \left ( 1- \mu_r^{-1} \right ) \left (
4{\vec u} \cdot {\vec v} + 2 \nabla \times {\vec \Theta}({\vec u})\cdot {\vec v} + 2 {\vec u} \cdot  \nabla \times {\vec \Theta} ({\vec v}) \right ) \dif {\vec \xi} \nonumber\\
&\left .  -\int_{{\vec B} \cup {\vec B}^c  } \tilde{\mu}_r^{-1}  \nabla \times {\vec \Theta}({\vec u}) \cdot  \nabla \times {\vec \Theta}({\vec v}) \dif {\vec \xi} \right )
\label{eqn:symmform} ,
\end{align}
and $N^{\sigma_*}({\vec u},{\vec v})-C^{\sigma_*}({\vec u},{\vec v})= M({\vec u},{\vec v})- N^0({\vec u},{\vec v}): {\mathbb R}^3 \times {\mathbb R}^3 \to {\mathbb C}$ is also a symmetric bilinear form on real vectors.
\end{theorem}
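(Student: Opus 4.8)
Bilinearity is immediate: problem (\ref{eqn:transproblemthetainv}) depends linearly on the constant vector $\vec u$, both through the source $\im\nu\,\vec u\times\vec\xi$ and through the interface datum $-2[\tilde\mu_r^{-1}]_\Gamma\vec n\times\vec u$, so by uniqueness $\vec\Theta(a\vec u+b\vec w)=a\,\vec\Theta(\vec u)+b\,\vec\Theta(\vec w)$ for $a,b\in\mathbb R$, and substituting into (\ref{eqn:mcheckinv}) shows $C$, $N$ and $M=N-C$ are bilinear on $\mathbb R^3\times\mathbb R^3$. Furthermore, once the representation (\ref{eqn:symmform}) is established the symmetry of $M$ follows by inspection: using the first line of (\ref{eqn:transproblemthetainv}) in $\vec B$, the first integrand of (\ref{eqn:symmform}) equals $\im\nu\,(\vec\Theta(\vec u)+\vec u\times\vec\xi)\cdot(\vec\Theta(\vec v)+\vec v\times\vec\xi)$, which is symmetric under $\vec u\leftrightarrow\vec v$, and the remaining two integrands are manifestly so; consequently $M-N^0=N^{\sigma_*}-C^{\sigma_*}$ is symmetric and bilinear, being the difference of the symmetric bilinear forms $M$ and $N^0$ (the latter by Theorem~\ref{thm:formsn0}). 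Thus everything reduces to proving (\ref{eqn:symmform}).

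To prove (\ref{eqn:symmform}) I would start from its right-hand side. Substituting $\im\nu(\vec\Theta(\vec u)+\vec u\times\vec\xi)=\nabla_\xi\times\mu_r^{-1}\nabla_\xi\times\vec\Theta(\vec u)$ in the first integrand and expanding, one part is precisely $\tfrac{\im\alpha^3}{4}\int_{\vec B}\nu\,(\vec u\times\vec\xi)\cdot(\vec\Theta(\vec v)+\vec v\times\vec\xi)\,\dif\vec\xi=-C(\vec u,\vec v)$ — using the scalar triple product identity $\vec u\cdot(\vec\xi\times\vec a)=\vec a\cdot(\vec u\times\vec\xi)$ to match the definition of $C$ in (\ref{eqn:mcheckinv}); in the other part, $\tfrac{\im\alpha^3}{4}\int_{\vec B}\nu\,\vec\Theta(\vec u)\cdot(\vec\Theta(\vec v)+\vec v\times\vec\xi)\,\dif\vec\xi$, one substitutes $\im\nu(\vec v\times\vec\xi)=\nabla_\xi\times\mu_r^{-1}\nabla_\xi\times\vec\Theta(\vec v)-\im\nu\,\vec\Theta(\vec v)$, so that the $\nu\,\vec\Theta\cdot\vec\Theta$ contributions cancel and the first integrand collapses to $-C(\vec u,\vec v)+\tfrac{\alpha^3}{4}\int_{\vec B}\vec\Theta(\vec u)\cdot\nabla_\xi\times\mu_r^{-1}\nabla_\xi\times\vec\Theta(\vec v)\,\dif\vec\xi$.

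Next, integrate $\int_{\vec B}\vec\Theta(\vec u)\cdot\nabla_\xi\times\mu_r^{-1}\nabla_\xi\times\vec\Theta(\vec v)$ by parts over the subregions of $\vec B$, and integrate the third integrand of (\ref{eqn:symmform}) by parts over $\vec B$ and over $\vec B^c$ separately, using $\nabla_\xi\times\nabla_\xi\times\vec\Theta(\vec v)=\vec 0$ in $\vec B^c$ and the $O(|\vec\xi|^{-1})$ decay of $\vec\Theta$ to discard the contribution from the sphere at infinity. The volume terms $\int_{\vec B}\mu_r^{-1}\nabla_\xi\times\vec\Theta(\vec u)\cdot\nabla_\xi\times\vec\Theta(\vec v)$ cancel, and the interface contributions, via the transmission conditions $[\vec n\times\tilde\mu_r^{-1}\nabla_\xi\times\vec\Theta]_\Gamma=-2[\tilde\mu_r^{-1}]_\Gamma\vec n\times\vec v$ and $[\vec n\times\vec\Theta]_\Gamma=\vec 0$, collapse to $2\int_\Gamma[\tilde\mu_r^{-1}]_\Gamma\,\vec\Theta(\vec u)\cdot(\vec n\times\vec v)\,\dif\vec\xi$ (with $[\cdot]_\Gamma$, $\vec n$ oriented consistently on the internal interfaces $\partial B^{(n)}\cap\partial B^{(m)}$ and on $\partial\vec B$). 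Reversing the surface-to-volume argument used in the proof of Theorem~\ref{thm:formsn0} — splitting over the $B^{(n)}$, using $\nabla_\xi\cdot(\vec\Theta(\vec u)\times\vec v)=\vec v\cdot\nabla_\xi\times\vec\Theta(\vec u)$ and $[\vec n\times\vec\Theta]_\Gamma=\vec 0$ — this surface integral equals $-2\int_{\vec B}(1-\mu_r^{-1})\nabla_\xi\times\vec\Theta(\vec u)\cdot\vec v\,\dif\vec\xi$. Since the middle integrand of (\ref{eqn:symmform}) differs from $N(\vec u,\vec v)$ by exactly $\tfrac{\alpha^3}{2}\int_{\vec B}(1-\mu_r^{-1})\nabla_\xi\times\vec\Theta(\vec u)\cdot\vec v\,\dif\vec\xi$, this cancels against the surface contribution, leaving $N(\vec u,\vec v)-C(\vec u,\vec v)=M(\vec u,\vec v)$, which is (\ref{eqn:symmform}).

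I expect the main difficulty to be the sign and orientation bookkeeping in the integration-by-parts steps: one must keep track of the two kinds of interface making up $\Gamma$ (the internal interfaces, which carry $[\mu_r^{-1}]$, and the outer boundary $\partial\vec B$, which carries $[\tilde\mu_r^{-1}]=1-\mu_r^{-1}$), the one-sided traces $(\cdot)|_+$ and $(\cdot)|_-$ together with the orientation of $\vec n$ relative to each jump bracket, and the justification that the term on the large sphere vanishes; the algebra is otherwise routine. As a consistency check — and an independent proof of the symmetry assertion by itself — one can test the weak form of (\ref{eqn:transproblemthetainv}) for source $\vec u$ against $\vec\Theta(\vec v)$ and for source $\vec v$ against $\vec\Theta(\vec u)$: since $\int_{\vec B\cup\vec B^c}\tilde\mu_r^{-1}\nabla_\xi\times\vec\Theta(\vec u)\cdot\nabla_\xi\times\vec\Theta(\vec v)-\im\int_{\vec B}\nu\,\vec\Theta(\vec u)\cdot\vec\Theta(\vec v)$ is symmetric in $\vec u,\vec v$, the resulting reciprocity identity rearranges to show that the antisymmetric parts of $N$ and of $-C$ cancel, i.e.\ $M(\vec u,\vec v)=M(\vec v,\vec u)$.
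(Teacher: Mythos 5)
Your proposal is correct and is essentially the paper's own proof run in the reverse direction: the paper starts from $-4C(\vec u,\vec v)/\alpha^3$, substitutes $\vec u\times\vec\xi=\tfrac{1}{\im\nu}\nabla\times\mu_r^{-1}\nabla\times\vec\Theta(\vec u)-\vec\Theta(\vec u)$, and integrates by parts with the transmission conditions to reach (\ref{eqn:symmform}), whereas you traverse the identical chain of identities from the right-hand side back to $N-C$, with the same bilinearity and by-inspection symmetry arguments. The reciprocity check via the weak form at the end is a nice independent confirmation of symmetry but does not change the substance.
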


\begin{proof}
The first part of the proof applies similar arguments to Lemma 4.4 of~\cite{ledgerlionheart2014},  which showed that ${\mathcal M}$ is a symmetric tensor for a homogenous object. Here, we will apply these arguments to the form $M({\vec u},{\vec v})$ and consider an object with possibly inhomogeneous materials. We begin by noting from (\ref{eqn:mcheckinv}a) that
\begin{align}
-\frac{4C ({\vec u},{\vec v})}{\alpha^3} =& \int_{\vec B} \im \nu ( {\vec \Theta}({\vec v}) + {\vec v} \times {\vec \xi}) \cdot ( {\vec u } \times {\vec \xi}) \dif {\vec \xi} \nonumber \\
= &\int_{\vec B} \nabla \times \mu_r^{-1} \nabla \times {\vec \Theta}({\vec v}) \cdot \left ( \frac{1}{\im \nu} \nabla \times \mu_r^{-1} \nabla \times {\vec \Theta}({\vec u}) - {\vec \Theta}({\vec u})
\right ) \dif {\vec \xi} \nonumber \\
= &\int_{\vec B} \frac{1}{\im \nu}  \nabla \times \mu_r^{-1} \nabla \times {\vec \Theta}({\vec v}) \cdot \nabla \times \mu_r^{-1} \nabla \times {\vec \Theta}({\vec u}) \dif {\vec \xi} \nonumber \\
 & -\int_{\vec B} \nabla \times \mu_r^{-1} \nabla \times {\vec \Theta}({\vec v}) \cdot {\vec \Theta}({\vec u}) \dif {\vec \xi} , \nonumber
\end{align}
by use of the transmission problem (\ref{eqn:transproblemthetainv}). Next, by integration by parts, we have
\begin{align}
\int_{\vec B} &\nabla \times \mu_r^{-1} \nabla \times {\vec \Theta}({\vec v}) \cdot {\vec \Theta}({\vec u}) \dif {\vec \xi} = \int_{\partial {\vec B}} {\vec n}^-  \times \mu_r^{-1} \nabla \times {\vec \Theta} ({\vec v}) \cdot  {\vec \Theta}({\vec u})   |_- \dif {\vec \xi} \nonumber \\
&+\int_{\vec B} \mu_r^{-1} \nabla \times {\vec \Theta}({\vec u}) \cdot \nabla \times {\vec \Theta}({\vec v}) \dif {\vec \xi}
 - \sum_{n,m=1, n\ne m}^N \int_{\partial B^{(n)} \cup \partial B^{(m)}} [ {\vec n} \times \mu_r^{-1} \nabla \times {\vec \Theta}({\vec u}) ]_{\partial B^{(n)} \cap \partial B^{(m)}}  \cdot {\vec \Theta}({\vec u}) \dif {\vec \xi} \nonumber \\
&=  \int_{\partial {\vec B} }{\vec n}^- \times \nabla \times {\vec \Theta}({\vec v}) \cdot {\vec \Theta}({\vec u}) |_+ + 2[\tilde{\mu}_r^{-1} ]_{\partial {\vec B}} {\vec n}^- \times {\vec v} \cdot {\vec \Theta} ({\vec u}) \dif {\vec \xi} \nonumber \\
&+\int_{\vec B} \mu_r^{-1} \nabla \times {\vec \Theta}({\vec u}) \cdot \nabla \times {\vec \Theta}({\vec v}) \dif {\vec \xi}
 +2 \sum_{n,m=1, n\ne m}^N \int_{\partial B^{(n)} \cup \partial B^{(m)}} [  \mu_r^{-1} ]_{\partial B^{(n)} \cap \partial B^{(m)}}   {\vec n} \times {\vec v} \cdot {\vec \Theta}({\vec u}) \dif {\vec \xi} \nonumber \\
 &=  \int_{\partial {\vec B} }{\vec n}^- \times \nabla \times {\vec \Theta}({\vec v}) \cdot {\vec \Theta}({\vec u}) |_+ + 2[\tilde{\mu}_r^{-1} ]_{\partial {\vec B}} {\vec n}^- \times {\vec v} \cdot {\vec \Theta} ({\vec u}) \dif {\vec \xi} \nonumber \\
&+\int_{\vec B} \mu_r^{-1} \nabla \times {\vec \Theta}({\vec u}) \cdot \nabla \times {\vec \Theta}({\vec v}) \dif {\vec \xi}
 +2 \sum_{n=1}^N (1-(\mu_r(B^{(n)}))^{-1}) \int_{\partial B^{(n)} \setminus \partial {\vec B}} {\vec n}^- \times {\vec v} \cdot {\vec \Theta}({\vec u}) \dif {\vec \xi} \nonumber ,
\end{align}
since $[{\vec n} \times {\vec \Theta} ({\vec v})]_{\partial B^{(n)}\cup\partial B^{(m)}}  = {\vec 0}$ for $n,m=1,\ldots,N$, $n\ne m$. It then follows that
\begin{align}
\int_{\vec B} &\nabla \times \mu_r^{-1} \nabla \times {\vec \Theta}({\vec u}) \cdot {\vec \Theta}({\vec u}) \dif {\vec \xi} =\int_{{\vec B}  } {\mu}_r^{-1}  \nabla \times {\vec \Theta}({\vec u}) \cdot  \nabla \times {\vec \Theta}({\vec v}) \dif {\vec \xi} \nonumber \\
&+ \int_{{\vec B}^c } \nabla \times {\vec \Theta}({\vec u}) \cdot  \nabla \times {\vec \Theta}({\vec v}) \dif {\vec \xi} -2 \int_{\vec B} (1-\mu_r^{-1}) \nabla \times {\vec \Theta}({\vec u})\cdot {\vec v} \dif {\vec \xi}  \nonumber .
\end{align}
From the above, and $M({\vec u},{\vec v}) = N ({\vec u},{\vec v}) - C ({\vec u},{\vec v}) $, the result in (\ref{eqn:symmform}) immediately follows.
On consideration of (\ref{eqn:symmform}), and the linearity of the transmission problem (\ref{eqn:transproblem1var}), we see 
 that $M({\vec u},{\vec v}) = M({\vec v},{\vec u})$ $\forall {\vec u}, {\vec v} \in {\mathbb R}^3$,  $M({\vec u}+{\vec w},{\vec v}) = M({\vec u},{\vec v})+M({\vec w},{\vec v}) $ $\forall {\vec u}, {\vec v}, {\vec w} \in {\mathbb R}^3$ and  $M(c{\vec u},d{\vec v}) = cd  M({\vec u},{\vec v}) $ $\forall {\vec u}, {\vec v} \in {\mathbb R}^3$ and $\forall c,d \in {\mathbb R}$ and, thus, $M({\vec u},{\vec v}):{\mathbb R}^3 \times {\mathbb R}^3 \to {\mathbb C}$ is a symmetric bilinear form on real vectors. By using Theorem~\ref{thm:formsn0}, it follows that $M({\vec u},{\vec v})- N^0({\vec u},{\vec v}):{\mathbb R}^3 \times {\mathbb R}^3 \to {\mathbb C}$ is also a symmetric bilinear form on real vectors.
\end{proof}

\begin{corollary}
It immediately follows from Theorem~\ref{thm:symmfull} that ${\mathcal M}= M({\vec e}_i,{\vec e}_j) {\vec e}_i \otimes {\vec e}_j$ is a complex symmetric tensor and ${\mathcal M}-{\mathcal N}^0 = {\mathcal N}^{\sigma^*}-{\mathcal C}^{\sigma_*} =( N^{\sigma^*}({\vec e}_i, {\vec e}_j) - C^{\sigma^*}({\vec e}_i,{\vec e}_j) ){\vec e}_i \otimes {\vec e}_j$ is a complex symmetric tensor, extending the known results in Lemma 4.4
of~\cite{ledgerlionheart2014} and Lemma 1 of~\cite{ledgerlionheart2016} for a homogeneous object to the inhomogeneous case.
\end{corollary}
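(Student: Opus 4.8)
The plan is to obtain this corollary as an immediate consequence of Theorem~\ref{thm:symmfull}, with no further analytic work needed. The starting observation is that, by~(\ref{eqn:mcheckinv}) and Lemma~\ref{lemma:msplitting1}, the tensor coefficients are \emph{defined} to be the values of the relevant bilinear forms on the orthonormal basis: $({\mathcal M})_{ij} = M({\vec e}_i,{\vec e}_j)$ and $({\mathcal M})_{ij} - ({\mathcal N}^0)_{ij} = N^{\sigma_*}({\vec e}_i,{\vec e}_j) - C^{\sigma_*}({\vec e}_i,{\vec e}_j)$. Hence the symmetry of the tensors is precisely the symmetry of these forms, which Theorem~\ref{thm:symmfull} has already established.

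First I would invoke the first assertion of Theorem~\ref{thm:symmfull}, namely that $M\colon {\mathbb R}^3 \times {\mathbb R}^3 \to {\mathbb C}$ is a symmetric bilinear form, so that $M({\vec e}_i,{\vec e}_j) = M({\vec e}_j,{\vec e}_i)$ for all $i,j$, i.e. $({\mathcal M})_{ij} = ({\mathcal M})_{ji}$, which says exactly that ${\mathcal M}$ is a complex symmetric rank $2$ tensor. The one point at which I would take care is terminological: since $M$ is ${\mathbb C}$-valued, the conclusion is symmetry in the transpose sense ${\mathcal M}^T = {\mathcal M}$, and not Hermitian symmetry (one expects a nonzero imaginary part as soon as $\nu > 0$). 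Next I would repeat the argument with the second assertion of Theorem~\ref{thm:symmfull}, that $N^{\sigma_*}({\vec u},{\vec v}) - C^{\sigma_*}({\vec u},{\vec v}) = M({\vec u},{\vec v}) - N^0({\vec u},{\vec v})$ is also a symmetric bilinear form on real vectors; evaluating on ${\vec e}_i,{\vec e}_j$ gives $({\mathcal M} - {\mathcal N}^0)_{ij} = ({\mathcal M} - {\mathcal N}^0)_{ji}$, so ${\mathcal N}^{\sigma_*} - {\mathcal C}^{\sigma_*}$ is complex symmetric. As a cross-check I would note that this last point also follows directly from the symmetry of ${\mathcal M}$ together with the real symmetry of ${\mathcal N}^0$ from Theorem~\ref{thm:formsn0}, since a difference of (complex) symmetric tensors is symmetric.

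To justify the word ``extending'' in the statement, I would finally specialise ${\vec B}$ to a single homogeneous region $B$ with constant $\mu_r$ and constant $\nu$, in which case the forms $M$, $N^0$ and $N^{\sigma_*} - C^{\sigma_*}$ reduce to those analysed in Lemma~4.4 of~\cite{ledgerlionheart2014} (for ${\mathcal M}$) and Lemma~1 of~\cite{ledgerlionheart2016} (for the splitting ${\mathcal M} = {\mathcal N}^0 + {\mathcal N}^{\sigma_*} - {\mathcal C}^{\sigma_*}$), so those earlier symmetry statements are recovered as the homogeneous special case of the present ones.

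I do not anticipate any genuine obstacle. The entire analytic content — the integration by parts and the repeated use of the transmission conditions of~(\ref{eqn:transproblemthetainv}) and~(\ref{eqn:transproblem1var}) that rewrite $M({\vec u},{\vec v})$ in the manifestly $({\vec u}\leftrightarrow{\vec v})$-symmetric form~(\ref{eqn:symmform}) — has already been discharged inside the proof of Theorem~\ref{thm:symmfull}. All that remains for the corollary is the trivial passage from ``symmetric bilinear form on ${\mathbb R}^3$'' to ``symmetric coefficient array in an orthonormal basis'', plus the care with the complex-symmetric-versus-Hermitian distinction noted above.
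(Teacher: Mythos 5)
Your proposal is correct and follows the same route as the paper, which states the corollary as an immediate consequence of Theorem~\ref{thm:symmfull}: the symmetry of the bilinear forms $M$ and $M-N^0$ on real vectors, evaluated on the orthonormal basis, gives the symmetry of the coefficient arrays. Your added remarks on the complex-symmetric versus Hermitian distinction and the homogeneous specialisation are sensible but not needed beyond what the paper already asserts.
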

\begin{remark}
Note that the first and last terms in (\ref{eqn:symmform})  cannot be expressed in terms of the notation introduced in (\ref{eqn:l2innerprod}) since  ${\vec \Theta}({\vec u})$ and ${\vec \Theta}({\vec v})$ are complex valued and the integrands each lack a complex conjugate.
\end{remark}

\section{Limiting cases of ${\mathcal M}$} \label{sect:limitcase}
Recall that the asymptotic formula (\ref{eqn:asymp}) is valid for $\nu =O(1)$ as $\alpha \to 0$ and so care needs to be exercised when interpreting the limiting cases of ${\mathcal M}$. Still further, recall that the eddy current model (\ref{eqn:eddymodel}) is a low-frequency approximation of the Maxwell system and so the limit of $\nu \to \infty$ for fixed $\sigma_*$, $\alpha$ would break break both (\ref{eqn:asymp}) and (\ref{eqn:eddymodel}). The case of a perfect conductor with sufficiently small $\omega$, $\alpha$  and $\sigma_*\to \infty$ is permitted by the eddy current model, provided topological requirements on ${\vec B}$ are satisfied~\cite{ammaribuffa2000,schmidt2008}, but invalidates (\ref{eqn:asymp}) as we still have $\nu \to \infty$.
In the following, we compute $M ({\vec u},{\vec v};\nu)$ when $\nu =0$ and $\nu \to \infty$. From the former, we can deduce ${\mathcal M}(0)$, which provides a magnetostatic characterisation of ${\vec B}$ for a permeable object, and, from the latter, we can obtain
 ${\mathcal M}(\infty)$, which we denote as the characterisation of a perfectly conducting object. The coefficients of ${\mathcal M}(\infty)$ can not be substituted in to (\ref{eqn:asymp}) and, instead, should be viewed as the limiting characterisation of ${\vec B}$ provided by (\ref{eqn:asymp})   as $\sigma_*\to \infty$ and $\alpha \to 0$.   


\begin{lemma} \label{lemma:limitcase}
The limiting cases of $M ({\vec u},{\vec v};\nu)$ when $\nu =0$ and $\nu \to \infty$ are
\begin{align}
{ M}({\vec u},{\vec v};0) ={ N}^0 ({\vec u},{\vec v}) =&  \alpha^3  \left <  \left ( 1- \mu_r^{-1} \right )   {\vec u},  {\vec v} \right >_{L^2({\vec B})}  \nonumber\\
&+ \frac{\alpha^3}{4} \left <  \tilde{\mu}_r^{-1}  \nabla  \times \tilde{\vec \Theta}^{(0)}({\vec u}) , \nabla  \times \tilde{\vec \Theta}^{(0)} ({\vec v})  \right >_{L^2({\vec B}\cup{\vec B}^c )} \label{eqn:limit0} ,\\
 { M}({\vec u},{\vec v};\infty)  =& - \alpha^3 \left <  {\vec u} , {\vec v} \right >_{L^2({\vec B})} - \frac{\alpha^3}{4} \left < \nabla \times {\vec \Theta}^{(\infty)}({\vec u}) , \nabla \times {\vec \Theta}^{(\infty)}({\vec v}) \right  >_{L^2({\vec B}^c)} \label{eqn:limitinf} ,
\end{align}
where $\tilde{\vec \Theta}^{(0)}({\vec u})$ is the real vector field solution to (\ref{eqn:transproblem0til}) and ${\vec \Theta}^{(\infty)}({\vec u})$ is the real vector field solution to
\begin{subequations} \label{eqn:tpthetainf}
\begin{align}
\nabla_\xi \times \nabla_\xi \times {\vec \Theta}^{(\infty)}({\vec u} ) = & {\vec 0} &&\text{in ${\vec B}^c$}, \\
\nabla_\xi \cdot  {\vec \Theta}^{(\infty)} = & 0 &&\text{in ${\vec B}^c$}, \\
{\vec n} \times \nabla_\xi \times {\vec \Theta}^{(\infty)}({\vec u} ) |_+ = & - 2{\vec n} \times {\vec u} &&\text{on $\partial {\vec B}$}, \\
{\vec \Theta}^{(\infty)}   = & O(|{\vec \xi}|^{-1}) && \text{as $|{\vec \xi}| \to \infty$}.
\end{align}
\end{subequations}
\end{lemma}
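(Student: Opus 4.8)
The plan is to treat the two limits separately, in each case starting from the symmetric representation (\ref{eqn:symmform}) of Theorem~\ref{thm:symmfull}, which packages the entire $\nu$-dependence into the field ${\vec \Theta}({\vec u})={\vec \Theta}({\vec u};\nu)$ solving (\ref{eqn:transproblemthetainv}) and the explicit weight $1/(\im\nu)$ multiplying the first integrand.

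The case $\nu=0$ is immediate. When $\nu=0$ the right-hand side and the zeroth-order term in (\ref{eqn:transproblemthetainv}) both disappear, so ${\vec \Theta}({\vec u};0)$ satisfies exactly the transmission problem (\ref{eqn:transproblem0til}) for $\tilde{\vec \Theta}^{(0)}({\vec u})$; equivalently, in the splitting of Lemma~\ref{lemma:msplitting1} the field ${\vec \Theta}^{(1)}({\vec u})$ then solves a homogeneous, well-posed problem, hence vanishes, so that $C^{\sigma_*}(\cdot,\cdot)\equiv N^{\sigma_*}(\cdot,\cdot)\equiv 0$ and $M({\vec u},{\vec v};0)=N^0({\vec u},{\vec v})$; formula (\ref{eqn:limit0}) is then nothing but the representation of $N^0$ already furnished by Theorem~\ref{thm:formsn0}. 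One may equally pass to the limit $\nu\to 0$ directly in (\ref{eqn:symmform}): using the governing equation, the first integrand equals $\im\nu\,({\vec \Theta}({\vec u})+{\vec u}\times{\vec \xi})\cdot({\vec \Theta}({\vec v})+{\vec v}\times{\vec \xi})$ and so vanishes at $\nu=0$, while the remaining two integrals collapse onto $N^0$ via the integration-by-parts identity used to prove Theorem~\ref{thm:formsn0}.

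For $\nu\to\infty$ the route has three steps. (i) Obtain $\nu$-uniform bounds for ${\vec \Theta}({\vec u})$: testing the weak form of (\ref{eqn:transproblemthetainv}) with $\overline{{\vec \Theta}({\vec u})}$ over ${\mathbb R}^3$ and separating real and imaginary parts bounds $\|\nabla\times{\vec \Theta}({\vec u})\|_{L^2({\mathbb R}^3)}$ uniformly in $\nu$, and a refined estimate (testing with $\overline{{\vec \Theta}({\vec u})+{\vec u}\times{\vec \xi}}$ over ${\vec B}$, then using $\nabla\times\mu_r^{-1}\nabla\times{\vec \Theta}({\vec u})=\im\nu({\vec \Theta}({\vec u})+{\vec u}\times{\vec \xi})$) yields $\|{\vec \Theta}({\vec u})+{\vec u}\times{\vec \xi}\|_{L^2({\vec B})}=O(\nu^{-1})$. (ii) Identify the limiting fields: by (i), ${\vec \Theta}({\vec u})\to-{\vec u}\times{\vec \xi}$ in ${\vec B}$ (so $\nabla\times{\vec \Theta}({\vec u})\to-2{\vec u}$ there), while in ${\vec B}^c$ the limit satisfies $\nabla_\xi\times\nabla_\xi\times(\cdot)={\vec 0}$, $\nabla_\xi\cdot(\cdot)=0$, decays, and --- combining the transmission condition of (\ref{eqn:transproblemthetainv}) with the interior limit --- inherits the Neumann datum ${\vec n}\times\nabla_\xi\times{\vec \Theta}^{(\infty)}({\vec u})|_+=-2{\vec n}\times{\vec u}$ on $\partial{\vec B}$; well-posedness of (\ref{eqn:tpthetainf}), which is where the topological hypotheses on ${\vec B}$ mentioned before the lemma enter, pins the limit down and upgrades convergence to the whole family. (iii) Pass to the limit in (\ref{eqn:symmform}): the first integrand, rewritten as $\im\nu({\vec \Theta}({\vec u})+{\vec u}\times{\vec \xi})\cdot({\vec \Theta}({\vec v})+{\vec v}\times{\vec \xi})$, is $O(\nu^{-1})$ by (i) and hence vanishes; with $\nabla\times{\vec \Theta}({\vec u})\to-2{\vec u}$ in ${\vec B}$ the middle integral tends to $-4\int_{\vec B}(1-\mu_r^{-1})\,{\vec u}\cdot{\vec v}$ and the ${\vec B}$-part of the last integral to $4\int_{\vec B}\mu_r^{-1}\,{\vec u}\cdot{\vec v}$, the two combining to $-4\int_{\vec B}{\vec u}\cdot{\vec v}$, while the ${\vec B}^c$-part of the last integral tends to $\int_{{\vec B}^c}\nabla\times{\vec \Theta}^{(\infty)}({\vec u})\cdot\nabla\times{\vec \Theta}^{(\infty)}({\vec v})$; carrying the overall factor $\alpha^3/4$ through reproduces (\ref{eqn:limitinf}).

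The hard part will be steps (i)--(ii): turning the heuristic of field expulsion from a highly conducting region into rigorous statements, in particular securing the sharp $O(\nu^{-1})$ decay (needed so that the formally singular first term of (\ref{eqn:symmform}) genuinely disappears in the limit rather than leaving a residue --- equivalently so that $C^{\sigma_*}$ has a finite limit), and proving strong $L^2({\vec B})$ convergence of $\nabla\times{\vec \Theta}({\vec u})$ to $-2{\vec u}$, which requires a compactness/regularity estimate for the curl--curl operator with piecewise-constant coefficients on a Lipschitz domain, so that the quadratic ${\vec B}$-integral in (\ref{eqn:symmform}) may be passed to the limit. Throughout the $\nu\to\infty$ analysis the topological conditions on ${\vec B}$ must be kept in force, since (\ref{eqn:tpthetainf}) is only well posed under them.
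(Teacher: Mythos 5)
Your proposal is correct and follows essentially the same route as the paper: the $\nu=0$ case is dispatched exactly as in the paper (via ${\vec \Theta}^{(1)}({\vec u})={\vec 0}$ and Theorem~\ref{thm:formsn0}), and the $\nu\to\infty$ case likewise substitutes ${\vec \Theta}({\vec u})\to-{\vec u}\times{\vec \xi}$ in ${\vec B}$ into the symmetric form (\ref{eqn:symmform}) of Theorem~\ref{thm:symmfull} and identifies the exterior limit with ${\vec \Theta}^{(\infty)}({\vec u})$ through the collapsed jump condition. Your steps (i)--(ii), with the uniform energy bound, the $O(\nu^{-1})$ estimate on $\|{\vec \Theta}({\vec u})+{\vec u}\times{\vec \xi}\|_{L^2({\vec B})}$ and the convergence of the curls, merely make rigorous the limit passage that the paper asserts informally, so they strengthen rather than alter the argument.
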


\begin{proof}
Using Lemma~\ref{lemma:msplitting1}, we immediately establish that ${C}({\vec u},{\vec v})$ vanishes for $\nu=0$ and, from (\ref{eqn:transproblem1var}), find that ${\vec \Theta}^{(1)}({\vec u}) ={ \vec 0}$ for $\nu =0$. Thus,  ${ M}({\vec u},{\vec v};0) ={ N}^0 ({\vec u},{\vec v})$ and we quote the form of ${ N}^0 ({\vec u},{\vec v})$ given in Theorem~\ref{thm:formsn0}.

To obtain ${ M}({\vec u},{\vec v};\infty)$, we see, from (\ref{eqn:transproblemthetainv}), that ${\vec \Theta}({\vec u}) = - {\vec u} \times {\vec \xi}$ in ${\vec B}$ when $\nu\to \infty$. Using Theorem~\ref{thm:symmfull} for this case, we have
\begin{align}
M({\vec u},{\vec v}; \infty)=& \frac{\alpha^3}{4} \left (- 4 \int_{\vec B} \mu_r^{-1} {\vec u}\cdot {\vec v} \dif {\vec \xi} \right . \nonumber \\
& -\int_{{\vec B^c}  }  \nabla \times {\vec \Theta}({\vec u}) \cdot  \nabla \times {\vec \Theta}({\vec v}) \dif {\vec \xi} 
\nonumber \\
&\left . +\int_{\vec B} \left ( 1- \mu_r^{-1} \right ) \left (
4{\vec u} \cdot {\vec v} - 4 {\vec u} \cdot  {\vec v} - 4 {\vec u} \cdot  {\vec v}  \right ) \dif {\vec \xi} \right ), \nonumber
\end{align}
which immediately simplifies to (\ref{eqn:limitinf}) by realising that ${\vec \Theta}({\vec u})$ becomes ${\vec \Theta}^{(\infty)} ({\vec u})\in {\mathbb R}^3$ when $\nu \to \infty$. This is because,  on the interior interfaces of ${\vec B}$,  we observe, from (\ref{eqn:transproblemthetainv}), that $[ {\vec n} \times {\mu}^{-1} \nabla \times {\vec \Theta}({\vec u})]_{\partial B^{(m)} \cup \partial B^{(n)}}  = - 2 [ {\mu}^{-1} ]_{\partial B^{(m)} \cup \partial B^{(n)}} {\vec n} \times {\vec u}$, $n,m =1,\ldots,N$, $n\ne m$ is now automatically satisfied since $\nabla \times {\vec \Theta}({\vec u}) = -2 {\vec u}$ in ${\vec B}$ and, on $\partial {\vec B}$, the jump condition 
$[ {\vec n} \times \tilde{\mu}^{-1} \nabla \times {\vec \Theta}({\vec u})]_{\partial {\vec B}}  = - 2 [ \tilde{\mu}^{-1} ]_{\partial {\vec B}} {\vec n} \times {\vec u}$
simplifies to the boundary condition ${\vec n} \times \nabla \times {\vec \Theta}^{(\infty)}({\vec u} ) |_+ =  - 2{\vec n} \times {\vec u}$.
\end{proof}

\begin{corollary}\label{corollary:n0tops}
For an object with homogeneous $\mu_*$,  ${\mathcal M}(0)= M({\vec e}_i,{\vec e}_j,0) {\vec e}_i \otimes {\vec e}_j  = {\mathcal N}^0=N^0({\vec e}_i , {\vec e}_j ) {\vec e}_i \otimes {\vec e}_j$ is just the P\'oyla-Szeg\"o tensor parameterised by the contrast in permeability ${\mathcal T}(\mu_r)$, independent of the object's topology.
\end{corollary}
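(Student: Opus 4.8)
The plan is to show that, for homogeneous $\mu_* = \mu_r \mu_0$, the formula for ${\mathcal M}(0) = {\mathcal N}^0$ given in~(\ref{eqn:limit0}) coincides with the classical variational characterisation of the P\'olya--Szeg\"o (polarization) tensor ${\mathcal T}(\mu_r)$ for the transmission (conductivity-type) problem with contrast $\mu_r$. First I would recall from Lemma~\ref{lemma:limitcase} that when $\nu = 0$ the field ${\vec \Theta}^{(1)}$ vanishes and $M({\vec u},{\vec v};0) = N^0({\vec u},{\vec v})$, and that $N^0$ is determined by the single transmission problem~(\ref{eqn:transproblem0til}) for $\tilde{\vec \Theta}^{(0)}$. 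The key observation is that, since $\mu_r$ is constant inside $B$, the curl-curl transmission problem for $\tilde{\vec \Theta}^{(0)}$ decouples: one can write $\tilde{\vec \Theta}^{(0)}({\vec e}_i) = \nabla \phi_i$ for a scalar potential $\phi_i$ (the divergence-free condition together with the curl-curl equation and the far-field decay forces the curl of $\tilde{\vec\Theta}^{(0)}$ to vanish, so it is a gradient), and the remaining jump condition on $[\tilde{\mu}_r^{-1}\nabla_\xi\times\tilde{\vec\Theta}^{(0)}\times{\vec n}]_\Gamma$ reduces to the standard flux-matching condition $[\tilde{\mu}_r^{-1}\partial_n(\phi_i + \xi_i)]_\Gamma = 0$ for the scalar conductivity problem with conductivity $\mu_r^{-1}$ inside and $1$ outside.

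Next I would substitute this gradient representation into~(\ref{eqn:limit0}): the volume term $\alpha^3\langle(1-\mu_r^{-1}){\vec e}_i,{\vec e}_j\rangle_{L^2(B)} = \alpha^3(1-\mu_r^{-1})|B|\delta_{ij}$ and the energy term $\tfrac{\alpha^3}{4}\langle\tilde{\mu}_r^{-1}\nabla\times\tilde{\vec\Theta}^{(0)}({\vec e}_i),\nabla\times\tilde{\vec\Theta}^{(0)}({\vec e}_j)\rangle_{L^2(B\cup B^c)}$ must be re-expressed — here the natural route is to use the equivalent surface/variational form~(\ref{eqn:n0form1h})--(\ref{eqn:n0form3h}) from Lemma~\ref{lemma:formsn0h}, which already casts $N^0$ purely in terms of ${\vec\Theta}^{(0)}$ and its tangential trace on $\Gamma$. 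Writing everything in terms of the scalar potentials, the bilinear form becomes exactly the Dirichlet-energy difference that defines ${\mathcal T}(\mu_r)$, namely (up to the $\alpha^3$ scaling and the sign/normalisation conventions of~\cite{ledgerlionheart2016}) the standard formula $({\mathcal T}(\mu_r))_{ij} = (\mu_r^{-1}-1)\big(|B|\delta_{ij} + \int_B \partial_j\phi_i\,\dif\xi\big)$ or its symmetric energy form. The cleanest presentation is probably to simply invoke Lemma 3 of~\cite{ledgerlionheart2016}, which already established ${\mathcal N}^0 = {\mathcal T}(\mu_r)$ for a homogeneous object; the only new content here is noting that Lemma~\ref{lemma:limitcase} identifies this same ${\mathcal N}^0$ with ${\mathcal M}(0)$, so the corollary follows by transitivity.

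The final point — topological independence — is immediate once the reduction to scalars is in place: the scalar transmission problem for $\phi_i$ on ${\mathbb R}^3$ with piecewise-constant coefficient $\mu_r^{-1}$ is well posed and has a unique solution regardless of whether $B$ is simply connected or multiply connected, since no cohomological side conditions enter a scalar elliptic problem. This is in sharp contrast to the $\nu\to\infty$ (perfectly conducting) limit~(\ref{eqn:limitinf}), where ${\vec\Theta}^{(\infty)}$ genuinely solves a vector curl-curl problem in the exterior ${\vec B}^c$ and the topology of ${\vec B}$ matters. I expect the main obstacle to be purely bookkeeping: matching the sign and normalisation conventions between the $M({\vec u},{\vec v};0)$ expression used here and the definition of ${\mathcal T}(\mu_r)$ adopted in~\cite{ledgerlionheart2016}, and verifying carefully that the gradient ansatz $\tilde{\vec\Theta}^{(0)} = \nabla\phi_i$ is consistent with all four lines of~(\ref{eqn:transproblem0til}) — in particular that the inhomogeneous jump on the tangential trace of $\tilde{\mu}_r^{-1}\nabla\times\tilde{\vec\Theta}^{(0)}$ is compatible with $\nabla\times\nabla\phi_i = {\vec 0}$, which requires the jump term $-2[\tilde{\mu}_r^{-1}]_\Gamma{\vec u}\times{\vec n}$ to be absorbed correctly into the scalar flux condition via the identity relating ${\vec u}\times{\vec n}$ on $\Gamma$ to the surface gradient of $\xi_i$.
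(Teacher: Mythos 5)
Your fallback route --- invoke Lemma~\ref{lemma:limitcase} to identify ${\mathcal M}(0)$ with ${\mathcal N}^0$, then cite Lemma 3 of~\cite{ledgerlionheart2016} for ${\mathcal N}^0={\mathcal T}(\mu_r)$ --- is exactly the paper's own proof, so the corollary does follow. But the primary argument you sketch contains a genuine error. You claim that the divergence-free condition, the curl-curl equation and the far-field decay force $\nabla\times\tilde{\vec\Theta}^{(0)}$ to vanish, so that $\tilde{\vec\Theta}^{(0)}({\vec e}_i)=\nabla\phi_i$. This is false: the equation $\nabla_\xi\times\tilde{\mu}_r^{-1}\nabla_\xi\times\tilde{\vec\Theta}^{(0)}={\vec 0}$ makes the field $\tilde{\mu}_r^{-1}\nabla\times\tilde{\vec\Theta}^{(0)}$ curl-free, not $\tilde{\vec\Theta}^{(0)}$ itself, and a divergence-free decaying field can certainly have nonzero curl (a dipole field does). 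Worse, the ansatz is inconsistent with the transmission problem: if $\nabla\times\tilde{\vec\Theta}^{(0)}={\vec 0}$ then the left-hand side of the jump condition $[\tilde{\mu}_r^{-1}\nabla_\xi\times\tilde{\vec\Theta}^{(0)}\times{\vec n}]_\Gamma=-2[\tilde{\mu}_r^{-1}]_\Gamma{\vec u}\times{\vec n}$ vanishes identically while the right-hand side does not (unless $\mu_r=1$), and the energy term in (\ref{eqn:limit0}) would drop out, leaving $N^0({\vec u},{\vec v})=\alpha^3(1-\mu_r^{-1})|B|\,{\vec u}\cdot{\vec v}$, which is not ${\mathcal T}(\mu_r)$. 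The ``obstacle'' you flag at the end is therefore fatal rather than bookkeeping. The scalar reduction lives one level up the de Rham complex: it is the curl-free field $\tilde{\mu}_r^{-1}\nabla\times{\vec\Theta}^{(0)}$ (the magnetic-field-like quantity) that is written as a gradient and whose potential satisfies the flux-matching condition you quote, exactly as in the $\nu\to\infty$ corollary where one sets $\nabla\times{\vec\Theta}^{(\infty)}({\vec e}_i)=-2\nabla\psi_i$.

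This also undercuts your topology argument. Writing a curl-free vector field as a global gradient requires its periods around non-contractible loops to vanish, and that --- not well-posedness of a scalar elliptic problem --- is what Lemma 3 of~\cite{ledgerlionheart2016} verifies for the ${\vec\Theta}^{(0)}$ problem: the relevant loops are contractible, so the gradient representation holds for any topology of $B$. The contrast with ${\mathcal M}(\infty)$ is precisely that ${\vec\Theta}^{(\infty)}$ lives only in $B^c$, where loops threading the holes of $B$ are not contractible and a curl-free, non-gradient field ${\vec h}_i\ne{\vec 0}$ survives. Note that the perfectly conducting limit also reduces to a well-posed scalar problem for $\psi_i$ in (\ref{eqn:psitranprob}), yet $({\mathcal M}(\infty))_{ij}\ne({\mathcal T}(0))_{ij}$ for objects with holes; so ``no cohomological side conditions enter a scalar elliptic problem'' cannot be the reason the $\nu=0$ case is topology-independent.
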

\begin{proof}
The result follows from Lemma~\ref{lemma:limitcase}  and by applying similar arguments to Lemma 3 of~\cite{ledgerlionheart2016}. The latter discusses the contractibility of loops associated with holes in the object  so that the result is independent of the object's topology.
\end{proof}

\begin{corollary}
In the case where ${\vec B}$ becomes a single object $B$, with Betti numbers such that $\beta_1(B) =\beta_1(B^c) =0$, then
$({\mathcal M} (\infty))_{ij} ={ M}({\vec e}_i,{\vec e}_j;\infty) $ can be expressed as
\begin{subequations}
\begin{align}
({\mathcal M} (\infty))_{ij} =  &- \alpha^3|B|  \delta_{ij} - \alpha^3 \int_{B^c} \nabla \psi_i\cdot \nabla \psi_j \dif {\vec \xi}   \\
= & -\alpha^3 |B| \delta_{ij} - \alpha^3 \int_\Gamma {\vec n}^+ \cdot {\vec e}_j  \psi_i \dif {\vec \xi}   \\
= & -\alpha^3 |B| \delta_{ij} + \alpha^3 \int_\Gamma {\vec n}^- \cdot \nabla \psi_i \xi_j \dif {\vec \xi} = ({ \mathcal T} (0) )_{ij}, 
\end{align}\label{eqn:minfdifferentforms}
\end{subequations}
and coincides with the coefficients of the P\'oyla-Szeg\"o tensor parameterised by  0, $({ \mathcal T} (0) )_{ij}$. In the above,
 $\delta_{ij}$ denotes the Kronecker delta and $\psi_i({\vec \xi})$ solves
\begin{subequations}
\begin{align}
\nabla^2 \psi_i & = 0 && \text{in $B^c$}, \\
{\vec n} \cdot \nabla \psi_i|_+  & = {\vec n} \cdot \nabla \xi_i && \text{on $\Gamma$}, \\
\psi_i & = O(|{\vec\xi}|) &&\text{as $|{\vec\xi}|\to \infty$}.
\end{align} \label{eqn:psitranprob}
\end{subequations}
\end{corollary}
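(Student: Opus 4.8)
The plan is to extract the result from Lemma~\ref{lemma:limitcase}, use the topological hypotheses to trade the vectorial exterior problem for $\vec{\Theta}^{(\infty)}$ for the scalar exterior Neumann problem (\ref{eqn:psitranprob}), and then pass between the three stated forms by integration by parts. First I would specialise (\ref{eqn:limitinf}) to the single-body case $\vec{B}=B$. Since $\langle\vec{e}_i,\vec{e}_j\rangle_{L^2(B)}=|B|\delta_{ij}$, this immediately produces the common first summand $-\alpha^3|B|\delta_{ij}$ appearing in all three expressions of (\ref{eqn:minfdifferentforms}), and it remains only to identify $\tfrac14\langle\nabla\times\vec{\Theta}^{(\infty)}(\vec{e}_i),\nabla\times\vec{\Theta}^{(\infty)}(\vec{e}_j)\rangle_{L^2(B^c)}$ with $\int_{B^c}\nabla\psi_i\cdot\nabla\psi_j\,\dif\vec{\xi}$ for the $\psi_i$ solving (\ref{eqn:psitranprob}).

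The key step is the scalar reduction. In $B^c$ the field $\vec{w}_i:=\nabla\times\vec{\Theta}^{(\infty)}(\vec{e}_i)$ is curl-free by (\ref{eqn:tpthetainf}a); since $\beta_1(B^c)=0$, a curl-free field on the exterior domain is globally a gradient, and because $\vec{w}_i$ is also divergence-free (by (\ref{eqn:tpthetainf}b)) and $\vec{\Theta}^{(\infty)}=O(|\vec{\xi}|^{-1})$ forces $\vec{w}_i=O(|\vec{\xi}|^{-2})$, it is the gradient of a harmonic function that decays at infinity. Being the curl of a single-valued field, $\vec{w}_i$ has zero net flux through the closed surface $\Gamma=\partial B$, so this harmonic primitive has no monopole term and in fact decays like $O(|\vec{\xi}|^{-2})$. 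The natural condition (\ref{eqn:tpthetainf}c) says the tangential part of the primitive's gradient coincides on $\Gamma$ with that of $-2\xi_i$, so the primitive differs from $-2\xi_i$ by a function with vanishing surface gradient on $\Gamma$; since $B$ is a single body with $\beta_1(B)=0$, $\Gamma$ is connected and that function is a single constant, which the zero-flux condition then fixes. After the appropriate rescaling and absorption of the constant one obtains a harmonic, decaying $\psi_i$ on $B^c$ with $\nabla\times\vec{\Theta}^{(\infty)}(\vec{e}_i)$ a fixed multiple of $\nabla\psi_i$ and with $\vec{n}\cdot\nabla\psi_i|_+=\vec{n}\cdot\nabla\xi_i$ on $\Gamma$ --- that is, $\psi_i$ solves (\ref{eqn:psitranprob}) (equivalently, $\psi_i$ is the magnetostatic perturbation potential of a perfectly conducting $B$ in the uniform applied field $\vec{e}_i$). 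Substituting back into (\ref{eqn:limitinf}) yields the first form (\ref{eqn:minfdifferentforms}a).

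The remaining two forms follow by integration by parts on the harmonic, decaying functions $\psi_i,\psi_j$. Applying Green's first identity in $B^c$ --- the boundary term at infinity vanishing thanks to the $O(|\vec{\xi}|^{-2})$ decay coming from the absent monopole --- and using the Neumann condition $\vec{n}\cdot\nabla\psi_j|_+=\vec{n}\cdot\vec{e}_j$, one rewrites $\int_{B^c}\nabla\psi_i\cdot\nabla\psi_j$ as a boundary integral of $\psi_i$ against $\vec{n}\cdot\vec{e}_j$ over $\Gamma$, which is (\ref{eqn:minfdifferentforms}b); a second integration by parts (Green's second identity between $\psi_i$ and the harmonic function $\xi_j$, again invoking the boundary condition) turns this into the form (\ref{eqn:minfdifferentforms}c). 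Finally, (\ref{eqn:minfdifferentforms}c), together with the Neumann problem (\ref{eqn:psitranprob}), is exactly the transmission problem and boundary-integral formula defining the P\'olya--Szeg\"o tensor $\mathcal{T}(k)$ in the limiting contrast $k\to 0$ (the exterior conductivity-contrast problem degenerating, as $k\to0$, into the exterior Neumann problem for $\psi_i$, and the standard polarization-tensor formula reducing to $-\alpha^3|B|\delta_{ij}+\alpha^3\int_\Gamma(\vec{n}\cdot\nabla\psi_i)\xi_j\,\dif\vec{\xi}$); this identifies the whole expression with $(\mathcal{T}(0))_{ij}$, in the same manner that $\mathcal{N}^0=\mathcal{T}(\mu_r)$ was obtained in Corollary~\ref{corollary:n0tops}.

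The main obstacle is the topological step: one must justify carefully that $\beta_1(B^c)=0$ lets a curl-free field on the unbounded exterior be written as a single global gradient (rather than only locally), that the decaying monopole-free harmonic primitive is unique up to an additive constant, and that $\beta_1(B)=0$ with $B$ a single body forces $\Gamma$ connected so that ``vanishing surface gradient on $\Gamma$'' means ``globally constant'' and not merely ``locally constant on each component'' (the multiply-connected/multi-component case being precisely what is excluded here). Once that is in place, the successive integrations by parts --- in particular the precise sign conventions for $\vec{n}^{\pm}$ and the control of the terms on the sphere at infinity --- are routine but must be tracked with care.
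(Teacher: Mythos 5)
Your overall route --- specialising (\ref{eqn:limitinf}) to a single body, reducing the exterior curl problem for $\vec\Theta^{(\infty)}$ to a scalar potential, and then integrating by parts twice to pass between the three forms and compare with the P\'olya-Szeg\"o tensor --- is the same as the paper's, and the integration-by-parts steps and the identification with $({\mathcal T}(0))_{ij}$ are fine. The genuine gap is in the scalar reduction, at the point where you pass from the boundary data you actually derive to the boundary data you need. From the tangential condition (\ref{eqn:tpthetainf}c) you correctly conclude that the harmonic primitive of $\nabla\times\vec\Theta^{(\infty)}(\vec e_i)$ agrees with $-2\xi_i$ up to a constant \emph{on} $\Gamma$ --- a Dirichlet-type condition --- and you then assert, without argument, that the rescaled potential satisfies the Neumann condition $\vec n\cdot\nabla\psi_i|_+=\vec n\cdot\nabla\xi_i$. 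That step is a non sequitur, and it matters: the exterior Dirichlet problem with data $\xi_i+\mathrm{const}$ (flux-normalised) and the exterior Neumann problem with data $\vec n\cdot\vec e_i$ have genuinely different solutions. For the unit sphere the former gives $\xi_i/|\vec\xi|^3$ and the latter $-\xi_i/(2|\vec\xi|^3)$, so your construction would yield $-4\pi\alpha^3\delta_{ij}$ instead of the correct $({\mathcal T}(0))_{ij}=-2\pi\alpha^3\delta_{ij}$.

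The Neumann condition in (\ref{eqn:psitranprob}) has to be extracted from the \emph{normal} component of the curl rather than its tangential trace: since $[\vec n\times\vec\Theta]_\Gamma=\vec 0$ and $\vec\Theta\to-\vec u\times\vec\xi$ in $B$ as $\nu\to\infty$, the surface-curl identity (the normal component of a curl is determined by the tangential trace of the field) gives $\vec n\cdot\nabla\times\vec\Theta^{(\infty)}(\vec u)|_+=\vec n\cdot\nabla\times(-\vec u\times\vec\xi)=-2\,\vec n\cdot\vec u$ on $\Gamma$, and writing $\nabla\times\vec\Theta^{(\infty)}(\vec e_i)=-2\nabla\psi_i$ this is exactly $\vec n\cdot\nabla\psi_i|_+=\vec n\cdot\nabla\xi_i$. (This is also the physically meaningful condition: the total field $\vec e_i+\tfrac12\nabla\times\vec\Theta^{(\infty)}(\vec e_i)$ must have vanishing normal flux through the surface of a perfect conductor.) Your topological bookkeeping --- $\beta_1(B^c)=0$ to obtain a single global primitive, zero net flux through $\Gamma$ because the field is a curl, connectedness of $\Gamma$ --- is all correct and needed, but it must be hung on this normal-trace condition rather than on the tangential trace of the curl.
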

\begin{proof}
For the case of  $\beta_1(B) =\beta_1(B^c) =0$, it follows from  Lemma~\ref{lemma:limitcase} that we can set $\nabla \times {\vec \Theta}^{(\infty)}({\vec e} _i)= - 2 \nabla \psi_i$ where $\psi_i$  solves (\ref{eqn:psitranprob}). Also, by applying integration by parts, the different forms of $({\mathcal M} (\infty))_{ij} ={ M}({\vec e}_i,{\vec e}_j;\infty) $ in (\ref{eqn:minfdifferentforms}) can be easily obtained. We see this coincides with $({ \mathcal T} (0) )_{ij}$ by comparing (\ref{eqn:minfdifferentforms}c) to the expression for
 ${\mathcal T}(c)$ given in (9) of~\cite{ledgerlionheart2016}  in the case where the contrast becomes $0$.
\end{proof}

\begin{remark}
For objects, with $\beta_1(B^c)\ne 1$ then $\nabla \times {\vec \Theta}^{(\infty)}({\vec e} _i)= - 2 \nabla \psi_i+{\vec h}_i$ in $B^c$ where ${\vec h}_i$ is a curl free function that is not a gradient with dimension $\text{dim}({\vec h}_i)= \beta_1(B^c)$. Unlike in Lemma 3 of~\cite{ledgerlionheart2016}, the loops $\gamma_k(B^c)$, $k=1,\ldots, \beta_1(B^c)$ associated with the holes passing through the object are no longer contractable and so ${\vec h}_i \ne {\vec 0}$ in this case.  Thus,
 $( {\mathcal M}  (\infty) )_{ij}$ does not coincide with $({\mathcal T} (0))_{ij}$ for objects with holes and the more general form $ ({\mathcal M} (\infty) )_{ij}= M({\vec e}_i, {\vec e}_j; \infty)$  following from (\ref{eqn:limitinf}) must be used. Numerical examples illustrating this for single multiply connected objects with loops were presented in~\cite{ledgerlionheart2016}.
\end{remark}

\section{The energy functional associated with ${\mathcal M}$} \label{sect:enfun}
An important alternative representation of $M({\vec u},{\vec v})- N^0({\vec u},{\vec v}) = {N}^{\sigma_*} ({\vec u},{\vec v})-{C}^{\sigma_*}({\vec u},{\vec v})$ is provided in the following theorem.
\begin{theorem} \label{thm:realandimagpts}
The bilinear form  $ M({\vec u},{\vec v})- N^0({\vec u},{\vec v}) = {N}^{\sigma_*} ({\vec u},{\vec v})-{C}^{\sigma_*}({\vec u},{\vec v})$ can be written as $ {R}^{\sigma_*} ({\vec u},{\vec v})+  \im {I}^{\sigma_*}({\vec u},{\vec v})$ where ${R}^{\sigma_*} ({\vec u},{\vec v}):{\mathbb R}^3 \times {\mathbb R}^3\to {\mathbb R}$,  ${I}^{\sigma_*} ({\vec u},{\vec v}):{\mathbb R}^3 \times {\mathbb R}^3\to {\mathbb R}$ are the following symmetric bilinear forms  on real vectors 
\begin{subequations}
\begin{align}
{R}^{\sigma_*} ({\vec u},{\vec v})=&  \mathrm{Re}( {N}^{\sigma_*} ({\vec u},{\vec v})-{C}^{\sigma_*}({\vec u},{\vec v}))\nonumber \\
=&- \frac{\alpha^3}{4}    \int_{{\vec B}\cup{\vec B}^c}  \tilde{\mu}_r ^{-1}\nabla \times {\vec \Theta}^{(1)} {(\vec v}) \cdot  
\nabla \times \overline{{\vec \Theta}^{(1)}({\vec u}) } \dif {\vec \xi} \nonumber \\
=&- \frac{\alpha^3}{4}   \left <   \tilde{\mu}_r ^{-1}\nabla \times {\vec \Theta}^{(1)} {(\vec v}) ,
\nabla \times {{\vec \Theta}^{(1)}({\vec u}) }\right >_{L^2( {\vec B} \cup {\vec B}^c) }   \nonumber \\
=&- \frac{\alpha^3}{4}   \left <   \tilde{\mu}_r ^{-1}\nabla \times {\vec \Theta}^{(1)} {(\vec u}) ,
\nabla \times {{\vec \Theta}^{(1)}({\vec v}) }\right >_{L^2( {\vec B} \cup {\vec B}^c) }   
,\label{eqn:rform2}  \\
{ I}^{\sigma_*} ({\vec u},{\vec v}) = &  \mathrm{Im}( {N}^{\sigma_*} ({\vec u},{\vec v})-{C}^{\sigma_*}({\vec u},{\vec v}))\nonumber \\
=&\frac{\alpha^3}{4}   \int_{\vec B }\frac{1}{\nu}  \nabla \times {\mu}_r^{-1} \nabla \times {\vec \Theta}^{(1)}({\vec v})  \cdot \nabla \times \mu_r ^{-1} 
\nabla \times \overline{{\vec \Theta}^{(1)} ({\vec u}) }\dif {\vec \xi}  \nonumber \\
=&\frac{\alpha^3}{4}  \left < \frac{1}{\nu}  \nabla \times {\mu}_r^{-1} \nabla \times {\vec \Theta}^{(1)}({\vec v})  ,  \nabla \times \mu_r ^{-1} 
\nabla \times {{\vec \Theta}^{(1)} ({\vec u}) } \right >_{L^2({\vec B})} \nonumber \\
=&\frac{\alpha^3}{4}  \left < \frac{1}{\nu}  \nabla \times {\mu}_r^{-1} \nabla \times {\vec \Theta}^{(1)}({\vec u})  ,  \nabla \times \mu_r ^{-1} 
\nabla \times {{\vec \Theta}^{(1)} ({\vec v}) } \right >_{L^2({\vec B})}
.\label{eqn:iform2}
\end{align} 
\end{subequations}
 Additionally, 
$-{\mathcal R}^{\sigma_*} ({\vec u},{\vec v})$ and ${\mathcal I}^{\sigma_*} ({\vec u},{\vec v})$ define inner products on real vectors.  \end{theorem}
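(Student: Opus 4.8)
The symmetric bilinearity of $R^{\sigma_*}$ and $I^{\sigma_*}$ is already contained in the preceding part of the theorem, so the plan is to establish \emph{positive definiteness} of $-R^{\sigma_*}$ and $I^{\sigma_*}$ on real vectors ${\vec u}$. Positivity I would read off at once from the last identities in~(\ref{eqn:rform2}) and~(\ref{eqn:iform2}) with ${\vec v}={\vec u}$: since $\alpha^3>0$, $\tilde{\mu}_r^{-1}>0$ in ${\vec B}\cup{\vec B}^c$ and $\nu>0$ in ${\vec B}$ (the standing physical assumption, without which $R^{\sigma_*}\equiv I^{\sigma_*}\equiv 0$ since ${\vec \Theta}^{(1)}\equiv{\vec 0}$), one has $-R^{\sigma_*}({\vec u},{\vec u})=\frac{\alpha^3}{4}\left < \tilde{\mu}_r^{-1}\nabla\times{\vec \Theta}^{(1)}({\vec u}),\nabla\times{\vec \Theta}^{(1)}({\vec u})\right >_{L^2({\vec B}\cup{\vec B}^c)}\ge 0$ and $I^{\sigma_*}({\vec u},{\vec u})=\frac{\alpha^3}{4}\left < \nu^{-1}\nabla\times\mu_r^{-1}\nabla\times{\vec \Theta}^{(1)}({\vec u}),\nabla\times\mu_r^{-1}\nabla\times{\vec \Theta}^{(1)}({\vec u})\right >_{L^2({\vec B})}\ge 0$, with equality precisely when $\nabla\times{\vec \Theta}^{(1)}({\vec u})={\vec 0}$ in ${\vec B}\cup{\vec B}^c$, respectively $\nabla\times\mu_r^{-1}\nabla\times{\vec \Theta}^{(1)}({\vec u})={\vec 0}$ in ${\vec B}$. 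Note that, unlike for ${\mathcal N}^0$, no sign condition on $\mu_r-1$ is needed here.

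The crux will be to show that either degeneracy forces ${\vec u}={\vec 0}$. In both cases $\nabla\times\mu_r^{-1}\nabla\times{\vec \Theta}^{(1)}({\vec u})={\vec 0}$ in ${\vec B}$ (immediately for $I^{\sigma_*}$; for $-R^{\sigma_*}$ because then $\mu_r^{-1}\nabla\times{\vec \Theta}^{(1)}({\vec u})={\vec 0}$ there), so inserting this into the interior equation~(\ref{eqn:transproblem1var}a) and using $\nu>0$ gives ${\vec \Theta}^{(1)}({\vec u})=-{\vec \Theta}^{(0)}({\vec u})$ in ${\vec B}$, equivalently ${\vec \Theta}({\vec u})=-{\vec u}\times{\vec \xi}$ in ${\vec B}$. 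I would then introduce ${\vec W}:={\vec \Theta}^{(1)}({\vec u})+{\vec \Theta}^{(0)}({\vec u})={\vec \Theta}^{(1)}({\vec u})+\tilde{\vec \Theta}^{(0)}({\vec u})+{\vec u}\times{\vec \xi}$. By construction ${\vec W}\equiv{\vec 0}$ in ${\vec B}$; in ${\vec B}^c$, where $\tilde{\mu}_r\equiv 1$, both ${\vec \Theta}^{(0)}({\vec u})$ and ${\vec \Theta}^{(1)}({\vec u})$ are divergence free with $\nabla\times\nabla\times(\cdot)={\vec 0}$ (by~(\ref{eqn:transproblem0var}) and~(\ref{eqn:transproblem1var})), hence so is ${\vec W}$, and ${\vec W}-{\vec u}\times{\vec \xi}=O(|{\vec \xi}|^{-1})$ as $|{\vec \xi}|\to\infty$. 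Adding the source-free tangential transmission conditions for ${\vec \Theta}^{(0)}({\vec u})$ and ${\vec \Theta}^{(1)}({\vec u})$ across $\partial{\vec B}$ and using ${\vec W}|_-={\vec 0}$, $\nabla\times{\vec W}|_-={\vec 0}$, one obtains the vanishing Cauchy data ${\vec n}\times{\vec W}|_+={\vec 0}$ and ${\vec n}\times\nabla\times{\vec W}|_+={\vec 0}$ on $\partial{\vec B}$ (together with ${\vec n}\cdot{\vec W}|_+=0$ from the divergence-free gauge). Since $\nabla\cdot{\vec W}=0$ and $\nabla\times\nabla\times{\vec W}={\vec 0}$ imply $\Delta{\vec W}={\vec 0}$ componentwise in ${\vec B}^c$, unique continuation forces ${\vec W}$ to vanish near $\partial{\vec B}$, hence, ${\vec B}^c$ being connected, ${\vec W}\equiv{\vec 0}$ in ${\vec B}^c$; but then ${\vec u}\times{\vec \xi}=-({\vec \Theta}^{(1)}({\vec u})+\tilde{\vec \Theta}^{(0)}({\vec u}))=O(|{\vec \xi}|^{-1})$, which is impossible unless ${\vec u}={\vec 0}$. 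Together with symmetry and bilinearity this shows $-R^{\sigma_*}$ and $I^{\sigma_*}$ define inner products on real vectors, equivalently that $-{\mathcal R}^{\sigma_*}$ and ${\mathcal I}^{\sigma_*}$ are symmetric positive-definite tensors.

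I expect the definiteness step to be the main obstacle: the vanishing of either energy integral only determines ${\vec \Theta}^{(1)}({\vec u})$ up to a curl-free remainder in ${\vec B}^c$, so one cannot conclude directly and must invoke the full transmission problem; the device is to recast the degeneracy as an over-determined exterior problem for ${\vec W}$ with zero Cauchy data and close it by unique continuation together with the prescribed linear growth at infinity (a naive energy estimate for ${\vec W}$ fails, because its leading-order behaviour ${\vec u}\times{\vec \xi}$ is itself a solution of the homogeneous exterior equations and the leading terms cancel). Two points deserve care: the standing hypothesis $\nu>0$ throughout ${\vec B}$ (i.e.\ $\sigma_*^{(n)}>0$ for every $n$), which validates the passage ``$\nabla\times\mu_r^{-1}\nabla\times{\vec \Theta}^{(1)}={\vec 0}$ in ${\vec B}\Rightarrow{\vec \Theta}^{(1)}=-{\vec \Theta}^{(0)}$ in ${\vec B}$'' on all of ${\vec B}$; and the trace/regularity statements underpinning the notion of Cauchy data of ${\vec W}$ on the (merely Lipschitz) boundary $\partial{\vec B}$, which follow from the $H(\mathrm{curl})$ transmission framework for ${\vec \Theta}^{(0)},{\vec \Theta}^{(1)}$ and the divergence-free gauge.
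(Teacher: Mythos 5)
Your proposal proves only the last sentence of the theorem and takes the rest as given. The substantive content of the statement is the pair of integral identities (\ref{eqn:rform2}) and (\ref{eqn:iform2}): that $\mathrm{Re}(N^{\sigma_*}-C^{\sigma_*})$ collapses to the single curl energy $-\frac{\alpha^3}{4}\left < \tilde{\mu}_r^{-1}\nabla\times{\vec \Theta}^{(1)}({\vec v}),\nabla\times{\vec \Theta}^{(1)}({\vec u})\right >_{L^2({\vec B}\cup{\vec B}^c)}$ and that $\mathrm{Im}(N^{\sigma_*}-C^{\sigma_*})$ collapses to the single weighted norm of $\nabla\times\mu_r^{-1}\nabla\times{\vec \Theta}^{(1)}$. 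These are conclusions, not definitions, and the paper's proof spends essentially all of its effort on them: starting from (\ref{eqn:definecconvar})--(\ref{eqn:definenconvar}) it substitutes for ${\vec u}\times{\vec \xi}$ using the interior equation of (\ref{eqn:transproblem1var}), integrates by parts repeatedly across $\Gamma$ and out to infinity, arrives at $N^{\sigma_*}-C^{\sigma_*}$ as a sum of \emph{two} complex integrals, and then must invoke the symmetry already proved in Theorem~\ref{thm:symmfull} to show that the imaginary part of each of those integrals vanishes (equations (\ref{eqn:symstep2}) and (\ref{eqn:symstep3})); only then do the real and imaginary parts reduce to the one-term expressions from which you read off the signs. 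None of this appears in your proposal, so the very formulas on which your positivity argument rests are left unproven.

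On the part you do treat, the paper disposes of definiteness in a single sentence, so your argument is a genuine supplement, and its skeleton is sound: vanishing of either quadratic form forces $\nabla\times\mu_r^{-1}\nabla\times{\vec \Theta}^{(1)}({\vec u})={\vec 0}$ in ${\vec B}$, hence ${\vec \Theta}^{(1)}({\vec u})=-{\vec \Theta}^{(0)}({\vec u})$ there when $\nu>0$ throughout ${\vec B}$ (correctly flagged, and consistent with the later corollary that excludes the limiting cases), and the $O(|{\vec \xi}|)$ growth of ${\vec \Theta}^{(0)}({\vec u})$ at infinity then rules out ${\vec u}\ne{\vec 0}$ once ${\vec W}={\vec 0}$ propagates to ${\vec B}^c$. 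Two steps still need repair. First, ${\vec n}\cdot{\vec W}|_+=0$ does not follow from the divergence-free gauge: the exterior normal trace is not constrained by $\nabla\cdot{\vec W}=0$ in ${\vec B}^c$ together with ${\vec W}={\vec 0}$ in ${\vec B}$, so you do not have full Cauchy data for the componentwise harmonic field. Second, the continuation must therefore be run from the tangential data $({\vec n}\times{\vec W}|_+,{\vec n}\times\nabla\times{\vec W}|_+)=({\vec 0},{\vec 0})$ alone, i.e.\ via a Maxwell-type unique continuation principle rather than the scalar Holmgren argument for $\Delta{\vec W}={\vec 0}$ that you invoke. Both issues are fixable, but as written the proposal neither establishes the theorem's identities nor fully closes the definiteness claim.
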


\begin{proof}
Using  the definitions in (\ref{eqn:definecconvar}) and (\ref{eqn:definenconvar}) and the transmission problem (\ref{eqn:transproblem1var}) we have
\begin{align}
({N}^{\sigma_*} -{{C}}^{\sigma_*} ) ({\vec u},{\vec v})  = &
 \frac{\im \alpha^3 }{4} {\vec u} \cdot \int_{\vec B} \nu {\vec \xi} \times  \frac{1}{\im \nu} \nabla \times \mu_r^{-1} \nabla \times {\vec \Theta}^{(1)} ({\vec v}) \dif {\vec \xi}\nonumber\\
 & + \frac{\alpha^3}{2} \int_{\vec B}   \left ( 1- \mu_r^{-1}  \right ) \left (
    {\vec u} \cdot  \nabla \times {\vec \Theta}^{(1)} ({\vec v}) \right ) \dif {\vec \xi} \nonumber \\
    = &  \frac{\alpha^3}{4} \int_{\vec B} \nabla \times \mu_r^{-1} \nabla \times {\vec \Theta}^{(1)} ({\vec v}) \cdot {\vec u} \times {\vec \xi} \dif {\vec \xi} \nonumber \\
    &+ \frac{\alpha^3}{2} \int_{\vec B} \left ( 1- \mu_r^{-1}  \right )  \left (
    {\vec u} \cdot  \nabla \times {\vec \Theta}^{(1)}({\vec v})  \right ) \dif {\vec \xi} \nonumber .
\end{align}
Then, using  ${\vec u } \times {\vec \xi}= \frac{1}{\im \nu} \nabla \times \mu_r^{-1} \nabla \times {\vec \Theta}^{(1)}({\vec u})  - ( {\vec \Theta}^{(0)} ({\vec u})+ {\vec \Theta}^{(1)} ({\vec u}) - {\vec u} \times {\vec \xi})$ in ${\vec B}$ we have, for ${\vec u}\in {\mathbb R}^3$,
\begin{align}
\overline{{\vec u} \times {\vec \xi}}= {\vec u} \times {\vec \xi} =\frac{\im}{\nu} \nabla \times \mu_r^{-1} \nabla \times \overline{{\vec \Theta}^{(1)} ({\vec u})} -( {\vec \Theta}^{(0)}({\vec u}) + \overline{{\vec \Theta}^{(1)}({\vec u})} - {\vec u} \times {\vec \xi}) \nonumber ,
\end{align}
in ${\vec B}$ since ${\vec \Theta}^{(0)}({\vec u})\in {\mathbb R}^3$. Thus, it follows that
\begin{align}
({N}^{\sigma_*} -{C}^{\sigma_*} )({\vec u},{\vec v})  = & \frac{\im  \alpha^3 }{4} \int_{\vec B} \frac{1}{\nu}  \nabla \times \mu_r^{-1} \nabla \times {\vec \Theta}^{(1)}({\vec v}) \cdot
 \nabla \times \mu_r^{-1} \nabla \times \overline{{\vec \Theta}^{(1)}({\vec u})} \dif {\vec \xi} \nonumber \\
 &- \frac{\alpha^3}{4} \int_{\vec B}  \nabla \times \mu_r^{-1} \nabla \times {\vec \Theta}^{(1)}({\vec v})  \cdot ( {\vec \Theta}^{(0)}({\vec u}) +\overline{ {\vec \Theta}^{(1)} ({\vec u}) }- {\vec u} \times {\vec \xi}) \dif {\vec \xi} \nonumber \\
 & +\frac{\alpha^3}{2} \int_{\vec B}   \left ( 1- \mu_r^{-1}  \right ) \left (
    {\vec u} \cdot  \nabla \times {\vec \Theta}^{(1)} ({\vec v}) \right ) \dif {\vec \xi} \nonumber .
 \end{align}
 Denoting the latter two terms by $-\frac{\alpha^3}{4}A_1$ and $\frac{\alpha^3}{2}A_2$, respectively, then, by integration by parts, we have
 \begin{align}
 A_1 = &\int_{\vec B} \mu_r^{-1} \nabla \times {\vec \Theta}^{(1)}({\vec v}) \cdot \nabla \times \overline{{\vec \Theta}^{(1)}({\vec u})} \dif {\vec \xi} + \int_{\vec B} \mu_r^{-1} \nabla \times {\vec \Theta}^{(1)}({\vec v}) \cdot \nabla \times {\vec \Theta}^{(0)}({\vec u}) \dif {\vec \xi}
 \nonumber \\
 & -2 {\vec u} \cdot \int_{\vec B} \mu_r^{-1}  \nabla \times {\vec \Theta }^{(1)}({\vec v}) \dif {\vec \xi} \nonumber \\
 &+\int_{\vec B}  \nabla \cdot \mu_r^{-1}  \nabla \times {\vec \Theta}^{(1)}({\vec v}) \times ( {\vec \Theta}^{(0)}({\vec u}) + \overline{{\vec \Theta}^{(1)}({\vec u})}- {\vec u} \times {\vec \xi} ) \dif {\vec \xi} \nonumber \\
= &\int_{\vec B}  \mu_r^{-1} \nabla \times {\vec \Theta}^{(1)}({\vec v}) \cdot \nabla \times \overline{{\vec \Theta}^{(1)}({\vec u})} \dif {\vec \xi}+
\int_{\vec B}  {\vec \Theta}^{(1)}({\vec v}) \cdot \nabla \times \mu_r^{-1}\nabla \times {\vec \Theta}^{(0)}({\vec u}) \dif {\vec \xi}  
\nonumber\\
&+ \int_{\vec B} \nabla \cdot  ( {\vec \Theta}^{(1)}({\vec v})  \times \mu_r^{-1}\nabla \times {\vec \Theta}^{(0)}({\vec u}) ) \dif {\vec \xi} \nonumber \\
&-2{\vec u} \cdot  \int_{\vec B} \mu_r^{-1}   \nabla \times {\vec \Theta}^{(1)}({\vec v})  \dif {\vec \xi}- \int_{{\vec B}^c} \nabla \cdot  (\nabla \times {\vec \Theta}^{(1)}({\vec v}) \times ( {\vec \Theta}^{(0)}({\vec u}) + \overline{{\vec \Theta}^{(1)}({\vec u})}- {\vec u} \times {\vec \xi} )) \dif {\vec \xi} . \nonumber 
\end{align}
Next, using $\nabla \times \mu_r^{-1}\nabla \times {\vec \Theta}^{(0)}({\vec u}) = {\vec 0} $ in ${\vec B}$, integrating by parts the third integral over ${\vec B}$, and expanding the integral over ${\vec B}^c$, we have
\begin{align}
A_1
=&\int_{\vec B} \mu_r^{-1} \nabla \times {\vec \Theta}^{(1)}({\vec v}) \cdot \nabla \times \overline{{\vec \Theta}^{(1)}({\vec u})} \dif {\vec \xi} -  \int_{{\vec B}^c} \nabla \cdot  ( {\vec \Theta}^{(1)}({\vec v})  \times \nabla \times {\vec \Theta}^{(0)}({\vec u}) ) \dif {\vec \xi} \nonumber\\
&-2  {\vec u} \cdot  \int_{\vec B} \mu_r^{-1} \nabla \times {\vec \Theta}^{(1)}({\vec v})  \dif {\vec \xi} \nonumber \\
& + \int_{{\vec B}^c} \nabla \times {\vec \Theta}^{(1)}({\vec v}) \cdot \nabla \times {\vec \Theta}^{(0)}({\vec u}) \dif {\vec \xi} + \int_{{\vec B}^c}  \nabla \times {\vec \Theta}^{(1)}({\vec v}) \cdot \nabla \times \overline{{\vec \Theta}^{(1)}({\vec u})}\dif {\vec \xi}\nonumber\\
& -2 {\vec u} \cdot \int_{{\vec B}^c} \nabla \times {\vec \Theta}^{(1)}({\vec v}) \dif {\vec \xi}\nonumber \\
=&\int_{{\vec B} \cup {\vec B}^c} \tilde{\mu_r}^{-1} \nabla \times {\vec \Theta}^{(1)}({\vec v}) \cdot \nabla \times \overline{{\vec \Theta}^{(1)}({\vec u}) }\dif {\vec \xi} 
-  \int_{{\vec B}^c} \nabla \times {\vec \Theta}^{(0)}({\vec u})  \cdot \nabla \times {\vec \Theta}^{(1)}({\vec v}) \dif {\vec \xi} \nonumber\\
&+ \int_{{\vec B}^c} \nabla \times \nabla \times {\vec \Theta}^{(0)}({\vec u}) \cdot {\vec \Theta}^{(1)}({\vec v}) \dif {\vec \xi} \nonumber\\
& -2  {\vec u} \cdot  \int_{\vec B}\mu_r^{-1} \nabla \times {\vec \Theta}^{(1)}({\vec v})  \dif {\vec \xi} + \int_{{\vec B}^c} \nabla \times {\vec \Theta}^{(1)}({\vec v}) \cdot \nabla \times {\vec \Theta}^{(0)}({\vec u}) \dif {\vec \xi}
-2 {\vec u} \cdot \int_{{\vec B}^c} \nabla \times {\vec \Theta}^{(1)}({\vec v}) \dif {\vec \xi}\nonumber \\
=&\int_{{\vec B} \cup {\vec B}^c} \tilde{\mu_r}^{-1} \nabla \times {\vec \Theta}^{(1)}({\vec v}) \cdot \nabla \times \overline{{\vec \Theta}^{(1)}({\vec u})}\dif {\vec \xi} 
 -2 {\vec u} \cdot  \int_{\vec B}\mu_r^{-1}  \nabla \times {\vec \Theta}^{(1)}({\vec v})  \dif {\vec \xi} -2 {\vec u} \cdot \int_{{\vec B}^c} \nabla \times {\vec \Theta}^{(1)}({\vec v}) \dif {\vec \xi}\nonumber .
 \end{align}
Note that the final equality follows by cancelling terms and using $\nabla \times \mu_r^{-1}\nabla \times {\vec \Theta}^{(0)}({\vec u}) = {\vec 0} $ in ${\vec B}^c$.
In addition, by transforming the surface integral in $A_2$, we have
\begin{align}
A_2= & -  {\vec u}  \cdot \int_{\vec B} \mu_r^{-1} \nabla \times {\vec \Theta}^{(1)} ({\vec v})\dif {\vec \xi} - {\vec u} \cdot \int_{\partial {\vec B}} {\vec n}^+ \times {\vec \Theta}^{(1)} ({\vec v}) \dif {\vec \xi} \nonumber \\
= &- {\vec u}  \cdot \int_{\vec B} \mu_r^{-1}  \nabla \times {\vec \Theta}^{(1)} ({\vec v}) \dif {\vec \xi} - {\vec u}\cdot \int_{{\vec B}^c} \nabla \times {\vec \Theta}^{(1)} ({\vec v}) \dif {\vec \xi} \nonumber  ,
\end{align}
so that
\begin{align}
( N^{\sigma_*} -C^{\sigma_*} ) ({\vec u},{\vec v}) = & \frac{\im  \alpha^3 }{4 } \int_{\vec B} \frac{1}{\nu} \nabla \times \mu_r^{-1} \nabla \times {\vec \Theta}^{(1)}({\vec v}) \cdot
 \nabla \times \mu_r^{-1} \nabla \times \overline{{\vec \Theta}^{(1)}({\vec u})} \dif {\vec \xi} \nonumber \\
& -\frac{\alpha^3}{4} \int_{{\vec B} \cup {\vec B}^c} \tilde{\mu_r}^{-1} \nabla \times {\vec \Theta}^{(1)}({\vec v}) \cdot \nabla \times \overline{{\vec \Theta}^{(1)}({\vec u})} \dif {\vec \xi} \nonumber .
\end{align}
 Denoting the real part of $(N^{\sigma_*} -C ^{\sigma_*})({\vec u},{\vec v}) $ as ${R}^{\sigma_*}({\vec u},{\vec v})$ and its imaginary part by ${I} ^{\sigma_*}({\vec u},{\vec v})$ then we have 
\begin{subequations}
\begin{align}
{R}^{\sigma_*}({\vec u},{\vec v}) = & -  \frac{  \alpha^3 }{4} \mathrm{Im} \left ( \int_{\vec B} \frac{1}{\nu} \nabla \times \mu_r^{-1} \nabla \times {\vec \Theta}^{(1)} ({\vec v}) \cdot
 \nabla \times \mu_r^{-1} \nabla \times \overline{{\vec \Theta}^{(1)}({\vec u})} \dif {\vec \xi} \right ) \nonumber \\
 & -\frac{\alpha^3}{4} \mathrm{Re} \left ( \int_{{\vec B} \cup {\vec B}^c} \tilde{\mu_r}^{-1} \nabla \times {\vec \Theta}^{(1)}({\vec v}) \cdot \nabla \times \overline{{\vec \Theta}^{(1)}({\vec u})} \dif {\vec \xi} 
 \right ) \label{eqn:rform1} , \\
{I}^{\sigma_*} ({\vec u},{\vec v}) = &  \frac{  \alpha^3 }{4} \mathrm{Re} \left ( \int_{\vec B} \frac{1}{\nu} \nabla \times \mu_r^{-1} \nabla \times {\vec \Theta}^{(1)}({\vec v}) \cdot
 \nabla \times \mu_r^{-1} \nabla \times \overline{{\vec \Theta}^{(1)} ({\vec u})}\dif {\vec \xi} \right ) \nonumber \\
 & -\frac{\alpha^3}{4} \mathrm{Im} \left ( \int_{{\vec B} \cup {\vec B}^c} \tilde{\mu_r}^{-1} \nabla \times {\vec \Theta}^{(1)} ({\vec v}) \cdot \nabla \times \overline{{\vec \Theta}^{(1)} ({\vec u }) } \dif {\vec \xi} 
 \right )\label{eqn:iform1}  .
\end{align} 
\end{subequations}
By the  properties of the complex conjugate, we get that
\begin{align}
{R}^{\sigma_*}({\vec v},{\vec u} )= & -  \frac{  \alpha^3 }{4} \mathrm{Im} \left ( \int_{\vec B} \frac{1}{\nu} \nabla \times \mu_r^{-1} \nabla \times {\vec \Theta}^{(1)} ({\vec u}) \cdot
 \nabla \times \mu_r^{-1} \nabla \times \overline{{\vec \Theta}^{(1)}({\vec v})} \dif {\vec \xi} \right ) \nonumber \\
 & -\frac{\alpha^3}{4} \mathrm{Re} \left ( \int_{{\vec B} \cup {\vec B}^c} \tilde{\mu_r}^{-1} \nabla \times {\vec \Theta}^{(1)} ({\vec u}) \cdot \nabla \times \overline{{\vec \Theta}^{(1)}({\vec v})} \dif {\vec \xi} 
 \right ) \nonumber \\
=& -  \frac{  \alpha^3 }{4} \mathrm{Im} \left (\overline{ \int_{\vec B} \frac{1}{\nu} \nabla \times \mu_r^{-1} \nabla \times {\vec \Theta}^{(1)}({\vec v}) \cdot
 \nabla \times \mu_r^{-1} \nabla \times \overline{{\vec \Theta}^{(1)}{(\vec u}) } \dif {\vec \xi}}  \right ) \nonumber \\
 & -\frac{\alpha^3}{4} \mathrm{Re} \left ( \overline{\int_{{\vec B} \cup {\vec B}^c} \tilde{\mu_r}^{-1} \nabla \times {\vec \Theta}^{(1)}({\vec v}) \cdot \nabla \times \overline{{\vec \Theta}^{(1)}({\vec u}) } \dif {\vec \xi} }
 \right ) \nonumber \\
 = &   \frac{  \alpha^3 }{4} \mathrm{Im} \left ( \int_{\vec B} \frac{1}{\nu}  \nabla \times \mu_r^{-1} \nabla \times {\vec \Theta}^{(1)} ({\vec v}) \cdot
 \nabla \times \mu_r^{-1} \nabla \times \overline{{\vec \Theta}^{(1)} ({\vec u}) } \dif {\vec \xi} \right ) \nonumber \\
 & -\frac{\alpha^3}{4} \mathrm{Re} \left ( \int_{{\vec B} \cup {\vec B}^c} \tilde{\mu_r}^{-1} \nabla \times {\vec \Theta}^{(1)} ({\vec v}) \cdot \nabla \times \overline{{\vec \Theta}^{(1)}({\vec u})} \dif {\vec \xi} 
 \right ) \nonumber \\
 = & -\frac{\alpha^3}{4} \mathrm{Re} \left ( \int_{{\vec B} \cup {\vec B}^c} \tilde{\mu_r}^{-1} \nabla \times {\vec \Theta}^{(1)} ({\vec v})\cdot \nabla \times \overline{{\vec \Theta}^{(1)}({\vec u})} \dif {\vec \xi} 
 \right ) =  {R}^{\sigma_*}({\vec u},{\vec v} )\label{eqn:symstep1} ,
 \end{align}
where, in the final step, we have used
\begin{equation}
\frac{  \alpha^3 }{4} \mathrm{Im} \left ( \int_{\vec B} \frac{1}{\nu}  \nabla \times \mu_r^{-1} \nabla \times {\vec \Theta}^{(1)} ({\vec v}) \cdot
 \nabla \times \mu_r^{-1} \nabla \times \overline{{\vec \Theta}^{(1)} ({\vec u}) } \dif {\vec \xi} \right )=0, \label{eqn:symstep2} 
\end{equation}
 which follows since we know that   ${R}^{\sigma_*} ({\vec u},{\vec v}) = {\mathcal R}^{\sigma_*}({\vec v},{\vec u}) $  by the symmetry of ${N}^{\sigma_*} ({\vec u},{\vec v})-{ C}^{\sigma_*}({\vec u},{\vec v})$ in Theorem \ref{thm:symmfull} and, hence, the symmetry of its real and imaginary parts. By applying similar arguments to ${\mathcal I}^{\sigma_*}$ we get that
 \begin{equation}
 -\frac{\alpha^3}{4} \mathrm{Im} \left ( \int_{{\vec B} \cup {\vec B}^c} \tilde{\mu_r}^{-1} \nabla \times {\vec \Theta}^{(1)} ({\vec v}) \cdot \nabla \times \overline{{\vec \theta}^{(1)} ({\vec u }) } \dif {\vec \xi} \right )=0, \label{eqn:symstep3} 
 \end{equation}
 and
\begin{equation}
 {I}^{\sigma_*} ({\vec u},{\vec v}) =   \frac{  \alpha^3 }{4} \mathrm{Re} \left ( \int_{\vec B} \frac{1}{\nu} \nabla \times \mu_r^{-1} \nabla \times {\vec \Theta}^{(1)}({\vec v}) \cdot
 \nabla \times \mu_r^{-1} \nabla \times \overline{{\vec \theta}^{(1)} ({\vec u})}\dif {\vec \xi} \right ) \label{eqn:symstep4} .
 \end{equation}
Still further, using (\ref{eqn:symstep3}),
 (\ref{eqn:symstep1}) becomes (\ref{eqn:rform2}) and, in a similar manner, using (\ref{eqn:symstep2}), (\ref{eqn:symstep4}) becomes (\ref{eqn:iform2}) as desired. 
 
 It also follows from (\ref{eqn:rform2}), and the linearity of the transmission problem (\ref{eqn:transproblem1var}),
 that $R^{\sigma_*} ({\vec u},{\vec v}) = R^{\sigma_*} ({\vec v},{\vec u})$ $\forall {\vec u}, {\vec v} \in {\mathbb R}^3$,  $R^{\sigma_*} ({\vec u}+{\vec w},{\vec v}) = R^{\sigma_*} ({\vec u},{\vec v})+R^{\sigma_*} ({\vec w},{\vec v}) $ $\forall {\vec u}, {\vec v}, {\vec w} \in {\mathbb R}^3$ and  $R^{\sigma_*}(c{\vec u},d{\vec v}) = c {d} R^{\sigma_*}({\vec u},{\vec v}) $ $\forall {\vec u}, {\vec v} \in {\mathbb R}^3$ and $\forall c,d \in {\mathbb R}$ and, thus, $R^{\sigma_*}({\vec u},{\vec v}):{\mathbb R}^3 \times {\mathbb R}^3 \to {\mathbb R}$ is a symmetric bilinear form on real vectors. Similarly, $I^{\sigma_*}({\vec u},{\vec v}):{\mathbb R}^3 \times {\mathbb R}^3 \to {\mathbb R }$ is a symmetric bilinear form on real vectors. In addition, since $-{\mathcal R}^{\sigma_*} ({\vec u},{\vec u})\ge 0$,  ${\mathcal I}^{\sigma_*} ({\vec u},{\vec u})\ge 0$ and ${\mathcal R}^{\sigma_*} ({\vec u},{\vec u})= {\mathcal I}^{\sigma_*} ({\vec u},{\vec u})=0$ only if ${\vec u}={\vec 0}$, $- R^{\sigma_*} ({\vec u},{\vec v}) $ and $I^{\sigma_*} ({\vec u},{\vec v}) $ define inner products on real vectors.
\end{proof}

\begin{corollary} \label{coll:diagonalcoeff}
An alternative splitting of the MPT is ${\mathcal M} = {\mathcal N}^0 + {\mathcal R}^{\sigma_*} + \im {\mathcal I}^{\sigma_*}$ where  ${\mathcal N}^0 =N^0 ( {\vec e}_i ,{\vec e}_j ) {\vec e}_i \otimes {\vec e}_j$,
 ${\mathcal R}^{\sigma_*} =R^{\sigma_*} ( {\vec e}_i ,{\vec e}_j ) {\vec e}_i \otimes {\vec e}_j$ and ${\mathcal I}^{\sigma_*} =I^{\sigma_*} ( {\vec e}_i ,{\vec e}_j ) {\vec e}_i \otimes {\vec e}_j$ are real symmetric tensors. In addition,  $({\mathcal N}^0)_{ii}\ge 0$, $({\mathcal R}^{\sigma_*} )_{ii}\le 0$ and $ ({\mathcal I}^{\sigma_*})_{ii} \ge 0$ where the repeated index $i$ does not imply summation.
 \end{corollary}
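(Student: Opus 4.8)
The plan is to read the result off directly from Theorems~\ref{thm:formsn0}, \ref{thm:symmfull} and~\ref{thm:realandimagpts} together with Corollary~\ref{coll:diagonalcoeffn0}: since the analytic work has already been carried out, this corollary is essentially bookkeeping.

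First I would assemble the splitting. Theorem~\ref{thm:realandimagpts} gives $M({\vec u},{\vec v}) - N^0({\vec u},{\vec v}) = R^{\sigma_*}({\vec u},{\vec v}) + \im I^{\sigma_*}({\vec u},{\vec v})$, hence $M({\vec u},{\vec v}) = N^0({\vec u},{\vec v}) + R^{\sigma_*}({\vec u},{\vec v}) + \im I^{\sigma_*}({\vec u},{\vec v})$. Evaluating at ${\vec u}={\vec e}_i$, ${\vec v}={\vec e}_j$ and recalling $({\mathcal M})_{ij} = M({\vec e}_i,{\vec e}_j)$ yields ${\mathcal M} = {\mathcal N}^0 + {\mathcal R}^{\sigma_*} + \im {\mathcal I}^{\sigma_*}$ with ${\mathcal N}^0 = N^0({\vec e}_i,{\vec e}_j){\vec e}_i\otimes{\vec e}_j$, ${\mathcal R}^{\sigma_*} = R^{\sigma_*}({\vec e}_i,{\vec e}_j){\vec e}_i\otimes{\vec e}_j$ and ${\mathcal I}^{\sigma_*} = I^{\sigma_*}({\vec e}_i,{\vec e}_j){\vec e}_i\otimes{\vec e}_j$. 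Because $N^0$, $R^{\sigma_*}$ and $I^{\sigma_*}$ are symmetric bilinear forms on real vectors taking values in ${\mathbb R}$ (Theorems~\ref{thm:formsn0} and~\ref{thm:realandimagpts}), each of the three tensors is real and symmetric; in particular ${\mathcal M}$ is then complex symmetric, consistent with Theorem~\ref{thm:symmfull}.

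Next I would obtain the diagonal sign conditions by setting ${\vec u}={\vec v}={\vec e}_i$ (no summation). From~(\ref{eqn:iform2}), $({\mathcal I}^{\sigma_*})_{ii} = I^{\sigma_*}({\vec e}_i,{\vec e}_i) = \frac{\alpha^3}{4}\langle \frac{1}{\nu}\nabla\times\mu_r^{-1}\nabla\times{\vec \Theta}^{(1)}({\vec e}_i), \nabla\times\mu_r^{-1}\nabla\times{\vec \Theta}^{(1)}({\vec e}_i)\rangle_{L^2({\vec B})}\ge 0$ since $\nu>0$ in ${\vec B}$ (equivalently, ${\mathcal I}^{\sigma_*}$ is an inner product). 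From~(\ref{eqn:rform2}), $({\mathcal R}^{\sigma_*})_{ii} = R^{\sigma_*}({\vec e}_i,{\vec e}_i) = -\frac{\alpha^3}{4}\langle \tilde{\mu}_r^{-1}\nabla\times{\vec \Theta}^{(1)}({\vec e}_i), \nabla\times{\vec \Theta}^{(1)}({\vec e}_i)\rangle_{L^2({\vec B}\cup{\vec B}^c)}\le 0$ since $\tilde{\mu}_r^{-1}>0$ (equivalently, $-{\mathcal R}^{\sigma_*}$ is an inner product). Finally, $({\mathcal N}^0)_{ii} = N^0({\vec e}_i,{\vec e}_i)$ is nonnegative by Corollary~\ref{coll:diagonalcoeffn0}, which writes it as $\alpha^3\int_{{\vec B}}(1-\mu_r^{-1})\,\dif {\vec \xi}$ plus two nonnegative weighted $L^2$ norms of curls.

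The one point I would flag, rather than any hard part, is that $({\mathcal N}^0)_{ii}\ge 0$ is \emph{not} unconditional: it uses $\mu_r({\vec \xi})\ge 1$ in ${\vec B}$, so that $1-\mu_r^{-1}\ge 0$ and $N^0$ is the genuine inner product of Theorem~\ref{thm:formsn0}; indeed Corollary~\ref{coll:diagonalcoeffn02} shows that in the homogeneous case $({\mathcal N}^0)_{ii}<0$ when $\mu_r<1$. Beyond this caveat there is no genuine obstacle: the symmetry and reality assertions are inherited verbatim from the bilinear-form properties established earlier, and the three sign inequalities follow immediately from the quadratic-form representations~(\ref{eqn:n0form2}), (\ref{eqn:rform2}) and~(\ref{eqn:iform2}) together with the positivity of $\tilde{\mu}_r^{-1}$ and $\nu$.
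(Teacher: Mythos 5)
Your proof is correct and follows essentially the same route as the paper: the splitting and symmetry are read off from Theorem~\ref{thm:realandimagpts} (and Theorem~\ref{thm:formsn0}), and the diagonal sign conditions come from the quadratic-form representations (\ref{eqn:rform2}), (\ref{eqn:iform2}) and Corollary~\ref{coll:diagonalcoeffn0}, exactly as in the paper's own proof. Your caveat about $({\mathcal N}^0)_{ii}\ge 0$ is well taken and is in fact a genuine imprecision in the corollary as stated: Corollary~\ref{coll:diagonalcoeffn0} only gives positivity under $\mu_r({\vec \xi})\ge 1$, and Corollary~\ref{coll:diagonalcoeffn02} shows the sign reverses for a homogeneous object with $\mu_r<1$, so the hypothesis $\mu_r\ge 1$ should really appear in the statement.
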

 
 \begin{proof}
 The splitting ${\mathcal M} = {\mathcal N}^0 + {\mathcal R}^{\sigma_*} + \im {\mathcal I}^{\sigma_*}$ immediately follows from Theorem~\ref{thm:realandimagpts}. The symmetry of ${\mathcal N}^0 =N^0 ( {\vec e}_i ,{\vec e}_j ) {\vec e}_i \otimes {\vec e}_j$ follows from (\ref{eqn:n0form2}) and the symmetries of ${\mathcal R}^{\sigma_*} =R^{\sigma_*} ( {\vec e}_i ,{\vec e}_j ) {\vec e}_i \otimes {\vec e}_j$ and ${\mathcal I}^{\sigma_*} =I^{\sigma_*} ( {\vec e}_i ,{\vec e}_j ) {\vec e}_i \otimes {\vec e}_j$ from (\ref{eqn:rform2})
 and (\ref{eqn:iform2}), respectively. 
 The diagonal coefficients of ${\mathcal N}^0$ are quoted in Corollary~\ref{coll:diagonalcoeffn0} and 
  those of  ${\mathcal R}^{\sigma_*}$ and ${\mathcal I}^{\sigma_*}$ are
\begin{subequations}
\begin{align}
({\mathcal R}^{\sigma_*})_{ii} = & - \frac{\alpha^3}{4} \left  (  \| \nabla \times {\vec \Theta}^{(1)} ({
\vec e}_i)\|_{W(\mu_r^{-1}, {\vec B})}^2 +  \| \nabla \times {\vec \Theta}^{(1)} ({
\vec e}_i) \|_{L^2({\vec B}^c )}^2 \right ) , \\
({\mathcal I}^{\sigma_*})_{ii} = &  \frac{\alpha^3}{4}  \left (  \|\nabla \times \mu_r^{-1}\nabla \times {\vec \Theta}^{(1)} ({
\vec e}_i) \|_{W(\nu^{-1},{\vec B})}^2  \right ) ,
\end{align} 
\end{subequations}
leading to the quoted result.
\end{proof}

\begin{remark}
In Theorems~\ref{thm:formsn0} and~\ref{thm:realandimagpts} we have established that the MPT follows from the symmetric bilinear form
\begin{equation}
M({\vec u}, {\vec v} ) =  N^0 ({\vec u},{\vec v}) +{R}^{\sigma_*} ({\vec u}, {\vec v} ) + \im I^{\sigma_*} ({\vec u}, {\vec v} ), \label{eqn:newsplitmptfun}
\end{equation}
where $ {\vec u}$ and $ {\vec v}$ are real vectors and
 $N^0 ({\vec u},{\vec v})$, $-R^{\sigma_*} ({\vec u}, {\vec v} )$ and $I^{\sigma_*} ({\vec u}, {\vec v} )$ are symmetric bilinear forms and  inner products. This suggests that another possible route to the derivation of the asymptotic formula for $({\vec H}_\alpha-{\vec H}_0)({\vec x})$ could be through through the approach of topological derivatives~\cite{novotnybook}, where, through the definition of an appropriate energy functional, its topological derivative is the leading order term of (\ref{eqn:asymp}).
Still further, $N^0 ({\vec u},{\vec v}) $ defines a magnetostatic type energy, $-{R}^{\sigma_*} ({\vec u}, {\vec v} ) $ a magnetic type energy and $ I^{\sigma_*} ({\vec u}, {\vec v} )$ an electric (Ohmic) type energy functional for pairs of solutions ${\vec \Theta}({\vec u})$ and ${\vec \Theta}({\vec v})$, which provides a concrete interpretation of the three contributions in (\ref{eqn:newsplitmptfun}).
\end{remark}

We complete this section by establishing an alternative form of ${R}^{\sigma_*} ({\vec u}, {\vec v} )$ and $ I^{\sigma_*} ({\vec u}, {\vec v} )$. To do this, we first remark that the weak form for the transmission problem  (\ref{eqn:transproblem1var}) is: Find ${\vec \Theta}^{(1)} ({\vec u})  \in X$ such that
\begin{align}
\left <  \tilde{\mu}_r^{-1} \nabla \times {\vec \Theta}^{(1)} ({\vec u}) , \nabla \times {\vec \psi}\right >_{L^2( {\vec B}\cup{\vec B}^c)} = \im \left < \nu ( {\vec \Theta}^{(1)} ({\vec u}) + {\vec \Theta}^{(0)} ({\vec u}) ),{\vec \psi}\right >_{L^2({\vec B})} \qquad \forall {\vec \psi} \in X, \label{eqn:weakformtheta1}
\end{align}
where
\begin{align}
X:= \{  {\vec \varphi} \in {\vec H}(\hbox{curl}): \nabla \cdot {\vec \varphi}=0 \ \text{ in ${\vec B}^c$} , \ {\vec \varphi} = O( | {\vec \xi}|^{-1} )  \text{ as $|{\vec \xi}| \to \infty $}  \} .\nonumber
\end{align}
We can then establish the following result:

\begin{lemma} \label{lemma:altformiandr}
An alternative form of the symmetric bilinear forms  ${R}^{\sigma_*} ({\vec u}, {\vec v} )$ and $ I^{\sigma_*} ({\vec u}, {\vec v} )$, introduced in (\ref{eqn:rform2}) and (\ref{eqn:iform2}), respectively, is
\begin{subequations}
\begin{align}
{R}^{\sigma_*} ({\vec u}, {\vec v} ) & = - \frac{\alpha^3}{4} \left <  \nu \mathrm{Im}( {\vec \Theta}^{(1)} ({\vec u}) ) , {\vec \Theta}^{(0)} ({\vec v}) \right>_{L^2({\vec B})}, \label{eqn:raltform}\\
{I}^{\sigma_*} ({\vec u}, {\vec v} ) & = \frac{\alpha^3}{4}  \left ( \left <  \nu  \mathrm{Re}( {\vec \Theta}^{(1)} ({\vec u}) ) ,  {\vec \Theta}^{(0)} ({\vec v}) \right>_{L^2({\vec B})}  + \left < \nu  {\vec \Theta}^{(0)} ({\vec u})  , {\vec \Theta}^{(0)} ({\vec v}) \right >_{L^2({\vec B})} 
 \right ).\label{eqn:ialtform}
\end{align}
\end{subequations}
\end{lemma}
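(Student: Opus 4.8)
The plan is to start from the integral representations \eqref{eqn:rform2} and \eqref{eqn:iform2} and convert the curl--curl integrands into volume terms involving $\nu$, $\vec\Theta^{(1)}$ and $\vec\Theta^{(0)}$ by invoking the weak form \eqref{eqn:weakformtheta1} with a well-chosen test function. Concretely, in \eqref{eqn:iform2} the key observation is that $\frac{1}{\nu}\nabla\times\mu_r^{-1}\nabla\times\vec\Theta^{(1)}(\vec u) = \im(\vec\Theta^{(1)}(\vec u)+\vec\Theta^{(0)}(\vec u))$ pointwise in $\vec B$, by the volume PDE in \eqref{eqn:transproblem1var}. Substituting this identity for one of the two factors in \eqref{eqn:iform2} turns the integrand into $\im\,\mu_r^{-1}\nabla\times\vec\Theta^{(1)}(\vec v)\cdot\overline{(\vec\Theta^{(1)}(\vec u)+\vec\Theta^{(0)}(\vec u))}$, integrated over $\vec B$; then I would apply \eqref{eqn:weakformtheta1} (extending the test field $\vec\Theta^{(1)}(\vec u)+\vec\Theta^{(0)}(\vec u)$, or rather its real/imaginary parts, appropriately into $\vec B^c$ — note $\vec\Theta^{(0)}$ and $\vec\Theta^{(1)}$ are both in $X$, so this is legitimate) to replace $\langle\tilde\mu_r^{-1}\nabla\times\vec\Theta^{(1)}(\vec v),\nabla\times(\cdots)\rangle_{L^2(\vec B\cup\vec B^c)}$ by $\im\langle\nu(\vec\Theta^{(1)}(\vec v)+\vec\Theta^{(0)}(\vec v)),\cdots\rangle_{L^2(\vec B)}$. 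Taking real parts and using the symmetry of $I^{\sigma_*}$ (from Theorem~\ref{thm:realandimagpts}) to symmetrise in $\vec u,\vec v$ should collapse everything to the claimed form \eqref{eqn:ialtform}, the two terms there being exactly $\langle\nu\,\mathrm{Re}(\vec\Theta^{(1)}(\vec u)),\vec\Theta^{(0)}(\vec v)\rangle$ and $\langle\nu\vec\Theta^{(0)}(\vec u),\vec\Theta^{(0)}(\vec v)\rangle$.

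For \eqref{eqn:raltform} I would proceed analogously but more directly: starting from \eqref{eqn:rform2}, write $\langle\tilde\mu_r^{-1}\nabla\times\vec\Theta^{(1)}(\vec v),\nabla\times\vec\Theta^{(1)}(\vec u)\rangle_{L^2(\vec B\cup\vec B^c)}$ and feed it into \eqref{eqn:weakformtheta1} with $\vec\psi=\vec\Theta^{(1)}(\vec u)$ (the conjugation bookkeeping has already been settled inside the proof of Theorem~\ref{thm:realandimagpts}, where \eqref{eqn:rform2} was derived without a conjugate on the second factor after using \eqref{eqn:symstep3}). This gives $R^{\sigma_*}(\vec u,\vec v) = -\frac{\alpha^3}{4}\,\mathrm{Re}\big(\im\langle\nu(\vec\Theta^{(1)}(\vec v)+\vec\Theta^{(0)}(\vec v)),\vec\Theta^{(1)}(\vec u)\rangle_{L^2(\vec B)}\big)$ up to the symmetrisation; since $\vec\Theta^{(0)}$ is real, $\mathrm{Re}(\im\langle\nu\vec\Theta^{(0)}(\vec v),\vec\Theta^{(1)}(\vec u)\rangle) = -\langle\nu\vec\Theta^{(0)}(\vec v),\mathrm{Im}(\vec\Theta^{(1)}(\vec u))\rangle$, while the $\langle\nu\vec\Theta^{(1)}(\vec v),\vec\Theta^{(1)}(\vec u)\rangle$ piece should drop out of the real part (its real part is the imaginary part of a sesquilinear-type quantity that vanishes by the same symmetry argument used for \eqref{eqn:symstep2}, or more cleanly by testing \eqref{eqn:weakformtheta1} with $\vec\psi = \vec\Theta^{(1)}(\vec v)$ and comparing). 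Re-symmetrising in $\vec u\leftrightarrow\vec v$ using the established symmetry of $R^{\sigma_*}$ then yields \eqref{eqn:raltform}.

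The main obstacle I anticipate is the careful bookkeeping of complex conjugates and of which argument the test function sits in: \eqref{eqn:rform2} and \eqref{eqn:iform2} as finally stated carry a conjugate that has already been partly eliminated via \eqref{eqn:symstep2}--\eqref{eqn:symstep3}, so one must be consistent about whether one works with the ``conjugated'' or ``unconjugated'' representation before applying \eqref{eqn:weakformtheta1}, and must track that $\vec\Theta^{(0)}\in\mathbb R^3$ while $\vec\Theta^{(1)}\in\mathbb C^3$. A secondary technical point is justifying that $\vec\Theta^{(0)}(\vec u)$ (and $\vec\Theta^{(1)}(\vec u)$) are admissible test functions in \eqref{eqn:weakformtheta1}, i.e.\ that they lie in $X$ — this follows from their defining transmission problems \eqref{eqn:transproblem0var} and \eqref{eqn:transproblem1var}, respectively, which enforce the divergence-free condition and decay in $\vec B^c$. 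Once the substitution is set up correctly, everything else is routine manipulation of inner products and real/imaginary parts.
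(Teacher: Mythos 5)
Your proposal is correct and follows essentially the same route as the paper: test the weak form (\ref{eqn:weakformtheta1}) with ${\vec \psi}={\vec \Theta}^{(1)}$, substitute the strong equation $\nabla\times\mu_r^{-1}\nabla\times{\vec \Theta}^{(1)}=\im\nu({\vec \Theta}^{(1)}+{\vec \Theta}^{(0)})$ into (\ref{eqn:iform2}), and then exploit the realness of $R^{\sigma_*}$, $I^{\sigma_*}$ and ${\vec \Theta}^{(0)}$ to separate real and imaginary parts. The one place where your sketch is thinner than the paper's argument is the elimination of the $\langle \nu{\vec \Theta}^{(1)}({\vec v}),{\vec \Theta}^{(1)}({\vec u})\rangle_{L^2({\vec B})}$ contribution to $R^{\sigma_*}$: since $\mathrm{Re}(\im\langle\cdot,\cdot\rangle)=-\mathrm{Im}\langle\cdot,\cdot\rangle$ is only \emph{antisymmetric} under ${\vec u}\leftrightarrow{\vec v}$, symmetrising produces the symmetrised average of the cross terms rather than (\ref{eqn:raltform}) itself, unless you separately show that $\langle\nu\,\mathrm{Im}({\vec \Theta}^{(1)}({\vec u})),{\vec \Theta}^{(0)}({\vec v})\rangle_{L^2({\vec B})}$ is symmetric; the paper sidesteps this by reading (\ref{eqn:raltform}) off as the imaginary part of the fully expanded identity for $I^{\sigma_*}$, whose first term is $-\tfrac{1}{\im}R^{\sigma_*}$ with $R^{\sigma_*}$ already known to be real --- a device that is implicit in your own derivation of (\ref{eqn:ialtform}) and closes the gap.
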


\begin{proof}
Choosing ${\vec \psi} = {\vec \Theta}^{(1)} ({\vec v}) $ in (\ref{eqn:weakformtheta1}) then
\begin{align}
\left <  \tilde{\mu}_r^{-1} \nabla \times {\vec \Theta}^{(1)} ({\vec u}) , \nabla \times {\vec \Theta}^{(1)} ({\vec v}) \right >_{L^2({\vec B}\cup{\vec B}^c)} = \im \left < \nu ( {\vec \Theta}^{(1)} ({\vec u}) + {\vec \Theta}^{(0)} ({\vec u}) ), {\vec \Theta}^{(1)} ({\vec v}) \right >_{L^2({\vec B})} \nonumber,
\end{align}
and, hence, from (\ref{eqn:rform2}) we obtain that
\begin{align}
{R}^{\sigma_*} ({\vec u}, {\vec v} ) &= - \im \frac{\alpha^3}{4}  \left < \nu ( {\vec \Theta}^{(1)} ({\vec u}) + {\vec \Theta}^{(0)} ({\vec u}) ), {\vec \Theta}^{(1)} ({\vec v}) \right > _{L^2( {\vec B} )} \nonumber ,
\end{align}
which must be real by definition.
Also, by using the transmission problem (\ref{eqn:transproblem1var}) and recalling $ {\vec \Theta}^{(0)} ({\vec u}) \in {\mathbb R}^3 $, we have that
\begin{align}
{I}^{\sigma_*} ({\vec u}, {\vec v} ) =& \frac{\alpha^3}{4}\left < \nu ( {\vec \Theta}^{(1)} ({\vec u}) + {\vec \Theta}^{(0)} ({\vec u}) ), {\vec \Theta}^{(1)} ({\vec v})+  {\vec \Theta}^{(0)} ({\vec v})\right >_{L^2({\vec B})}  \nonumber\\
= & \frac{\alpha^3}{4}  \left (   \left  < \nu ( {\vec \Theta}^{(1)} ({\vec u}) + {\vec \Theta}^{(0)} ({\vec u}) ) , {\vec \Theta}^{(1)} ({\vec v}) \right >_{L^2({\vec B})} +  \left < \nu  {\vec \Theta}^{(1)} ({\vec u}) ,  {\vec \Theta}^{(0)} ({\vec v}) \right >_{L^2({\vec B})} \right . \nonumber \\
&\left . +\left <  \nu {\vec \Theta}^{(0)} ({\vec u}) , 
{\vec \Theta}^{(0)} ({\vec v}) \right >_{L^2({\vec B})} \right) \nonumber \\
= & - \frac{1}{\im} {R}^{\sigma_*}  ({\vec u}, {\vec v} )+ \frac{\alpha^3}{4}  \left (  \left < \nu  {\vec \Theta}^{(1)} ({\vec u}) ,  {\vec \Theta}^{(0)} ({\vec v}) \right >_{L^2({\vec B})} + \left < \nu {\vec \Theta}^{(0)} ({\vec u}) ,{\vec \Theta}^{(0)} ({\vec v}) \right >_{L^2({\vec B})}  \right) \nonumber,
\end{align}
which must be real by definition.
Then, since ${R}^{\sigma_*}  ({\vec u}, {\vec v} )\in {\mathbb R}$,  ${\vec \Theta}^{(0)} ({\vec u}) \in {\mathbb R}^3 $ and ${\vec \Theta}^{(1)} ({\vec u}) \in {\mathbb C}^3 $, it follows that the first term is purely imaginary, the second is complex and the last term is real and, hence, an alternative form of ${I}^{\sigma_*} ({\vec u}, {\vec v} )$
is
 given by (\ref{eqn:ialtform}). Still further, we have
\begin{align}
\mathrm{Im} \left (  - \frac{1}{\im} {R}^{\sigma_*}  ({\vec u}, {\vec v} ) \right ) + 
 \frac{\alpha^3}{4}  \left <  \nu \mathrm{Im}( {\vec \Theta}^{(1)} ({\vec u}) ),  {\vec \Theta}^{(0)} ({\vec v}) \right >_{L^2({\vec B})}  =0, \nonumber
\end{align}
and, from this, we immediately obtain (\ref{eqn:raltform}).
\end{proof}
\begin{corollary}
In a similar way to Corollary~\ref{coll:diagonalcoeff}, the expressions (\ref{eqn:raltform}) and (\ref{eqn:ialtform}), obtained in Lemma~\ref{lemma:altformiandr}, can be used to obtain alternative expressions for the tensors  ${\mathcal R}^{\sigma_*} =R^{\sigma_*} ( {\vec e}_i ,{\vec e}_j ) {\vec e}_i \otimes {\vec e}_j$ and ${\mathcal I}^{\sigma_*} =I^{\sigma_*} ( {\vec e}_i ,{\vec e}_j ) {\vec e}_i \otimes {\vec e}_j$.
\end{corollary}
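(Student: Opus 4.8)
The plan is to simply specialise the bilinear forms of Lemma~\ref{lemma:altformiandr} to the orthonormal basis vectors, exactly as was done for Corollary~\ref{coll:diagonalcoeff}. First I would set ${\vec u} = {\vec e}_i$ and ${\vec v} = {\vec e}_j$ in (\ref{eqn:raltform}) and (\ref{eqn:ialtform}), and invoke the linearity of the transmission problems (\ref{eqn:transproblem0var}) and (\ref{eqn:transproblem1var}) to write ${\vec \Theta}^{(0)}({\vec e}_j)$ and ${\vec \Theta}^{(1)}({\vec e}_j)$ as the canonical solutions ${\vec \Theta}^{(0)}_j$, ${\vec \Theta}^{(1)}_j$ associated with the $j$th coordinate direction. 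This yields
\begin{subequations}
\begin{align}
({\mathcal R}^{\sigma_*})_{ij} &= -\frac{\alpha^3}{4} \left < \nu \, \mathrm{Im}( {\vec \Theta}^{(1)}({\vec e}_i) ), {\vec \Theta}^{(0)}({\vec e}_j) \right >_{L^2({\vec B})}, \\
({\mathcal I}^{\sigma_*})_{ij} &= \frac{\alpha^3}{4} \left ( \left < \nu\, \mathrm{Re}( {\vec \Theta}^{(1)}({\vec e}_i) ), {\vec \Theta}^{(0)}({\vec e}_j) \right >_{L^2({\vec B})} + \left < \nu\, {\vec \Theta}^{(0)}({\vec e}_i), {\vec \Theta}^{(0)}({\vec e}_j) \right >_{L^2({\vec B})} \right ),
\end{align}
\end{subequations}
and then I would assemble ${\mathcal R}^{\sigma_*} = ({\mathcal R}^{\sigma_*})_{ij}\, {\vec e}_i \otimes {\vec e}_j$ and ${\mathcal I}^{\sigma_*} = ({\mathcal I}^{\sigma_*})_{ij}\, {\vec e}_i \otimes {\vec e}_j$.

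The only point that needs more than a substitution is that the right-hand sides above are genuinely real and define symmetric tensors, but this is already guaranteed: Theorem~\ref{thm:realandimagpts} establishes that $R^{\sigma_*}$ and $I^{\sigma_*}$ are real-valued symmetric bilinear forms on ${\mathbb R}^3 \times {\mathbb R}^3$, and Lemma~\ref{lemma:altformiandr} shows that the displayed formulas coincide with those forms; hence no re-verification of symmetry or reality is required here. I would simply remark that the symmetry $({\mathcal R}^{\sigma_*})_{ij} = ({\mathcal R}^{\sigma_*})_{ji}$, which is not manifest from the expressions above (they involve $\mathrm{Im}({\vec \Theta}^{(1)}({\vec e}_i))$ paired with ${\vec \Theta}^{(0)}({\vec e}_j)$), is nonetheless inherited from that theorem, and likewise for ${\mathcal I}^{\sigma_*}$.

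There is essentially no obstacle in this corollary; it is a bookkeeping statement recording the tensor form of Lemma~\ref{lemma:altformiandr}. If anything, the mild subtlety worth flagging is purely presentational: the formulas expose an asymmetric-looking pairing of a complex field with a real field, so I would add one sentence pointing the reader back to Theorem~\ref{thm:realandimagpts} for why the resulting coefficient arrays are symmetric, rather than attempting an independent algebraic proof of symmetry from these particular representations.
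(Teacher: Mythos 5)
Your proposal is correct and matches the paper's (implicit) argument: the corollary is proved simply by substituting ${\vec u}={\vec e}_i$, ${\vec v}={\vec e}_j$ into (\ref{eqn:raltform}) and (\ref{eqn:ialtform}) and assembling the coefficient arrays, with reality and symmetry inherited from Theorem~\ref{thm:realandimagpts}. The paper offers no separate proof for this corollary, treating it exactly as the bookkeeping step you describe.
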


\section{Bounds on the off--diagonal coefficients of ${\mathcal R}^{\sigma_*} $ and ${\mathcal I}^{\sigma_*}$} \label{sect:bounds}
Bounds on the off-diagonal coefficients of the P\'olya-Szeg\"o tensor, and hence ${\mathcal N}^0$ for homogenous $\mu_*$, have previously been established e.g.~\cite{ammarikangbook,kleinmansenior,ledgerlionheart2016}. The following provides a bound on the magnitudes of the off-diagonal coefficients of ${\mathcal R}^{\sigma_*}$ and ${\mathcal I}^{\sigma_*}$.
\begin{lemma}
For $i\ne j$ then
\begin{subequations}
\begin{align}
|({\mathcal R}^{\sigma_*} )_{ij}| \le & |  \mathrm{Tr}({\mathcal R}^{\sigma_*}) |  = \left |   ( {\mathcal R}^{\sigma_*})_{kk}\right | , \\
|({\mathcal I}^{\sigma_*})_{ij}| \le & \mathrm{Tr}({\mathcal I}^{\sigma_*})  =  ( {\mathcal I}^{\sigma_*} )_{kk} .
\end{align}
\end{subequations}
\end{lemma}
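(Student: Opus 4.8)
The plan is to exploit the positivity established in Theorem~\ref{thm:realandimagpts}: the bilinear forms $-R^{\sigma_*}(\cdot,\cdot)$ and $I^{\sigma_*}(\cdot,\cdot)$ are inner products on ${\mathbb R}^3$, so the associated real symmetric tensors $-{\mathcal R}^{\sigma_*}$ and ${\mathcal I}^{\sigma_*}$ are positive definite. For any inner product $a(\cdot,\cdot)$ on ${\mathbb R}^3$ the Cauchy--Schwarz inequality gives $|a({\vec e}_i,{\vec e}_j)| \le a({\vec e}_i,{\vec e}_i)^{1/2}\, a({\vec e}_j,{\vec e}_j)^{1/2}$ for each pair $i\ne j$. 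I would apply this first with $a = I^{\sigma_*}$, giving $|({\mathcal I}^{\sigma_*})_{ij}| \le (({\mathcal I}^{\sigma_*})_{ii})^{1/2}(({\mathcal I}^{\sigma_*})_{jj})^{1/2}$, and then with $a = -R^{\sigma_*}$, giving $|({\mathcal R}^{\sigma_*})_{ij}| \le (-({\mathcal R}^{\sigma_*})_{ii})^{1/2}(-({\mathcal R}^{\sigma_*})_{jj})^{1/2}$, where the sign inside the roots is justified by $({\mathcal R}^{\sigma_*})_{ii}\le 0$ from Corollary~\ref{coll:diagonalcoeff}.

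The second step is to bound the geometric mean of two non-negative numbers by their arithmetic mean, and then by the sum of all three diagonal coefficients. Using Corollary~\ref{coll:diagonalcoeff}, every diagonal coefficient of ${\mathcal I}^{\sigma_*}$ is non-negative and every diagonal coefficient of $-{\mathcal R}^{\sigma_*}$ is non-negative, so for $i\ne j$
\[
(({\mathcal I}^{\sigma_*})_{ii})^{1/2}(({\mathcal I}^{\sigma_*})_{jj})^{1/2} \le \tfrac12\big(({\mathcal I}^{\sigma_*})_{ii}+({\mathcal I}^{\sigma_*})_{jj}\big) \le \sum_{k} ({\mathcal I}^{\sigma_*})_{kk} = \mathrm{Tr}({\mathcal I}^{\sigma_*}),
\]
and likewise $(-({\mathcal R}^{\sigma_*})_{ii})^{1/2}(-({\mathcal R}^{\sigma_*})_{jj})^{1/2} \le \sum_k (-({\mathcal R}^{\sigma_*})_{kk}) = \mathrm{Tr}(-{\mathcal R}^{\sigma_*})$. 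Finally, since $\mathrm{Tr}({\mathcal I}^{\sigma_*}) \ge 0$ and $\mathrm{Tr}({\mathcal R}^{\sigma_*}) \le 0$, one has $\mathrm{Tr}(-{\mathcal R}^{\sigma_*}) = |\mathrm{Tr}({\mathcal R}^{\sigma_*})| = |({\mathcal R}^{\sigma_*})_{kk}|$, and the two claimed inequalities follow.

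The argument is elementary once the positive-definiteness from Theorem~\ref{thm:realandimagpts} is in hand, so there is no genuine obstacle; the only point needing care is bookkeeping of signs, namely that ${\mathcal R}^{\sigma_*}$, its diagonal entries, and its trace are all non-positive, which is why the $\mathcal R$-bound must carry the absolute value $|\mathrm{Tr}({\mathcal R}^{\sigma_*})|$ while the $\mathcal I$-bound does not. As an alternative to invoking Cauchy--Schwarz and the arithmetic--geometric mean inequality, one could argue directly from positivity of the $2\times 2$ principal minors: for a symmetric positive semidefinite matrix $A$ with non-negative diagonal, $A_{ij}^2 \le A_{ii}A_{jj} \le \max(A_{ii},A_{jj})^2 \le (\mathrm{Tr}\,A)^2$, which yields the same conclusion applied to $A = {\mathcal I}^{\sigma_*}$ and $A = -{\mathcal R}^{\sigma_*}$.
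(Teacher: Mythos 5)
Your proof is correct. The chain of inequalities you use — Cauchy--Schwarz, then the arithmetic--geometric mean bound, then absorbing the two diagonal terms into the full trace — is the same chain the paper uses, but you execute it one level of abstraction higher: you invoke Theorem~\ref{thm:realandimagpts} to treat $-R^{\sigma_*}$ and $I^{\sigma_*}$ as inner products on ${\mathbb R}^3$ and apply Cauchy--Schwarz to the resulting $3\times 3$ Gram structure, whereas the paper goes back to the explicit $L^2$ representations of $({\mathcal R}^{\sigma_*})_{ij}$ and $({\mathcal I}^{\sigma_*})_{ij}$ from Theorem~\ref{thm:realandimagpts}, splits the ${\mathcal R}^{\sigma_*}$ integral into its ${\vec B}$ and ${\vec B}^c$ contributions by the triangle inequality, applies the Cauchy--Schwarz inequality separately in each weighted $L^2$ space, and then uses $cd\le c^2+d^2$ before recognising the sum over $k$ of the resulting norms as the trace via Corollary~\ref{coll:diagonalcoeff}. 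Your route is shorter and makes transparent that the result holds for \emph{any} real symmetric positive (semi)definite tensor with the trace playing the role of an upper bound on the diagonal, which is exactly the content of your closing remark about $2\times 2$ principal minors; the paper's route has the minor advantage of not needing the inner-product (definiteness) assertion at all, only the integral formulas, but at the cost of a slightly looser intermediate step (bounding the modulus of a sum by the sum of moduli before applying Cauchy--Schwarz). Your sign bookkeeping for ${\mathcal R}^{\sigma_*}$ — diagonal entries and trace non-positive, hence the absolute value on $\mathrm{Tr}({\mathcal R}^{\sigma_*})$ — is handled correctly and matches the statement.
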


\begin{proof}
First we construct an upper bound on $|({\mathcal R}^{\sigma_*})_{ij}|$ for $i \ne j$ as
\begin{align}
| ({\mathcal R}^{\sigma_*})_{ij}| = &  \frac{\alpha^3}{4}\left |    \int_{{\vec B}  \cup {\vec B}^c} \tilde{\mu_r}^{-1} \nabla \times {\vec \Theta}^{(1)}({\vec e}_j) \cdot \nabla \times \overline{{\vec \Theta}^{(1)}({\vec e}_i) } \dif {\vec \xi} 
  \right | \nonumber  \\
 \le & \frac{\alpha^3}{4}  \left (  \left | \int_{\vec B} {\mu_r}^{-1} \nabla \times {\vec \Theta}^{(1)}({\vec e}_j) \cdot \nabla \times \overline{{\vec \Theta}^{(1)}({\vec e}_i)} \dif {\vec \xi} \right |  +   \left | \int_{{\vec B}^c} \nabla \times {\vec \Theta}^{(1)} ({\vec e}_j) \cdot \nabla \times \overline{{\vec \Theta}^{(1)}({\vec e}_i)} \dif {\vec \xi} \right | \right ) \nonumber \\
 \le & \frac{\alpha^3}{4}  \left (   \|  \nabla \times {\vec \Theta}^{(1)} ({\vec e}_j)\|_{W(\mu_r^{-1}, {\vec B})} \|  \nabla \times {\vec \Theta}^{(1)} ({\vec e}_i ) \|_{W(\mu_r^{-1}, {\vec B} )}  \right . \nonumber \\
 & \left . + \|  \nabla \times {\vec \Theta}^{(1)}({\vec e}_j) \|_{L^2({\vec B}^c)} \|  \nabla \times {\vec \Theta}^{(1)}({\vec e}_i) \|_{L^2({\vec B}^c)} \right ), \nonumber 
 \end{align}
which follows by application of the Cauchy-Schwartz inequality. From $(c-d)^2+2cd =c^2+d^2$ we have $cd <2cd < c^2+d^2 $ for real $c>0$ and $d>0$ and so
\begin{align}
|({\mathcal R}^{\sigma_*})_{ij}|  \le &  \frac{\alpha^3}{4}  \left (  \|  \nabla \times {\vec \Theta}^{(1)}({\vec e}_i) \|_{W(\mu_r^{-1}, {\vec B} )}^2 +  \|  \nabla \times {\vec \Theta}^{(1)} ({\vec e}_i) \|_{L^2({\vec B}^c)} ^2 \right . \nonumber \\
& \left . +
  \|  \nabla \times {\vec \Theta}^{(1)}({\vec e}_j) \|_{W(\mu_r^{-1}, {\vec B})} ^2+  \|  \nabla \times {\vec \Theta}^{(1)}({\vec e}_j) \|_{L^2({\vec B}^c)} ^2 \right ) \nonumber\\
 \le &  \frac{\alpha^3}{4}  \left (  \sum_{k=1}^3  \|  \nabla \times {\vec \Theta}^{(1)} ({\vec e}_k) \|_{W(\mu_r^{-1}, {\vec B} )} ^2 + \|  \nabla \times {\vec \Theta}^{(1)}({\vec e}_k) \|_{L^2({\vec B} )} ^2  \right ) \nonumber \\
 \le & \left |  \sum_{k=1}^3 ( {\mathcal R}^{\sigma_*})_{kk} \right  | ,  \nonumber
 \end{align}
 as desired. In a similar fashion, for $i\ne j$,
 \begin{align}
 |( {\mathcal I}^{\sigma_*})_{ij}  | = &  \frac{  \alpha^3 }{4} \left |   \int_{\vec B} \frac{1}{\nu} \nabla \times \mu_r^{-1} \nabla \times {\vec \Theta}^{(1)} ({\vec e}_j)\cdot
 \nabla \times \mu_r^{-1} \nabla \times \overline{{\vec \Theta}^{(1)}({\vec e}_i)} \dif {\vec \xi}  \right | \nonumber \\
  \le & \frac{  \alpha^3 }{4}  \left ( \| \nabla \times \mu_r^{-1} \nabla \times {\vec \Theta}^{(1)} ({\vec e}_i) \|_{W(\nu^{-1}, {\vec B} )}  \| \nabla \times \mu_r^{-1} \nabla \times {\vec \Theta}^{(1)} ({\vec e}_j) \|_{W(\nu^{-1}, {\vec B} )}  \right ) \nonumber \\
 \le & \frac{  \alpha^3 }{4}  \left ( \| \nabla \times \mu_r^{-1} \nabla \times {\vec \Theta}^{(1)}  ({\vec e}_i)\|_{W(\nu^{-1}, {\vec B} )}^2+  \| \nabla \times \mu_r^{-1} \nabla \times {\vec \Theta}^{(1)} ({\vec e}_j) \|_{W(\nu^{-1}, {\vec B} )}  ^2 \right ) \nonumber \\
 \le & \frac{  \alpha^3 }{4} \sum_{k=1}^3 ( {\mathcal I}^{\sigma_*})_{kk} \nonumber ,
  \end{align}
  since $({\mathcal I}^{\sigma_*})_{kk}  > 0$ giving the result as desired.
\end{proof}

\section{Eigenvalues of ${\mathcal R}^{\sigma_*}$, ${\mathcal I}^{\sigma_*} $ and ${\mathcal N}^0 $} \label{sect:eig}
As ${\mathcal R}^{\sigma_*}$ and ${\mathcal I}^{\sigma_*}$ are real symmetric tensors, their coefficients, when arranged in the form of a $3\times 3$ matrices, can be diagonalised by  orthogonal matrices $Q^{{\mathcal R}^{\sigma_*}}$ and $Q^{{\mathcal I}^{\sigma_*}}$, respectively, so that $\Lambda^{{\mathcal R}^{\sigma_*}}$ and $\Lambda^{{\mathcal I}^{\sigma_*}}$ are diagonal and
\begin{subequations}
\begin{align}
(\Lambda^{{\mathcal R}^{\sigma_*}})_{ij} =  & ( (Q^{{\mathcal R}^{\sigma_*}})^T  {\mathcal R}^{\sigma_*} Q^{{\mathcal R}^{\sigma_*}})_{ij} = (Q^{{\mathcal R}^{\sigma_*}})_{ki} ({\mathcal R}^{\sigma_*})_{kp} (Q^{{\mathcal R}^{\sigma_*}})_{pj} , \\
(\Lambda^{{\mathcal I}^{\sigma_*}})_{ij} =  & ( (Q^{{\mathcal I}^{\sigma_*}})^T  {\mathcal I}^{\sigma_*} Q^{{\mathcal I}^{\sigma_*}})_{ij} = (Q^{{\mathcal I}^{\sigma_*}})_{ki} ({\mathcal I}^{\sigma_*})_{kp} (Q^{{\mathcal I}^{\sigma_*}})_{pj} .
\end{align}
\end{subequations}
Moreover, the diagonal entries of $\Lambda^{{\mathcal R}^{\sigma_*}}$ and $\Lambda^{{\mathcal I}^{\sigma_*}}$ are the eigenvalues of ${\mathcal R}^{\sigma_*}$ and ${\mathcal I}^{\sigma_*}$, respectively, and the columns of the matrices $Q^{{\mathcal R}^{\sigma_*}}$ and $Q^{{\mathcal I}^{\sigma_*}}$ are their eigenvectors.  In a similar way, ${\mathcal N}^0$ can be diagonalised by the orthogonal matrix $Q^{{\mathcal N}^0}$ containing the eigenvectors of ${\mathcal N}^0$ so that
\begin{equation}
(\Lambda^{{\mathcal N}^0})_{ij} =   ( (Q^{{\mathcal R}^{0 }})^T  {\mathcal N}^{0} Q^{{\mathcal R}^{ 0}})_{ij} = (Q^{{\mathcal R}^{ 0 }})_{ki} ({\mathcal N}^{0})_{kp} (Q^{{\mathcal R}^{0}})_{pj} ,
\end{equation}
are the elements of a diagonal matrix containing the eigenvalues of ${\mathcal N}^0$.

The orthogonal matrices $Q^{{\mathcal R}^{\sigma_*}}$, $Q^{{\mathcal I}^{\sigma_*}}$ and $Q^{{\mathcal N}^{0}}$ can also be viewed as rotations of the object ${\vec B}$ such that 
 ${\mathcal R}^{\sigma_*}[Q^{{\mathcal R}^{\sigma_*}}({\vec B})]$, ${\mathcal I}^{\sigma_*}[Q^{{\mathcal I}^{\sigma_*}}({\vec B})]$ and ${\mathcal N}^0[Q^{{\mathcal N}^{0}}({\vec B}) ]$~\footnote{In a similar way (\ref{eqn:asymp})  the square brackets used here emphasise the the object for which the tensor is evaluated.} are diagonal and their entries being the associated eigenvalues. We summarise this as the  main result of this section:
\begin{theorem} \label{thm:tensoreigs}
The eigenvalues of  ${\mathcal R}^{\sigma_*}$, ${\mathcal I}^{\sigma_*}$ and ${\mathcal N}^0$ can be explicitly expressed as the diagonal coefficients
\begin{subequations}
\begin{align}
(\Lambda^{{\mathcal R}^{\sigma_*}})_{ii} =& ({\mathcal R}^{\sigma_*} [ Q^{{\mathcal R}^{\sigma_*}}({\vec B})])_{ii}  \nonumber \\
=&  - \frac{\alpha^3}{4} \left  (  \| \nabla \times {\vec \Theta}^{(1)} ({\vec e}_i) \|_{W(\mu_r^{-1}, Q^{{\mathcal R}^{\sigma_*}}({\vec B}))}^2 +  \| \nabla \times {\vec \Theta}^{(1)} ({\vec e}_i) \|_{L^2(Q^{{\mathcal R}^{\sigma_*}}({\vec B}^c) )}^2 \right ), \\
(\Lambda^{{\mathcal I}^{\sigma_*}})_{ii} & =( {\mathcal I}^{\sigma_*} [ Q^{{\mathcal I}^{\sigma_*}}({\vec B})])_{ii} \nonumber \\
 = &  \frac{\alpha^3}{4 }  \left (  \|\nabla \times \mu_r^{-1}\nabla \times {\vec \Theta}^{(1)} ({\vec e}_i) \|_{W(\nu^{-1},Q^{{\mathcal I}^{\sigma_*}}({\vec B}))}^2  \right ) , \\
(\Lambda^{{\mathcal N}^{0}})_{ii} & = ( {\mathcal N}^{0} [ Q^{{\mathcal N}^{0}}({\vec B})])_{ii} \nonumber \\
 = &\frac{\alpha^3}{4}   \left ( 4\int_{ Q^{{\mathcal R}^{0}}({\vec B})}  ( 1- \mu_r^{-1} ) \dif {\vec \xi} +   \| \nabla \times \tilde{\vec \Theta}^{(0)}({\vec e}_i) \|_{L^2 ( {Q^{{\mathcal R}^{0}}({\vec B}^c } )} ^2 \right . \nonumber \\
& \left . +\|  \nabla \times \tilde{\vec \Theta}^{(0)}({\vec e}_i) \|^2_{W( \mu_r^{-1} ,Q^{{\mathcal R}^{0}}({\vec B}))}  \right ) ,
\end{align}
\end{subequations}
where the repeated index $i$ does not imply summation and ${\vec \Theta}^{(1)}({\vec u})$  is the solution to
\begin{subequations}
\begin{align} 
\nabla_\xi \times \mu_r^{-1} \nabla_\xi \times {\vec \Theta} ^{(1)} ({\vec u})- \im \nu ({\vec \Theta}^{(1)} ({\vec u}))+{\vec \Theta}^{(0)}({\vec u}))  & = {\vec 0}   && \hbox{in $Q({\vec B} )$}  , \\
\nabla_\xi \times  \nabla_\xi \times {\vec \Theta} ^{(1)} ({\vec u}) & = {\vec 0}   && \hbox{in $Q(  {\vec B}^c)$} , \\
\nabla_\xi \cdot {\vec \Theta}^{(1)} ({\vec u}) &= 0 && \hbox{in $Q({\vec B}^c)$} , \\
\left [ {\vec \Theta}^{(1)} ({\vec u}) \times {\vec n} \right ]_{ Q(\Gamma)}  & = {\vec 0}  && \hbox{on $ Q(\Gamma)$} ,\\ 
  \left [   \tilde{\mu}_r^{-1}   \nabla_\xi  \times {\vec \Theta}^{(1)} ({\vec u}) \times {\vec n} \right ]_{ Q(\Gamma)}  & = 
 {\vec 0} &&
\hbox{on ${ Q(\Gamma)}$} ,\\ 
{\vec \Theta}^{(1)}  & = O(|{\vec \xi} |^{-1}) && \hbox{as $|{\vec \xi}| \to \infty$} ,
\end{align} 
\end{subequations}
with $Q=Q^{{\mathcal R}^{\sigma_*}}$, $Q=Q^{{\mathcal I}^{\sigma_*}}$, respectively. In addition,  ${\vec \Theta}^{(0)}({\vec u})= \tilde{\vec \Theta}^{(0)}({\vec u}) +{\vec u} \times {\vec \xi}$  is the solution to
\begin{subequations}
\begin{align} 
\nabla_\xi \times \tilde{\mu}_r^{-1} \nabla_\xi \times {\vec \Theta}^{(0)}({\vec u}) & ={\vec 0}  && \hbox{in ${Q ({\vec B})} \cup {Q({\vec B}^c)}$}  , \\
\nabla_\xi \cdot {\vec \Theta}^{(0)}({\vec u} ) &= 0  && \hbox{in ${Q ({\vec B})}\cup {Q ({\vec B}^c)}$} , \\
\left [ {\vec \Theta}  ^{(0)} ({\vec u}) \times {\vec n} \right ]_{ {Q (\Gamma)}}   &= {\vec 0}   && \hbox{on $ Q (\Gamma)$} ,\\ 
\,  \left [   \tilde{\mu}_r^{-1}   \nabla_\xi  \times {\vec \Theta}^{(0)}({\vec u} )  \times {\vec n} \right ]_{ {Q (\Gamma)}}  & = 
{\vec 0}  &&
\hbox{on $ {Q (\Gamma )}$} ,\\ 
{\vec \Theta}^{(0)} - {\vec u} \times {\vec \xi} & = O(|{\vec \xi} |^{-1})  && \hbox{as $|{\vec \xi}| \to \infty$} ,
\end{align} 
\end{subequations}
with $Q=Q^{{\mathcal R}^{\sigma_*}}$, $Q=Q^{{\mathcal I}^{\sigma_*}}$, $Q=Q^{{\mathcal N}^0 }$, respectively.
\end{theorem}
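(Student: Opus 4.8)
The plan is to reduce the statement to a single change-of-variables identity, namely the behaviour of the three tensors under a rigid rotation of the object. Since ${\mathcal R}^{\sigma_*}$, ${\mathcal I}^{\sigma_*}$ and ${\mathcal N}^0$ are real symmetric, their eigenvalues are exactly the diagonal entries of the matrices obtained by conjugating them with their (orthogonal) eigenvector matrices, so everything follows once we know that conjugation by an orthogonal $Q$ coincides with rotating the object, i.e.\
\[
{\mathcal R}^{\sigma_*}[Q({\vec B})] = Q\,{\mathcal R}^{\sigma_*}[{\vec B}]\,Q^T , \qquad {\mathcal I}^{\sigma_*}[Q({\vec B})] = Q\,{\mathcal I}^{\sigma_*}[{\vec B}]\,Q^T , \qquad {\mathcal N}^{0}[Q({\vec B})] = Q\,{\mathcal N}^{0}[{\vec B}]\,Q^T ,
\]
where, after replacing a column of $Q$ by its negative if necessary, we may assume $\det Q = 1$ without affecting the diagonalisation. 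Granting this covariance, I would take $Q$ to be the orthogonal matrix that diagonalises ${\mathcal R}^{\sigma_*}$ (the matrix $Q^{{\mathcal R}^{\sigma_*}}$ of Section~\ref{sect:eig}); then ${\mathcal R}^{\sigma_*}[Q({\vec B})]$ is the diagonal matrix $\Lambda^{{\mathcal R}^{\sigma_*}}$ of eigenvalues, and its entries are obtained merely by reading off the diagonal-coefficient formula of Corollary~\ref{coll:diagonalcoeff} for the rotated object $Q^{{\mathcal R}^{\sigma_*}}({\vec B})$, with ${\vec\Theta}^{(1)}$ now the solution of the transmission problem posed on that configuration. The expressions for ${\mathcal I}^{\sigma_*}$ and ${\mathcal N}^0$ follow identically, using Corollary~\ref{coll:diagonalcoeff} and Corollary~\ref{coll:diagonalcoeffn0} together with the eigenvector matrices $Q^{{\mathcal I}^{\sigma_*}}$ and $Q^{{\mathcal N}^0}$; this is precisely the asserted theorem.

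To prove the covariance I would transport the transmission problems under $\vec\xi\mapsto\vec\xi'=Q\vec\xi$. The claim to verify is that if ${\vec\Theta}^{(1)}_{\vec B}(\vec w)$ and ${\vec\Theta}^{(0)}_{\vec B}(\vec w)$ solve (\ref{eqn:transproblem1var}) and (\ref{eqn:transproblem0var}) for the object ${\vec B}$ with source $\vec w$, then $\vec\xi'\mapsto Q\,{\vec\Theta}^{(1)}_{\vec B}(Q^T\vec w)(Q^T\vec\xi')$ and $\vec\xi'\mapsto Q\,{\vec\Theta}^{(0)}_{\vec B}(Q^T\vec w)(Q^T\vec\xi')$ (and hence the push-forward of $\tilde{\vec\Theta}^{(0)}_{\vec B}$, cf.\ (\ref{eqn:transproblem0til})) solve the corresponding problems on $Q({\vec B})$ with the same source $\vec w$. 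This is checked by assembling the elementary transformation rules, valid because $Q$ is orthogonal with $\det Q = 1$: the curl is covariant, $\big(\nabla\times(Q\,\vec f(Q^T\cdot))\big)(\vec\xi') = Q\,(\nabla\times\vec f)(Q^T\vec\xi')$; the divergence, the Euclidean length and hence the far-field decay $O(|\vec\xi|^{-1})$ are invariant; the cross product satisfies $Q(\vec a\times\vec b) = (Q\vec a)\times(Q\vec b)$, whence $Q\big((Q^T\vec w)\times(Q^T\vec\xi)\big) = \vec w\times\vec\xi'$ and $Q\big((Q^T\vec w)\times\vec n\big) = \vec w\times(Q\vec n)$; the outward unit normal on $\partial(Q({\vec B}))$ at $\vec\xi'$ equals $Q\,\vec n(Q^T\vec\xi')$; and the piecewise-constant data transform as $\mu_r^{Q({\vec B})}(\vec\xi') = \mu_r^{\vec B}(Q^T\vec\xi')$, $\nu^{Q({\vec B})}(\vec\xi') = \nu^{\vec B}(Q^T\vec\xi')$. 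Inserting the transported field into (\ref{eqn:rform2}), changing variables (unit Jacobian) and using $Q\vec a\cdot\overline{Q\vec b} = \vec a\cdot\overline{\vec b}$, one gets $R^{\sigma_*}[Q({\vec B})](\vec u,\vec v) = R^{\sigma_*}[{\vec B}](Q^T\vec u,Q^T\vec v)$; expanding $Q^T\vec e_i$ in the basis $\{\vec e_k\}$ and invoking bilinearity turns this into ${\mathcal R}^{\sigma_*}[Q({\vec B})] = Q\,{\mathcal R}^{\sigma_*}[{\vec B}]\,Q^T$, and the arguments starting from (\ref{eqn:iform2}) and (\ref{eqn:n0form2}) are structurally identical.

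The change-of-variables bookkeeping is routine; the step I expect to require the most care is checking that the transported fields genuinely satisfy the interface conditions on $Q(\Gamma)$ --- the tangential continuity $[\vec n\times{\vec\Theta}]_\Gamma = \vec 0$ and, in particular, the weighted jump of the curl $[\tilde\mu_r^{-1}\,\nabla\times{\vec\Theta}\times\vec n]_\Gamma = -2[\tilde\mu_r^{-1}]_\Gamma\,\vec w\times\vec n$ of (\ref{eqn:transproblem0til}) together with its homogeneous analogues in (\ref{eqn:transproblem1var}) and (\ref{eqn:transproblem0var}) --- since this is where one must use simultaneously the covariance of the curl (which fails for improper $Q$, forcing the reduction to $\det Q = 1$), the transformation $Q\,\vec n(Q^T\cdot)$ of the normal, and the covariance of $\mu_r$ and $\nu$. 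Once these are matched the identity $Q(\vec a\times\vec b) = (Q\vec a)\times(Q\vec b)$ makes every jump condition transform consistently, and the theorem then follows immediately on combining the covariance with Corollaries~\ref{coll:diagonalcoeffn0} and~\ref{coll:diagonalcoeff}.
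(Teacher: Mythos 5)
Your proposal is correct and follows essentially the same route as the paper: establish that ${\mathcal R}^{\sigma_*}$, ${\mathcal I}^{\sigma_*}$ and ${\mathcal N}^0$ transform covariantly under a rotation of the object, choose the rotation to be the respective eigenvector matrix so that the rotated tensor is diagonal with the eigenvalues on its diagonal, and then read off the diagonal coefficients from Corollaries~\ref{coll:diagonalcoeffn0} and~\ref{coll:diagonalcoeff} applied to the rotated configuration. The only difference is one of emphasis: you prove the covariance directly by transporting each transmission problem under $\vec\xi\mapsto Q\vec\xi$ (which, incidentally, separates the transformation of ${\mathcal N}^0$ from that of ${\mathcal R}^{\sigma_*}$ more cleanly), whereas the paper invokes the known transformation rule for ${\mathcal M}$ and splits it into its three constituent tensors.
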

\begin{proof}
Under the action of a rotation $Q$, the MPT's coefficients transform as $({\mathcal M}')_{ij}   =({\mathcal M} [Q ({\vec B})])_{ij} = (Q)_{ip} (Q)_{jq} ({\mathcal M})_{pq}$. Thus,
\begin{align}
({\mathcal N}^{0'})_{ij}+( {\mathcal Q}^{\sigma_* ' })_{ij}+\im ({\mathcal I}^{\sigma_* ' })_{ij}= & (Q)_{ip} (Q)_{jq}( ({\mathcal N}^{0})_{pq}+ ({\mathcal R}^{\sigma_*  })_{pq}+\im( {\mathcal I}^{\sigma_* } )_{pq} ) \nonumber \\
( {\mathcal N}^{0'}+ {\mathcal R}^{\sigma_* ' })_{ij} +\im( {\mathcal I}^{\sigma_* ' })_{ij}= & (Q)_{ip} (Q)_{jq} ( {\mathcal N}^{0}+ {\mathcal R}^{\sigma_*  })_{pq}+\im (Q)_{ip} (Q)_{jq} ( {\mathcal I} ^{\sigma_* })_{pq},  \nonumber
\end{align}
 and so $( {\mathcal N}^{0'} )_{ij}=({\mathcal N}^{0}[Q({\vec B})] )_{ij} = (Q)_{ip} (Q)_{jq} ({\mathcal N}^{0})_{pq}$, $( {\mathcal R}^{\sigma_* ' })_{ij} =({\mathcal R}^{\sigma_*  } [Q({\vec B})] )_{ij}= (Q)_{ip} (Q)_{jq} ({\mathcal R}^{\sigma_*})_{pq}$ and $ ({\mathcal I}^{\sigma_* ' } )_{ij}=({\mathcal I}^{\sigma_*  } [Q({\vec B})] )_{ij}= (Q)_{ip} (Q)_{jq} ({\mathcal I}^{\sigma_*})_{pq}$. Choosing $Q=Q^{{\mathcal R}^{\sigma_*}}$, and noting that under the action of this rotation  ${\vec B}$ becomes $Q^{{\mathcal R}^{\sigma_*}}({\vec B} )$, then, by the application of Corollary~\ref{coll:diagonalcoeff} for the rotated object configuration we have
 \begin{align}
( {\mathcal R}^{\sigma_* ' } )_{ii}& = ({\mathcal R}^{\sigma_* }[Q^{{\mathcal R}^{\sigma_*}}({\vec B})])_{ii} =  - \frac{\alpha^3}{4} \left  (  \| \nabla \times {\vec \Theta}^{(1)} ({\vec e}_i) \|_{W(\mu_r^{-1},   Q^{{\mathcal R}^{\sigma_*}}(B))}^2 +  \| \nabla \times {\vec \Theta}^{(1)} ({\vec e}_i)  \|_{L^2(Q^{{\mathcal R}^{\sigma_*}}(B^c) )}^2 \right ) \nonumber \\
 {} & = (Q^{\sigma_*})_{ip} (Q^{\sigma_*})_{iq}( {\mathcal R}^{\sigma_*})_{pq} =  (\Lambda^{{\mathcal R}^{\sigma_*}})_{ii} ,\nonumber
  \end{align}
for the diagonal coefficients, where the repeated index $i$ does not imply summation.  Repeating similar steps for $Q=Q^{{\mathcal I}^{\sigma_*}}$ and $Q=Q^{{\mathcal N}^{0}} $ gives the corresponding result for $\Lambda^{{\mathcal I}^{\sigma_*}}$ and $\Lambda^{{\mathcal N}^{0}}$.
 \end{proof}
 
 \begin{remark}
When  $Q=Q^{{\mathcal R}^{\sigma_*}}$ is applied to ${\vec B}$, the resulting
$  {\mathcal R}^{\sigma_* '}={\mathcal R}^{\sigma_* }[Q^{{\mathcal R}^{\sigma_*}}({\vec B})] $
 will necessarily be diagonal and will have the eigenvalues of ${\mathcal R}^{\sigma_* }[{\vec B}]$ as its diagonal coefficients, i.e. $\Lambda^{{\mathcal R}^{\sigma_*}} = {\mathcal R}^{\sigma_* '}$. 
  Since 
$ {\mathcal R}^{\sigma_* '} $  is diagonal, the eigenvalues of  $ {\mathcal R}^{\sigma_* '}  $ are its diagonal entries, i.e. $\Lambda^{{\mathcal R}^{\sigma_*'}} =  {\mathcal R}^{\sigma_* '}$, and the eigenvectors of  $ {\mathcal R}^{\sigma_* '}  $  form the columns of  ${\mathbb I}$. However, the eigenvectors of ${\mathcal R}^{\sigma_* }[{\vec B}] $ do not, in general, form the columns of ${\mathbb I}$ unless the object has rotational or reflectional symmetries.  It follows that the eigenvalues contained in $\Lambda^{{\mathcal R}^{\sigma_*}}= \Lambda^{{\mathcal R}^{\sigma_*'}}$ are invariant under the action of rotation of an object, but the eigenvectors of ${\mathcal R}^{\sigma_* }$ are not. Using similar, arguments we also get that $\Lambda^{{\mathcal I}^{\sigma_*'}}=\Lambda^{{\mathcal I}^{\sigma_*}}$ and $\Lambda^{{\mathcal N}^{0'}}=\Lambda^{{\mathcal N}^{0}}$ are invariant under rotation.
 \end{remark}

 \begin{corollary}
Excluding the limiting cases of zero frequency  and infinite conductivity, ${\mathcal R}^{\sigma_*}$ is negative definite and ${\mathcal I}^{\sigma_*}$ is positive definite.
If $\mu({\vec \xi}) >1 $ for ${\vec \xi} \in {\vec B}$, ${\mathcal N}^{0}$ is positive definite for an inhomogeneous object.  For a homogeneous object, ${\mathcal N}^{0}$ is positive definite if $\mu_r >1 $ and negative definite if $\mu_r <1$. For the limiting case of zero frequency, ${\mathcal M} = {\mathcal N}^0$ and, thus, has the aforementioned properties of the real tensor ${\mathcal N}^0$ and, for the limiting case of infinite conductivity, ${\mathcal M} \to  {\mathcal M}(\infty)$  is real and negative definite.
\end{corollary}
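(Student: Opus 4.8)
The plan is to reduce each definiteness assertion to a statement about the sign of the associated quadratic form and then to quote the inner‑product properties and limiting‑case formulas already established. The elementary observation I would use throughout is that for a symmetric bilinear form $A:{\mathbb R}^3\times{\mathbb R}^3\to{\mathbb R}$ with associated tensor ${\mathcal A}=A({\vec e}_i,{\vec e}_j){\vec e}_i\otimes{\vec e}_j$ one has $({\mathcal A})_{ij}({\vec u})_i({\vec u})_j=A({\vec u},{\vec u})$ by bilinearity, so ${\mathcal A}$ is positive (respectively negative) definite precisely when $A({\vec u},{\vec u})>0$ (respectively $<0$) for every ${\vec u}\ne{\vec 0}$.

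First I would treat ${\mathcal R}^{\sigma_*}$ and ${\mathcal I}^{\sigma_*}$ away from the limiting cases. Theorem~\ref{thm:realandimagpts} asserts that $-R^{\sigma_*}({\vec u},{\vec v})$ and $I^{\sigma_*}({\vec u},{\vec v})$ are inner products on real vectors, hence $-R^{\sigma_*}({\vec u},{\vec u})>0$ and $I^{\sigma_*}({\vec u},{\vec u})>0$ for all ${\vec u}\ne{\vec 0}$; by the observation above this gives at once that ${\mathcal R}^{\sigma_*}$ is negative definite and ${\mathcal I}^{\sigma_*}$ is positive definite (the explicit eigenvalue expressions of Theorem~\ref{thm:tensoreigs} give the same conclusion). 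The two limiting cases are excluded because the non‑degeneracy fails there: at $\nu=0$ one has ${\vec \Theta}^{(1)}({\vec u})={\vec 0}$ by Lemma~\ref{lemma:msplitting1}, so ${\mathcal R}^{\sigma_*}$ and ${\mathcal I}^{\sigma_*}$ vanish, and $\nu\to\infty$ is not admissible in (\ref{eqn:asymp}).

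Next, for ${\mathcal N}^0$ with an inhomogeneous object, Theorem~\ref{thm:formsn0} states that $N^0$ is an inner product provided $\mu_r\ge 1$ on ${\vec B}$; under the hypothesis $\mu_r({\vec \xi})>1$ this yields $N^0({\vec u},{\vec u})>0$ for ${\vec u}\ne{\vec 0}$, so ${\mathcal N}^0$ is positive definite. For a homogeneous object, where the inner‑product route of Theorem~\ref{thm:formsn0} only covers $\mu_r\ge 1$, I would instead use Theorem~\ref{thm:tensoreigs}: the eigenvalues of ${\mathcal N}^0$ are the diagonal entries of ${\mathcal N}^0[Q^{{\mathcal N}^0}({\vec B})]$, and by the homogeneous form (\ref{eqn:n0form3h}) of Corollary~\ref{coll:diagonalcoeffn02}, applied to the rotated object, each such entry equals $\frac{\alpha^3}{4}\frac{\mu_r+1}{\mu_r-1}$ times a quantity that is manifestly nonnegative and vanishes only for the trivial argument. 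Since $\mu_r+1>0$, the sign of every eigenvalue is that of $\mu_r-1$, so ${\mathcal N}^0$ is positive definite when $\mu_r>1$ and negative definite when $\mu_r<1$.

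Finally, the limiting values of ${\mathcal M}$: by Lemma~\ref{lemma:limitcase}, $M({\vec u},{\vec v};0)=N^0({\vec u},{\vec v})$, so ${\mathcal M}(0)={\mathcal N}^0$ and inherits the definiteness just established; and $M({\vec u},{\vec u};\infty)=-\alpha^3\|{\vec u}\|_{L^2({\vec B})}^2-\frac{\alpha^3}{4}\|\nabla\times{\vec \Theta}^{(\infty)}({\vec u})\|_{L^2({\vec B}^c)}^2$, which is real because ${\vec \Theta}^{(\infty)}$ is real, and is strictly negative for ${\vec u}\ne{\vec 0}$ since ${\vec u}$ is constant so $\|{\vec u}\|_{L^2({\vec B})}^2=|{\vec B}|\,|{\vec u}|^2>0$ and $\alpha^3>0$; hence ${\mathcal M}(\infty)$ is real and negative definite. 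I do not expect a genuine obstacle; the only points needing care are the homogeneous $\mu_r<1$ case for ${\mathcal N}^0$, which falls outside the hypotheses of the inner‑product statement and must be read off from the eigenvalue formula together with (\ref{eqn:n0form3h}), and the correct attribution of the degeneracy of ${\mathcal R}^{\sigma_*}$ and ${\mathcal I}^{\sigma_*}$ at the excluded endpoints.
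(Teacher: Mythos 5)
Your proposal is correct, and for most of the statement it follows the same underlying facts as the paper, differing only in which face of those facts is invoked: you derive the definiteness of ${\mathcal R}^{\sigma_*}$, ${\mathcal I}^{\sigma_*}$ and of ${\mathcal N}^0$ (inhomogeneous, $\mu_r>1$) from the inner--product statements in Theorems~\ref{thm:realandimagpts} and~\ref{thm:formsn0} via the identity $({\mathcal A})_{ij}({\vec u})_i({\vec u})_j=A({\vec u},{\vec u})$, whereas the paper reads the same signs off the explicit eigenvalue expressions of Theorem~\ref{thm:tensoreigs}; you correctly note these are interchangeable. The one genuinely different step is the homogeneous ${\mathcal N}^0$ case with $\mu_r<1$: the paper simply cites the known definiteness of the P\'olya--Szeg\"o tensor from the Ammari--Kang book, while you give a self-contained internal argument by applying Theorem~\ref{thm:tensoreigs} to the rotated object and using the factorisation (\ref{eqn:n0form3h}) of Corollary~\ref{coll:diagonalcoeffn02}, in which the bracketed quantity is bounded below by $|B|>0$ so every eigenvalue carries the sign of $\mu_r-1$. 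Your route buys independence from the external reference at the cost of a slightly longer argument; the paper's is shorter but leans on the literature. The treatment of the limiting cases via Lemma~\ref{lemma:limitcase}, i.e.\ (\ref{eqn:limit0}) and (\ref{eqn:limitinf}), coincides with the paper's, and your observation that $M({\vec u},{\vec u};\infty)\le-\alpha^3|{\vec B}|\,|{\vec u}|^2<0$ for ${\vec u}\ne{\vec 0}$ correctly supplies the strict negativity the paper leaves implicit. The only cosmetic slip is attributing ${\vec \Theta}^{(1)}({\vec u})={\vec 0}$ at $\nu=0$ to Lemma~\ref{lemma:msplitting1} rather than to the proof of Lemma~\ref{lemma:limitcase}, which does not affect the argument.
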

\begin{proof}
Choosing $\nu \in (0, \infty)$
excludes the limiting cases of zero frequency and infinite conductivity~\cite{ledgerlionheart2016}. The definiteness of ${\mathcal R}^{\sigma_*}$, ${\mathcal I}^{\sigma_*}$, for $\nu \in (0,\infty)$,
and ${\mathcal N}^{0}$, for  $\mu({\vec \xi}) >1 $ for ${\vec \xi} \in {\vec B}$,
 follow from Theorem~\ref{thm:tensoreigs}. The results on ${\mathcal N}^{0}$ for a homogeneous object follow from~\cite[pg. 93]{ammarikangbook}, since ${\mathcal N}^0$ coincides with the P\'olya-Szeg\"o tensor for a homogenous object. The results on the limiting cases  follow from (\ref{eqn:limit0}) and (\ref{eqn:limitinf})  by considering $\nu=0$ and $\nu \to \infty$, respectively.
%
 \end{proof}

\section{Spectral analysis of ${\mathcal M}=  {\mathcal N}^0 + {\mathcal R}^{\sigma_*} + \im {\mathcal I}^{\sigma_*} $ for an object with homogeneous $\sigma_*$} \label{sect:spectrum}
In this section, we investigate how  ${\mathcal M}$ depends on $\omega$. An illustration of the typical behaviour of $\text{Re} ( {\mathcal M}(\omega)) ={\mathcal N}^0 +  {\mathcal R}^{\sigma_*}(\omega) $ and $\text{Im} ( {\mathcal M} (\omega) )={\mathcal I}^{\sigma_*}(\omega)  $ for the case of a conducting sphere $B_\alpha = \alpha B$ with radius $\alpha=0.01\text{m}$ and material parameters $\mu_r=1.5$ and $\sigma_*=5.96 \times 10^6 \text{S/m}$ is shown in Figure~\ref{fig:sphere}. This plot is obtained by evaluating the known analytical solution provided by Wait~\cite{Wait1951}.
Here,   $\Lambda^{{\mathcal R}^{\sigma_*}}$, $\Lambda^{{\mathcal I}^{\sigma_*}}$ and $\Lambda^{{\mathcal N}_0}$ each contain a single repeated eigenvalue of multiplicity three 
 as ${\mathcal R}^{\sigma_*}$, ${\mathcal N}^0$ and ${\mathcal I}^{\sigma_*}(\omega) $ are each a multiple of ${\mathbb I}$. Numerical results for other object shapes can be found in~\cite{ledgerlionheart2016,ledgerlionheart2018,ledgerlionheart2018mathmeth}.
\begin{figure}[!h]
\begin{center}
$\begin{array}{c}
\includegraphics[width=0.6\textwidth]{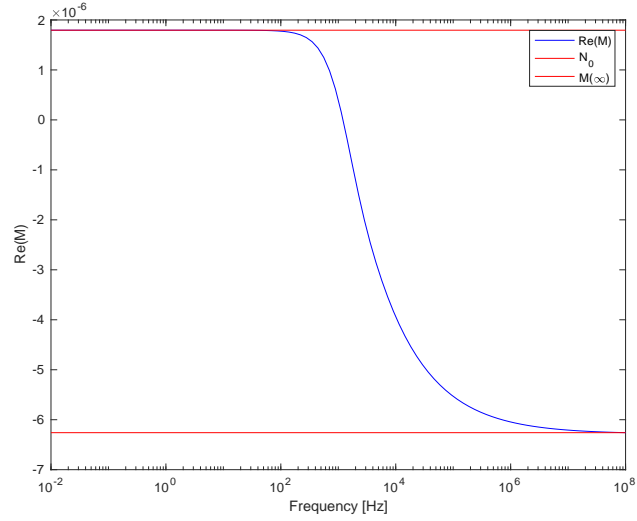} \\
 (a) \\
 \includegraphics[width=0.6\textwidth]{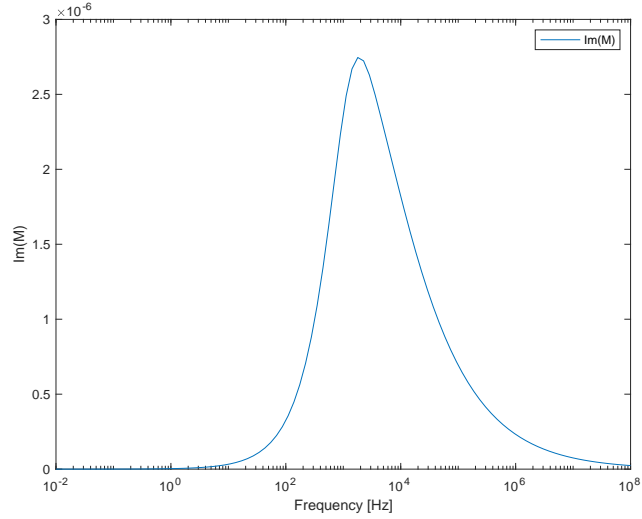} \\
 (b)  
\end{array}$
\end{center}
\caption{Conducting sphere with  $\alpha=0.01\text{m}$, $\mu_r=1.5$ and $\sigma_*=5.96 \times 10^6 \text{S/m}$: behaviour of $(a)$ $\text{Re} ( {\mathcal M}(\omega)) ={\mathcal N}^0 + {\mathcal R}^{\sigma_*}(\omega)$ and $(b)$ $\text{Im} ( {\mathcal M} (\omega) )$.} \label{fig:sphere}
\end{figure}

The matrices of eigenvalues $\Lambda^{{\mathcal R}^{\sigma_*}}$ and  $\Lambda^{{\mathcal I}^{\sigma_*}}$ are  strongly dependent on $\nu= \alpha^2 \sigma_* \omega \mu_0 $. The limiting behaviour of $ {\mathcal M}- {\mathcal N}^0 =  {\mathcal R}^{\sigma_*} + \im {\mathcal I}^{\sigma_*} = -\widecheck{\mathcal C}^{\sigma_*}+ {\mathcal N}^{\sigma_*}$ for $\nu = 0$ and $\nu \to \infty$ has already been investigated and we recall that
\begin{itemize}
\item ${\mathcal R}^{\sigma_*} \to 0$, ${\mathcal I}^{\sigma_*} \to 0$ as $\nu \to 0$ and hence $\Lambda^{{\mathcal R}^{\sigma_*} }\to 0$, $\Lambda^{{\mathcal I}^{\sigma_*}} \to 0$ as $\nu \to 0$;
\item ${\mathcal I}^{\sigma_*} \to 0$ as $\nu \to \infty$ and hence since ${\mathcal N}^0 +{\mathcal R}^{\sigma_*} \to {\mathcal M}(\infty)$  as $\nu \to \infty$ by Lemma~\ref{lemma:limitcase} then $\Lambda^{{\mathcal I}^{\sigma_*}} \to 0$ and $\Lambda^{{\mathcal N}^0 + {\mathcal R}^{\sigma_*} }\to\Lambda^{ {\mathcal M}(\infty)}$  as $\nu \to \infty$. If $\beta_1(B)=0$ then ${\mathcal M}(\infty)$ simplifies to ${\mathcal T}(0)$ and $\Lambda^{{\mathcal N}^0+{\mathcal R}^{\sigma_*} }\to\Lambda^{ {\mathcal T}(0)}$ .
\end{itemize}
Throughout this section we require that $\sigma_*=\sigma_*^{(n)}$ is constant throughout ${\vec B}$, but allow $\mu_*$ to still vary in a piecewise constant manner through  ${\vec B}$.
With this, and the above in mind,  it is beneficial to consider the dependence of ${\mathcal M}$, ${\mathcal R}^{\sigma_*}$ and ${\mathcal I}^{\sigma_*}$ on $\nu$ from which their behaviour with $\omega$ can be readily obtained by a simple change of variables. As explained previously in Section ~\ref{sect:limitcase}, our interest lies in the case in understanding the behaviour of these tensors where $\nu= O(1)$ so as not to invalidate~(\ref{eqn:asymp}).
We begin by investigating the behaviour of ${\vec \Theta}^{(1)} ({\vec u})$  with $\nu$. 

\subsection{Spectral behaviour of ${\vec \Theta}^{(1)} ({\vec u})$ with $\nu$}
We introduce the model eigenvalue problem: Find the eigenvalue--eigensolution pairs $(\lambda,{\vec \phi})$ such that
\begin{subequations} \label{eqn:modelprob}
\begin{align}
\nabla \times \mu_r^{-1} \nabla \times {\vec \phi} =& \lambda {\vec \phi} && \text{in ${\vec B}$}, \\
\nabla \times  \nabla \times {\vec \phi} =&  {\vec 0} && \text{in ${\vec B} ^c$}, \\
\nabla \cdot {\vec \phi} = & 0 && \text{in ${\vec B} \cup {\vec B}^c$}, \\
[ {\vec n} \times {\vec \phi} ]_\Gamma = & {\vec 0} && \text{ on $\Gamma$} , \\
[ {\vec n} \times \tilde{\mu}_r^{-1} \nabla \times {\vec \phi} ]_\Gamma  = & {\vec 0} && \text{ on $\Gamma$} , \\
{\vec \phi} = & O(|{\vec \xi}|^{-1}) && \text{as $|{\vec \xi}| \to \infty$} ,
\end{align}
\end{subequations}
which we will show is closely related to understanding the behaviour of ${\vec \Theta}^{(1)}( {\vec u}) $. The model eigenvalue problem can be written in weak form as: Find ${\vec \phi} \in Y$ and $\lambda$ such that
\begin{align}
\left < \mu_r^{-1} \nabla \times {\vec \phi} , \nabla \times {\vec \psi} \right >_{L^2( {\vec B} )}  = \lambda \left <  {\vec \phi} , {\vec \psi} \right >_{L^2( {\vec B} )}\qquad \forall {\vec \psi} \in Y
\label{eqn:wkform1},
\end{align}
where 
\begin{align}
Y:= & \{ {\vec \varphi} \in H(\hbox{curl}): \nabla \times \nabla \times {\vec \varphi} = {\vec 0} \text{ in ${\vec B}^c$}, \nabla \cdot {\vec \varphi}=0 \text{ in ${\vec B} \cup {\vec B}^c$}, {\vec \varphi} = O(|{\vec \xi}|^{-1} ) \text{ as $|{\vec \xi}| \to \infty$}  \} . \nonumber 
\end{align}
To analyse (\ref{eqn:wkform1}), it is useful to apply a Helmholtz decomposition~\cite[pg. 86]{monkbook} to $Y$:
\begin{align}
Y=& Y_0 \oplus \nabla S, \qquad
Y_0:=  \{ {\vec \phi} \in Y:  \left < {\vec \phi},\nabla p \right >_{L^2({\vec B})}=0 \qquad \forall p \in S \}, \nonumber
\end{align}

and, based on treatment of a similar problem in~\cite[pg. 96]{monkbook}, we summarises its properties in the following remark.
\begin{remark} \label{remark:monkremark}
%
 Repeating similar arguments to those of Monk~\cite[pg 96.]{monkbook}, the eigenvalue problem (\ref{eqn:wkform1}) can be investigated using a Helmholtz decomposition
 \begin{equation}
 {\vec \phi}({\vec \xi}) = {\vec \phi}_0 + \nabla p \qquad \text{where } {\vec \phi}_0 \in Y_0, p \in S,
 \end{equation}
for  ${\vec \xi} \in {\vec B}$. Corresponding to the eigenvalue $\lambda=0$, then it can be shown that ${\vec \phi}_0={\vec 0}$ and there are an infinite number of gradient eigenfunctions in ${\vec B}$. Corresponding to $\lambda \ne 0$ the problem (\ref{eqn:wkform1}) can be rewritten as: Find ${\vec \phi}_0 \in  Y_0$ and $\lambda\ne 0$  such that
 \begin{align}
\left < \mu_r^{-1} \nabla \times {\vec \phi}_0 , \nabla \times {\vec \psi} \right >_{L^2 ( {\vec B})}  = \lambda \left <  {\vec \phi}_0 , {\vec \psi} \right >_{L^2({\vec B})} \qquad \forall {\vec \psi} \in Y_0
\label{eqn:wkform2}.
\end{align}
 Choosing ${\vec \psi}={\vec \phi}_0$ in (\ref{eqn:wkform2}) it is possible to show that  $\lambda > 0$ . Continuing to follow Monk, then, by introducing an appropriate solution operator, the existence of eigenvalues and eigenfunctions can be established using the Hilbert-Schmidt theory leading to the following conclusions:
\begin{enumerate}
\item Corresponding to the eigenvalue $\lambda=0$ there is an infinite family of eigenfunctions, which are such that ${\vec \phi} = \nabla p$ in ${\vec B}$ for any $p \in S$.
\item There is an infinite discrete set of eigenvalues $\lambda_j > 0$, $j=1,2\, \ldots$ and corresponding eigenfunctions ${\vec \phi}_j\in Y_0$, ${\vec \phi}_j \ne 0$ such that
\begin{itemize}
\item Problem (\ref{eqn:wkform1}) is satisfied,
\item $0 < \lambda_1 < \lambda_2 < \ldots$,
\item $\lim_{j \to \infty} \lambda_j = \infty$,
\item ${\vec \phi}_j$ is orthogonal to ${\vec \phi}_k$ in the $L^2({\vec B})$ inner product $\left < {\vec \phi}_j , {\vec \phi}_k \right >_{L^2({\vec B})} = \delta_{jk}$.
\end{itemize}
\end{enumerate}
\end{remark}
 Using these properties we can deduce the following about ${\vec \Theta}^{(1)} ({\vec u})$:
\begin{lemma} \label{lemma:theta1rep}
The  weak solution to (\ref{eqn:transproblem1var}) for $\nu \in [0,\infty)$
can be expressed as the convergent series 
\begin{equation}
{\vec \Theta}^{(1)} ({\vec u})= - \sum_{n=1}^\infty \frac{\im \nu }{\im \nu  - \lambda_n} P_n({\vec \Theta}^{(0)}({\vec u}) ) = \sum_{n=1}^\infty \beta_n P_n({\vec \Theta}^{(0)} ({\vec u}) ) , \ \
\beta_n:= - \frac{\im \nu }{\im \nu  - \lambda_n}, \label{eqn:spectraltheta1}
\end{equation}
where $P_n({\vec \Theta}^{(0)} ({\vec u})) = {\vec \phi}_n \left <   {\vec \Theta}^{(0)}({\vec u}) , {\vec \phi}_n \right >_{L^2({\vec B})}$,  $(\lambda_n,{\vec \phi}_n)$ satisify (\ref{eqn:wkform1}) and
\begin{equation}
\mathrm{Re}( \beta_n) = - \frac{\nu^2}{\nu^2+ \lambda_n^2}, \qquad \mathrm{Im} (\beta_n) = \frac{\nu \lambda_n}{\nu^2+ \lambda_n^2} .
\nonumber
\end{equation}
\end{lemma}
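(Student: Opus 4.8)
The plan is to expand the solution $\vec\Theta^{(1)}(\vec u)$ in the $L^2(\vec B)$-orthonormal basis of eigenfunctions $\{\vec\phi_n\}_{n\ge 1}$ supplied by Remark~\ref{remark:monkremark}, together with the infinite-dimensional kernel of gradients associated with $\lambda=0$, and to read off the coefficients from the weak form (\ref{eqn:weakformtheta1}). First I would note that $\vec\Theta^{(1)}(\vec u)\in X$, so its restriction to $\vec B$ admits a Helmholtz splitting $\vec\Theta^{(1)}(\vec u)|_{\vec B}=\vec w_0+\nabla q$ with $\vec w_0\in Y_0$ and $q\in S$; the divergence-free constraint in $\vec B^c$ together with the transmission and decay conditions pins down the exterior extension of any $Y_0$-component uniquely, so it is legitimate to work modulo gradients and expand $\vec w_0=\sum_n c_n\vec\phi_n$. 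Testing (\ref{eqn:weakformtheta1}) with $\vec\psi=\vec\phi_m$ and using that $(\lambda_m,\vec\phi_m)$ satisfies (\ref{eqn:wkform1}), the left-hand side becomes $\lambda_m\langle\vec\Theta^{(1)}(\vec u),\vec\phi_m\rangle_{L^2(\vec B)}=\lambda_m c_m$ (gradients drop out against $\nabla\times\mu_r^{-1}\nabla\times\vec\phi_m$), while the right-hand side is $\im\nu\big(c_m+\langle\vec\Theta^{(0)}(\vec u),\vec\phi_m\rangle_{L^2(\vec B)}\big)$. Solving the scalar relation $\lambda_m c_m=\im\nu(c_m+\langle\vec\Theta^{(0)}(\vec u),\vec\phi_m\rangle)$ gives $c_m=-\dfrac{\im\nu}{\im\nu-\lambda_m}\langle\vec\Theta^{(0)}(\vec u),\vec\phi_m\rangle=\beta_m\langle\vec\Theta^{(0)}(\vec u),\vec\phi_m\rangle$, which is exactly the claimed $P_m$-coefficient.

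Next I would argue that the gradient part $\nabla q$ vanishes. Testing (\ref{eqn:weakformtheta1}) with $\vec\psi=\nabla p$ for arbitrary $p\in S$ kills the left-hand side ($\nabla\times\nabla p=\vec0$), so $\langle\nu(\vec\Theta^{(1)}(\vec u)+\vec\Theta^{(0)}(\vec u)),\nabla p\rangle_{L^2(\vec B)}=0$; since $\nu$ is constant in $\vec B$ (the standing hypothesis of this section), and $\vec\Theta^{(0)}(\vec u)$ is itself divergence-free in $\vec B$ with the same interface structure, an integration by parts shows the $\vec\Theta^{(0)}$ term contributes only boundary terms that cancel against the matching exterior gradient, forcing the $\nabla q$ component of $\vec\Theta^{(1)}(\vec u)$ to be zero — equivalently, the $\lambda=0$ eigenspace does not enter the expansion. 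Hence $\vec\Theta^{(1)}(\vec u)=\sum_{n\ge1}\beta_n P_n(\vec\Theta^{(0)}(\vec u))$ as a formal series, and the real and imaginary parts of $\beta_n=-\im\nu/(\im\nu-\lambda_n)$ are obtained by rationalising: multiplying numerator and denominator by $-\lambda_n-\im\nu$ gives $\beta_n=\dfrac{-\im\nu(-\lambda_n-\im\nu)}{\lambda_n^2+\nu^2}=\dfrac{-\nu^2+\im\nu\lambda_n}{\lambda_n^2+\nu^2}$, so $\mathrm{Re}(\beta_n)=-\nu^2/(\nu^2+\lambda_n^2)$ and $\mathrm{Im}(\beta_n)=\nu\lambda_n/(\nu^2+\lambda_n^2)$.

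The main obstacle is establishing \emph{convergence} of the series in the $H(\mathrm{curl})$-type norm in which (\ref{eqn:weakformtheta1}) lives, rather than merely in $L^2(\vec B)$. The $L^2(\vec B)$ convergence is automatic from Parseval, since $\sum_n|\langle\vec\Theta^{(0)}(\vec u),\vec\phi_n\rangle|^2=\|\vec w_0^{(0)}\|_{L^2(\vec B)}^2<\infty$ and $|\beta_n|\le1$ uniformly in $n$ for fixed $\nu$; but to conclude the sum is the genuine weak solution one needs control of $\sum_n|\beta_n|^2\lambda_n|\langle\vec\Theta^{(0)}(\vec u),\vec\phi_n\rangle|^2$, which is where the decay $|\beta_n|\sim\nu/\lambda_n$ as $\lambda_n\to\infty$ is essential: it yields $\sum_n|\beta_n|^2\lambda_n\,|c_n^{(0)}|^2\lesssim\nu^2\sum_n\lambda_n^{-1}|c_n^{(0)}|^2$, and one then needs $\vec\Theta^{(0)}(\vec u)$ to lie in the relevant form-domain (finite energy $\langle\mu_r^{-1}\nabla\times\vec\Theta^{(0)},\nabla\times\vec\Theta^{(0)}\rangle$), which holds by Theorem~\ref{thm:formsn0}. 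I would invoke the Hilbert--Schmidt / spectral-theorem machinery already cited in Remark~\ref{remark:monkremark} to make the partial sums a Cauchy sequence in the energy norm, pass to the limit in the weak formulation, and use uniqueness of the weak solution to (\ref{eqn:transproblem1var}) to identify the limit with $\vec\Theta^{(1)}(\vec u)$; the endpoint case $\nu=0$ is trivial since every $\beta_n=0$ and $\vec\Theta^{(1)}(\vec u)=\vec0$, consistent with Lemma~\ref{lemma:limitcase}.
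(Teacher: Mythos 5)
Your proposal is correct and its core coincides with the paper's: both expand $\vec\Theta^{(1)}(\vec u)$ in the eigenbasis $\{\vec\phi_n\}$ of (\ref{eqn:modelprob}) and identify the coefficient of $\vec\phi_n$ as $\beta_n\left<\vec\Theta^{(0)}(\vec u),\vec\phi_n\right>_{L^2(\vec B)}$. The differences are in packaging and, more substantively, in the convergence argument. For the coefficients, the paper introduces the solution operator $A=(L-\im\nu\mathbb{I})^{-1}$, verifies it is linear and self adjoint, computes $A\vec\phi_n=(\lambda_n-\im\nu)^{-1}\vec\phi_n$ and invokes the spectral theorem; you obtain the same scalar relation $\lambda_m c_m=\im\nu\left(c_m+\left<\vec\Theta^{(0)}(\vec u),\vec\phi_m\right>\right)$ by testing (\ref{eqn:weakformtheta1}) directly with $\vec\phi_m$ --- equivalent and arguably more transparent, though your dismissal of the $\lambda=0$ (gradient) component is brisker than it should be: what is actually needed is $\left<\vec\Theta^{(0)}(\vec u),\nabla p\right>_{L^2(\vec B)}=0$ for all $p\in S$, which follows from $\nabla\cdot\vec\Theta^{(0)}(\vec u)=0$ and the choice of $S$, not from a cancellation of interior against exterior boundary terms. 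Where the two routes genuinely diverge is convergence, which is the part the authors themselves single out as delicate: the paper deduces $\left|\left<\vec\Theta^{(0)}(\vec u),\vec\phi_n\right>\right|=O(\lambda_n^{-s/2})$, $s>2$, from the boundedness of $\sum_n\lambda_n^2\left<\vec\Theta^{(0)}(\vec u),\vec\phi_n\right>^2$ and combines this with an imported pointwise eigenfunction bound $|\vec\phi_n|\le C\lambda_n$ to make the individual terms of (\ref{eqn:spectraltheta1}) decay, whereas you show the partial sums are Cauchy in the energy norm using only Parseval and the elementary estimate $|\beta_n|^2\lambda_n\le\nu^2/\lambda_n$, then identify the limit with the unique weak solution. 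Your route is more self-contained (no sup-norm estimate on eigenfunctions is required) and is the natural one for a claim about the weak solution; the paper's buys a stronger, essentially pointwise, mode of convergence that it leans on in later sections.
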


\begin{proof}
We begin by defining $a: Y \times Y \to {\mathbb C}$ by
\begin{align}
a ({\vec f},{\vec g}):=& \left < \mu_r^{-1} \nabla \times {\vec f } , \nabla \times {\vec g } \right >_{L^2 ({\vec B})} - \im\nu \left < {\vec f},{\vec g} \right >_{L^2({\vec B})}\nonumber .
\end{align}
For $ \nu \in (0, \infty)$, it is clear that  $\im \nu$ is not an eigenvalue of (\ref{eqn:wkform1}) and, hence,
by Corollary 4.19 in Monk~\cite[pg. 98]{monkbook} the problem: Find ${\vec \Theta}^{(1)} ({\vec u}) \in Y$ such that
\begin{equation}
a ({\vec \Theta}^{(1)}({\vec u}),{\vec \psi}) =\im \nu  \left <  {\vec \Theta}^{(0)}({\vec u}),{\vec \psi} \right >_{L^2 ({\vec B} )} \qquad \forall {\vec \psi} \in Y,
\end{equation}
has a unique solution for every ${\vec \Theta}^{(0)}({\vec u})  \in L^2({\vec B}) \subset Y'$.
Defining the operator $(L - \im \nu {\mathbb I} ) : Y \to Y'$, this problem consists of finding the solution to the operator equation
\begin{equation}
(L - \im \nu {\mathbb I} ) {\vec \Theta}^{(1)}  = \im \nu  {\vec \Theta}^{(0)} \label{eqn:lminnuopeq} .
\end{equation}
Writing $A= (L -\im \nu  {\mathbb I})^{-1}:  Y' \to Y$ for the solution operator defined by
\begin{equation}
\left < \mu_r^{-1} \nabla \times (A{\vec f}), \nabla  \times {\vec g}  \right > _{L^2( {\vec B})}  - \im \nu \left <  A {\vec f} , {\vec g} \right >_{L^2({\vec B})} =
\left < {\vec f},{\vec g} \right >_{L^2({\vec B})} \qquad \forall {\vec g} \in Y \nonumber ,
\end{equation}
then it clear that $A$ is linear and we can check that it is self adjoint:
\begin{align}
\left < {\vec f},A {\vec g} \right >_{L^2({\vec B})} =& \left < \mu_r^{-1} \nabla \times (A{\vec f}), \nabla  \times (A{\vec g} ) \right > _{L^2( {\vec B})} - \im \nu \left < A {\vec f} , A{\vec g} \right >_{L^2({\vec B})}  \nonumber \\
=&\left < \mu_r^{-1} \nabla \times \overline{(A{\vec g})}, \nabla  \times \overline{(A{\vec f} )} \right > _{L^2( {\vec B})} - \im \nu \left <  \overline{A {\vec g}} , \overline{A{\vec f}} \right >_{L^2( {\vec B})} \nonumber \\
=& \left < \overline{\vec g},\overline{A {\vec f}} \right >_{L^2({\vec B})} = \overline{ \left <  {\vec g}, A {\vec f}\right >_{L^2({\vec B})} }=\left < A{\vec f} , {\vec g}\right >_{L^2( {\vec B})} \nonumber .
\end{align}
Also, using the spectral behaviour of $L$ from  Remark~\ref{remark:monkremark}, we have $L {\vec \phi}_n = \lambda_n  {\vec \phi}_n$, thus, $ (L- \im \nu{\mathbb I} ) {\vec \phi}_n = (\lambda_n - \im \nu) {\vec \phi}_n$ and, hence, $A{\vec \phi}_n =(\lambda_n - \im \nu)^{-1}{\vec \phi}_n$.
Furthermore, as $A$ is linear and self adjoint, the spectral theorem applies  to $A$, which, when combined with (\ref{eqn:lminnuopeq}), leads immediately to (\ref{eqn:spectraltheta1}).  We can extend its applicability to $ \nu\in [0,  \infty )$ since we know that ${\vec \Theta}^{(1)}({\vec u})$ vanishes for $\nu=0$.  Then, we introduce $\beta_n:= - {\im \nu }/ ( {\im \nu  - \lambda_n})$ and its real and imaginary parts are trivially computed.

To show that the series converges, we expand ${\vec \Theta}^{(0)}({\vec u})$ in terms of ${\vec \phi}_n$ as
\begin{align}
{\vec \Theta}^{(0)}({\vec u})= \sum_{n=1}^\infty {\vec \phi}_n \left <  {\vec \Theta}^{(0)} ({\vec u}), {\vec \phi}_n \right >_{L^2({\vec B})} \nonumber,
\end{align}
from which it follows that
\begin{align}&
\| \nabla \times \tilde{\mu}_r^{-1} \nabla \times {\vec \Theta}^{(0)} ({\vec u}) \|_{L^2({\vec B}\cup {\vec B}^c)}^2 =
\| \nabla \times {\mu}_r^{-1} \nabla \times {\vec \Theta}^{(0)} ({\vec u}) \|_{L^2({\vec B} )}^2 = \nonumber \\ 
&=\sum_{n=1}^\infty \sum_{m=1}^\infty \left <  {\vec \Theta}^{(0)} ({\vec u}), {\vec \phi}_n \right > _{L^2( {\vec B})}  \left < {\vec \Theta}^{(0)} ({\vec u}), {\vec \phi}_m \right >_{L^2( {\vec B})}  \int_{\vec B}
\nabla \times \mu_r^{-1} \nabla \times{\vec \phi}_n \cdot \nabla \times \mu_r^{-1}\nabla \times {\vec \phi}_m \dif {\vec \xi} \nonumber \\
& = \sum_{n=1}^\infty \sum_{m=1}^\infty \left <  {\vec \Theta}^{(0)} ({\vec u}), {\vec \phi}_n \right > _{L^2( {\vec B})}  \left <  {\vec \Theta}^{(0)} ({\vec u}), {\vec \phi}_m \right >_{L^2({\vec B})}   \lambda_n \lambda_m \left <  {\vec \phi}_n , {\vec \phi}_m \right >_{L^2({\vec B})} \nonumber\\
& = \sum_{n=1}^\infty \lambda_n^2 \left < {\vec \Theta}^{(0)} ({\vec u}), {\vec \phi}_n \right >_{L^2({\vec B})}^2 <C, \label{eqn:bdinnerprodthephi}
\end{align}
since $\| \nabla \times \tilde{\mu}_r^{-1} \nabla \times {\vec \Theta}^{(0)} ({\vec u}) \|_{L^2({\vec B}\cup {\vec B}^c)}$ is bounded and $\left <  {\vec \phi}_n , {\vec \phi}_m \right >_{L^2({\vec B})} =\delta_{mn}$. Hence, 
\begin{align}
\left | \left <  {\vec \Theta}^{(0)} ({\vec u}), {\vec \phi}_n \right >_{L^2({\vec B})} \right |< O(\lambda_n^{-s/2}),  \nonumber
\end{align}
 as $n \to \infty $ with $s >2$  and $C>0$ independent of $\lambda_n$. Combining this with $|{\vec \phi}_n| < C \lambda_n \| {\vec \phi}_n \|_{L^2({\vec B})} < C\lambda_n $, which follows, for example, from using an analogous result to that in Proposition 3.1 of~\cite{filoche}, and
\begin{equation}
\beta_n= - \frac{ (\nu/\lambda_n)^2 }{(\nu/\lambda_n)^2 +1} + \im  \frac{ \nu/\lambda_n }{(\nu/\lambda_n)^2 +1} ,  \nonumber
\nonumber
\end{equation}
 then we have that
\begin{equation}
| \beta_n P_n({\vec \Theta}^{(0)} ({\vec u}) )| \le |\beta_n|  | {\vec \phi}_n |  \left |  \left <  {\vec \Theta}^{(0)} ({\vec u}), {\vec \phi}_n \right >_{L^2({\vec B})} \right |  < C |\beta_n| \lambda_n^{1-s/2}. \nonumber 
\end{equation}
This estimate goes to zero as $n \to \infty$ and, hence, (\ref{eqn:spectraltheta1}) converges.
\end{proof}
 
 \begin{corollary} \label{coll:linkderiv}
 From the definition of $\beta_n$ in Lemma~\ref{lemma:theta1rep} it follows that
 \begin{equation}
 \frac{\dif }{\dif \log \nu} ( \mathrm{Re}(\beta_n)) = -2\frac{\lambda_n^2 \nu^2}{(\nu^2+\lambda_n^2)^2} = -2\left ( \mathrm{Im}(\beta_n)  \right )^2 \label{eqn:deriv1},
 \end{equation}
 and
  \begin{equation}
 \frac{\dif ^2}{\dif (\log \nu)^2} ( \mathrm {Re}(\beta_n)) = -\frac{4\lambda_n^2\nu^2(\lambda_n^2-\nu^2)}{(\nu^2+\lambda_n^2)^3} = -4\left ( \mathrm {Im}(\beta_n)  \right )\frac{\dif }{\dif \log \nu} ( \mathrm {Im}(\beta_n)) \label{eqn:deriv2}.
 \end{equation}
 \end{corollary}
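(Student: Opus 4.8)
The plan is to treat this as a direct calculus computation starting from the closed forms of $\mathrm{Re}(\beta_n)$ and $\mathrm{Im}(\beta_n)$ recorded in Lemma~\ref{lemma:theta1rep}, namely $\mathrm{Re}(\beta_n) = -\nu^2/(\nu^2+\lambda_n^2)$ and $\mathrm{Im}(\beta_n) = \nu\lambda_n/(\nu^2+\lambda_n^2)$. The one organising observation I would use throughout is that, writing $t := \log\nu$ so that $\nu = e^t$, the chain rule gives $\frac{\dif}{\dif\log\nu} = \nu\,\frac{\dif}{\dif\nu}$; hence every derivative with respect to $\log\nu$ is computed by differentiating the relevant rational function of $\nu$ and multiplying by $\nu$. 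All the identities below are then identities of rational functions on $\nu \in (0,\infty)$.

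For (\ref{eqn:deriv1}) I would differentiate $\mathrm{Re}(\beta_n)$ with respect to $\nu$ using the quotient rule, which gives $-2\nu\lambda_n^2/(\nu^2+\lambda_n^2)^2$; multiplying by $\nu$ yields $\frac{\dif}{\dif\log\nu}(\mathrm{Re}(\beta_n)) = -2\nu^2\lambda_n^2/(\nu^2+\lambda_n^2)^2$, which is the middle expression claimed. Recognising that $\nu^2\lambda_n^2/(\nu^2+\lambda_n^2)^2 = (\mathrm{Im}(\beta_n))^2$ immediately gives the last equality in (\ref{eqn:deriv1}).

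For (\ref{eqn:deriv2}) the cleanest route is to apply $\frac{\dif}{\dif\log\nu}$ to the identity $\frac{\dif}{\dif\log\nu}(\mathrm{Re}(\beta_n)) = -2(\mathrm{Im}(\beta_n))^2$ just established; the chain rule gives $\frac{\dif^2}{\dif(\log\nu)^2}(\mathrm{Re}(\beta_n)) = -4\,\mathrm{Im}(\beta_n)\,\frac{\dif}{\dif\log\nu}(\mathrm{Im}(\beta_n))$, which is the last equality in (\ref{eqn:deriv2}). To obtain the explicit rational form I would compute $\frac{\dif}{\dif\log\nu}(\mathrm{Im}(\beta_n))$ in the same manner: differentiating $\nu\lambda_n/(\nu^2+\lambda_n^2)$ in $\nu$ gives $\lambda_n(\lambda_n^2-\nu^2)/(\nu^2+\lambda_n^2)^2$, and multiplying by $\nu$ gives $\nu\lambda_n(\lambda_n^2-\nu^2)/(\nu^2+\lambda_n^2)^2$. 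Substituting this together with $\mathrm{Im}(\beta_n) = \nu\lambda_n/(\nu^2+\lambda_n^2)$ into $-4\,\mathrm{Im}(\beta_n)\,\frac{\dif}{\dif\log\nu}(\mathrm{Im}(\beta_n))$ produces $-4\lambda_n^2\nu^2(\lambda_n^2-\nu^2)/(\nu^2+\lambda_n^2)^3$, completing (\ref{eqn:deriv2}). Equivalently, one could differentiate $-2\nu^2\lambda_n^2/(\nu^2+\lambda_n^2)^2$ directly in $\nu$ and multiply by $\nu$, arriving at the same expression.

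There is essentially no obstacle here: the result is an elementary identity and the proof is a short quotient-rule computation. The only point needing a little attention is the bookkeeping in the quotient-rule derivatives and the consistent use of $\frac{\dif}{\dif\log\nu} = \nu\,\frac{\dif}{\dif\nu}$; of the two routes for (\ref{eqn:deriv2}), the chain-rule one is marginally shorter because it reuses (\ref{eqn:deriv1}) and requires only one differentiation of $\mathrm{Im}(\beta_n)$.
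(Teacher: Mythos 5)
Your computation is correct and is exactly the direct verification the paper intends: the corollary is stated without proof as following from the closed forms $\mathrm{Re}(\beta_n) = -\nu^2/(\nu^2+\lambda_n^2)$ and $\mathrm{Im}(\beta_n) = \nu\lambda_n/(\nu^2+\lambda_n^2)$, and your chain-rule/quotient-rule calculation with $\frac{\dif}{\dif\log\nu} = \nu\,\frac{\dif}{\dif\nu}$ supplies precisely that verification.
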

 \begin{remark}
 The complex functions $\beta_n(\nu)$, $n=1,2,\ldots$, characterise the behaviour of ${\vec \Theta}^{(1)}({\vec u})$ with respect to $\nu$. The real part of each function,  $\mathrm{Re}( \beta_n)$, is monotonic and bounded with $\log \nu$ and the imaginary part of each function, $\mathrm{Im}( \beta_n)$, has a single local maximum with $\log \nu$. 
\end{remark}

 \subsection{Spectral behaviour of ${\mathcal R}^{\sigma_*}$ and ${\mathcal I}^{\sigma_*}$ with $\nu$}
The following Lemma, which describes the behaviour of ${\mathcal R}^{\sigma_*}$ and ${\mathcal I}^{\sigma_*}$ with $\nu$, follows from the representation of ${\vec \Theta}^{(1)}({\vec u})$ provided by Lemma~\ref{lemma:theta1rep}.

\begin{lemma} \label{lemma:tenrepintheta1sum}
The coefficients of the tensors ${\mathcal R}^{\sigma_*}$ and ${\mathcal I}^{\sigma_*}$ for an object with homogeneous $\sigma_*$, although not necessarily homogenous $\mu_*$, can be expressed as the convergent series
\begin{subequations}
\begin{align}
({\mathcal R}^{\sigma_*})_{ij} = &  - \frac{\alpha^3 \nu^2  }{4} \sum_{n=1}^\infty \frac{\lambda_n}{\nu^2+ \lambda_n^2}  \left <  {\vec \phi}_n ,{\vec \Theta}^{(0)}({\vec e}_i) \right >_{L^2( {\vec B})} \left <  {\vec \phi}_n ,{\vec \Theta}^{(0)}({\vec e}_j) \right >_{L^2({\vec B})} \nonumber \\
 =&  \frac{\alpha^3  }{4} \sum_{n=1}^\infty \mathrm{Re}(\beta_n) \lambda_n  \left < {\vec \phi}_n ,{\vec \Theta}^{(0)}({\vec e}_i) \right >_{L^2({\vec B})} \left < {\vec \phi}_n ,{\vec \Theta}^{(0)}({\vec e}_j) \right >_{L^2({\vec B})} , \label{eqn:expandrsigma}\\ 
({\mathcal I}^{\sigma_*})_{ij} = &   \frac{\alpha^3    \nu}{4} \sum_{n=1}^\infty \frac{\lambda_n^2}{\nu^2+ \lambda_n^2}  \left < {\vec u}_n ,{\vec \Theta}^{(0)}({\vec e}_i) \right >_{L^2({\vec B})} \left < {\vec \phi}_n ,{\vec \Theta}^{(0)}({\vec e}_j) \right >_{L^2({\vec B} )}  \nonumber \\
= &  \frac{\alpha^3   }{4} \sum_{n=1}^\infty \mathrm {Im}( \beta_n) \lambda_n  \left < {\vec \phi}_n ,{\vec \Theta}^{(0)}({\vec e}_i) \right >_{L^2({\vec B})} \left < {\vec \phi}_n ,{\vec \Theta}^{(0)}({\vec e}_j) \right >_{L^2({\vec B})} .\label{eqn:expandisigma}%
\end{align}
\end{subequations}
\end{lemma}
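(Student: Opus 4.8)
The plan is to feed the spectral representation of ${\vec \Theta}^{(1)}({\vec u})$ from Lemma~\ref{lemma:theta1rep} into the $L^2$ expressions for ${\mathcal R}^{\sigma_*}$ and ${\mathcal I}^{\sigma_*}$ obtained earlier, and then to collapse the resulting double sums to single sums using only the $L^2({\vec B})$--orthonormality of the eigenfunctions ${\vec \phi}_n$ recorded in Remark~\ref{remark:monkremark}. I would write $c_n({\vec u}):=\left<{\vec \Theta}^{(0)}({\vec u}),{\vec \phi}_n\right>_{L^2({\vec B})}\in{\mathbb R}$, so that (\ref{eqn:spectraltheta1}) reads ${\vec \Theta}^{(1)}({\vec u})=\sum_{n=1}^\infty \beta_n c_n({\vec u}){\vec \phi}_n$. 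Since the ${\vec \phi}_n$ and the $c_n({\vec u})$ are real, taking real and imaginary parts acts only on the scalars $\beta_n$, and I would record, from the formulas in Lemma~\ref{lemma:theta1rep}, the elementary identities $|\beta_n|^2=\mathrm{Re}(\beta_n)^2+\mathrm{Im}(\beta_n)^2=\nu^2/(\nu^2+\lambda_n^2)$, $\ \nu\,\mathrm{Im}(\beta_n)=-\mathrm{Re}(\beta_n)\lambda_n=\nu^2\lambda_n/(\nu^2+\lambda_n^2)$ and $\nu^{-1}|\beta_n|^2\lambda_n^2=\mathrm{Im}(\beta_n)\lambda_n=\nu\lambda_n^2/(\nu^2+\lambda_n^2)$.

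For ${\mathcal R}^{\sigma_*}$ I would use the representation $({\mathcal R}^{\sigma_*})_{ij}=-\frac{\alpha^3}{4}\left<\nu\,\mathrm{Im}({\vec \Theta}^{(1)}({\vec e}_i)),{\vec \Theta}^{(0)}({\vec e}_j)\right>_{L^2({\vec B})}$ of Lemma~\ref{lemma:altformiandr}: substituting $\mathrm{Im}({\vec \Theta}^{(1)}({\vec e}_i))=\sum_n\mathrm{Im}(\beta_n)c_n({\vec e}_i){\vec \phi}_n$ and pairing termwise against ${\vec \Theta}^{(0)}({\vec e}_j)$ (so that $\left<{\vec \phi}_n,{\vec \Theta}^{(0)}({\vec e}_j)\right>_{L^2({\vec B})}=c_n({\vec e}_j)$) gives $-\frac{\alpha^3\nu}{4}\sum_n\mathrm{Im}(\beta_n)c_n({\vec e}_i)c_n({\vec e}_j)$, which is precisely (\ref{eqn:expandrsigma}) after inserting $\nu\,\mathrm{Im}(\beta_n)=\nu^2\lambda_n/(\nu^2+\lambda_n^2)=-\mathrm{Re}(\beta_n)\lambda_n$. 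The same answer follows from (\ref{eqn:rform2}) once one notes $\left<\tilde{\mu}_r^{-1}\nabla\times{\vec \phi}_n,\nabla\times{\vec \phi}_m\right>_{L^2({\vec B}\cup{\vec B}^c)}=\lambda_n\delta_{nm}$, obtained by testing (\ref{eqn:modelprob}a) with ${\vec \phi}_m$, integrating by parts over ${\vec B}$ and over ${\vec B}^c$, and using the transmission condition (\ref{eqn:modelprob}e). For ${\mathcal I}^{\sigma_*}$ I would use $({\mathcal I}^{\sigma_*})_{ij}=\frac{\alpha^3}{4}\left<\frac1\nu\nabla\times\mu_r^{-1}\nabla\times{\vec \Theta}^{(1)}({\vec e}_i),\nabla\times\mu_r^{-1}\nabla\times{\vec \Theta}^{(1)}({\vec e}_j)\right>_{L^2({\vec B})}$ from (\ref{eqn:iform2}); since $\nabla\times\mu_r^{-1}\nabla\times{\vec \phi}_n=\lambda_n{\vec \phi}_n$ in ${\vec B}$, the operator acts termwise, so $\nabla\times\mu_r^{-1}\nabla\times{\vec \Theta}^{(1)}({\vec u})=\sum_n\beta_n\lambda_n c_n({\vec u}){\vec \phi}_n$ in ${\vec B}$, and $L^2({\vec B})$--orthonormality collapses the inner product to $\frac{\alpha^3}{4\nu}\sum_n|\beta_n|^2\lambda_n^2 c_n({\vec e}_i)c_n({\vec e}_j)$, which is (\ref{eqn:expandisigma}) after using $\nu^{-1}|\beta_n|^2\lambda_n^2=\mathrm{Im}(\beta_n)\lambda_n$. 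Note that neither route requires the family $\{{\vec \phi}_n\}$ to be complete, only orthonormal, which is why the potentially delicate Parseval identity for ${\vec \Theta}^{(0)}$ is never invoked.

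It then remains to justify convergence and the termwise manipulations, and this is the only step that needs genuine care. The decisive input is the Bessel--type estimate $\sum_{n}\lambda_n^2 c_n({\vec u})^2\le\|\nabla\times\tilde{\mu}_r^{-1}\nabla\times{\vec \Theta}^{(0)}({\vec u})\|_{L^2({\vec B}\cup{\vec B}^c)}^2<\infty$ already established inside the proof of Lemma~\ref{lemma:theta1rep} (see (\ref{eqn:bdinnerprodthephi})). Combined with $0\le|\beta_n|\le 1$ and the AM--GM bound $\nu^2\lambda_n/(\nu^2+\lambda_n^2)\le\nu/2$, it gives $\sum_n|\beta_n\lambda_n c_n({\vec u})|^2\le\sum_n\lambda_n^2 c_n({\vec u})^2<\infty$ and $\sum_n|\beta_n c_n({\vec u})|^2\le\lambda_1^{-2}\sum_n\lambda_n^2 c_n({\vec u})^2<\infty$, so that $\sum_n\beta_n\lambda_n c_n({\vec u}){\vec \phi}_n$ converges in $L^2({\vec B})$ and ${\vec \Theta}^{(1)}({\vec u})$ converges in $L^2({\vec B})$; continuity of the inner products then legitimises the exchanges of summation and integration used above. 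Finally, the bounds $\lambda_n/(\nu^2+\lambda_n^2)\le\lambda_1^{-1}$ and $\lambda_n^2/(\nu^2+\lambda_n^2)\le 1$, together with the Cauchy--Schwarz inequality applied to $\sum_n|c_n({\vec e}_i)c_n({\vec e}_j)|$, show that the two series in (\ref{eqn:expandrsigma}) and (\ref{eqn:expandisigma}) converge absolutely, completing the proof. The main obstacle, such as it is, is purely the bookkeeping of these convergence estimates; the algebraic identification of the series coefficients is immediate once the spectral representation of ${\vec \Theta}^{(1)}$ is substituted.
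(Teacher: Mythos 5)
Your proposal is correct and follows the same overall strategy as the paper: substitute the spectral representation of ${\vec \Theta}^{(1)}({\vec u})$ from Lemma~\ref{lemma:theta1rep} into integral representations of the two tensors and collapse the double sums using orthonormality of the ${\vec \phi}_n$. For $({\mathcal I}^{\sigma_*})_{ij}$ your route is identical to the paper's. For $({\mathcal R}^{\sigma_*})_{ij}$ you diverge slightly: the paper works from (\ref{eqn:rform2}) and must first establish the identity (\ref{eqn:eigtranformrule}), namely $\left<\mu_r^{-1}\nabla\times{\vec \phi}_n,\nabla\times{\vec \phi}_m\right>_{L^2({\vec B})}+\left<\nabla\times{\vec \phi}_n,\nabla\times{\vec \phi}_m\right>_{L^2({\vec B}^c)}=\lambda_n\delta_{nm}$, whereas you work from the alternative form (\ref{eqn:raltform}) of Lemma~\ref{lemma:altformiandr}, which pairs $\mathrm{Im}({\vec \Theta}^{(1)}({\vec e}_i))$ directly against ${\vec \Theta}^{(0)}({\vec e}_j)$ in $L^2({\vec B})$ and so needs only $\left<{\vec \phi}_n,{\vec \Theta}^{(0)}({\vec e}_j)\right>_{L^2({\vec B})}=c_n({\vec e}_j)$ together with the elementary identity $\nu\,\mathrm{Im}(\beta_n)=-\mathrm{Re}(\beta_n)\lambda_n$; this is a modest but genuine shortcut (and you correctly note the paper's route as an alternative). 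Your convergence argument is also marginally more elementary: you get absolute convergence from Bessel's inequality, the bounds $\lambda_n/(\nu^2+\lambda_n^2)\le\lambda_1^{-1}$, $\lambda_n^2/(\nu^2+\lambda_n^2)\le1$ and Cauchy--Schwarz, rather than invoking the decay rate $|c_n({\vec u})|<O(\lambda_n^{-s/2})$ used in the paper. Both approaches require homogeneous $\sigma_*$ in the same place (so that $\nu$ is a constant scalar in Lemma~\ref{lemma:theta1rep} and can be factored out of the integrals), and your termwise manipulations are justified to at least the same standard as the paper's.
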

\begin{proof}
Using Theorem~\ref{thm:realandimagpts} and Lemma~\ref{lemma:theta1rep} we see that $({\mathcal R}^{\sigma_*} )_{ij}$ can be expressed as
\begin{align}
({\mathcal R}^{\sigma_*} )_{ij}=&  - \frac{\alpha^3}{4}  \left (  \left <  \mu_r ^{-1}\nabla \times {\vec \Theta}^{(1)}({\vec e}_j) ,\nabla \times {{\vec \Theta}^{(1)}}({\vec e}_i) \right >_{L^2({\vec B})} +  \left < \nabla \times {\vec \Theta}^{(1)}({\vec e}_j) , 
\nabla \times {{\vec \Theta}^{(1)}} ({\vec e}_i) \right >_{L^2({\vec B}^c)}  \right ), \nonumber \\
= & - \frac{\alpha^3}{4}  \left (  \left < \sum_{n=1}^\infty \frac{\im \nu}{\im \nu - \lambda_n} \mu_r ^{-1}   \nabla \times P_n({\vec \Theta}^{(0)}({\vec e}_i)), \sum_{m=1}^\infty \frac{\im \nu}{\im \nu - \lambda_m}  \nabla \times  P_m({\vec \Theta}^{(0)}({\vec e}_j)) \right >_{L^2({\vec B})} \right .   \nonumber  \\
& +  \left .  \left < \sum_{n=1}^\infty \frac{\im \nu}{\im \nu - \lambda_n}  \nabla \times  P_n({\vec \Theta}^{(0)}({\vec e}_i)), \sum_{m=1}^\infty \frac{\im \nu}{\im \nu - \lambda_m} \nabla \times   P_m({\vec \Theta}^{(0)}({\vec e}_j)) \right >_{L^2({\vec B}^c)} \right ) \nonumber \\
= & - \frac{\alpha^3}{4} \left ( \sum_{n=1}^\infty  \sum_{m =1}^\infty \frac{\im \nu}{\im \nu - \lambda_n} \overline{ \frac{\im \nu}{\im \nu - \lambda_m}} \left ( \left <
  \mu_r ^{-1} \nabla \times  P_n({\vec \Theta}^{(0)}({\vec e}_i)),  \nabla \times  P_m({\vec \Theta}^{(0)}({\vec e}_j)) \right > _{L^2({\vec B})} \right .   \right.  \nonumber \\
& +  \left . \left.  \left < \nabla \times  P_n({\vec \Theta}^{(0)}({\vec e}_i)),\nabla \times  P_m({\vec \Theta}^{(0)}({\vec e}_j))  \right >_{L^2({\vec B}^c)} \right )\right )  \label{eqn:expandrform1} .
\end{align}
Then, noting that
\begin{align}
\left < \right . &\left . \mu_r ^{-1} \nabla \times  P_n({\vec \Theta}^{(0)}({\vec e}_i)),  \nabla \times  P_m({\vec \Theta}^{(0)}({\vec e}_j))  \right >_{L^2({\vec B})} +   \left < \nabla \times  P_n({\vec \Theta}^{(0)}({\vec e}_i)),\nabla \times  P_m({\vec \Theta}^{(0)}({\vec e}_j))  \right >_{L^2({\vec B}^c)}  \nonumber \\
  =&  \left <  {\vec \phi}_n , {\vec \Theta}^{(0)}({\vec e}_i) \right >_{L^2({\vec B})}  \left < {\vec \phi}_m , {\vec \Theta}^{(0)}({\vec e}_j)\right >_{L^2({\vec B})}\nonumber\\
 & \left (  \left < \mu_r^{-1} \nabla \times {\vec \phi}_n , \nabla \times{\vec \phi}_m \right >_{L^2({\vec B})} +  \left < \nabla \times {\vec \phi}_n ,\nabla \times {\vec \phi}_m \right >_{L^2( {\vec B}^c) }\right ) \nonumber \\
  = &\left <  {\vec \phi}_n , {\vec \Theta}^{(0)}({\vec e}_i) \right >_{L^2({\vec B})}  \left < {\vec \phi}_m , {\vec \Theta}^{(0)}({\vec e}_j) \right >_{L^2({\vec B})}  \lambda_n  \left < {\vec \phi}_n  , {\vec \phi}_m  \right >_{L^2({\vec B})} \nonumber\\
  = &\left < {\vec \phi}_n , {\vec \Theta}^{(0)}({\vec e}_i ) \right >_{L^2({\vec B})}  \left < {\vec \phi}_m , {\vec \Theta}^{(0)}({\vec e}_j) \right >_{L^2({\vec B})}  \lambda_n \delta_{nm}  , \label{eqn:eigtranformrule}
\end{align}
and combining with (\ref{eqn:expandrform1}), gives the desired result for $({\mathcal R}^{\sigma_*})_{ij}$. For $({\mathcal I}^{\sigma_*})_{ij} $ we have
\begin{align}
({\mathcal I}^{\sigma_*} )_{ij}=&   \frac{\alpha^3}{4\nu} \left <  \nabla \times \mu_r ^{-1} \nabla \times {\vec \Theta}^{(1)} ({\vec e}_i),\nabla \times \mu_r^{-1} \nabla \times {\vec \Theta}^{(1)} ({\vec e}_j) \right >_{L^2({\vec B})}     \nonumber \\
= & \frac{\alpha^3}{4\nu}  \left < \sum_{n=1}^\infty \frac{\im \nu}{\im \nu - \lambda_n}\nabla \times \mu_r ^{-1} \nabla \times P_n( {\vec \Theta}^{(1)}({\vec e}_i)), \sum_{m=1}^\infty 
\frac{\im \nu}{\im \nu - \lambda_m}\nabla \times \mu_r ^{-1} \nabla \times P_m( {\vec \Theta}^{(1)     } ({\vec e}_i) ) \right >_{L^2({\vec B})}  , \nonumber
\end{align}
and using
\begin{align}
\nabla \times \mu_r ^{-1} \nabla \times P_n( {\vec \Theta}^{(1)} ({\vec e}_i))=& \left < {\vec \phi}_n,{\vec \Theta}^{(0)}({\vec e}_i)  \right >_{L^2({\vec B})} \nabla \times \mu_r ^{-1} \nabla \times {\vec \phi}_n\nonumber \\
= & \lambda_n  \left < {\vec \phi}_n,{\vec \Theta}^{(0)}({\vec e}_i)  \right >_{L^2({\vec B})} {\vec \phi}_n \qquad \text{in ${\vec B}$}, \nonumber
\end{align}
we have
\begin{align}
({\mathcal I}^{\sigma_*} )_{ij}=&   \frac{\alpha^3}{4\nu}   \sum_{n=1}^\infty \sum_{m =1}^\infty \frac{\im \nu}{\im \nu - \lambda_n} \overline{ \frac{\im \nu}{\im \nu - \lambda_m}}     \lambda_n \lambda_m  \left < {\vec \phi}_n,{\vec \Theta}^{(0)}({\vec e}_i) \right >_{L^2({\vec B})}   \left < {\vec \phi}_m,{\vec \Theta}^{(0)}({\vec e}_j) \right >_{L^2({\vec B})} \nonumber \\
&  \left < {\vec \phi}_n , {\vec \phi}_m \right >_{L^2({\vec B})}  . \nonumber
\end{align}
The final result for $({\mathcal I}^{\sigma_*})_{ij}$ follows from noting that $\left < {\vec \phi}_n , {\vec \phi}_m \right >_{L^2({\vec B})} = \delta_{mn}$.

The convergence of (\ref{eqn:expandrsigma}) and (\ref{eqn:expandisigma}) follows in a similar manner to that of (\ref{eqn:spectraltheta1}) by using 
\begin{align}
\left | \left <  {\vec \Theta}^{(0)} ({\vec u}), {\vec \phi}_n \right >_{L^2({\vec B})} \right | < O(\lambda_n^{-s/2} ) , \nonumber
\end{align}
 as $n \to \infty $ with $s >2$  and $C>0$ independent of $\lambda_n$.

\end{proof}
Taking in to account possible multiplicities in the eigenvalues  $\lambda_n$, we have the following:
\begin{remark} 
The result of Lemma~\ref{lemma:tenrepintheta1sum} can be rewritten to make explicit possible multiplicities in the eigenvalues  $\lambda_n$ as
\begin{subequations}
 \label{eqn:modesmult}
 \begin{align}
({\mathcal R}^{\sigma_*} )_{ij}= &   \frac{\alpha^3  }{4} \sum_{n=1}^\infty  \mathrm{Re}(\beta_n) \lambda_n \sum_{k=1}^{\mathrm{mult}(\lambda_n)} \left < {\vec \phi}_{n,k} ,{\vec \Theta}^{(0)}({\vec e}_i)  \right >_{L^2({\vec B})}   \left <  {\vec \phi}_{n,k} ,{\vec \Theta}^{(0)}({\vec e}_j)  \right >_{L^2({\vec B})} , \\
({\mathcal I}^{\sigma_*})_{ij} = &     \frac{\alpha^3   }{4} \sum_{n=1 }^\infty \mathrm{Im}( \beta_n) \lambda_n  \sum_{k=1}^{\mathrm{mult}(\lambda_n)}  \left < {\vec \phi}_{n,k} ,{\vec \Theta}^{(0)}({\vec e}_i)  \right >_{L^2({\vec B})} \left <  {\vec \phi}_{n,k} ,{\vec \Theta}^{(0)}({\vec e}_j)  \right >_{L^2({\vec B})}.
\end{align}
\end{subequations}
\end{remark}

We observe that Lemma~\ref{lemma:tenrepintheta1sum} provides a connection between the point of inflection of $({\mathcal R}^{\sigma_*})_{ij}$ with $\log \nu$ and the stationary point of $({\mathcal I}^{\sigma_*})_{ij}$ with $\log \nu$ as discussed in the following remark.
\begin{remark} \label{remark:statpoint}
Applying (\ref{eqn:deriv2}) to the results (\ref{eqn:modesmult})
then a point of inflection for $({\mathcal R}^{\sigma_*})_{ij}$ with $\log \nu$ corresponds to where
\begin{align}
 \frac{\dif ^2}{\dif (\log \nu)^2}  &(({\mathcal R}^{\sigma_*})_{ij}) =     - \alpha^3 \sum_{n=1}^\infty  \frac{\lambda_n^3\nu^2(\lambda_n^2-\nu^2)}{(\nu^2+\lambda_n^2)^3} \sum_{k=1}^{\mathrm{mult}(\lambda_n)} \left <  {\vec \phi}_{n,k} ,{\vec \Theta}^{(0)}({\vec e}_i)  \right > _{L^2({\vec B})} \left <  {\vec \phi}_{n,k} ,{\vec \Theta}^{(0)}({\vec e}_j)  \right >_{L^2({\vec B})} \nonumber \\
 =&
-\alpha^3 \sum_{n=1}^\infty  \lambda_n  \mathrm{Im}(\beta_n)  \frac{\dif }{\dif \log \nu} ( \mathrm{Im}(\beta_n)) \sum_{k=1}^{\mathrm{mult}(\lambda_n)}  \left <  {\vec \phi}_{n,k} ,{\vec \Theta}^{(0)}({\vec e}_i)  \right >_{L^2({\vec B} )} \left <  {\vec \phi}_{n,k} ,{\vec \Theta}^{(0)}({\vec e}_j)  \right >_{L^2({\vec B})} \nonumber \\
 =&0. \nonumber
\end{align}
Similarly, the stationary point for $({\mathcal I}^{\sigma_*})_{ij}$ with $\log \nu$ corresponds to where
\begin{align}
 \frac{\dif }{\dif (\log \nu)}  (({\mathcal I}^{\sigma_*})_{ij}) = &  \frac{ \alpha^3}{4} \sum_{n=1}^\infty  \frac{\lambda_n^2\nu(\lambda_n^2-\nu^2)}{(\nu^2+\lambda_n^2)^2} \sum_{k=1}^{\mathrm{mult}(\lambda_n)}  \left <  {\vec \phi}_{n,k} ,{\vec \Theta}^{(0)}({\vec e}_i) \right >_{L^2( {\vec B})}  \left <  {\vec \phi}_{n,k} ,{\vec \Theta}^{(0)}({\vec e}_j) \right >_{L^2({\vec B})} \nonumber \\
= &  -\frac{ \alpha^3}{16} \sum_{n=1}^\infty \frac{\lambda_n}{\mathrm{Im}(\beta_n)} 
 \frac{\dif^2 }{\dif ( \log \nu )^2 } ( \mathrm{Re}(\beta_n)) \nonumber \\
 & \sum_{k=1}^{\mathrm{mult}(\lambda_n)} \left <   {\vec \phi}_{n,k} ,{\vec \Theta}^{(0)}({\vec e}_i) \right >_{L^2({\vec B})} \left < {\vec \phi}_{n,k} ,{\vec \Theta}^{(0)}({\vec e}_j) \right >_{L^2({\vec B})} \nonumber \\
  =& 0. \nonumber
 \end{align}
Thus, a stationary point for $({\mathcal I}^{\sigma_*})_{ij}$ with respect to $\log \nu$ corresponds to a point of inflection for $({\mathcal R}^{\sigma_*})_{ij}$  with respect to $\log \nu$.
\end{remark}

 \subsubsection{Dominant spectral behaviour of $( {\mathcal R}^{\sigma_*}(\nu))_{ij}$, $( {\mathcal I}^{\sigma_*}(\nu))_{ij}$}

From Corollary~\ref{coll:linkderiv}, we observe, for $i=j$, that the expressions  (\ref{eqn:modesmult}a) and (\ref{eqn:modesmult}b) involve sums of terms that are each monotonically decreasing and bounded with $\log \nu$ and have a single local maximum with $\log \nu$, respectively.
For $i\ne j$,  (\ref{eqn:modesmult}a) involves sums of terms that are either monotonically decreasing and bounded or monotonically increasing and bounded with $\log \nu$, and, (\ref{eqn:modesmult}b) has terms which have either a single local  minimum or maximum with $\log \nu$. 
The difference in the behaviour of the different terms for $i\ne j$ is due to 
\begin{align}
\sum_{k=1}^{\mathrm{mult}(\lambda_n)} \left < {\vec \phi}_{n,k} ,{\vec \Theta}^{(0)}({\vec e}_i)  \right >_{L^2({\vec B})}   \left <  {\vec \phi}_{n,k} ,{\vec \Theta}^{(0)}({\vec e}_j)  \right >_{L^2({\vec B})}, \nonumber
\end{align}
whose sign can vary for different $n$.
For each $i$, $j$  we expect, amongst the terms in these summations, there is a $n=n_{dom}$, which we call the dominant mode, that provides the dominant behaviour of $( {\mathcal R}^{\sigma_*}(\nu))_{ij}$ and  $({\mathcal I}^{\sigma_*}(\nu))_{ij}$ for $\nu \in [0,\nu_{max})$.
 We confirm this behaviour by using a least squares fit of the functions
 \begin{align}
 f^{({\mathcal R}^{\sigma_*})_{ij}}(a,b) = - \frac{ab\nu^2}{\nu^2+b^2} ,\qquad
 f^{({\mathcal I}^{\sigma_*})_{ij}}(c,d) =  \frac{cd\nu}{\nu^2+d^2} , \nonumber 
 \end{align}
 to the curves of $({\mathcal R}^{\sigma_*}(\nu)) _{ij}=( \mathrm{Re}({\mathcal M}(\nu)))_{ij} - ({\mathcal N}^0)_{ij}$ and $( {\mathcal I}^{\sigma_*}(\nu) )_{ij}= (\mathrm{Im}({\mathcal M}(\nu)))_{ij}$ where $a$ and $c$ control the amplitude and sign of the functions and we expect to find that $b\approx d$ corresponds to the dominant eigenvalue $\lambda_{n_{dom}}$ for the considered coefficient. 
 
First, we consider the conducting sphere previously shown in Figure~\ref{fig:sphere}. For this object, ${\mathcal R}^{\sigma_*}(\nu)$ and $ {\mathcal I}^{\sigma_*}(\nu)$ are diagonal and a multiple of ${\mathbb I}$. We also expect the dominant mode to be $n_{dom}=1$, which has an eigenvalue with multiplicity 3. By fitting the functions $f^{({\mathcal R}^{\sigma_*})_{ii} }(a,b)$ and $f^{({\mathcal I}^{\sigma_*})_{ii}}(c,d)$ to the exact data (no summation implied) for 
$f\in [0, 10^4) \text{Hz}$, where  $f_{max} = \omega_{max}/ (  2 \pi ) =10^4 \text{Hz}$, which implies $\nu_{max} =\alpha^2 \sigma_* \mu_0 \omega_{max} \approx 47$, we find $b\approx d\approx10$, as expected. In Figure~\ref{fig:fitcurvesphere}, we observe that the functions provide a good approximation of $({\mathcal R}^{\sigma_*}(\nu))_{ii}$ and 
$({\mathcal I}^{\sigma_*}(\nu))_{ii}$. Also included are the residuals $- | ({\mathcal R}^{\sigma_*}(\nu)) _{ii} -  f^{( {\mathcal R}^{\sigma_*} )_{ii}}(a,b)|$ and $ | ({\mathcal I}^{\sigma_*}(\nu)) _{ii} -  f^{( {\mathcal I}^{\sigma_*})_{ii} }(a,b)|$, which are small for $\nu \in [0,\nu_{max})$.
 
 \begin{figure}[!h]
 \begin{center}
 $\begin{array}{c}
 \includegraphics[width=0.6\textwidth]{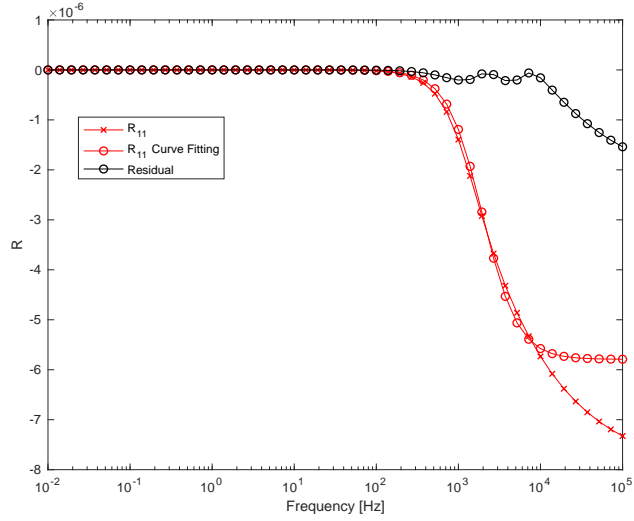} \\
 (a) \\
\includegraphics[width=0.6\textwidth]{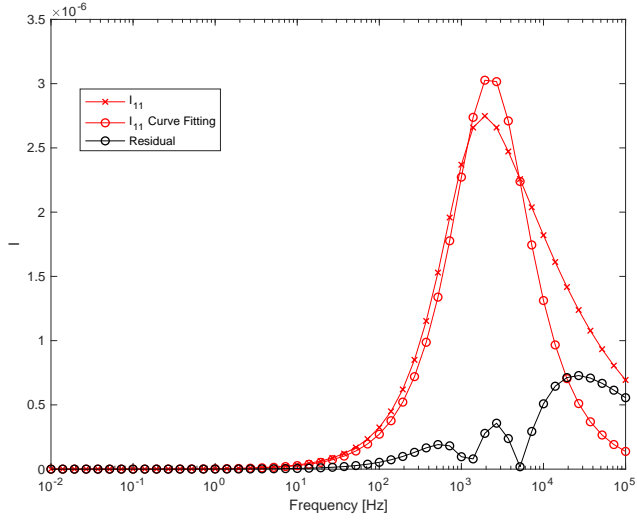} \\
(b)
\end{array}$
\end{center}
\caption{Conducting sphere with  $\alpha=0.01\text{m}$, $\mu_r=1.5$ and $\sigma_*=5.96 \times 10^6 \text{S/m}$: Curve fitting of $(a)$  $({\mathcal R}^{\sigma_*}(\nu)_{ii} = (\mathrm{Re}({\mathcal M}(\nu)))_{ii} - ({\mathcal N}^0)_{ii}$ and $(b)$ $({\mathcal I}^{\sigma_*}(\nu) )_{ii}=( \mathrm{Im}({\mathcal M}(\nu)))_{ii}$, (no summation implied, $i=1,2,3$ are identical).} \label{fig:fitcurvesphere}
\end{figure}
 
 As a second example, we consider an irregular conducting tetrahedron $B_\alpha = \alpha B$ where the object $B$ has vertices $(0,0,0)$, $(0.7,0,0)$, $(0.89,0.46,0)$ and $(1.36,1.33,1.62)$,  $\alpha=0.01\text{m}$, $\mu_r=1.5$ and $\sigma_*=5.96 \times 10^6 \text{S/m}$. For this object, ${\mathcal R}^{\sigma_*}(\nu)$ and $ {\mathcal I}^{\sigma_*}(\nu)$ have $6$ independent coefficients and, therefore, for each coefficient, the dominant mode may differ. 
The functions $f^{( {\mathcal R}^{\sigma_*})_{ij} }(a,b)$ and $f^{( {\mathcal I}^{\sigma_*})_{ij} }(c,d)$ are fitted to the curves $({\mathcal R}^{\sigma_*}(\nu))_{ij}$ and $ ({\mathcal I}^{\sigma_*}(\nu))_{ij}$
obtained using the computational procedure described in~\cite{ledgerlionheart2014,ledgerlionheart2016} for $f\in [0,  10^5)  \text{Hz}$ using a mesh of 34\,473 unstructured tetrahedra and third order finite elements. Different values of $c \approx d$ are obtained for each coefficient and we observe, in Figure~\ref{fig:fitcurvetetdiag}, for the diagonal coefficients, and in  Figure~\ref{fig:fitcurvetetoffdiag},
 for the off-diagonal coefficients, 
 that the functions describe the dominant behaviour of $({\mathcal R}^{\sigma_*}(\nu))_{ij}$ and 
$({\mathcal I}^{\sigma_*}(\nu))_{ij}$ for $f\in [0, 10^5) \text{Hz}$, where  $f_{max} = \omega_{max}/ (  2 \pi ) =10^5 \text{Hz}$, which implies $\nu_{max} =\alpha^2 \sigma_* \mu_0 \omega_{max} \approx 470$.

  \begin{figure}[!h]
 \begin{center}
 $\begin{array}{c}
 \includegraphics[width=0.6\textwidth]{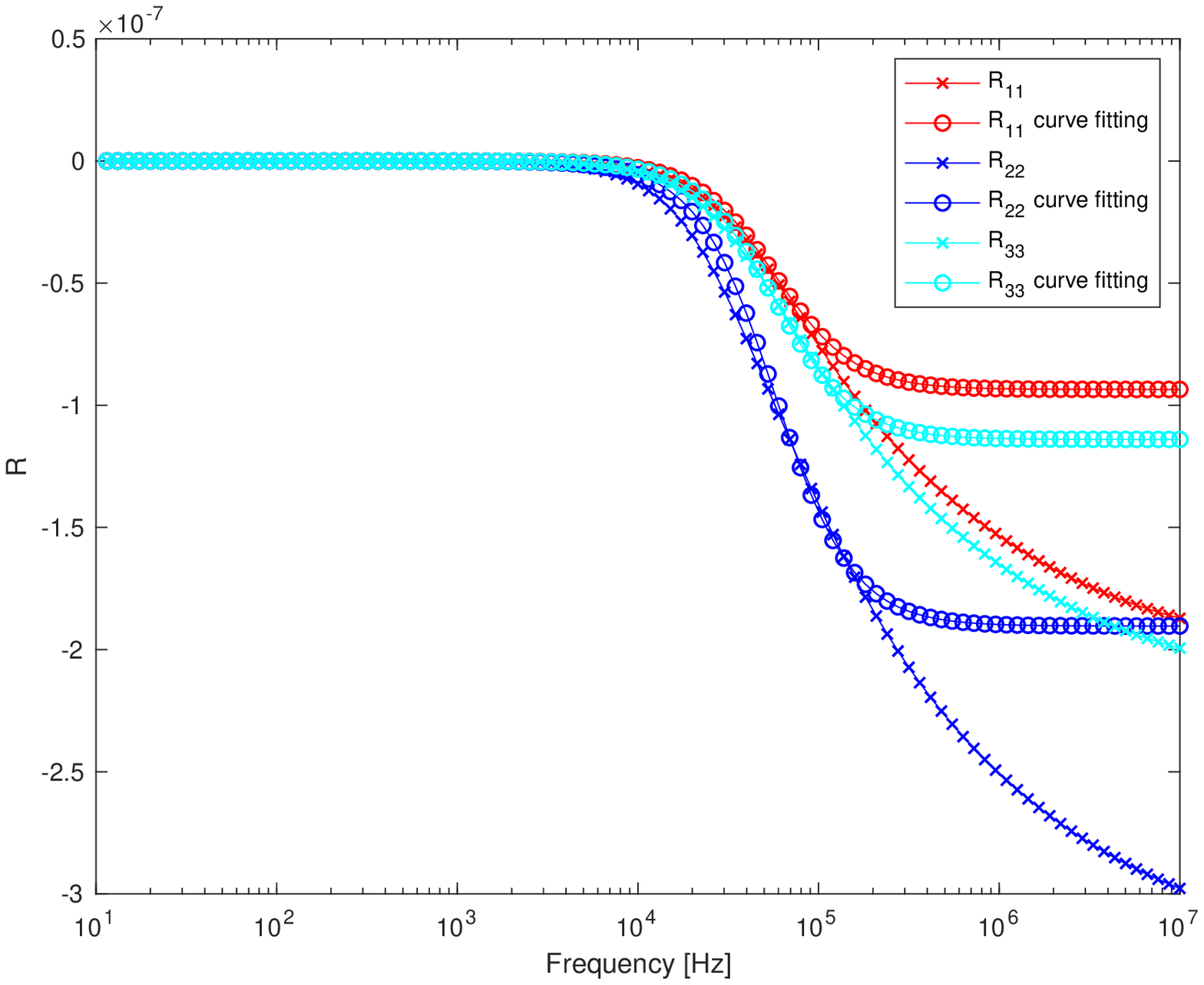} \\
 (a) \\
\includegraphics[width=0.6\textwidth]{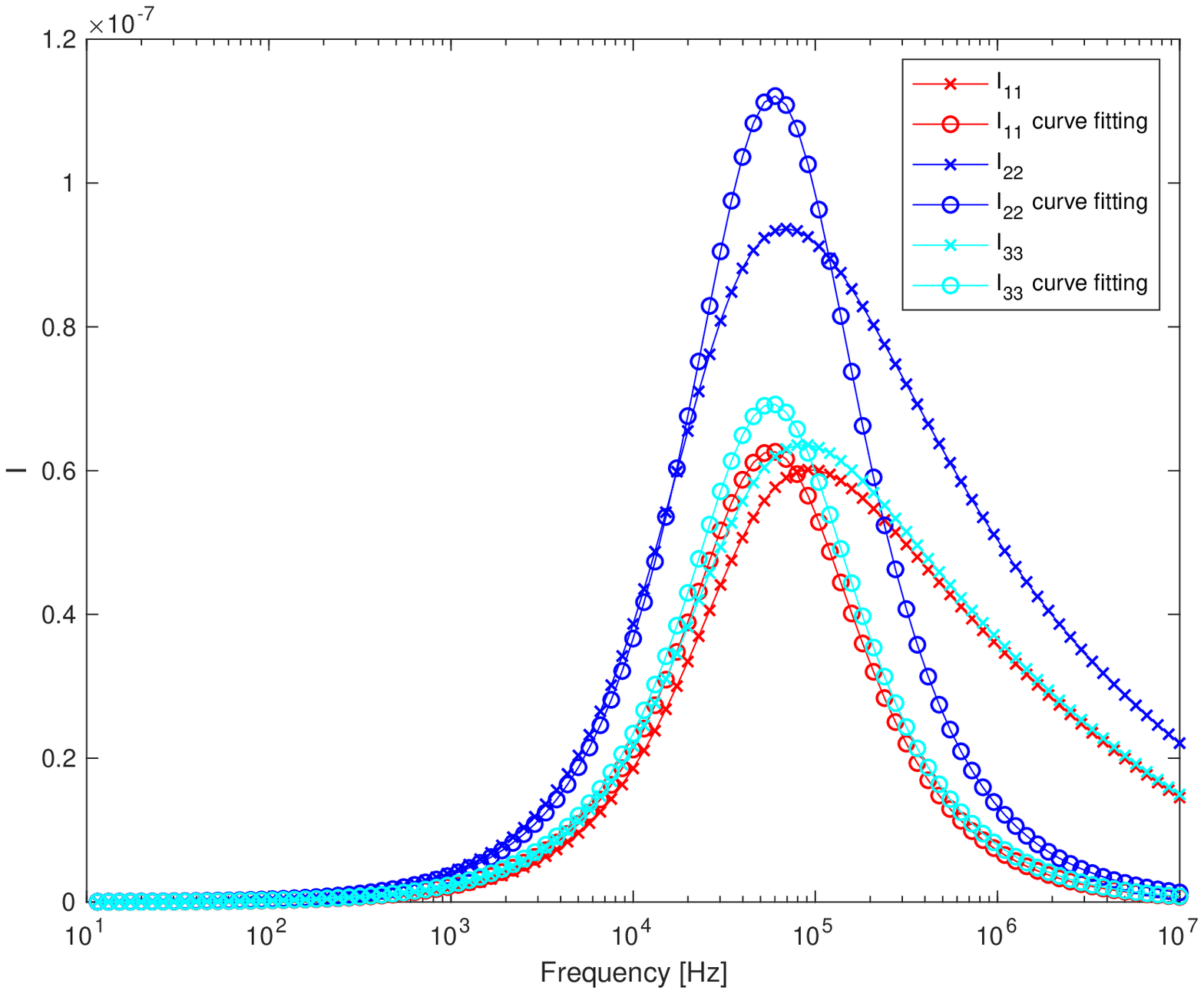} \\
(b)
\end{array}$
\end{center}
\caption{Conducting irregular tetrahedron with  $\alpha=0.01\text{m}$, $\mu_r=1.5$ and $\sigma_*=5.96 \times 10^6 \text{S/m}$: Curve fitting of $(a)$  ${\mathcal R}_{ii}^{\sigma_*}(\nu) = \mathrm{Re}({\mathcal M}(\nu))_{ii} - {\mathcal N}_{ii}^0$ and $(b)$  ${\mathcal I}_{ii}^{\sigma_*}(\nu) = \mathrm{Im}({\mathcal M}(\nu))_{ii}$, $i=1,2,3$ (no summation implied).} \label{fig:fitcurvetetdiag}
\end{figure}
 
   \begin{figure}[!h]
 \begin{center}
 $\begin{array}{c}
 \includegraphics[width=0.6\textwidth]{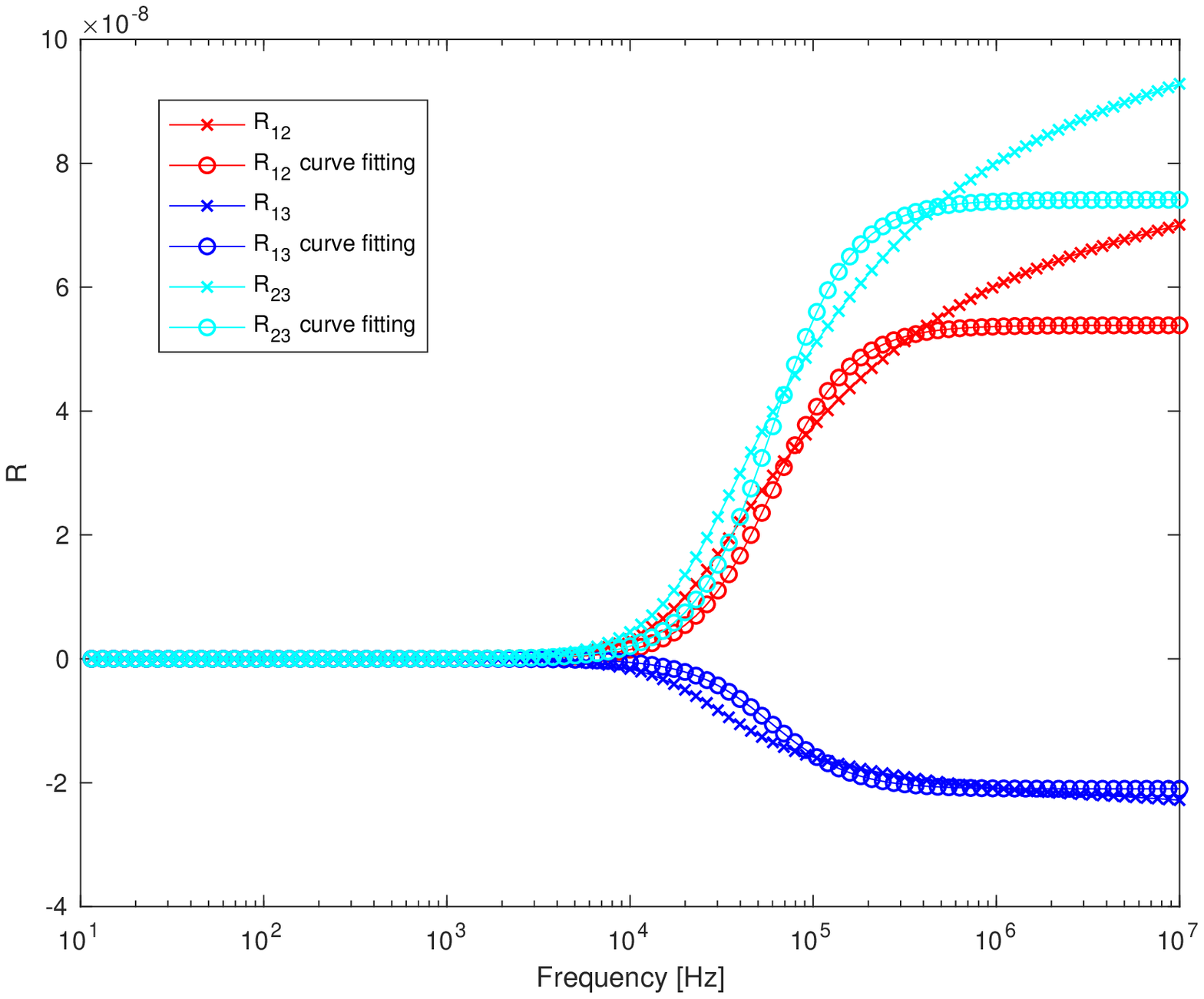} \\
 (a) \\
\includegraphics[width=0.6\textwidth]{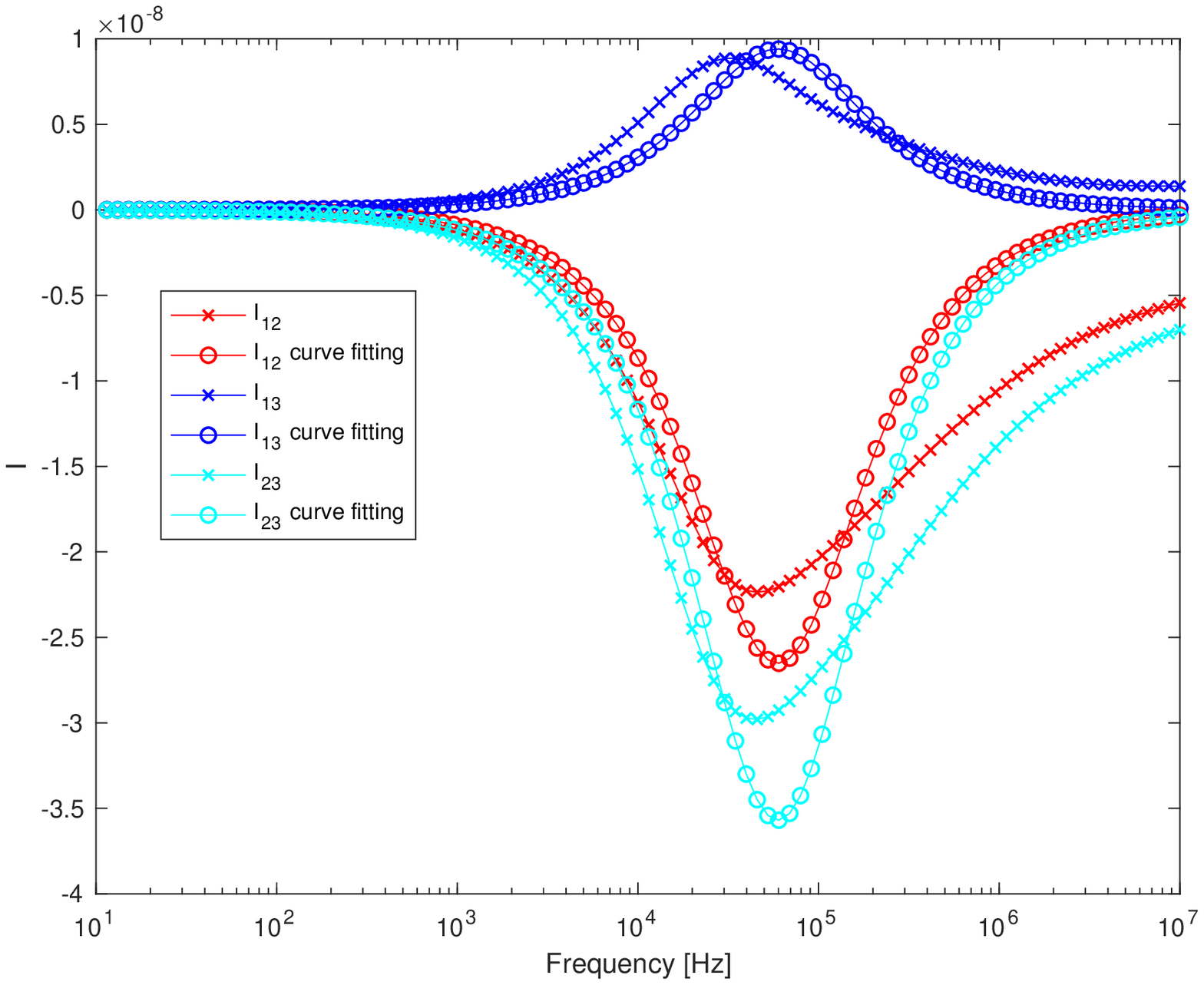} \\
(b) 
\end{array}$
\end{center}
\caption{Conducting irregular tetrahedron with  $\alpha=0.01\text{m}$, $\mu_r=1.5$ and $\sigma_*=5.96 \times 10^6 \text{S/m}$: Curve fitting of $(a)$  $({\mathcal R}^{\sigma_*}(\nu) )_{ij} = (\mathrm{Re}({\mathcal M}(\nu)))_{ij} -(  {\mathcal N}^0)_{ij}$ and $(b)$ $( {\mathcal I}^{\sigma_*}(\nu) )_{ij}=( \mathrm{Im}({\mathcal M}(\nu)))_{ij}$, $i\ne j $ .}
\label{fig:fitcurvetetoffdiag}
\end{figure}
%
%
%

The presence of dominant modes also provides further insights in to how  $({\mathcal R}^{\sigma_*})_{ii} $ and $({\mathcal I}^{\sigma_*})_{ii} $ are connected as described in the following remark.
\begin{remark}
For $\nu \in [0, \nu_{max})$ then, given a dominate mode $n_{dom}$, and applying  Corollary~\ref{coll:linkderiv} to Lemma~\ref{lemma:tenrepintheta1sum}, 
we have the following
\begin{align}
\left |  \frac{\dif}{\dif \log \nu} ( ({\mathcal R}^{\sigma_*})_{ii} )   \right | =  &  \frac{\alpha^3}{2}  \sum_{n=1}^\infty  (\mathrm{Im}(\beta_n)^2 \lambda_n \sum_{k=1}^{\mathrm{mult}(\lambda_n)} \left <  {\vec \phi}_{n,k} ,{\vec \Theta}^{(0)}({\vec e}_i) \right >_{L^2({\vec B})}^2   \nonumber \\
\approx &   \frac{\alpha^3}{2}   (\mathrm{Im}(\beta_{n_{dom}})^2 \lambda_{n_{dom}} \sum_{k=1}^{\mathrm{mult}(\lambda_{n_{dom}})} \left <  {\vec \phi}_{n_{dom},k} ,{\vec \Theta}^{(0)}({\vec e}_i) \right >_{L^2({\vec B})}^2\nonumber \\
\approx & C ({\mathcal I}^{\sigma_*})_{ii}^2, \label{eqn:correctrel}
\end{align}
where $C>0$ depends on $\lambda_{n_{dom}}$, $ {\vec \phi}_{n_{dom},k}$ and ${\vec \Theta}^{(0)}$, but is independent of $\nu$, which reveals insights in to how $\left |  \frac{\dif}{\dif \log \nu} ( ({\mathcal R}^{\sigma_*})_{ii} )   \right | $ and $({\mathcal I}^{\sigma_*})_{ii}^2$ are connected. 
From frequency sweeps of the computed tensor coefficients for different objects with homogenous $\sigma_*$ (e.g.~\cite{ledgerlionheart2016,ledgerlionheart2018,ledgerlionheart2018mathmeth} ), and from broadband measurements of tensorial coefficients (e.g.~\cite{rehimpeyton,davidson2018,norton2001}), $({\mathcal R}^{\sigma_*})_{ii}$ has been found to exhibit a monotonic and bounded behaviour with $\log \nu$ and ${\mathcal I}_{ii}^{\sigma_*} $ has a single local maximum with $\log \nu$ for a large range of objects. Thus, one might be tempted to conjecture that
\begin{equation}
\left | \frac{\dif}{\dif \log \nu} ( ({\mathcal R}^{\sigma_*})_{ii} ) \right | \le C   ({\mathcal I}^{\sigma_*})_{ii},  \nonumber
\end{equation}
however, this is not true, the correct behaviour being  of the type stated in (\ref{eqn:correctrel}).
 \end{remark}

\subsubsection{Reduction in the number of coefficients in $ {\mathcal R}^{\sigma_*}(\nu)$, $ {\mathcal I}^{\sigma_*}(\nu)$ due to object symmetries}

The important role played by $\left < {\vec \phi}_{n,k} ,{\vec \Theta}^{(0)}({\vec e}_i)  \right>_{L^2({\vec B})}   \left < {\vec \phi}_{n,k} ,{\vec \Theta}^{(0)}({\vec e}_j) \right >_{L^2({\vec B})}$ in the transformation of ${\mathcal R}^{\sigma_*}$ and ${\mathcal I}^{\sigma_*}$ is understood through the following lemma.

\begin{lemma} \label{lemma:transformintphitheta0}
Under the action of an orthogonal transformation matrix $Q$  
\begin{align}&
\left < {\vec \phi}_{n,k} ,{\vec \Theta}^{(0)}({\vec e}_i) \right >_{L^2({Q({\vec B})})}   \left <  {\vec \phi}_{n,k} ,{\vec \Theta}^{(0)}({\vec e}_j) \right >_{L^2({Q({\vec B})})}
\nonumber \\
&= (Q)_{ip} (Q)_{jq} \left <  {\vec \phi}_{n,k} ,{\vec \Theta}^{(0)}({\vec e}_p) \right >_{L^2({\vec B})}   \left < {\vec \phi}_{n,k} ,{\vec \Theta}^{(0)}({\vec e}_q) \right >_{L^2({\vec B})}, \label{eqn:transformintphitheta0}
\end{align}
transforms like the coefficients of a rank 2 tensor. Consequently, the coefficients of ${\mathcal R}^{\sigma_*}$ and ${\mathcal I}^{\sigma_*}$  expressed in the form (\ref{eqn:modesmult}) obey the transformations
\begin{align}
({\mathcal R}^{\sigma_*}[Q({\vec B})] )_{ij}= & (Q)_{ip} (Q)_{jq} ({\mathcal R}^{\sigma_*}[{\vec B}])_{pq}, \nonumber \\
({\mathcal I}^{\sigma_*}[Q({\vec B})])_{ij} = & (Q)_{ip} (Q)_{jq} ({\mathcal I}^{\sigma_*}[{\vec B}])_{pq}, \nonumber 
\end{align}
as expected.
\end{lemma}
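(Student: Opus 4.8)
The plan is to trace exactly how each ingredient of the representations (\ref{eqn:modesmult}a)--(\ref{eqn:modesmult}b) behaves under $Q$, and then to read off (\ref{eqn:transformintphitheta0}) and its consequence by a change of variables. The eigenvalues $\lambda_n$ of the model problem (\ref{eqn:modelprob}), the scalar coefficients $\beta_n$ of Lemma~\ref{lemma:theta1rep} (which are functions of $\lambda_n$ and $\nu$ only), and the object size $\alpha$ are all invariant under $Q$, so the whole $Q$--dependence of $({\mathcal R}^{\sigma_*})_{ij}$ and $({\mathcal I}^{\sigma_*})_{ij}$ will be concentrated in the products $\left < {\vec \phi}_{n,k},{\vec \Theta}^{(0)}({\vec e}_i)\right >_{L^2}\left < {\vec \phi}_{n,k},{\vec \Theta}^{(0)}({\vec e}_j)\right >_{L^2}$.

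First I would record how the transmission problems transform. Writing $\xi=Q^T\eta$ and using the pseudovector identity $\nabla_\eta\times\big(Q\,{\vec w}(Q^T\cdot)\big)(\eta)=(\det Q)\,Q\big(\nabla_\xi\times{\vec w}\big)(Q^T\eta)$ together with $Q({\vec a}\times{\vec b})=(\det Q)(Q{\vec a})\times(Q{\vec b})$, one verifies that the linear map ${\vec w}\mapsto(\det Q)\,Q\,{\vec w}(Q^T\cdot)$ carries a solution of (\ref{eqn:transproblem0var}) on ${\vec B}$ with forcing ${\vec u}$ to the solution on $Q({\vec B})$ with forcing $Q{\vec u}$: the two factors of $\det Q$ produced by the double curl cancel, and the surviving $\det Q$ cancels the one produced by the cross product ${\vec u}\times{\vec \xi}$ in the radiation condition (and, in the $\tilde{\vec \Theta}^{(0)}$ form (\ref{eqn:transproblem0til}), the one in the jump condition). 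By linearity and uniqueness this gives
\[
{\vec \Theta}^{(0)}_{Q({\vec B})}({\vec e}_i)(\eta)=(\det Q)\,Q\,{\vec \Theta}^{(0)}_{\vec B}(Q^T{\vec e}_i)(Q^T\eta)=(\det Q)\sum_{p}(Q)_{ip}\,Q\,{\vec \Theta}^{(0)}_{\vec B}({\vec e}_p)(Q^T\eta).
\]
The same map also carries $\lambda_n$--eigenfunctions of (\ref{eqn:modelprob}) on ${\vec B}$ bijectively and $L^2$--isometrically onto $\lambda_n$--eigenfunctions on $Q({\vec B})$, so $\lambda_n$ and $\mathrm{mult}(\lambda_n)$ are unchanged and we may take $\{(\det Q)\,Q\,{\vec \phi}^{\,{\vec B}}_{n,k}(Q^T\cdot)\}_{k}$ as an orthonormal basis of the $n$th eigenspace on $Q({\vec B})$. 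Establishing these transformation rules, and in particular being careful with the $\det Q$ bookkeeping forced by the pseudovector nature of the curl, is the only delicate point; everything downstream is routine.

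Granting them, I would substitute $\xi=Q^T\eta$ (with unit Jacobian), use the orthogonality of $Q$, and use that ${\vec \Theta}^{(0)}$ and the ${\vec \phi}_{n,k}$ are real so that all the inner products occurring in (\ref{eqn:modesmult}) are real and symmetric. This yields
\[
\left < {\vec \phi}_{n,k},{\vec \Theta}^{(0)}({\vec e}_i)\right >_{L^2(Q({\vec B}))}=\sum_p(Q)_{ip}\left < {\vec \phi}_{n,k},{\vec \Theta}^{(0)}({\vec e}_p)\right >_{L^2({\vec B})},
\]
and multiplying the $i$-- and $j$--versions gives (\ref{eqn:transformintphitheta0}); since only the sum over $k$ enters (\ref{eqn:modesmult}), and $\sum_k\left < \,\cdot\,,{\vec \phi}_{n,k}\right >\left < \,\cdot\,,{\vec \phi}_{n,k}\right >$ is the orthogonal projector onto the $n$th eigenspace, the outcome does not depend on the choice of eigenbases. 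Finally, inserting (\ref{eqn:transformintphitheta0}) into (\ref{eqn:modesmult}a)--(\ref{eqn:modesmult}b) and pulling the $\nu$--independent factors $(Q)_{ip}(Q)_{jq}$ out of the sums over $n$ and $k$ produces $({\mathcal R}^{\sigma_*}[Q({\vec B})])_{ij}=(Q)_{ip}(Q)_{jq}({\mathcal R}^{\sigma_*}[{\vec B}])_{pq}$ and likewise for ${\mathcal I}^{\sigma_*}$, in agreement with the rank-$2$ tensor transformation already exploited in the proof of Theorem~\ref{thm:tensoreigs}.
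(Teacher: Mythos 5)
Your proposal is correct and follows essentially the same route as the paper: establish the transformation rules ${\vec \Theta}^{(0)}_{Q({\vec B})}({\vec u}) = (\det Q)\, Q\, {\vec \Theta}^{(0)}_{\vec B}(Q^T {\vec u})(Q^T\cdot)$ and ${\vec \phi}_{n,k,Q({\vec B})} = (\det Q)\, Q\, {\vec \phi}_{n,k,{\vec B}}(Q^T\cdot)$, change variables in the $L^2$ inner products, use linearity in the auxiliary vector to extract $(Q)_{ip}$, and pull the resulting factors out of the sums in (\ref{eqn:modesmult}). The only differences are that you derive the transformation laws (the paper cites Proposition 4.3 of the Ammari et al.\ reference) and you add the worthwhile observation that the sum over $k$ is the orthogonal projector onto the $n$th eigenspace, so the result is independent of the choice of eigenbasis on $Q({\vec B})$ — a point the paper leaves implicit.
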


\begin{proof}
Using the notation ${\vec \Theta}_{\vec B}^{(0)}({\vec u})$ to denote the solution of (\ref{eqn:transproblem0var}) and ${\vec \phi}_{n,k,{\vec B}}$ to denote the $n,k$ eigenmode of (\ref{eqn:modelprob}), where the dependence on ${\vec B}$ has been made explicit, we have, from Proposition 4.3 of~\cite{ammarivolkov2013b}, the transformations
\begin{align}
{\vec \Theta}_{Q({\vec B})}^{(0)}({\vec u}) = |Q| Q {\vec \Theta}_{\vec B}^{(0)}(Q^T{\vec u}), \qquad  {\vec \phi}_{n,k,Q({\vec B})} =|Q| Q {\vec \phi}_{n,k,{\vec B}} ,\nonumber
\end{align}
for an orthogonal transformation matrix $Q$. Observe that ${\vec \phi}_{n,k,R({\vec B})}$ does not depend on auxiliary vector and so its transformation is simpler. Following similar arguments to the proof of Theorem 3.1 of~\cite{ledgerlionheart2014} we have
\begin{align}
\left < {\vec \phi}_{n,k} ,{\vec \Theta}^{(0)}({\vec e}_i) \right >_{L^2({Q({\vec B})})}  = & \int_{Q({\vec B}) } {\vec \phi}_{n,k,Q({\vec B})} \cdot {\vec \Theta}_{Q({\vec B})} ^{(0)}({\vec e}_i) \dif {\vec \xi} \nonumber \\
= & |Q|^2 \int_{\vec B} Q_{pq} ( {\vec \phi}_{n,k,{\vec B}})_q Q_{ps} ( {\vec \Theta}_{{\vec B}} ^{(0)}(Q^T{\vec e}_i) )_s\dif {\vec \xi} \nonumber \\
= &   \delta_{qs} \int_{\vec B} ( {\vec \phi}_{n,k,{\vec B}})_q ( {\vec \Theta}_{{\vec B}} ^{(0)}(Q^T{\vec e}_i) )_s\dif {\vec \xi} \nonumber \\
= &   (Q)_{ip} \int_{\vec B}  {\vec \phi}_{n,k,{\vec B}}  \cdot {\vec \Theta}_{{\vec B}} ^{(0)}({\vec e}_p)  \dif {\vec \xi} , \nonumber 
\end{align}
where we have used ${\vec \Theta}_{\vec B}^{(0)}(Q^T{\vec e}_i) = \sum_{p=1}^3 (Q)_{ip} {\vec \Theta}_{\vec B}^{(0)}({\vec e}_p)$ in the final step. Repeating similar steps for $\left <  {\vec \phi}_{n,k} ,{\vec \Theta}^{(0)}({\vec e}_j) \right>_{L^2({Q({\vec B})})}$ gives the result in (\ref{eqn:transformintphitheta0}). On consideration of (\ref{eqn:modesmult}) the 
 transformations of the coefficients of  ${\mathcal R}^{\sigma_*}[Q({\vec B})]$ and ${\mathcal I}^{\sigma_*}[Q({\vec B})]$ immediately follow.
\end{proof}
\begin{remark}
Suppose, due to reflectional or rotational symmetries of an object, that $({\mathcal R}^{\sigma_*})_{ij}=0$ and $({\mathcal I}^{\sigma_*})_{ij}=0$ for some $i \ne j$. According to Lemma~\ref{lemma:transformintphitheta0}, we have already seen \\  $\left <  {\vec \phi}_{n,k} ,{\vec \Theta}^{(0)}({\vec e}_i) \right >_{L^2({\vec B})} \left <  {\vec \phi}_{n,k} ,{\vec \Theta}^{(0)}({\vec e}_j) \right>_{L^2({\vec B})}$ transforms like the coefficients of a rank 2 tensor. This then implies 
\begin{equation}
\sum_{k=1}^{\text{mult}(\lambda_n)} \left < {\vec \phi}_{n,k} ,{\vec \Theta}^{(0)}({\vec e}_i) \right >_{L^2( {\vec B})} \left < {\vec \phi}_{n,k} ,{\vec \Theta}^{(0)}({\vec e}_j) \right >_{L^2({\vec B})} =0,
\end{equation}
must hold for each $n$ to ensure that  (\ref{eqn:modesmult}) results in $({\mathcal R}^{\sigma_*})_{ij}=0$ and $({\mathcal I}^{\sigma_*})_{ij}=0$, independent of the object's materials and the frequency. It is impossible to have $\left <  {\vec \phi}_{n,k} ,{\vec \Theta}^{(0)}({\vec e}_i) \right >_{L^2({\vec B})} =0$ or $\left < {\vec \phi}_{n,k} ,{\vec \Theta}^{(0)}({\vec e}_j) \right >_{L^2({\vec B})} =0$  for all $ k$ since this would then imply that all of the $i$th row  or the $j$th column of the tensor was $0$, which contradicts Lemma~\ref{lemma:limitcase} where the diagonal coefficients only go to $0$ for extreme values.
Furthermore, this also implies that if we have a rotational, or reflectional symmetries resulting in  $({\mathcal R}^{\sigma_*})_{ij}=0$ and $({\mathcal I}^{\sigma_*})_{ij}=0$ for some $i \ne j$, then we must also have $\text{mult} (\lambda_n) \ge 2$ for all $n$.
\end{remark}

\subsubsection{Spectral behaviour of the eigenvectors of $ {\mathcal R}^{\sigma_*}(\nu)$, $ {\mathcal I}^{\sigma_*}(\nu)$}
For objects with rotational and/or reflectional symmetries, such that $ {\mathcal R}^{\sigma_*}$ and $ {\mathcal I}^{\sigma_*}$ are diagonal, then all of the coefficients of the commutators satisfy
\begin{subequations}\label{eqn:commutatordiag}
\begin{align}
 ( {\mathcal R}^{\sigma_*} (\nu))_{ij} ( {\mathcal I}^{\sigma_*} (\nu) )_{jk}-  ({\mathcal I}^{\sigma_*} (\nu) )_{ij}( {\mathcal R}^{\sigma_*} (\nu))_{jk}  =& 0,\\
 ({\mathcal R}^{\sigma_*} (\nu_1))_{ij} ( {\mathcal R}^{\sigma_*} (\nu_2) )_{jk}- ( {\mathcal R}^{\sigma_*} (\nu_2) )_{ij}( {\mathcal R}^{\sigma_*} (\nu_1))_{jk}  =& 0, \\
  ({\mathcal I}^{\sigma_*} (\nu_1))_{ij} ( {\mathcal I}^{\sigma_*} (\nu_2))_{jk} - ( {\mathcal I}_{ij}^{\sigma_*} (\nu_2))_{ij} ( {\mathcal I}^{\sigma_*} (\nu_1) )_{jk} =& 0 ,
\end{align}  %
\end{subequations} 
for any choice of $0<\nu_1,\nu_2,\nu< \infty$ and, hence, the eigenvectors of $ {\mathcal R}^{\sigma_*}(\nu_1)$ and  $ {\mathcal I}^{\sigma_*}(\nu_2)$ are the same for any $0<\nu_1,\nu_2< \infty$.

To understand how the eigenvectors of $ {\mathcal R}^{\sigma_*}$ and $ {\mathcal I}^{\sigma_*}$ for a general object can differ, we consider the following Lemma that provides estimates on the off-diagonal elements of the commutators using the alternative form of the tensors provided  by  Lemma~\ref{lemma:altformiandr}.  We note that it is easy to show that the diagonal elements of the commutators, corresponding to $i=k$ in (\ref{eqn:commutatordiag}), always vanish for any object. 

\begin{lemma} \label{lemma:commutator}
The off-diagonal elements of the commutators of
$ {\mathcal R}^{\sigma_*} (\nu ) $ and  $ {\mathcal I}^{\sigma_*} (\nu)$, $ {\mathcal R}^{\sigma_*} (\nu_1)  $ and   $ {\mathcal R}^{\sigma_*} (\nu_2)  $, as well as $ {\mathcal I}^{\sigma_*} (\nu_1) $ and  $ {\mathcal I}^{\sigma_*} (\nu_2) $, for $0 < \nu_1,\nu_2,\nu< \infty$ for $\nu_1\ne \nu_2$, for a general object, can be estimated as follows
\begin{subequations}
\begin{align}
\left |  ( {\mathcal R}^{\sigma_*} (\nu))_{ij} ( {\mathcal I}^{\sigma_*} (\nu) )_{jk}-  ({\mathcal I}^{\sigma_*} (\nu) )_{ij}( {\mathcal R}^{\sigma_*} (\nu))_{jk}  \right |  \le &
C\alpha^6 \nu , \label{eqn:bdcomrinew} \\
 \left | ({\mathcal R}^{\sigma_*} (\nu_1))_{ij} ( {\mathcal R}^{\sigma_*} (\nu_2) )_{jk}- ( {\mathcal R}^{\sigma_*} (\nu_2) )_{ij}( {\mathcal R}^{\sigma_*} (\nu_1))_{jk}   \right |
  \le & C\alpha^6,   \\
 \left |  ({\mathcal I}^{\sigma_*} (\nu_1))_{ij} ( {\mathcal I}^{\sigma_*} (\nu_2) )_{jk}- ( {\mathcal I}^{\sigma_*} (\nu_2) )_{ij}( {\mathcal I}^{\sigma_*} (\nu_1))_{jk}   \right |
   \le &C\alpha^6 \nu_1 \nu_2,
\end{align}
\end{subequations}
where $i\ne k$, $C>0$ is independent of $\nu$, $\nu_1$, $\nu_2$ and $\alpha$.
\end{lemma}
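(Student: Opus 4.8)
The plan is to estimate each commutator entry directly by substituting the spectral representation of the tensor coefficients from Lemma~\ref{lemma:tenrepintheta1sum} (in the multiplicity‑explicit form~(\ref{eqn:modesmult})), and then exploiting the boundedness, established in the proof of Lemma~\ref{lemma:theta1rep}, of the inner products $\left<{\vec \phi}_{n,k},{\vec \Theta}^{(0)}({\vec e}_i)\right>_{L^2({\vec B})}$ together with the summability $\sum_n \lambda_n^2 \left<{\vec \Theta}^{(0)}({\vec e}_i),{\vec \phi}_n\right>_{L^2({\vec B})}^2 < C$. For compactness write $c^{(i)}_{n,k} := \left<{\vec \phi}_{n,k},{\vec \Theta}^{(0)}({\vec e}_i)\right>_{L^2({\vec B})}$ and $S^{(ij)}_n := \sum_{k=1}^{\mathrm{mult}(\lambda_n)} c^{(i)}_{n,k} c^{(j)}_{n,k}$. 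Then $({\mathcal R}^{\sigma_*}(\nu))_{ij} = \tfrac{\alpha^3}{4}\sum_n \mathrm{Re}(\beta_n(\nu))\,\lambda_n\, S^{(ij)}_n$ and similarly for ${\mathcal I}^{\sigma_*}$ with $\mathrm{Im}(\beta_n)$; the key point is that $|S^{(ij)}_n| \le \bigl(\sum_k (c^{(i)}_{n,k})^2\bigr)^{1/2}\bigl(\sum_k (c^{(j)}_{n,k})^2\bigr)^{1/2}$ by Cauchy--Schwarz, so $\sum_n \lambda_n^2 |S^{(ij)}_n| < \infty$ for each pair $i,j$.

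First I would write the summand in each commutator as a double sum over mode indices $m,n$ and cancel the diagonal $m=n$ contribution. For~(\ref{eqn:bdcomrinew}), the $(i,j,k)$ commutator entry becomes (summing over the repeated $j$)
\begin{align}
\frac{\alpha^6}{16}\sum_{j}\sum_{m,n}\lambda_m \lambda_n \bigl(\mathrm{Re}(\beta_m)\mathrm{Im}(\beta_n) - \mathrm{Im}(\beta_m)\mathrm{Re}(\beta_n)\bigr) S^{(ij)}_m S^{(jk)}_n . \nonumber
\end{align}
Using $\beta_n = -\im\nu/(\im\nu-\lambda_n)$ one computes $\mathrm{Re}(\beta_m)\mathrm{Im}(\beta_n) - \mathrm{Im}(\beta_m)\mathrm{Re}(\beta_n) = \nu\lambda_n\nu^2/((\nu^2+\lambda_m^2)(\nu^2+\lambda_n^2)) - \nu\lambda_m\nu^2/((\nu^2+\lambda_m^2)(\nu^2+\lambda_n^2)) = \nu^3(\lambda_n-\lambda_m)/((\nu^2+\lambda_m^2)(\nu^2+\lambda_n^2))$, which is antisymmetric in $m,n$ and hence vanishes on the diagonal. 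One then bounds $|\nu^3(\lambda_n-\lambda_m)|/((\nu^2+\lambda_m^2)(\nu^2+\lambda_n^2)) \le \nu\bigl(\lambda_n/(\nu^2+\lambda_n^2) + \lambda_m/(\nu^2+\lambda_m^2)\bigr) \cdot (\text{bounded factor})$ — more cleanly, $\nu^3/(\nu^2+\lambda_m^2) \le \nu$ and $\lambda_n/(\nu^2+\lambda_n^2)\le 1/(2\lambda_n^{1/2})\cdot\nu^{-1}$ type estimates show each term is $\le C\nu \cdot \lambda_m^{-1}\lambda_n^{-1}$ up to constants, so that after multiplying by $\lambda_m\lambda_n$ the mode factor is $\le C\nu$ uniformly and the sum $\sum_{m,n}|S^{(ij)}_m||S^{(jk)}_n|$ converges. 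The factors $\nu_1\nu_2$ and the absence of a $\nu$ factor in the ${\mathcal R}$--${\mathcal R}$ case emerge the same way: for $({\mathcal R}(\nu_1))_{ij}({\mathcal R}(\nu_2))_{jk}-(\nu_1\leftrightarrow\nu_2)$ the combination $\mathrm{Re}(\beta_m(\nu_1))\mathrm{Re}(\beta_n(\nu_2)) - \mathrm{Re}(\beta_m(\nu_2))\mathrm{Re}(\beta_n(\nu_1)) = \nu_1^2\nu_2^2(\nu_2^2-\nu_1^2)(\lambda_m^2-\lambda_n^2)/((\nu_1^2+\lambda_m^2)(\nu_2^2+\lambda_n^2)(\nu_1^2+\lambda_n^2)(\nu_2^2+\lambda_m^2))$ is bounded by a constant after multiplication by $\lambda_m\lambda_n$; for the ${\mathcal I}$--${\mathcal I}$ case the analogous computation produces the $\nu_1\nu_2$ prefactor.

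The main obstacle is making the uniform bound on the mode factors genuinely rigorous: one must show that, after multiplying by $\lambda_m\lambda_n$, the $\beta$--combination is bounded by a constant (times $\nu$, or $\nu_1\nu_2$, as appropriate) \emph{uniformly in $m$, $n$ and in the frequency parameters}, so that the whole estimate reduces to the already–established convergence of $\sum_n \lambda_n^2 (c^{(i)}_{n,k})^2$. Concretely, one needs elementary inequalities of the form $\lambda\nu/(\nu^2+\lambda^2) \le \tfrac12$ and $\nu^3\lambda/((\nu^2+\lambda^2)^2)\le$ const, applied carefully to the antisymmetrised difference. An alternative, cleaner route that sidesteps the double sum entirely is to use instead the forms (\ref{eqn:raltform})--(\ref{eqn:ialtform}) of Lemma~\ref{lemma:altformiandr} together with the bounds $\| {\vec \Theta}^{(1)}({\vec e}_i)\|_{L^2({\vec B})} \le C$ and $\|\nu\,{\vec \Theta}^{(1)}({\vec e}_i)\|_{L^2({\vec B})}\le C\nu$ (which follow from the spectral series and $|\beta_n|\le\min(1,\nu/\lambda_n)$): then $|({\mathcal R}^{\sigma_*}(\nu))_{ij}| \le C\alpha^3\nu$ and $|({\mathcal I}^{\sigma_*}(\nu))_{ij}|\le C\alpha^3\nu$, but also $|({\mathcal I}^{\sigma_*}(\nu))_{ij}| \le C\alpha^3$ from the bound with the other $\nu$ absorbed, and one estimates each product in the commutator by the better of the two bounds for each factor; the antisymmetry forcing the \emph{difference} (rather than just the sum) is what actually buys the extra power of $\nu$ (resp.\ removes it), and this has to be extracted from the $\beta_n$ algebra regardless, so some version of the first argument seems unavoidable for the sharp constants. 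I would present the double–sum argument as the main line, with the scalar inequalities $\nu^3|\lambda_n-\lambda_m|/((\nu^2+\lambda_m^2)(\nu^2+\lambda_n^2)) \le 2\nu/(\lambda_m\lambda_n)\cdot\min(\lambda_m,\lambda_n)\cdot(\cdots)$ spelled out just enough to make the uniformity transparent.
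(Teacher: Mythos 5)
Your proposal takes a genuinely different route from the paper, and the route you dismiss as merely an ``alternative'' is in fact the paper's entire proof. The paper uses Lemma~\ref{lemma:altformiandr} together with the spectral series (\ref{eqn:spectraltheta1}) and the summability bound (\ref{eqn:bdinnerprodthephi}) to establish the individual estimates $|({\mathcal R}^{\sigma_*}(\nu))_{ij}|\le C\alpha^3$ and $|({\mathcal I}^{\sigma_*}(\nu))_{ij}|\le C\alpha^3\nu$ uniformly in $\nu$ (the apparent factor $\nu$ in (\ref{eqn:raltform}) is cancelled by the $1/\nu^2$ in $\|\mathrm{Im}({\vec \Theta}^{(1)})\|^2_{L^2({\vec B})}$), and then simply applies the triangle inequality $|RI-IR|\le |R|\,|I|+|I|\,|R|$. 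All three stated bounds are exactly the products of the individual bounds: $C\alpha^3\cdot C\alpha^3\nu$, $C\alpha^3\cdot C\alpha^3$, and $C\alpha^3\nu_1\cdot C\alpha^3\nu_2$. Your central claim --- that ``the antisymmetry forcing the difference (rather than just the sum) is what actually buys the extra power of $\nu$'' --- is therefore incorrect; no cancellation is used or needed anywhere.

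Moreover, the antisymmetrised double-sum argument as you state it has a genuine gap at precisely the step you flag as the ``main obstacle.'' The claimed uniform bound
\begin{equation}
\lambda_m\lambda_n\,\bigl|\mathrm{Re}(\beta_m)\mathrm{Im}(\beta_n)-\mathrm{Im}(\beta_m)\mathrm{Re}(\beta_n)\bigr| \;=\; \frac{\lambda_m\lambda_n\,\nu^3\,|\lambda_m-\lambda_n|}{(\nu^2+\lambda_m^2)(\nu^2+\lambda_n^2)} \;\le\; C\nu \nonumber
\end{equation}
is false: taking $\lambda_m=\nu$ and letting $\lambda_n\to\infty$ the left-hand side tends to $\nu^2/2$. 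The argument can be repaired by bounding the factor instead by $\nu(\lambda_m+\lambda_n)$ (using $|\mathrm{Re}(\beta_m)|\le 1$ and $|\mathrm{Im}(\beta_n)|\le\nu/\lambda_n$) and absorbing the surviving powers of $\lambda$ into the summable weights $\sum_n\lambda_n|S_n^{(ij)}|<\infty$ guaranteed by (\ref{eqn:bdinnerprodthephi}) --- but that repair is exactly the triangle inequality applied term by term, i.e.\ it abandons the cancellation and collapses back onto the paper's argument. I would recommend writing the proof in the paper's form: it is shorter, it avoids the double sum entirely, and it makes the uniformity in $\nu$, $\nu_1$, $\nu_2$ transparent.
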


\begin{proof}
Using (\ref{eqn:raltform})  we estimate that
\begin{align}
| ( {\mathcal R}^{\sigma_*} (\nu))_{ij} | & \le  \nu \frac{\alpha^3}{4} \| \mathrm{Im}( {\vec \Theta}^{(1)} ({\vec e}_i)) \|_{L^2({\vec B})} \| {\vec \Theta}^{(0)} ({\vec e}_j  ) \|_{L^2 ({\vec B})}  \nonumber \\
& \le  \nu \frac{\alpha^3}{4} \left ( \sum_{p=1}^3 \|  \mathrm{Im}( {\vec \Theta}^{(1)} ({\vec e}_p)) \|_{L^2({\vec B})}^2  \right )^{1/2}
\left ( \sum_{p=1}^3 \|   {\vec \Theta}^{(0)} ({\vec e}_p) \|_{L^2({\vec B})}^2  \right )^{1/2} \nonumber\\
& \le C  \nu \alpha^3  \left ( \sum_{p=1}^3 \|  \mathrm{Im}( {\vec \Theta}^{(1)} ({\vec e}_p)) \|_{L^2({\vec B})}^2  \right )^{1/2} \label{eqn:bdrij} ,
\end{align}
where $C>0$ does not depend on $\nu$ or $\alpha$ and, from (\ref{eqn:ialtform}), we estimate 
\begin{align}
|  ({\mathcal I}^{\sigma_*} (\nu))_{ij} |  \le & \nu \frac{\alpha^3}{4} \left ( \| \mathrm{Re}( {\vec \Theta}^{(1)} ({\vec e}_i)) \|_{L^2({\vec B})}  \| {\vec \Theta}^{(0)} ({\vec e}_j) \|_{L^2({\vec B})}  +  \| {\vec \Theta}^{(0)} ({\vec e}_i )  \|_{L^2({\vec B})}   \| {\vec \Theta}^{(0)} ({\vec e}_j )  \|_{L^2({\vec B})}  \right ) \nonumber \\
\le & \nu \frac{\alpha^3}{4} \left ( \sum_{p=1}^3 \|   {\vec \Theta}^{(0)} ({\vec e}_p) \|_{L^2({\vec B})}^2  \right )^{1/2} \left (
\left ( \sum_{p=1}^3 \|  \mathrm{Re}( {\vec \Theta}^{(1)} ({\vec e}_p)) \|_{L^2({\vec B})}^2  \right )^{1/2}
   \right . \nonumber \\
& \left .  + \left ( \sum_{p=1}^3 \|   {\vec \Theta}^{(0)} ({\vec e}_p) \|_{L^2({\vec B})}^2  \right )^{1/2}  \right ) \nonumber \\
\le & C \nu \alpha^3    \left (
\left ( \sum_{p=1}^3 \|  \mathrm{Re}( {\vec \Theta}^{(1)} ({\vec e}_p)) \|_{L^2({\vec B})}^2  \right )^{1/2} +1 \right ) \label{eqn:bdiij} .
\end{align}
Furthermore, using (\ref{eqn:spectraltheta1}), we obtain
\begin{align}
\|  \mathrm{Im}&( {\vec \Theta}^{(1)} ({\vec e}_p)) \|_{L^2({\vec B})}^2 =  \int_{\vec B} \mathrm{Im}( {\vec \Theta}^{(1)} ({\vec e}_p))  \cdot \mathrm{Im}( {\vec \Theta}^{(1)} ({\vec e}_p)) \dif {\vec \xi} \nonumber \\
& = \sum_{n=1}^\infty \sum_{m=1}^\infty \mathrm{Im}(\beta_n) \mathrm{Im}(\beta_m)  \left < {\vec \phi}_n , {\vec \Theta}^{(0)} ({\vec e}_p) \right >_{L^2({\vec B})}
 \left < {\vec \phi}_m , {\vec \Theta}^{(0)} ({\vec e}_p)\right >_{L^2({\vec B} )} \left <{\vec \phi}_n,{\vec \phi}_m \right >_{L^2({\vec B})} \nonumber \\
 & = \sum_{n=1}^\infty (\mathrm{Im}(\beta_n))^2 \left < {\vec \phi}_n , {\vec \Theta}^{(0)} ({\vec e}_p) \right >_{L^2({\vec B})}^2 \nonumber \\
& = \frac{1}{\nu^2} \sum_{n=1}^\infty \frac{\lambda_n^2}{(1+(\lambda_n/\nu)^2)^2} \left < {\vec \phi}_n , {\vec \Theta}^{(0)} ({\vec e}_p)  \right >_{L^2({\vec B})}^2 \label{eqn:expiml2int} ,
 \end{align}
where we have used $\mathrm{Im}( {\vec \Theta}^{(1)} ({\vec e}_p))\in {\mathbb R}$ and $\left < {\vec \phi}_n,{\vec \phi}_m \right >_{L^2({\vec B})}= \delta_{mn}$. In a similar way, we can show that
\begin{align}
\|  \mathrm{Re}( {\vec \Theta}^{(1)} ({\vec e}_p)) \|_{L^2({\vec B})}^2 = &  \sum_{n=1}^\infty \frac{1}{(1+(\lambda_n/\nu)^2)^2} \left < {\vec \phi}_n , {\vec \Theta}^{(0)} ({\vec e}_p) \right >_{L^2){\vec B})}^2 \label{eqn:exprel2int}.
\end{align}
Next, we use
\begin{align}
&\left |  ( {\mathcal R}^{\sigma_*} (\nu))_{ij} ( {\mathcal I}^{\sigma_*} (\nu) )_{jk}-  ({\mathcal I}^{\sigma_*} (\nu) )_{ij}( {\mathcal R}^{\sigma_*} (\nu))_{jk}   \right |  \le\nonumber \\
&|  ({\mathcal R}^{\sigma_*} (\nu) )_{ij} |  |  ( {\mathcal I}^{\sigma_*} (\nu) )_{jk} | + |  ({\mathcal I}^{\sigma_*} (\nu) )_{ij}| | ({\mathcal R}^{\sigma_*} (\nu))_{jk}| , \nonumber
\end{align}
and substitute (\ref{eqn:bdrij}) and (\ref{eqn:bdiij}) followed by (\ref{eqn:expiml2int}) and (\ref{eqn:exprel2int}) to obtain
\begin{align}
\left |  ( {\mathcal R}^{\sigma_*} (\nu))_{ij} ( {\mathcal I}^{\sigma_*} (\nu) )_{jk}-  ({\mathcal I}^{\sigma_*} (\nu) )_{ij}( {\mathcal R}^{\sigma_*} (\nu))_{jk}  \right |  \le &
C\alpha^6 \nu  E_1(\nu) E_2(\nu) , \label{eqn:bdcomri} 
\end{align}
where
\begin{align}
E_1(\nu) := & \left ( \sum_{n=1}^\infty \frac{\lambda_n^2}{\left (1+ (\lambda_n/\nu)^2 \right )^2} \sum_{p=1}^3 \left < {\vec \phi}_n , {\vec \Theta}^{(0)} ({\vec e}_p) \right >_{L^2({\vec B})}^2 \right )^{1/2}, \nonumber \\
E_2(\nu) : =&  \left ( \sum_{n=1}^\infty \frac{1}{\left (1+ (\lambda_n/\nu)^2 \right )^2} \sum_{p=1}^3 \left < {\vec \phi}_n , {\vec \Theta}^{(0)} ({\vec e}_p) \right >_{L^2({\vec B})}^2 \right )^{1/2}+1  \nonumber .
\end{align}
Still further, using (\ref{eqn:bdinnerprodthephi}), we obtain that
\begin{align}
E_1(\nu)^2 & \le \sum_{n=1}^\infty \lambda_n^2  \sum_{p=1}^3 \left < {\vec \phi}_n , {\vec \Theta}^{(0)} ({\vec e}_p) \right >_{L^2({\vec B})}^2  \le C, \nonumber
\end{align} 
independent of $\nu$. Since $E_2(\nu) < E_1(\nu)$  then we also have $E_2(\nu) <C $ independent of $\nu$, which, together with (\ref{eqn:bdcomri}), leads immediately to (\ref{eqn:bdcomrinew}).
The other two bounds are found in a similar way.

\end{proof}
 \begin{remark}
 In~\cite{ledgerlionheart2018mathmeth} we have proposed to use the eigenvalues of   $ {\mathcal R}^{\sigma_*} $ and  $ {\mathcal I}^{\sigma_*}  $ for the classification of objects, as they are known to be invariant under an object rotation, and their eigenvectors for determining an object's orientation. Lemma~\ref{lemma:commutator} 
 shows that the off-diagonal elements of the commutator  between $ {\mathcal R}^{\sigma_*} (\nu ) $ and $ {\mathcal I}^{\sigma_*} (\nu ) $, for general objects, grows at most linearly with $\nu$. Recalling that $\nu = \omega \sigma_*\mu \alpha^2$, then, by using
 \begin{equation}
 \frac{\dif}{\dif \omega} \left |  ({\mathcal R}^{\sigma_*} (\omega ))_{ij} ( {\mathcal I}^{\sigma_*} (\omega) )_{jk}- ( {\mathcal I}^{\sigma_*} (\omega) )_{ij} ({\mathcal R}^{\sigma_*} (\omega ) )_{jk} \right | ,
 \end{equation}
over a range of $\omega$, will also provide useful information and allow cases where the eigenvectors of  $ {\mathcal R}^{\sigma_*} (\omega ) $ and  $ {\mathcal I}^{\sigma_*} (\omega)$ are the same and where they differ to be distinguished.
 As an illustration, we include, in Figure~\ref{fig:comirregrandi}, the numerical results for $|(Z(\omega))_{ik}| =  |  ({\mathcal R}^{\sigma_*} (\omega ))_{ij} ( {\mathcal I}^{\sigma_*} (\omega) )_{jk}- ( {\mathcal I}^{\sigma_*} (\omega) )_{ij} ({\mathcal R}^{\sigma_*} (\omega ) )_{jk}  | $, $i\ne k$, for the irregular tetrahedron 
 previously considered in Figures~\ref{fig:fitcurvetetdiag} and~\ref{fig:fitcurvetetoffdiag}.
 We observe that the behaviour of $|(Z(\omega))_{ik}|$ tracks $\| {\mathcal R}^{\sigma_*} \|_F \| {\mathcal I}^{\sigma_*}\|_F $ and this behaviour is similar, in turn, to the estimate in (\ref{eqn:bdcomri}). 
 \begin{figure}[!h]
 \begin{center}
 \includegraphics[width=0.6\textwidth]{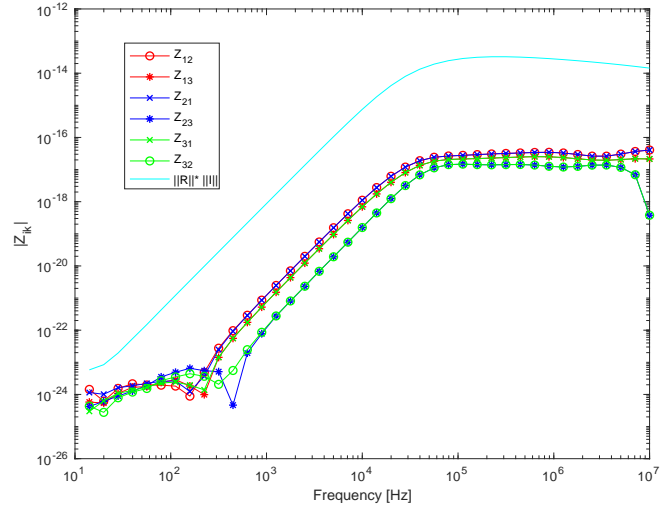}
 \end{center}
\caption{Conducting irregular tetrahedron with  $\alpha=0.01\text{m}$, $\mu_r=1.5$ and $\sigma_*=5.96 \times 10^6 \text{S/m}$: Behaviour of $|(Z(\omega))_{ik}| =  |  ({\mathcal R}^{\sigma_*} (\omega ))_{ij} ( {\mathcal I}^{\sigma_*} (\omega) )_{jk}- ( {\mathcal I}^{\sigma_*} (\omega) )_{ij} ({\mathcal R}^{\sigma_*} (\omega ) )_{jk}  | $ as a function of $\omega$ for $i\ne k$} \label{fig:comirregrandi}
 \end{figure} 
 \end{remark}

 \subsection{Mittag-Leffler expansion of ${\mathcal M}$}
Given a meromorphic function $f(w)$ in a region $\Omega$  with poles $a_n$, then Ahlfors~\cite[pg. 187]{ahlfors} explains how it can be expressed in the form
\begin{equation}
f(w) =g(z) + \sum_{n} P_n \left ( \frac{1}{w-a_n} \right ) ,
 \label{eqn:orgformml}
\end{equation}
where $P_n \left ( 1/(w-a_n) \right ) $ is a polynomial in $ 1/(w-a_n) $ for each pole $a_n$ and $g(w)$ is analytic in $\Omega$. Unfortunately, the sum on the right hand side is infinite and so there is no garuntee that it will converge in general. However, as described by Ahlfors, it is possible to modify (\ref{eqn:orgformml}) by  subtracting an analytic function $p_n$ from each singular part $P_n$, where each $p_n$ can be chosen as a polynomial. In the case where $\Omega$ is the complex plane, then, in Theorem 4 of~\cite[pg. 187]{ahlfors}, Ahlfors proves that every meromorphic function has a development in partial fractions and that the singular parts can be described  arbitrarily, with this being a particular case of a more general result due to Mittag-Leffler.
In particular, he explains that the modified series
\begin{equation}
f(w) =g(z) + \sum_{n} P_n  \left ( \frac{1}{w-a_n} \right )  -p_n(w),
 \label{eqn:modformml}
\end{equation}
can constructed by taking $p_n$ to be the Taylor series expansion of $ P_n  \left ( \frac{1}{w-a_n} \right ) $ expanded about $0$ and truncated at some sufficient degree $n_v$. Still further, he explains that the series in (\ref{eqn:modformml}) can be made absolutely convergent in the whole complex plane, apart from the poles, by choosing $n_v$ sufficiently large, in particular such that $2^{n_v} \ge M_n 2^n$ for all $n$ where $M_n = \max |P_n(w)|$ for $|w| < a_n/2$.

\vspace{0.1in}

We apply this result to ${\mathcal M}_{ij} (w)$ with $w:= \im \nu$ and obtain the following theorem, which is the main result of this section.

%
%
%
%
%

\begin{theorem} \label{thm:expandmittag}
The coefficients of ${{\mathcal M}} (w)$ are meromorphic in the whole complex plane with simple poles at $\lambda_n$ on the positive real axis, where $0 < \lambda_1 < \lambda_2 < \ldots $,
and is analytic at $w =0$ with ${{\mathcal M}} (0) = {\mathcal N}^0$. Thus, the coefficients of ${{\mathcal M}} (w)$  admit a Mittag-Leffler type expansion for simple poles in the form
\begin{align}
({\mathcal M}(w ))_{ij}  = & ( {\mathcal N}^0)_{ij} +   \sum_{n=1}^\infty   \left ( \frac{\lambda_n}{w -\lambda_n} +1   \right )  ({\mathcal A}^{(n)})_{ij}-p_n(w) 
, \label{eqn:mittagnu1}
\end{align}
where
\begin{align}
p_n(w) & =   - \left ( \frac{w}{\lambda_n} + \frac{w^2}{\lambda_n^2} + \ldots+\frac{w^{n_v}}{\lambda_n^{n_v}}  \right )({\mathcal A}^{(n)})_{ij} , \label{eqn:mittagni1}\\
({\mathcal A}^{(n)})_{ij} :=& -  \frac{\alpha^3 \lambda_n  }{4} \sum_{k=1}^{\mathrm{mult}(\lambda_n)} \left <  {\vec \phi}_{n,k} ,{\vec \Theta}^{(0)}({\vec e}_i) \right >_{L^2({\vec B})}   
\left <  {\vec \phi}_{n,k} ,{\vec \Theta}^{(0)}({\vec e}_j) \right >_{L^2({\vec B})} . \label{eqn:mittagni2}
\end{align}
In the above, $(\lambda_n,{\vec \phi}_n)$ are the eigenvalue--eigensolution pairs of (\ref{eqn:modelprob}).
The series can be made absolutely convergent in the complex plane, apart from at the poles, by choosing 
 $n_v$ sufficiently large, in particular such that $2^{n_v} \ge M_n 2^n$ for all $n$ where $M_n = \max |\left (\lambda_n/(w-\lambda_n) +1 \right ) A_{ij}^{(n)}|$ for $|w| < \lambda_n/2$.
\end{theorem}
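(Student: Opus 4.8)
The plan is to establish the three claims in the statement in turn: meromorphy with simple poles at the $\lambda_n$, analyticity at $w=0$ with value ${\mathcal N}^0$, and then the Mittag--Leffler expansion together with its convergence. First I would reuse the spectral machinery already in place. From Lemma~\ref{lemma:theta1rep}, the weak solution ${\vec \Theta}^{(1)}({\vec u})$ has the expansion ${\vec \Theta}^{(1)}({\vec u}) = \sum_{n} \beta_n P_n({\vec \Theta}^{(0)}({\vec u}))$ with $\beta_n = -w/(w-\lambda_n)$ on setting $w = \im\nu$; note this already extends $\beta_n$ to a rational function of $w$ holomorphic away from $w=\lambda_n$. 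Plugging this into the series representation of Lemma~\ref{lemma:tenrepintheta1sum} (or equivalently using $({\mathcal M})_{ij} = ({\mathcal N}^0)_{ij} + ({\mathcal R}^{\sigma_*})_{ij} + \im({\mathcal I}^{\sigma_*})_{ij}$ and combining the real and imaginary series) gives, after collecting the $\mathrm{Re}(\beta_n)\lambda_n$ and $\mathrm{Im}(\beta_n)\lambda_n$ pieces into a single complex coefficient, a term of the form $\beta_n \lambda_n \langle {\vec \phi}_n,{\vec \Theta}^{(0)}({\vec e}_i)\rangle\langle {\vec \phi}_n,{\vec \Theta}^{(0)}({\vec e}_j)\rangle$ for each $n$; writing $\beta_n \lambda_n = \lambda_n^2/(\lambda_n - w) = -\lambda_n^2/(w-\lambda_n)$ and recognising $\lambda_n^2 \langle\cdot\rangle\langle\cdot\rangle / (w-\lambda_n)$ as exactly $(\lambda_n/(w-\lambda_n)+1)\cdot\lambda_n\langle\cdot\rangle\langle\cdot\rangle$ up to the additive constant, we arrive at the coefficient $({\mathcal A}^{(n)})_{ij}$ as defined. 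The partial-fraction identity $\lambda_n^2/(w-\lambda_n) = \lambda_n + \lambda_n^2/(w-\lambda_n) - \lambda_n = \lambda_n(\lambda_n/(w-\lambda_n) + 1)$ is the elementary step that produces the ``$+1$'' in (\ref{eqn:mittagnu1}); this same ``$+1$'' is what makes the $n$th summand $O(1/\lambda_n)$ near $w=0$ rather than $O(1)$, and hence is forced by convergence.

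Next I would verify the analytic facts. For fixed $w$ not on the positive real axis, each summand is bounded by $C|\beta_n|\,\lambda_n \langle{\vec \phi}_n,{\vec \Theta}^{(0)}\rangle^2$, and since $|\langle{\vec \phi}_n,{\vec \Theta}^{(0)}({\vec u})\rangle| = O(\lambda_n^{-s/2})$ with $s>2$ from the bound (\ref{eqn:bdinnerprodthephi}) already proved in Lemma~\ref{lemma:theta1rep}, the tail converges locally uniformly on $\mathbb{C}\setminus\{\lambda_n\}$; hence $({\mathcal M}(w))_{ij}$ is holomorphic there and has at worst a simple pole at each $\lambda_n$ (the residue is $-\lambda_n({\mathcal A}^{(n)})_{ij}$, which is nonzero precisely when the $n$th mode couples to ${\vec e}_i,{\vec e}_j$). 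Analyticity at $w=0$ and the value ${\mathcal M}(0)={\mathcal N}^0$ follow because $\beta_n(0)=0$, so every frequency-dependent summand vanishes at $w=0$; alternatively one may quote Lemma~\ref{lemma:limitcase}, which already identifies $M({\vec u},{\vec v};0)=N^0({\vec u},{\vec v})$. This establishes the hypotheses needed to invoke the Mittag--Leffler theorem in the form stated by Ahlfors (Theorem~4, \cite[pg.~187]{ahlfors}).

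Finally, for the convergence of the modified series (\ref{eqn:modformml})--style expansion I would follow Ahlfors verbatim: the singular part attached to the pole $\lambda_n$ is $P_n(1/(w-\lambda_n)) := (\lambda_n/(w-\lambda_n)+1)({\mathcal A}^{(n)})_{ij}$, and one subtracts from it the polynomial $p_n(w)$ obtained by Taylor-expanding $P_n$ about $w=0$ and truncating at degree $n_v$. Since $\lambda_n/(w-\lambda_n)+1 = -\sum_{k\ge 1}(w/\lambda_n)^k$ for $|w|<\lambda_n$, the truncated Taylor polynomial is exactly $-\big(w/\lambda_n + \cdots + w^{n_v}/\lambda_n^{n_v}\big)({\mathcal A}^{(n)})_{ij}$, which is (\ref{eqn:mittagni1}). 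With $M_n := \max_{|w|<\lambda_n/2} |P_n(w)|$ one has $|P_n(w)-p_n(w)| \le 2 M_n (|w|/\lambda_n)^{n_v+1}$ on compact subsets (standard remainder estimate for a truncated geometric-type series), so choosing $n_v$ so large that $2^{n_v}\ge M_n 2^n$ for all $n$ forces $|P_n - p_n|\le C\,2^{-n}$ on any fixed disc, giving absolute convergence on $\mathbb{C}$ minus the poles. I expect the main obstacle to be purely bookkeeping: carefully showing that combining the separate real and imaginary series of Lemma~\ref{lemma:tenrepintheta1sum} into the single complex coefficient $({\mathcal A}^{(n)})_{ij}\,\beta_n\lambda_n/\lambda_n$ produces precisely the ``$+1$''-normalised singular part appearing in (\ref{eqn:mittagnu1}), and that the $M_n$ defined there (with the running $w$-dependence) is the quantity to which Ahlfors' criterion applies; both are routine once the algebraic identity $\lambda_n^2/(w-\lambda_n)=\lambda_n(\lambda_n/(w-\lambda_n)+1)$ is in hand, and no genuinely new estimate beyond (\ref{eqn:bdinnerprodthephi}) is required.
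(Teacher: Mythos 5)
Your proposal follows essentially the same route as the paper: substitute the spectral representation of Lemma~\ref{lemma:tenrepintheta1sum} with $w=\im\nu$, collect each mode into $\frac{w}{w-\lambda_n}({\mathcal A}^{(n)})_{ij}=\left(\frac{\lambda_n}{w-\lambda_n}+1\right)({\mathcal A}^{(n)})_{ij}$, identify $g(w)=({\mathcal N}^0)_{ij}$ via Lemma~\ref{lemma:limitcase}, and invoke Ahlfors' form of the Mittag--Leffler theorem with $p_n$ the truncated Taylor polynomial of the singular part. The only blemish is the garbled intermediate identity ($\lambda_n^2/(w-\lambda_n)\ne\lambda_n\left(\lambda_n/(w-\lambda_n)+1\right)$; the correct source of the ``$+1$'' is $w/(w-\lambda_n)=\lambda_n/(w-\lambda_n)+1$), which does not affect the argument since your final expansion and the geometric-series computation of $p_n(w)$ are correct.
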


\begin{proof}
Recall  ${\mathcal R}^{\sigma_*} + \im {\mathcal I}^{\sigma_*}= {\mathcal M} - {\mathcal N}^0 $ and from (\ref{eqn:modesmult}),
 that
\begin{align}
({{\mathcal M}} (w))_{ij} = & ({\mathcal N}^0)_{ij} + \frac{\alpha^3  }{4} \sum_{n=1}^\infty \beta_n(w) \lambda_n  \sum_{k=1}^{\mathrm{mult}(\lambda_n)} \left < {\vec \phi}_{n,k} ,{\vec \Theta}^{(0)}({\vec e}_i) \right >_{L^2({\vec B})}   \left < {\vec \phi}_{n,k} ,{\vec \Theta}^{(0)}({\vec e}_j) \right >_{L^2({\vec B})} , \nonumber
\end{align}
for objects ${\vec B}$ with homogenous $\sigma_*$, and possibly inhomogeneous $\mu_*$, where we have introduced  $w=\im \nu$.
Thus, by introducing (\ref{eqn:mittagni2}), we have
\begin{align}
({{\mathcal M}} (w) )_{ij}
=& ({\mathcal N}^0)_{ij} +   \sum_{n=1}^\infty   \frac{w }{w -\lambda_n} ({\mathcal A}^{(n)} )_{ij}
=( {\mathcal N}^0)_{ij} +   \sum_{n=1}^\infty \left ( \frac{\lambda_n}{w -\lambda_n} +1 \right ) ({\mathcal A}^{(n)})_{ij}  \label{eqn:pfractmittagg}.
\end{align}
We recall from Lemma~\ref{lemma:limitcase} that for the limiting case of $\nu =0$ we have ${\mathcal M}(0 ) ={\mathcal N}^0$, which, by Corollary~\ref{corollary:n0tops}, reduces to the P\'oyla-Szeg\"o tensor when considering a single object with homogeneous $\mu_*$, and as its coefficients  are independent of $\nu$ they are clearly analytic.
Thus, $f(w) =({{\mathcal M}} (w))_{ij} $ is of the form of (\ref{eqn:orgformml})
with $g(w) =  ({\mathcal N}^0)_{ij}$ and $P_n(w) = \left ( \frac{\lambda_n}{w -\lambda_n} +1 \right ) ({\mathcal A}^{(n)})_{ij} $ where the poles are simple. 
We already know from Lemma ~\ref{lemma:tenrepintheta1sum} that (\ref{eqn:pfractmittagg}) is convergent for $\nu\in [0,  \infty)$, i.e. when $w$ lies on the positive imaginary axis, away from the poles in the real axis. We can extend this further by applying the Mittag-Leffler Theorem, described above,
and constructing a modified expansion (\ref{eqn:mittagnu1}) where our $p_n(w) $ in (\ref{eqn:mittagni1}) is the Taylor series expansion of our $P_n(w) $ about $0$ and truncated at $n_v$ in such a way to ensure that it is convergent at all points in the complex plane away from the poles. This then immediately leads to our quoted result.

%
%
\end{proof}

\begin{corollary}\label{coll:speccasemittag}
Expanding ${\mathcal M}(s) = {\mathcal N}^0 + {\mathcal R}^{\sigma_*}(s)+ \im {\mathcal I}^{\sigma_*}(s)$ in terms of $s= -\im \omega$ we have that
$({\mathcal M} (s) )_{ij} $ is meromorphic in the whole complex plane with simple poles at $s_n=-\lambda_n / ( \mu_0 \sigma_* \alpha^2)$ on the negative real axis where $0 < |s_1 | < |s_2| < \ldots $ and is analytic at $s =0$ with ${{\mathcal M}} (0) = {\mathcal N}^0$ and, hence, in the case of $n_v=0$, admits the expansion
\begin{align}
({{\mathcal M}} (s) )_{ij}= &( {\mathcal N}^0)_{ij} +\sum_{n=1}^\infty \left ( \frac{s_n}{s-s_n} + 1 \right)  ( {\mathcal A}^{(n)})_{ij},   \label{eqn:expandmins}
\end{align}
which is absolutely convergent in the whole complex plane, apart from the poles, provided that $\max |\left ( s_n / ( s-s_n) +1 \right )  {\mathcal A}^{(n)})_{ij} |$, for $|s| < |s_n|/2$, decays faster than $2^{-n}$.
\end{corollary}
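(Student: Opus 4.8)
The plan is to deduce Corollary~\ref{coll:speccasemittag} directly from Theorem~\ref{thm:expandmittag} by the linear change of spectral variable. For homogeneous $\sigma_*$ we have $\nu = \mu_0 \sigma_* \alpha^2 \omega$, and Theorem~\ref{thm:expandmittag} is stated in $w = \im \nu$. Setting $s := -\im \omega$ gives $\omega = \im s$, hence
\[
w = \im \nu = \im\, \mu_0 \sigma_* \alpha^2 \omega = -\mu_0 \sigma_* \alpha^2 s ,
\]
so $w \mapsto s$ is the invertible affine map $s = -w/(\mu_0 \sigma_* \alpha^2)$ of ${\mathbb C}$ onto itself, fixing the origin. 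First I would observe that, since this map is a conformal bijection of ${\mathbb C}$, meromorphy, analyticity and the order of poles are all preserved: the simple poles $w=\lambda_n$ of $({\mathcal M}(w))_{ij}$ map to simple poles at $s_n := -\lambda_n/(\mu_0 \sigma_* \alpha^2)$, which lie on the negative real axis because $\lambda_n > 0$, with $0 < |s_1| < |s_2| < \cdots$ inherited from $0 < \lambda_1 < \lambda_2 < \cdots$; analyticity of $({\mathcal M})_{ij}$ at $w=0$ becomes analyticity at $s=0$; and $({\mathcal M})_{ij}|_{s=0} = ({\mathcal M})_{ij}|_{w=0} = ({\mathcal N}^0)_{ij}$, using Lemma~\ref{lemma:limitcase} (and Corollary~\ref{corollary:n0tops} for a single homogeneous object).

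Next I would substitute into the Mittag--Leffler expansion (\ref{eqn:mittagnu1}). Taking $n_v = 0$, the subtracted polynomials $p_n(w)$ of (\ref{eqn:mittagni1}) are empty, so (\ref{eqn:mittagnu1}) reads $({\mathcal M}(w))_{ij} = ({\mathcal N}^0)_{ij} + \sum_{n=1}^\infty \big(\lambda_n/(w-\lambda_n) + 1\big)({\mathcal A}^{(n)})_{ij}$ with ${\mathcal A}^{(n)}$ as in (\ref{eqn:mittagni2}) and independent of the variable. Inserting $w = -\mu_0 \sigma_* \alpha^2 s$ and $\lambda_n = -\mu_0 \sigma_* \alpha^2 s_n$ gives $\lambda_n/(w - \lambda_n) = s_n/(s - s_n)$, whence (\ref{eqn:expandmins}) follows immediately.

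Finally, for the convergence statement I would translate the Ahlfors criterion quoted after (\ref{eqn:modformml}): with $n_v = 0$ it reads $1 = 2^{n_v} \ge M_n 2^n$, i.e. $M_n \le 2^{-n}$, where $M_n = \max |P_n|$ over $|w| < \lambda_n/2$. Under $s = -w/(\mu_0 \sigma_* \alpha^2)$ this disc corresponds bijectively to $|s| < |s_n|/2$, and $P_n(w) = \big(\lambda_n/(w-\lambda_n)+1\big)({\mathcal A}^{(n)})_{ij}$ is carried to $\big(s_n/(s-s_n)+1\big)({\mathcal A}^{(n)})_{ij}$; hence $M_n$ is unchanged and the condition becomes exactly the stated one, namely that $\max|\big(s_n/(s-s_n)+1\big)({\mathcal A}^{(n)})_{ij}|$ over $|s|<|s_n|/2$ decay faster than $2^{-n}$, under which the modified series is absolutely convergent off the poles. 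The only point requiring any care is this last identification --- that the two half-radius discs match up under the change of variables and that $M_n$ is genuinely invariant --- but it is immediate since the map is linear and scales the poles by the same factor; there is no real obstacle, the corollary being in essence a restatement of Theorem~\ref{thm:expandmittag} in the variable $s$.
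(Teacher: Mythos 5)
Your proposal is correct and follows essentially the same route as the paper: specialise Theorem~\ref{thm:expandmittag} to $n_v=0$ and perform the linear change of variables $w=-\mu_0\sigma_*\alpha^2 s$, under which $\lambda_n/(w-\lambda_n)=s_n/(s-s_n)$ and the discs $|w|<\lambda_n/2$ and $|s|<|s_n|/2$ correspond. Your additional remarks on the invariance of meromorphy, pole order and $M_n$ under the affine map only make explicit what the paper leaves implicit.
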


\begin{proof}
For $n_v=0$ the result stated in (\ref{eqn:mittagnu1}) in Theorem~\ref{thm:expandmittag} becomes
\begin{align}
({\mathcal M} (w ) )_{ij}= & 
( {\mathcal N}^0)_{ij} +   \sum_{n=1}^\infty   \left ( \frac{\lambda_n}{w -\lambda_n} + 1 \right ) ( {\mathcal A}^{(n)})_{ij}, \label{eqn:expandw0}
\end{align}
which is convergent for $\nu\in [0, \infty) $, i.e. when $w$ lies on the positive imaginary axis and is absolutely convergent in the whole complex plane, apart from the poles provided that $M_n = \max |\left ( \lambda_n / ( w-\lambda_n) +1 \right )  {\mathcal A}^{(n)})_{ij} |$ for $|w| < \lambda_n/2$ decays faster than $2^{-n}$ . Still further, using a simple change of variables, we can obtain an expansion of ${\mathcal M}(s) = {\mathcal N}^0 + {\mathcal R}^{\sigma_*}(s)+ \im {\mathcal I}^{\sigma_*}(s)$ in terms of $s =-\im \omega$ and find that the poles are at  $s_n=-\lambda_n / ( \mu_0 \sigma_* \alpha^2)$ on the negative real axis where $0 < |s_1 | < |s_2| < \ldots $. Making the change of variables $ w = -s \mu_0 \sigma_* \alpha^2 $ in (\ref{eqn:expandw0}) gives (\ref{eqn:expandmins}).
\end{proof}
\begin{remark}
Wait and Spies~\cite{wait1969} obtained an analytical solution for a conducting permeable sphere and obtained explicit expressions for the tensor coefficients and the negative real values of the poles $s_n$ for this case. Their choice of $s_n$ corresponds to our $ s_n\mu_r$ in the case of a permeable homogeneous object, but ours is more general as it can also be applied to inhomogeneous objects where $\mu_r$ is no longer a constant. For other shapes with homogeneous parameters,
 Baum~\cite{baum499} has predicted that $ {\mathcal M}(s)$ has simple poles on the negative real axis and quoted
\begin{align}
{{\mathcal M}} (s) =&  {\mathcal M}(0) + \sum_{n=1}^\infty \frac{s}{s_n(s-s_n)} M_n {\vec M}_n \otimes {\vec M}_n  + \text{possible entire function} ,\label{eqn:baum} \nonumber \\
=& {\mathcal M}(0) +  \sum_{n=1}^\infty \left ( \frac{1}{s-s_n} + \frac{1}{s_n} \right)  M_n {\vec M}_n \otimes {\vec M}_n + \text{possible entire function}, 
\end{align}
without a formal proof and without explicit expressions for the tensor coefficients or the scalars $M_n$. He proposes a numerical approximation approach for calculation of $s_n$ and the eigenvectors ${\vec M}_n$, but does not make reference to eigenvalue problem (\ref{eqn:modelprob}), which is fundamental to their correct computation. His prediction uses $s=\im \omega$ as he applies $e^{\im \omega t}$ to obtain the time harmonic equations instead of $e^{-\im \omega t} $ used in this work.
Indeed, the subject of Baum's prediction was of subject of some considerable debate see e.g. ~\cite{ramm1982}.
 His prediction can be seen as a special case of Theorem~\ref{thm:expandmittag} discussed in Corollary~\ref{coll:speccasemittag},  which by comparing with (\ref{eqn:expandmins}) makes clear the definition of all the terms and makes explicit the correct eigenvalue problem (\ref{eqn:modelprob}) that needs to be solved. Although, importantly, (\ref{eqn:expandmins}) will only be absolutely convergent in the complex plane, apart from the poles, if $\max | ( s_n / ( s-s_n ) + 1  )   {\mathcal A}^{(n)}_{ij}|$, for $|s| <| s_n| /2$, decays faster than $2^{-n}$.
\end{remark}

 \section{Transient response of $({\vec H}_\alpha - {\vec H}_0)({\vec x})$ } \label{sect:tranresp}
  Building on the earlier work of Wait and Spies~\cite{wait1969}, who have obtained an analytical expression for transient response from a conducting permeable sphere, we can now apply Theorem~\ref{thm:expandmittag} to obtain explicit expressions for the transient response from an inhomogeneous conducting permeable object with $\sigma_*$ fixed.
 
\begin{theorem} \label{thm:stepfun}
 The transient perturbed magnetic field response to ${\vec B}_\alpha$ with fixed $\sigma_*$ placed in a background field ${\vec H}_0^{step}({\vec x},t)= {\vec H}_0({\vec x}) u(t)$  is
 \begin{align}
 ({\vec H}_\alpha - {\vec H}_0^{step})({\vec x},t)_i = & ( {\vec D}^2 G({\vec x},{\vec z}))_{ij} (  {\mathcal M} ^{step}(t))_{jk}
  ({\vec H}_0({\vec z}))_k  + ({\vec R}({\vec x},t))_i , \nonumber \\
 ( {\mathcal M}^{step}(t))_{jk} := &\left (({\mathcal N}^0)_{jk} + \sum_{n=1}^\infty e^{s_n t}( {\mathcal A}^{(n)} )_{jk} \right )u(t),  \nonumber
 \end{align}
  where ${\vec H}_0({\vec x})$ is  real valued and $u(t)$ a unit step function, generated by a divergence free current source of the form ${\vec J}_0^{step}({\vec x},t) = {\vec J}_0({\vec x}) u(t)$ with real valued ${\vec J}_0({\vec x}) $.
 In the above, ${\vec H}_\alpha ({\vec x},t)$ is the transient magnetic interaction field, which satisfies the transient version of (\ref{eqn:eddyqns}), $s_n = - \lambda_n/(\mu_0 \sigma_* \alpha^2)$, $\lambda_n$ is an eigenvalue of (\ref{eqn:modelprob}) and ${\mathcal A}_{ij}^{(n)}$ is as defined in (\ref{eqn:mittagni2}). 
 If the conditions of the asymptotic formula (\ref{eqn:asymp}) are met then ${\vec R}({\vec x},t)= {\vec 0}$.
 \end{theorem}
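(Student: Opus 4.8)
The plan is to exploit linearity of the transient eddy current system, take a temporal Laplace transform, use the time-harmonic asymptotic formula (\ref{eqn:asymp}) with the MPT replaced by its analytic continuation ${\mathcal M}(s)$, insert the Mittag--Leffler expansion of Corollary~\ref{coll:speccasemittag}, and then invert term by term. First I would note that, the background being non-conducting, the relation $\nabla\times{\vec H}_0={\vec J}_0$ carries no time derivative, so the step current ${\vec J}_0({\vec x})u(t)$ produces the transient background field ${\vec H}_0^{step}({\vec x},t)={\vec H}_0({\vec x})u(t)$ with exactly the frequency-independent field ${\vec H}_0({\vec x})$ appearing in (\ref{eqn:asymp}); its Laplace transform is ${\vec H}_0({\vec x})/s$, $s=-\im\omega$. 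Since the perturbed field depends linearly on the background and, by Corollary~\ref{coll:speccasemittag}, ${\mathcal M}(s)$ --- meromorphic on ${\mathbb C}$ --- is the associated Laplace-domain transfer function, the Laplace transform of the transient perturbed field is $\frac{1}{s}\big(({\vec D}^2G({\vec x},{\vec z}))_{ij}({\mathcal M}(s))_{jk}({\vec H}_0({\vec z}))_k+({\vec R}({\vec x}))_i\big)$, so that $({\mathcal M}^{step}(t))_{jk}$ is the inverse transform of $s^{-1}({\mathcal M}(s))_{jk}$.

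Next I would substitute $({\mathcal M}(s))_{jk}=({\mathcal N}^0)_{jk}+\sum_{n=1}^\infty\big(\frac{s_n}{s-s_n}+1\big)({\mathcal A}^{(n)})_{jk}$ from Corollary~\ref{coll:speccasemittag} (the $n_v=0$ case), with $s_n=-\lambda_n/(\mu_0\sigma_*\alpha^2)<0$ and ${\mathcal A}^{(n)}$ as in (\ref{eqn:mittagni2}), and apply the partial-fraction identity $\frac{1}{s}\big(\frac{s_n}{s-s_n}+1\big)=\frac{1}{s-s_n}$ (the $-1/s$ part of $\frac{s_n}{s(s-s_n)}$ cancels the $+1/s$), which gives $\frac{1}{s}({\mathcal M}(s))_{jk}=\frac{1}{s}({\mathcal N}^0)_{jk}+\sum_{n=1}^\infty\frac{1}{s-s_n}({\mathcal A}^{(n)})_{jk}$. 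Recognising $s^{-1}$ as the transform of $u(t)$ and $(s-s_n)^{-1}$ as that of $e^{s_n t}u(t)$ (the contour closing to the left for $t>0$, every $s_n$ lying in the open left half-plane) and inverting term by term yields $({\mathcal M}^{step}(t))_{jk}=\big(({\mathcal N}^0)_{jk}+\sum_{n=1}^\infty e^{s_n t}({\mathcal A}^{(n)})_{jk}\big)u(t)$, the series being dominated for $t\ge0$ by $\sum_n|({\mathcal A}^{(n)})_{jk}|$ and hence convergent. This matrix is real valued because ${\mathcal N}^0$, the $s_n$ and the $({\mathcal A}^{(n)})_{jk}$ are real --- the last built from the real eigenfunctions ${\vec\phi}_{n,k}$ of (\ref{eqn:modelprob}) and the real field ${\vec\Theta}^{(0)}$ --- consistent with real data ${\vec H}_0$, ${\vec J}_0$. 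Finally, under the hypotheses of (\ref{eqn:asymp}), so that the leading-order term is the exact characterisation, the transformed remainder is absent and therefore ${\vec R}({\vec x},t)={\vec 0}$.

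The main obstacle is justifying the term-by-term inversion, that is, interchanging $\sum_n$ with the inverse-Laplace integral. This needs $\sum_n(s-s_n)^{-1}({\mathcal A}^{(n)})_{jk}$ to converge absolutely and uniformly, with integrable tails, along the Bromwich line $\{c+\im y:y\in{\mathbb R}\}$, $c>0$; it follows from the absolute convergence guaranteed by the Mittag--Leffler theorem behind Theorem~\ref{thm:expandmittag}, together with the bound $|({\mathcal A}^{(n)})_{jk}|\le C\lambda_n\,\big|\langle{\vec\Theta}^{(0)}({\vec e}_i),{\vec\phi}_n\rangle_{L^2({\vec B})}\big|\,\big|\langle{\vec\Theta}^{(0)}({\vec e}_j),{\vec\phi}_n\rangle_{L^2({\vec B})}\big|$, which is $O(\lambda_n^{1-s})$ with $s>2$ by the estimates in the proof of Lemma~\ref{lemma:tenrepintheta1sum}, and $|s-s_n|\ge|s_n|-c\to\infty$. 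One must also record the implicit restriction to the regime where the raw ($n_v=0$) Mittag--Leffler series already converges: were polynomial corrections $p_n(s)$ required, their inverse transforms would be derivatives of $\delta(t)$, which are absent from the stated transient formula.
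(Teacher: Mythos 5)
Your proposal is correct and follows essentially the same route as the paper: Laplace transform of the step excitation, substitution of the asymptotic formula with the Mittag--Leffler expansion of ${\mathcal M}(s)$ from Corollary~\ref{coll:speccasemittag}, and term-by-term inversion, with the remainder vanishing under the hypotheses of (\ref{eqn:asymp}). The only cosmetic difference is that you invert via the partial-fraction identity $\tfrac{1}{s}\bigl(\tfrac{s_n}{s-s_n}+1\bigr)=\tfrac{1}{s-s_n}$ where the paper computes the residues at $s=0$ and $s=s_n$ directly after closing the Bromwich contour in the left half-plane; your explicit attention to the interchange of sum and inversion and to the $n_v=0$ restriction is, if anything, slightly more careful than the paper's own argument.
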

 
 \begin{proof}
For consistency with Wait and Spies~\cite{wait1969} we set $s=\im \omega$ and apply
 \begin{align}
 ({\vec H}_\alpha - {\vec H}_0^{step})({\vec x},t)_i  = \frac{1}{2\pi \im} \int_{c-\im \infty}^{c+\im \infty} \frac{1}{s} \overline{({\vec H}_\alpha - {\vec H}_0) ({\vec x})_i} e^{s t} \dif s = {\mathcal L}^{-1} \left ( \frac{1}{s} \overline{({\vec H}_\alpha - {\vec H}_0) ({\vec x})_i} \right ), \nonumber 
 \end{align}
where $c$ is a positive constant and ${\mathcal L}^{-1}$ denotes the inverse Laplace transform. The complex conjugate of $({\vec H}_\alpha - {\vec H}_0) ({\vec x})_i$ is taken as Wait and Spies use $e^{\im \omega t}$ rather than $e^{-\im \omega t } $ used here.
Now, substituting the asymptotic formula (\ref{eqn:asymp}), we have, assuming ${\vec H}_0$ is real, that
  \begin{align}
 ({\vec H}_\alpha - {\vec H}_0^{step})({\vec x},t)_i  = & ( {\vec D}^2 G({\vec x},{\vec z}))_{ij}  \frac{1}{2\pi \im} \int_{c-\im \infty}^{c+\im \infty} \frac{1}{s} \overline{( {\mathcal M})_{jk}}  e^{s t} \dif s ({\vec H}_0({\vec z}))_k  \nonumber \\
&  +  \frac{1}{2\pi \im} \int_{c-\im \infty}^{c+\im \infty} \frac{1}{s} \overline{{\vec R}({\vec x},s)_i} e^{st} \dif s \nonumber .
 \end{align}
 By considering (\ref{eqn:mittagnu1}), applying the change of variables from $w= \im \nu$ to $s=- \im \omega$, so that the poles lie on the negative real axis at $s_n=-\lambda_n / ( \mu_0 \sigma_* \alpha^2)$, as discussed in Corollary~\ref{coll:speccasemittag}, and closing the contour by an infinite  semicircle in the left hand $s$ plane, we find, 
for $t > 0$, that
 \begin{align}
 \frac{1}{2\pi \im}\int_{c-\im \infty}^{c+\im \infty} \frac{1}{s} \overline{( {\mathcal M}) _{jk}}  e^{s t} \dif s  =&  \frac{1}{2\pi \im}\int_{c-\im \infty}^{c+\im \infty} \frac{1}{s}  \left (
({\mathcal N}^0)_{ij} +\sum_{n=1}^\infty \left ( \frac{s_n}{\overline{s}-s_n}+1 - \overline{q_n(s) } \right)  ( {\mathcal A}^{(n)})_{ij}
\right )e^{st} \dif s \nonumber \\
 = & \Res_{s=0,  \overline{s}=s_n  } \left  ( \frac{1}{s}  
\left (
({\mathcal N}^0)_{ij} +\sum_{n=1}^\infty \left ( \frac{s_n}{\overline{s}-s_n} +1 -\overline{q_n(s)} \right) ( {\mathcal A}^{(n)})_{ij} \right )e ^{st}  \right )  u(t)\nonumber \\
=& \left ( \left ( ( {\mathcal N}^0)_{ij} + \sum_{n=1}^\infty \left ( \frac{s_n}{0-s_n}+1  -\overline{q_n(0)} \right ) ( {\mathcal A}^{(n)})_{ij} \right )e^{0t}\right . \nonumber \\
&+ \left . \sum_{n=1}^\infty \frac{s_n}{s_n }   ( {\mathcal A}^{(n)})_{ij} e^{s_nt} \right ) u(t) \nonumber \\
 = & \left (  ( { \mathcal N}^0)_{ij} + \sum_{n=1}^\infty  (  {\mathcal A}^{(n)} )_{ij} e^{s_nt} \right ) u(t). \nonumber
 \end{align} 
where $q_n(s): = -(  \frac{s}{s_n} +\frac{s^2}{s_n^2}+ \ldots + \frac{s^{n_v}}{s_n^{n_v}})$ and we have used the fact that $s_n$ is real. 
For $t < 0 $, we close the integral by an infinite semicircle in the righthand $s$ plane and find that the integral vanishes in this case as there are no poles in the right hand plane. 
From~\cite{ammarivolkov2013}, under the conditions of (\ref{eqn:asymp}) are met,  then ${\vec R}({\vec x},s) \le C \nu \alpha^4 \|{\vec H}_0 \|_{W^{2,\infty}(B_\alpha)}= C |s| \mu_0 \sigma_* \alpha^6 \|{\vec H}_0 \|_{W^{2,\infty}(B_\alpha)}$ and, hence, 
\begin{equation}
  ({\vec R}({\vec x},t))_i = \frac{1}{2\pi \im} \int_{c-\im \infty}^{c+\im \infty} \frac{1}{s} \overline{({\vec R}({\vec x},s) )_i}e^{st} \dif s = {\mathcal L}^{-1} \left ( \frac{1}{s} \overline{({\vec R}({\vec x},s) )_i} \right )  = 0.
 \end{equation}
Thus, the result immediately follows.
  \end{proof}
 
 \begin{remark}
 Theorem~\ref{thm:stepfun} shows the long-time response of the perturbed field for a step function characterises of an inhomogeneous object by the ${\mathcal N}^0$ tensor, which describes the magnetostatic characteristics of ${\vec B}_\alpha$. Similar observations were found for a conducting permeable sphere by Wait and Spies~\cite{wait1969}. Despite the issues with the convergence of Baum's~\cite{baum499} for a homogenous object it leads to a predication that is similar to that obtained in (\ref{eqn:baum}) when the correct form of Mittag-Leffler theorem is used. However, importantly, all terms are now explicitly defined and, 
 under the conditions of (\ref{eqn:asymp}),
 can be computed.
  \end{remark}
 
 \begin{theorem} \label{thm:impfun}
The transient perturbed magnetic field response to ${\vec B}_\alpha$ with fixed $\sigma_*$ placed in a background field ${\vec H}_0^{imp}({\vec x},t)= {\vec H}_0({\vec x}) \delta (t)$ is
 \begin{align}
 ({\vec H}_\alpha - {\vec H}_0^{imp })({\vec x},t)_i = & ({\vec D}^2 G({\vec x}, {\vec z}))_{ij} 
({\mathcal M}^{imp}(t) )_{jk}
 ({\vec H}_0({\vec z}))_k + \left  ( \tilde{\vec R}({\vec x},t)\right )_i , \nonumber \\
(  {\mathcal M}^{imp}(t))_{jk} : = &
 ({\mathcal M}(\infty))_{jk} \delta(t) + \sum_{n=1}^\infty s_n e^{s_n t} ( {\mathcal A}^{(n)} )_{jk} u(t) , \nonumber
 \end{align}
 where ${\vec H}_0({\vec x})$ is  real valued and $\delta(t)$ is a delta function associated with an impulse at $t=0$, generated by a divergence free current source of the form ${\vec J}_0^{imp}({\vec x},t) = {\vec J}_0({\vec x}) \delta(t)$ with real valued ${\vec J}_0({\vec x}) $.
 In the above, ${\vec H}_\alpha ({\vec x},t)$ is the transient magnetic interaction field, which satisfies the transient version of (\ref{eqn:eddyqns}), $s_n = - \lambda_n/(\mu_0 \sigma_* \alpha^2)$, $\lambda_n$ is an eigenvalue of (\ref{eqn:modelprob}) and $( {\mathcal A}^{(n)})_{ij}$ is as defined in (\ref{eqn:mittagni2}).  If the conditions of the asymptotic formula (\ref{eqn:asymp}) are met then ${\vec R}({\vec x},t)= {\vec 0}$.
 \end{theorem}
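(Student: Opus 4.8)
The plan is to mirror the proof of Theorem~\ref{thm:stepfun}, the only structural change being that the Laplace symbol $1/s$ of the unit step is replaced by the symbol $1$ of the impulse, since ${\mathcal L}(\delta(t)) = 1$. Following Wait and Spies~\cite{wait1969} I would set $s = \im\omega$, take the complex conjugate of the asymptotic formula (\ref{eqn:asymp}), and write
\begin{align}
 ({\vec H}_\alpha - {\vec H}_0^{imp})({\vec x},t)_i = \frac{1}{2\pi\im}\int_{c-\im\infty}^{c+\im\infty} \overline{({\vec H}_\alpha - {\vec H}_0)({\vec x})_i}\, e^{st}\dif s = {\mathcal L}^{-1}\!\left(\overline{({\vec H}_\alpha - {\vec H}_0)({\vec x})_i}\right). \nonumber
\end{align}
Substituting (\ref{eqn:asymp}) splits off the term $({\vec D}^2 G({\vec x},{\vec z}))_{ij}\,{\mathcal L}^{-1}(\overline{({\mathcal M})_{jk}})\,({\vec H}_0({\vec z}))_k$ together with the remainder $\tilde{\vec R}({\vec x},t)_i = {\mathcal L}^{-1}(\overline{{\vec R}({\vec x},s)_i})$, so the task reduces to inverting $\overline{({\mathcal M}(s))_{jk}}$.

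For this I would use the Mittag-Leffler expansion of Theorem~\ref{thm:expandmittag} in the form of Corollary~\ref{coll:speccasemittag}, so that $({\mathcal M}(s))_{jk}$ is meromorphic with simple poles at $s_n = -\lambda_n/(\mu_0\sigma_*\alpha^2)$ on the negative real axis and analytic at $s = 0$. Taking $s\to\infty$ in (\ref{eqn:expandmins}) and comparing with Lemma~\ref{lemma:limitcase} identifies $({\mathcal M}(\infty))_{jk} = ({\mathcal N}^0)_{jk} + \sum_{n=1}^\infty ({\mathcal A}^{(n)})_{jk}$, so the ($n_v = 0$) expansion recentres as
\begin{align}
({\mathcal M}(s))_{jk} = ({\mathcal M}(\infty))_{jk} + \sum_{n=1}^\infty \frac{s_n}{s - s_n}\,({\mathcal A}^{(n)})_{jk}, \nonumber
\end{align}
with the sum now vanishing as $s\to\infty$. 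Inverting termwise --- the constant ${\mathcal M}(\infty)$ going to ${\mathcal M}(\infty)\delta(t)$, and each simple-pole term going to $s_n e^{s_n t} u(t)\,{\mathcal A}^{(n)}$, equivalently by closing the Bromwich contour to the left for $t > 0$ and picking up the residue $s_n e^{s_n t}{\mathcal A}^{(n)}$ at each $s_n$ (and to the right for $t < 0$, where the contribution vanishes) --- reproduces exactly $({\mathcal M}^{imp}(t))_{jk}$. For a truncation $n_v > 0$ one proceeds identically, noting that the polynomial corrections $q_n(s)$ invert to distributional derivatives supported at the origin that merely refine the $\delta(t)$ part; the clean statement corresponds to the regime in which (\ref{eqn:expandmins}) already converges, i.e. $n_v = 0$. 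Finally, as in Theorem~\ref{thm:stepfun}, under the hypotheses of (\ref{eqn:asymp}) the remainder is $O(\alpha^4)$ and, treating the asymptotic formula as exact, $\tilde{\vec R}({\vec x},t) = {\vec 0}$.

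The main obstacle, and the place where the impulse case genuinely differs from Theorem~\ref{thm:stepfun}, is closing the Bromwich contour and exchanging it with the infinite series: because $({\mathcal M}(s))_{jk} \to ({\mathcal M}(\infty))_{jk}$, a nonzero limit rather than a decaying tail, Jordan's lemma cannot be applied to $\overline{({\mathcal M}(s))_{jk}}\,e^{st}$ directly. The fix is exactly the recentring above --- first peel off the constant ${\mathcal M}(\infty)$, assign it the distributional transform ${\mathcal M}(\infty)\delta(t)$, and only then close the contour for the residual $\sum_n \tfrac{s_n}{s-s_n}({\mathcal A}^{(n)})_{jk}$, whose decay at infinity, together with the convergence and the decay of $\left<{\vec \Theta}^{(0)}({\vec u}),{\vec \phi}_n\right>_{L^2({\vec B})}$ established in Lemmas~\ref{lemma:theta1rep} and~\ref{lemma:tenrepintheta1sum}, legitimises the termwise evaluation for $t > 0$. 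The summability of the ${\mathcal A}^{(n)}$ is then what permits the remaining instantaneous ($\delta(t)$-type) part to be absorbed into the ${\mathcal M}(\infty)\delta(t)$ term, which is the rigorous form of the statement that the short-time response of ${\vec B}_\alpha$ is that of a perfect conductor.
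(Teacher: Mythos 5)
Your proposal is correct, but it reaches the result by a genuinely different route from the paper. The paper does not redo the Bromwich inversion for the impulse: it obtains the impulse response as the time derivative of the step response of Theorem~\ref{thm:stepfun} (using $\delta(t)=\tfrac{\dif}{\dif t}u(t)$, following Wait and Spies), which immediately produces the $\sum_n s_n e^{s_n t}({\mathcal A}^{(n)})_{jk}u(t)$ part plus an instantaneous term $\bigl(({\mathcal N}^0)_{jk}+\sum_n({\mathcal A}^{(n)})_{jk}\bigr)\delta(t)$; the bulk of the paper's proof is then a long explicit verification that $({\mathcal N}^0)_{jk}+\sum_n({\mathcal A}^{(n)})_{jk}=({\mathcal M}(\infty))_{jk}$, carried out by writing $\sum_n({\mathcal A}^{(n)})_{jk}$ as $\lim_{\nu\to\infty}$ of the $\nabla\times{\vec \Theta}^{(1)}$ energy inner products, combining with Theorem~\ref{thm:formsn0}, and integrating by parts against the transmission conditions until the explicit form (\ref{eqn:limitinf}) appears. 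You instead obtain that same key identity essentially for free, by letting $s\to\infty$ in (\ref{eqn:expandmins}) and matching against $\lim_{\nu\to\infty}{\mathcal M}={\mathcal M}(\infty)$ from Lemma~\ref{lemma:limitcase}, and you then recentre the expansion about ${\mathcal M}(\infty)$ so that the Bromwich contour can be closed on the decaying residual. Your route is shorter and makes structurally transparent why the $\delta(t)$ coefficient is the perfect-conductor tensor (it is the value at infinity of the transfer function), and your observation that Jordan's lemma fails for $e^{st}\overline{({\mathcal M}(s))_{jk}}$ without the recentring is a genuine subtlety that the derivative-of-the-step route sidesteps. The one point you should make explicit is the interchange of $\lim_{s\to\infty}$ with the infinite sum in (\ref{eqn:expandmins}): it is justified by $|\beta_n|\le 1$ uniformly in $n$ and $\nu$ together with the absolute summability $\sum_n\lambda_n\bigl|\langle{\vec \phi}_n,{\vec \Theta}^{(0)}({\vec e}_i)\rangle_{L^2({\vec B})}\langle{\vec \phi}_n,{\vec \Theta}^{(0)}({\vec e}_j)\rangle_{L^2({\vec B})}\bigr|<\infty$ implicit in Lemma~\ref{lemma:tenrepintheta1sum}, but as written it is asserted rather than argued (the paper makes an analogous interchange for ${\vec \Theta}^{(1)}$ itself, so neither treatment is more rigorous on this point). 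Your restriction to $n_v=0$ for the clean statement, and the remark that higher $q_n$ corrections would only contribute distributions supported at $t=0$, is consistent with Corollary~\ref{coll:speccasemittag} and with what the paper actually uses.
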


  \begin{proof}
 Applying~\cite{wait1969} we have
 \begin{align}
 ({\vec H}_\alpha - {\vec H}_0^{imp})({\vec x},t)_i  = \frac{\partial }{\partial t} ( ({\vec H}_\alpha - {\vec H}_0^{step })({\vec x},t)_i ) ,
 \end{align}
then, since $\delta(t) = \dif / \dif t ( u(t) )$, it follows that
  \begin{align}
 ({\vec H}_\alpha - {\vec H}_0^{imp })({\vec x},t)_i =& ({\vec D}^2 G({\vec x}, {\vec z}))_{ij} \left ( 
 \left  ( ( { \mathcal N}^0)_{jk} + \sum_{n=1}^\infty e^{s_n t} ( {\mathcal A}^{(n)})_{jk} \right ) \delta(t) + \sum_{n=1}^\infty s_n e^{s_n t}  ({\mathcal A}^{(n)})_{jk} u(t) 
 \right)  \nonumber \\
 &  ({\vec H}_0({\vec z}))_k + \left  ( \tilde{\vec R}({\vec x},t)\right )_i .  \nonumber
 \end{align}
 Considering that the first term in parenthesis is only present at time $t=0$ we have, using (\ref{eqn:mittagni2}), that
 \begin{align}
( {\mathcal N}^0)_{jk} + \sum_{n=1}^\infty  ({\mathcal A}^{(n)})_{jk} = & ( {\mathcal N}^0)_{jk}   - \frac{\alpha^3 \lambda_n  }{4} \sum_{k=1}^{\mathrm{mult}(\lambda_n)} \left < {\vec \phi}_{n,k} ,{\vec \Theta}^{(0)}({\vec e}_i) \right >_{L^2({\vec B})}  \left < {\vec \phi}_{n,k} ,{\vec \Theta}^{(0)}({\vec e}_j) \right >_{L^2({\vec B})} \nonumber \\
 =& (  {\mathcal N}^0)_{jk} - \frac{\alpha^3}{4}  \sum_{n=1}^\infty \sum_{m=1}^\infty  \left ( \left <   \mu_r ^{-1} \nabla \times  P_n({\vec \Theta}^{(0)}({\vec e}_i)),  \nabla \times  P_m({\vec \Theta}^{(0)}({\vec e}_j))  \right >_{L^2({\vec B})} \right . \nonumber\\
 &\left . +   \left < \nabla \times  P_n({\vec \Theta}^{(0)}({\vec e}_i)),\nabla \times  P_m({\vec \Theta}^{(0)}({\vec e}_j))  \right >_{L^2({\vec B}^c)} \right ) \nonumber ,
   \end{align}
  where (\ref{eqn:eigtranformrule}) has been applied. Notice that 
  $  \lim_{\nu \to \infty} \beta_n=  \lim_{\nu \to \infty} -\frac{\im \nu}{\im \nu - \lambda_n} = -1 $
  and, thus, from Lemma~\ref{lemma:theta1rep} we have
  \begin{align}
  \lim_{\nu \to \infty } {\vec \Theta}^{(1)} ({\vec u}) = - \sum_{n=1}^\infty P_n({\vec \Theta}^{(0)}({\vec u}) )\nonumber ,
\end{align}
and so
\begin{align}
( {\mathcal N}^0)_{jk} + \sum_{n=1}^\infty  ( {\mathcal A}^{(n)})_{jk} =  &( {\mathcal N}^0)_{jk} - \frac{\alpha^3}{4}  \lim_{\nu \to \infty}  \left ( \left <  \mu_r ^{-1} \nabla \times {\vec \Theta}^{(1)}({\vec e}_i),  \nabla \times  {\vec \Theta}^{(1)}({\vec e}_j)  \right>_{L^2({\vec B})} \right . \nonumber\\
&\left . + \left < \nabla \times {\vec \Theta}^{(1)}({\vec e}_i),  \nabla \times  {\vec \Theta}^{(1)}({\vec e}_j)  \right>_{L^2({\vec B}^c)} \right ) \nonumber .
 \end{align}
 Still further, $ {\vec \Theta}^{(1)} ({\vec u}) = {\vec \Theta}({\vec u}) + \tilde{\vec \Theta}^{(0)}({\vec u})$,  and since $\tilde{\vec \Theta}^{(0)}({\vec u}) $ is independent of $\nu$, we have, using Theorem~\ref {thm:formsn0}, and writing in terms of inner products, that
 \begin{align}
 ( {\mathcal N}^0)_{jk}& + \sum_{n=1}^\infty (  {\mathcal A}^{(n)} )_{jk}=
 \alpha^3  \int_{{\vec B}}  \left ( 1- \mu_r^{-1}\right )   \delta_{jk}  \dif {\vec \xi}  + \frac{\alpha^3}{4} \left ( \left <  \mu_r^{-1}  \nabla  \times \tilde{\vec \Theta}^{(0)}({\vec e}_j) , \nabla  \times \tilde{\vec \Theta}^{(0)} ({\vec e}_k) \right >_{L^2{\vec B})}  \right . \nonumber \\
 & \left . + \left <  \nabla  \times \tilde{\vec \Theta}^{(0)}({\vec e}_j) , \nabla  \times \tilde{\vec \Theta}^{(0)} ({\vec e}_k) \right >_{L^2({\vec B}^c)}  \right ) 
- \frac{\alpha^3}{4}  \lim_{\nu \to \infty}  \left ( \left <   \mu_r ^{-1} \nabla \times {\vec \Theta}^{(1)}({\vec e}_j),  \nabla \times  {\vec \Theta}^{(1)}({\vec e}_k)  \right >_{L^2({\vec B})}  \right . \nonumber\\
&\left . + \left <  \nabla \times {\vec \Theta}^{(1)}({\vec e}_j),  \nabla \times  {\vec \Theta}^{(1)}({\vec e}_k)  \right >_{L^2({\vec B}^c)} \right ) \nonumber \\
=& \alpha^3  \int_{{\vec B}}  \left ( 1-  \mu_r^{-1} \right )  \delta_{jk} \dif {\vec \xi}  
 - \frac{\alpha^3}{4} \lim_{\nu \to \infty} \left (  \left <  \tilde{\mu}_r^{-1} \nabla \times {\vec \Theta} ({\vec e}_j),  \nabla \times  {\vec \Theta}({\vec e}_k) \right >_{L^2({\vec B} \cup {\vec B}^c) }  \right .   \nonumber \\
&\left . + \left <  \tilde{\mu}_r^{-1} \nabla \times {\vec \Theta}({\vec e}_j),  \nabla \times  {\vec \Theta}^{(0)}({\vec e}_k) \right >_{L^2({\vec B} \cup {\vec B}^c) } 
 +\left <  \tilde{\mu}_r^{-1} \nabla \times {\vec \Theta}^{(0)}({\vec e}_j),  \nabla \times  {\vec \Theta}({\vec e}_k) \right >_{L^2({\vec B} \cup {\vec B}^c) }
   \right ) \nonumber ,
\end{align}
and since $\lim_{\nu \to \infty} {\vec \Theta}({\vec u}) = - {\vec u} \times {\vec \xi} $ in $B$ then  
\begin{align}
\lim_{\nu \to \infty} \left <  {\mu}_r^{-1} \nabla \times {\vec \Theta} ({\vec e}_j),  \nabla \times  {\vec \Theta} ({\vec e}_k)  \right > _{L^2({\vec B})} = 4 \int_{\vec B} \mu_r^{-1} \delta_{jk} \dif {\vec \xi}. \nonumber
\end{align}
We can also show, by integration by parts,  that 
\begin{align}
 \lim_{\nu \to \infty} &\left <  \tilde{\mu}_r^{-1} \nabla \times {\vec \Theta}({\vec e}_j),  \nabla \times  {\vec \Theta}^{(0)}({\vec e}_k) \right >_{L^2({{\vec B} \cup {\vec B}^c })} \nonumber \\
  = & \lim_{\nu \to \infty}  \left (\int_{\partial {\vec B}} {\vec n}^- \cdot  {\vec \Theta}({\vec e}_j) \times {\mu}_r^{-1}  \nabla \times  {\vec \Theta}^{(0)}({\vec e}_k) |_- \dif {\vec 
 \xi} + 
 \int_{\partial {\vec B}} {\vec n}^+ \cdot  {\vec \Theta}({\vec e}_j) \times {\mu}_r^{-1}  \nabla \times  {\vec \Theta}^{(0)}({\vec e}_k) |_+ \dif {\vec 
 \xi} \right )\nonumber \\
 =& - \lim_{\nu \to \infty}  \int_{\partial {\vec B}}{\vec \Theta}({\vec e}_j) \cdot [ \tilde{\mu}_r^{-1} \nabla \times  {\vec \Theta}^{(0)}({\vec e}_k) \times {\vec n}^- ]_{\partial {\vec B}} \dif {\vec \xi}   = 2 \lim_{\nu \to \infty}  \int_{\partial {\vec B}}  [\tilde{\mu}_r^{-1}]_{\partial {\vec B}}  {\vec \Theta} ({\vec e}_j) \cdot {\vec e}_k\times {\vec n}^- \dif{\vec \xi}
 \nonumber \\
 =& 
 4\int_{\vec B} ( 1- \mu_r^{-1}) \delta_{jk} \dif {\vec \xi} , \nonumber 
 \end{align}
 and similarly obtain 
 \begin{align}
 \lim_{\nu \to \infty} &\left <  \tilde{\mu}_r^{-1} \nabla \times {\vec \Theta}^{(0)}({\vec e}_j),  \nabla \times  {\vec \Theta} ({\vec e}_k) \right >_{L^2({{\vec B} \cup {\vec B}^c })} =  \lim_{\nu \to \infty} \left <  \tilde{\mu}_r^{-1} \nabla \times \overline{{\vec \Theta} ({\vec e}_k)} ,  \nabla \times  {\vec \Theta} ^{(0)} ({\vec e}_j) \right >_{L^2({{\vec B} \cup {\vec B}^c }) }  \nonumber \\
  = & \lim_{\nu \to \infty}  \left (\int_{\partial {\vec B}} {\vec n}^- \cdot  \overline{{\vec \Theta}({\vec e}_k  )} \times {\mu}_r^{-1}  \nabla \times  {\vec \Theta}^{(0)}({\vec e}_ j ) |_- \dif {\vec 
 \xi} + 
 \int_{\partial {\vec B}} {\vec n}^+ \cdot \overline{ {\vec \Theta}({\vec e}_k)} \times {\mu}_r^{-1}  \nabla \times  {\vec \Theta}^{(0)}({\vec e}_j) |_+ \dif {\vec 
 \xi} \right )\nonumber \\
 =& - \lim_{\nu \to \infty}  \int_{\partial {\vec B}} \overline{{\vec \Theta}({\vec e}_k)} \cdot [ \tilde{\mu}_r^{-1} \nabla \times  {\vec \Theta}^{(0)}({\vec e}_j) \times {\vec n}^- ]_{\partial {\vec B}} \dif {\vec \xi}   = 2 \lim_{\nu \to \infty}  \int_{\partial {\vec B}}  [\tilde{\mu}_r^{-1}]_{\partial {\vec B}} \overline{  {\vec \Theta} ({\vec e}_k) }\cdot {\vec e}_j\times {\vec n}^- \dif{\vec \xi}
 \nonumber \\
 =& 
 4\int_{\vec B} ( 1- \mu_r^{-1}) \delta_{jk} \dif {\vec \xi}  \nonumber .
 \end{align}
 Thus, we finally obtain that  
 \begin{align}
( {\mathcal N}^0)_{jk} + \sum_{n=1}^\infty  ( {\mathcal A}^{(n)})_{jk} =&- \alpha^3  \int_{\vec B} \delta_{jk} \dif {\vec \xi} - \frac{\alpha^3}{4} \left < 
 \nabla \times {\vec \Theta}^{(\infty)} ({\vec e}_j),  \nabla \times  {\vec \Theta}^{(\infty)}({\vec e}_k) \right >_{L^2({\vec B}^c)}=(  {\mathcal M}(\infty) )_{jk} ,
  \end{align}
  where we used $\lim_{\nu \to \infty} {\vec \Theta}({\vec u}) =  {\vec \Theta}^{(\infty)}({\vec u})$, which satisfies the transmission problem (\ref{eqn:tpthetainf}). 
 \end{proof}

 \begin{remark}
 Theorem~\ref{thm:impfun} shows that the short-time response of the perturbed field for an impulse function characterises an inhomogeneous object by the coefficients of the ${\mathcal M}(\infty)$ tensor, which describes a perfectly conducting object ${\vec B}_\alpha$. Similar observations were found for a conducting permeable sphere by Wait and Spies~\cite{wait1969}. This is also confirms Baum's~\cite{baum499}  predication for homogeneous conducting objects and makes explicit all of the terms if the  conditions of the asymptotic formula (\ref{eqn:asymp}) are met.
  \end{remark}
 
 \begin{remark}
 Theorems~\ref{thm:stepfun}  and~\ref{thm:impfun} rely on the conditions of the asymptotic formula (\ref{eqn:asymp}) in order for ${\vec R}({\vec x},t)$ to vanish. In general, when these conditions are not met,  we do not have an estimate of  ${\vec R}({\vec x},t)$. Quantifying its behaviour for more general circumstances will form part of our future work.
 \end{remark}

\section*{Acknowledgement}

The authors are grateful for the useful discussions with Professor Habib Ammari, ETH Zurich, Switzerland, and Professor Faouzi Triki, Universit\'e Grenoble Alpes, France,  whose insight aided the proof of convergence of   the series in (\ref{eqn:spectraltheta1}).
The authors would like to thank EPSRC for the financial support received from the grants EP/R002134/1 and EP/R002177/1. The second author would like to thank the Royal Society for the financial support received from a Royal Society Wolfson Research Merit Award.  All data are provided in full in Section~\ref{sect:spectrum} of this paper.

\bibliographystyle{plain}
\bibliography{ledgerlionheart}

\begin{thebibliography}{10}

\bibitem{abdel}
O.~A. Abdel-Rehim, J.~L. Davidson, L.~A. Marsh, M.~D. O'Toole, and A.~J.
  Peyton.
\newblock Magnetic polarizability tensor spectroscopy for low metal
  anti-personnel mine surrogates.
\newblock {\em IEEE Sensors}, 16:3775--3783, 2016.

\bibitem{ahlfors}
L.V. Ahlfors.
\newblock {\em Complex Analysis}.
\newblock McGraw-Hill, New York, 1979.

\bibitem{ammarielectrosensing2013}
H.~Ammari, T.~Boulier, J.~Garnier, and H.~Wang.
\newblock Shape recognition and classification in electro-sensing.
\newblock {\em Proceedings of the National Academy of Sciences of the United
  States of America}, 111:11652--11657, 2014.

\bibitem{ammaribuffa2000}
H.~Ammari, A.~Buffa, and J.~C. {N\'ed\'elec}.
\newblock A justification of eddy currents model for the {M}axwell equations.
\newblock {\em SIAM J. Appl. Math.}, 60:1805--1823, 2000.

\bibitem{ammarivolkov2013}
H.~Ammari, J.~Chen, Z.~Chen, J.~Garnier, and D.~Volkov.
\newblock Target detection and characterization from electromagnetic induction
  data.
\newblock {\em J. {Math.} Pure. {Appl.}}, 101:54--75, 2014.

\bibitem{ammarivolkov2013b}
H.~Ammari, J.~Chen, Z.~Chen, D.~Vollkov, and H.~Wang.
\newblock Detection and classification from electromagnetic induction data.
\newblock {\em J. Comput. Phys.}, 301:201--217, 2015.

\bibitem{ammarikangbook}
H.~Ammari and H.~Kang.
\newblock {\em Polarization and Moment Tensors: With Applications to Inverse
  Problems}.
\newblock Springer, 2007.

\bibitem{ammarielectrosensing2014}
H.~Ammari and H.~Wang.
\newblock Time-domain multiscale shape identification in electro-sensing.
\newblock {\em Contemporary Mathematics}, 660, 2016.

\bibitem{baum499}
C.~E. Baum.
\newblock Low-frequency near-field magnetic scattering from highly, but not
  perfectly, conducting bodies.
\newblock Technical Report 499, Phillips Laboratory, 1993.

\bibitem{baum521}
C.~E. Baum.
\newblock Discrimination of buried targets via the singularity expansion.
\newblock Technical Report 521, Phillips Laboratory, 1996.

\bibitem{baumbook}
C.~E. Baum.
\newblock {\em The Singularity Expansion Method in Electromagnetics}.
\newblock SUMMA Foundation: Albuquerque NM, 2012.

\bibitem{das1990}
Y.~Das, J.~E. McFee, J.~Toews, and G.~C. Stuart.
\newblock Analysis of an electromagnetic induction detector for real--time
  location of buired objects.
\newblock {\em IEEE Trans. Geosci. Remote Sens.}, 28:278--288, 1990.

\bibitem{davidson2018}
J.L. Davidson, O.~A. Abdel-Rehim, L.~A. Marsh, M.~D. O'Toole, and A.~J. Peyton.
\newblock On the magnetic polarizability tensor of {US} coinage.
\newblock {\em Meas. Sci. and Technol.}, 29:035501, 2018.

\bibitem{dekdouk}
B.~Dekdouk, C.~Ktistis, L~A Marsh, D.~W. Armitage, and A.~J. Peyton.
\newblock Towards metal detection and identification for humanitarian demining
  using magnetic polarizability tensor spectroscopy.
\newblock {\em Meas. Sci. Technol.}, 26:115501, 2015.

\bibitem{filoche}
M.~Filoche and S.~Mayboroda.
\newblock The landscape of {A}nderson localization in a disordered medium.
\newblock In {\em Fractal Geometry and Dynamical Systems in Pure and Applied
  Mathematics II: Fractals in Applied Mathematics}, volume 601. Contemporary
  Mathematics, American Mathematical Society, 2013.

\bibitem{Gregorczy}
T.~M. Grzegorczy, B.~E. Barrowes, F.~Shubitidze, J.~P. Fern\'andez, and
  K.~O'Neill.
\newblock Simultaneous identification of multiple unexploded ordnance using
  electromagnetic induction sensors.
\newblock {\em IEEE Trans. Geosci. Remote Sens.}, 49:2507--2517, 2011.

\bibitem{kleinmansenior}
R.~E. Kleinman and T.~B.~A. Senior.
\newblock Rayleigh scattering.
\newblock Technical Report RL 728, University of Michigian Radiation
  Laboratory, 1982.

\bibitem{ledgerlionheart2014}
P.~D. Ledger and W.~R.~B. Lionheart.
\newblock Characterising the shape and material properties of hidden targets
  from magnetic induction data.
\newblock {\em IMA J. Appl. Math.}, 80:1776--1798, 2015.

\bibitem{ledgerlionheart2012}
P.~D. Ledger and W.~R.~B. Lionheart.
\newblock The perturbation of electromagnetics fields at distances that are
  large compared with the object's size.
\newblock {\em IMA J. Appl. Math.}, 80:865--892, 2015.

\bibitem{ledgerlionheart2016}
P.~D. Ledger and W.~R.~B. Lionheart.
\newblock Understanding the magnetic polarizability tensor.
\newblock {\em IEEE Trans. Magn.}, 52:6201216, 2016.

\bibitem{ledgerlionheart2018}
P.~D. Ledger and W.~R.~B. Lionheart.
\newblock An explicit formula for the magnetic polarizability tensor for object
  characterisation.
\newblock {\em IEEE Trans. Geosci. Remote Sens.}, 56:3520--3533, 2018.

\bibitem{ledgerlionheart2017}
P.~D. Ledger and W.~R.~B. Lionheart.
\newblock Generalised magnetic polarizability tensors.
\newblock {\em Math. Meth. Appl. Sci.}, 41:3175--3196, 2018.

\bibitem{ledgerlionheart2018mathmeth}
P.~D. Ledger, W.~R.~B. Lionheart, and A.A.S. Amad.
\newblock Characterisation of multiple conducting permeable objects in metal
  detection by polarizability tensors.
\newblock {\em Math. Meth. Appl. Sci.}, 42:830--860, 2019.

\bibitem{marsh2014b}
J.~Makkonen, L.~A. Marsh, J.~Vihonen, A.~{J\"arvi}, D.~W. Armitage, A.~Visa,
  and A.~J. Peyton.
\newblock {KNN} classification of metallic targets using the magnetic
  polarizability tensor.
\newblock {\em Meas. Sci. Technol.}, 25:055105, 2014.

\bibitem{marsh2013}
L.~A. Marsh, C.~Ktisis, A.~J{\"a}rvi, D.~W. Armitage, and A.~J. Peyton.
\newblock Three-dimensional object location and inversion of the magnetic
  polarisability tensor at a single frequency using a walk-through metal
  detector.
\newblock {\em Meas. Sci. Technol.}, 24:045102, 2013.

\bibitem{monkbook}
P.~Monk.
\newblock {\em Finite Element Methods for Maxwell's Equations}.
\newblock Oxford University Press, 2003.

\bibitem{norton2001}
S.~J. Norton and I.~J. Won.
\newblock Identification of buried unexploded ordnance from broadband induction
  data.
\newblock {\em IEEE Trans. Geosci. Remote Sens.}, 39:2253--2261, 2001.

\bibitem{novotnybook}
A.A. Novotny and J.~Soko{\l{}}owski.
\newblock {\em Topological Derivatives in Shape Optmization}.
\newblock Springer, Heidelberg, 2012.

\bibitem{ramm1982}
A.G. Ramm.
\newblock Mathematical foundations of the singularity and eigenmode expansion
  methods {(SEM and EEM)}.
\newblock {\em Journal of Mathematical Analysis and Applications}, 86:562--591,
  1982.

\bibitem{rehimpeyton}
O.~A.~Abel Rehim, J.~L. Davidson, L.~A. Marsh, M.~D. O'Toole, D.~W. Armitage,
  and A.~J. Peyton.
\newblock Measurement system for determining the magnetic polarizability tensor
  of small metallic objects.
\newblock In {\em IEEE Sensor Application Symposium}, 2015.

\bibitem{schmidt2008}
K.~Schmidt, O.~Sterz, and R.~Hiptmair.
\newblock Estimating the eddy current error.
\newblock {\em IEEE Trans. Magn}, 44:686--689, 2008.

\bibitem{Wait1951}
J.~R. Wait.
\newblock A conducting sphere in a time-varying magnetic field.
\newblock {\em Geophysics}, 16:666--672, 1951.

\bibitem{wait1969}
J.~R. Wait and K.~P. Spies.
\newblock Quasi-static transient response of a conducting permeable sphere.
\newblock {\em Geophysics}, 34:789--792, 1969.

\bibitem{zhao2016}
Y.~Zhao, W.~Yin, C.~Ktistis, D.~Butterfield, and A.~J. Peyton.
\newblock Determining the electromagnetic polarizability tensors of metal
  objects during in-line scanning.
\newblock {\em IEEE Trans. Instrum. Measure.}, 65:1172--1180, 2016.

\end{thebibliography}

\end{document}